\crefname{section}{section}{sections}
\crefname{subsection}{subsection}{subsections}
\Crefname{section}{Section}{Sections}
\Crefname{subsection}{Subsection}{Subsections}
\Crefname{figure}{Figure}{Figures}
\newcommand{\scp}[2]{{\left\langle {#1}\, , \, {#2}\right\rangle}}
\newcommand{\mR}{{\mathbb{R}}}
\newcommand{\mB}{{\mathbb{B}}}
\newcommand{\Diffid}[2]{\mathit{Diff}_{\!\mathit{id}}^{#1}(#2)}
\newcommand{\restrict}[2]{#1\,\rule[-3pt]{0.4pt}{8pt}\vphantom{#1}_{\, #2}}
\newcommand{\reg}{\omega}
\renewcommand{\mid}{\,|\,}
\newcommand{\sym}{\mathscr{L}_{\mathrm{sym}}}
\newcommand{\targ}{{\mathrm{targ}}}
\newcommand{\id}{\mathit{id}}
\theoremstyle{plain}
\newtheorem{definition}{Definition}
\newtheorem{lemma}{Lemma}
\newtheorem{theorem}{Theorem}
\newtheorem{proposition}{Proposition}
\newtheorem{corollary}{Corollary}
\theoremstyle{remark}
\newtheorem{remark}{Remark}
\newtheorem{example}{Example}
\title[Mechanistic Modeling of Longitudinal Shape Changes]{Mechanistic Modeling of Longitudinal Shape Changes: equations of motion and inverse problems}
\author{Dai-Ni~Hsieh$^\dagger$, Sylvain~Arguill\`ere$^\ddagger$, Nicolas~Charon$^\dagger$, and Laurent~Younes$^\dagger$}
\address{$^\dagger$Department of Applied Mathematics and Statistics, Johns Hopkins University}
\address{$^\ddagger$Laboratoire Paul Painlev\'e, Universit\'e de Lille}
\email{dnhsieh@jhu.edu}
\email{sylvain.arguillere@univ-lille.fr}
\email{charon@cis.jhu.edu}
\author{\vspace{-50pt}}
\email{laurent.younes@jhu.edu}
\thanks{LY was partially supported by NIH R01DC016784 and NIH U19AG033655; NC was partially supported by NSF 1912030 and NSF 1945224.}
\begin{document}

\maketitle

\begin{abstract}
This paper examines a longitudinal shape evolution model in which a 3D volume progresses through a family of elastic equilibria in response to the time-derivative of an internal force, or yank, with an additional regularization to ensure diffeomorphic transformations. We consider two different models of yank and address the long time existence and uniqueness of solutions for the equations of motion in both models. In addition, we derive sufficient conditions for the existence of an optimal yank that best describes the change from an observed initial volume to an observed volume at a later time. The main motivation for this work is the understanding of processes such as growth and atrophy in anatomical structures, where the yank could be roughly interpreted as a metabolic event triggering morphological changes. We provide preliminary results on simple examples to illustrate, under this model, the retrievability of some attributes of such events.
\end{abstract}

%

\section{Introduction}
\label{sec:introduction}
We analyze in this paper a shape evolution paradigm introduced in \cite{Hsieh2019} in which a volume progresses along a family of regularized elastic equilibria controlled by the gradient of a time-dependent potential, this gradient being interpreted as the time-derivative of an internal force that we will refer to as ``yank'', following, e.g., \cite{Linjeb180414}. A primary motivation of our work is the modeling of shape changes in anatomical structures, where the driving potential may be loosely interpreted as a result of metabolic events, for example, caused by a disease in the structure. Potential applications of this framework include biological growth models \cite{dicarlo2002growth,lubarda2002mechanics,amar2005growth,tallinen2016growth,gerig2006computational} or longitudinal studies in computational anatomy, and in particular, slow changes in the brain resulting from neuro-degenerative diseases \cite{braak1991neuropathological,braak1995staging,aylward2004onset,QIU2009S51,hua2011accurate,lindberg2012hippocampal,singh2013hierarchical,Durrleman2013,adaszewski2013early,younes2014regionally,tang2019regional,younes2019identifying}. Such processes of pathogenesis are not well understood today. Thus we introduce a general framework under which more advanced models can be developed. In our experiments, we make very simple assumptions on the initiation and propagation of the potential. We then illustrate the possibility of inferring the causes of the shape changes only from geometric observations. 

The relationship between shape and yank in our model can be represented as a control system in which the velocity field at a given time is obtained  as the solution of a linear equation that depends on both.
We will provide conditions ensuring that this control system has a unique solution over an arbitrary time interval before formulating and studying the inverse problem of estimating an optimal yank based only on observed initial and final shapes. We will consider two situations in this context.  In the first model,  we will assume that the yank is unspecified at all times. We will then estimate the yank so that it minimizes a cost accumulating over time, resulting in an optimal control problem. In the second one, the assumption will be that the potential specifying the yank is fully characterized by its initial value and follows the shape transformation through basic advection. In this latter case, we will attempt to solve the inverse problem of determining this initial value (specified by a few parameters) based on partial information on the deformation, namely the boundary of the transformed volume. 

The overall paradigm defining the dynamical system is the same as that described in \cite{Hsieh2019}, where we assume that, at time $t$, an infinitesimal force $\delta F(t)$ is applied to a volume $M(t)$ in a zero-stress state, resulting in a new equilibrium at time $t+\delta t$, denoted by $M(t+\delta t)$, where $\delta t$ is small, therefore assuming that times needed to reach new equilibria are negligible compared to the time frame within which the whole process is considered. (Such an assumption of {\em evolving reference configuration} is typical in morphoelastic growth models \cite{ward1997mathematical,dicarlo2002growth,goriely2017mathematics}.) The new configuration $M(t+\delta t)$ is obtained by displacing each point $x$ in $M(t)$ by a small vector $\delta x$, which is obtained by solving a linear equation $\mathcal L(t) \delta x = \delta F(t)$, where $\mathcal L$ typically depends on $M$. Dividing by $\delta t$, introducing the velocity $v = \delta x/\delta t$ and the yank $j = \delta F/\delta t$, we are led to consider shape evolution processes in which $M$ is advected by the vector field $v$ as the solution of  $\mathcal L(t) v = j$. The existence of solutions of such a process is stated in \cref{thm:ctrl_problem,thm:opt_problem} under some assumptions on the operator $\mathcal L$ (which are satisfied, in particular, by properly regularized elastic operators) and on the yank $j$. Existence of solutions to the inverse problem of estimating $j$ from the initial and final shapes are provided in the same theorems.

The paper is organized as follows. Notation and a general description of our framework are provided in \cref{sec:problems}. Our main theorems are stated in \cref{sec:theorems} and proved in \cref{sec:proofs}. \Cref{sec:elasticity} provides specific examples to which our theorems apply. \Cref{sec:experiments} presents experimental results. We conclude with a discussion in \cref{sec:conclusion} and provide implementation details in \cref{sec:implementation}.


\section{Formulation of problems}
\label{sec:problems}

\subsection{Notation}
\label{sec:notation}


For an integer $s\geq 0$, we let $C_0^s(\mathbb{R}^3, \mathbb{R}^3)$ denote the space of $s$-times continuously differentiable vector fields $v$ such that the $k$th derivative $D^k v$ tends to 0 at infinity for every $k \leq s$. The space $C_0^s(\mathbb{R}^3, \mathbb{R}^3)$ is a Banach space equipped with the norm $\|v\|_{s, \infty} = \sum_{k=0}^s \max_{x \,\in\, \mathbb{R}^3}|D^k v(x)|$, where $|\cdot|$ denotes the operator norm of a multilinear map on a product of finite-dimensional vector spaces equipped with the Euclidean norm. If $s = 0$, we will write the customary $\|v\|_\infty$ instead of $\|v\|_{0, \infty}$.

Let $\mathit{id}: \mathbb{R}^3 \rightarrow \mathbb{R}^3$ be the identity map, i.e., $\mathit{id}(x) = x$. We denote by $\Diffid{s}{\mathbb{R}^3}$ the set of $C^s$ diffeomorphisms on $\mR^3$ that tend to identity at infinity. Thus every element $\varphi \in \Diffid{s}{\mathbb{R}^3}$ can be written as $\varphi = \mathit{id} + v$, where $v \in C_0^s(\mathbb{R}^3, \mathbb{R}^3)$. The affine Banach space $\id + C_0^s(\mathbb{R}^3, \mathbb{R}^3)$ is equipped with the induced metric $d(\varphi, \psi) = \|\varphi - \psi\|_{s, \infty}$, which makes $\Diffid{s}{\mR^3} \subset \id + C_0^s(\mathbb{R}^3, \mathbb{R}^3)$ an open subset. 

We will denote by $\mathscr{L}(B, \widetilde B)$ the vector space of bounded linear operators from a Banach space $B$ to another Banach space $\widetilde B$. Weak convergence of sequences $(x_n)$ in $B$ will be denoted by  $x_n \rightharpoonup x$. Denoting the topological dual of $B$ by $B^*$, we will use the notation $(\mu \mid v)$ rather than $\mu(v)$ to denote the evaluation of $\mu \in B^*$ at $v \in B$. {
We say a linear operator $A \in \mathscr{L}(B, B^*)$ is symmetric if the corresponding bilinear form $(v, w) \mapsto (Av \mid w)$ is symmetric. The subspace of symmetric linear operators will be denoted by $\sym(B, B^*)$.}

For a generic function $f: [0, T] \times \mathbb{R}^3 \rightarrow \mathbb{R}^3$, we will use the notation $f(t): \mathbb{R}^3 \rightarrow \mathbb{R}^3$ defined by $f(t)(x) = f(t, x)$. We will use $C$ to denote a generic constant and $C_a$ to show a generic constant depending on $a$. The value of such constants may change from equation to equation.

Throughout this paper, $V$ is a separable Hilbert space of vector fields on $\mR^3$ continuously embedded in $C_0^2(\mathbb{R}^3, \mathbb{R}^3)$, which is denoted by $V \hookrightarrow C_0^2(\mathbb{R}^3, \mathbb{R}^3)$, with inner product $\scp{\cdot}{\cdot}_V$ and norm $\|\cdot\|_V$. Since $V \hookrightarrow C_0^2(\mathbb{R}^3, \mathbb{R}^3)$, there exists a constant $c_V$ such that $\|v\|_{2, \infty} \leq c_V \|v\|_V$. The duality map $L_V: V \to V^*$ is given by
\[
(L_V \hspace{1pt} v \mid w) = \scp{v}{w}_V
\]
and provides an isometry from $V$ onto $V^*$. We denote the inverse of $L_V$ by $K_V \in \mathscr{L}(V^*, V)$, which, because of the embedding assumption, is a kernel operator \cite{aronszajn1950theory}. Note that
\[
	\|v\|_V^2 = (L_V \hspace{1pt} v \mid v) = (K_V^{-1} \hspace{1pt} v \mid v) .
\]
As an example, the space $V$ can be the reproducing kernel Hilbert space (RKHS) associated with a Mat\'ern kernel of some order $r$, and some width $\sigma$, which, in three dimensions, implies that $V$ is a Sobolev space $H^{r+2}$. For the specific value {$r=3$}, which we will use in our experiments, the kernel operator (when applied to a vector measure $\mu\in V^*$) takes the form
\[
(K_V \hspace{1pt} \mu)(x) = \int_{\mR^3} \kappa(|x-y|/\sigma) \, d\mu(y)
\]
with $\kappa(t) = (1+t+2t^2/15 + t^3/15)e^{-t}$.

If $B$ is a Banach space and $p\geq 1$, $L^p([0,T], B)$ denotes the space of Bochner integrable functions $f:[0,T] \to B$ such that $\int_0^T \|f(t)\|_B^p\,dt < \infty$. Recall that a function $f:[0,T] \to B$ is Bochner integrable if: (i) it is the almost-everywhere limit of a sequence of measurable functions that take a finite number of values (also called simple functions) and (ii) satisfies $\int_0^T \|f(t)\|_B\,dt < \infty$.

\subsection{Control  systems and inverse problems}
We now describe the dynamics we consider in this paper, which gradually deform shapes through elastic equilibria. We assume a mapping $A: \Diffid{1}{\mathbb{R}^3} \rightarrow \sym(V, V^*)$ defined by $\varphi \mapsto A_\varphi$. Given a time-dependent mapping $j: [0, T] \rightarrow V^*$, we model the deformation trajectory of a compact subset $M_0 \subset \mathbb{R}^3$ as $t \mapsto \varphi_j(t, M_0)$, where $\varphi_j \in C([0,T], \Diffid{1}{\mR^3})$ is a solution to the system
\begin{equation}
	\label{eq:ctrl_problem_system}
	\left\{
		\begin{array}{l}
			\partial_t \hspace{1pt} \varphi(t, x) =  v(t, \varphi(t, x)), \ \varphi(0, x) = x ,
			\\[5pt]
			\displaystyle
			v(t) = \underset{v' \,\in\, V}{\arg\min} \ \frac{\reg}{2} \, \|v'\|_V^2 + \frac{1}{2} \, (A_{\varphi(t)} \hspace{1pt} v' \mid v') - (j(t) \mid v')
		\end{array}
	\right.
\end{equation}
and $\omega > 0$ is a fixed regularization parameter. The first equation in this system will be seen as an ordinary differential equation in $\Diffid{1}{\mR^3}$. We can interpret the squared norm $\frac{\reg}{2} \, \|v'\|_V^2$ as a regularization term that is introduced to ensure that $v(t)$ and (as we will see in our results) $\varphi(t)- \id$ are both in $C^2_0(\mR^2, \mR^2)$. This term will also ensure that $\varphi(t)$ is a diffeomorphism at all times (similar regularizations were used  in works such as \cite{beg2005computing,younes2010shapes,younes2018hybrid}). The operator $A_{\varphi(t)}$, as we shall detail later, may be for instance an elastic operator in which case the second term $\frac{1}{2} \, (A_{\varphi(t)} \hspace{1pt} v' \mid v')$ represents the linear elastic energy associated to the deformation while $j(t)$ represents a yank inducing the motion of the material. In this context, the second equation in system~\cref{eq:ctrl_problem_system} essentially states that the deformation vector field at each time is governed by an infinitesimal version of the principle of virtual work \cite[Theorem 1.6, Chapter 5]{marsden1994mathematical} with regularization. As a result, the shape $\varphi_j(t, M_0)$ is deformed from a stress-free state to an equilibrium at all time in this dynamical system, as described earlier in the introduction. We postpone specific examples of elastic operators and yank until \cref{sec:elasticity}, after presenting sufficient conditions ensuring existence of solutions of our inverse problems in \cref{sec:theorems}, where we treat $A_\varphi$ and $j$ as general operators.

We let $\mathscr M$ denote a class of compact subsets of $\mR^3$ that represents our ``shape space'' and assume that it is stable by the action of diffeomorphisms, i.e., $\varphi(\mathscr{M}) \subset \mathscr{M}$ for all $\varphi \in \Diffid{1}{\mathbb{R}^3}$. A specific description of $\mathscr{M}$ is problem dependent (see \cref{rem:varifold}). Given two elements  $M_0, M_\targ \in \mathscr M$, providing the observed initial shape and final shape, or target, we aim to find $j$ within a given class such that the deformed $M_0$ in response to $j$ at time $T$, i.e., $\varphi_j(T, M_0)$, is close to $M_\targ$ in some sense. Closeness will be measured according to a discrepancy function $\rho: \mathscr M \times \mathscr M \to [0, +\infty)$ that compares compact sets (see examples in  \cref{rem:varifold}). We will focus on the following two frameworks regarding the time-dependent yank $j$: 
\begin{enumerate}[label=(\arabic*), leftmargin=0.75cm]
\item Free yank model. In system~\cref{eq:ctrl_problem_system}, one can interpret  $j$ as a control that drives the evolution of the state $\varphi$ through the vector field $v_\varphi$. Let $\mathcal{X}_{V^*\!, \, T}^p = L^p([0, T], V^*)$. We will consider the optimal control problem
\begin{equation}
    \label{eq:free.yank}
	\min_{j \,\in\, \mathcal{X}_{V^*\!, \, T}^2} \, \int_0^T (j(t) \mid v(t)) \, dt + \rho(\varphi(T, M_0), M_\targ)
\end{equation}
subject to system~\cref{eq:ctrl_problem_system}. 
We will give sufficient conditions guaranteeing the existence of solutions of this problem in \cref{thm:ctrl_problem}.

\item Parametric yank model. The yank is modeled as a function of a transformation $\varphi$ and of a finite-dimensional parameter $\theta$ belonging to a compact set $\Theta\subset \mR^m$. In this case, the finite-dimensional optimization problem of interest is
\begin{equation}
    \label{eq:parametrized.yank}
	\min_{\vphantom{\rule{0pt}{6.5pt}} \theta \,\in\, \Theta} \  \rho(\varphi(T, M_0), M_\targ)
\end{equation}
subject to \cref{eq:ctrl_problem_system} with $j(t) = j({\varphi(t)}, \theta)$, namely,
\begin{equation}
	\label{eq:opt_problem_system}
	\left\{
		\begin{array}{ll}
			\partial_t \hspace{1pt} \varphi(t, x) =  v(t, \varphi(t, x)), \  \varphi(0, x) = x,
			\\[5pt]
			\displaystyle
			v(t) = \underset{v' \,\in\, V}{\arg\min} \  \frac{\reg}{2} \, \|v'\|_V^2 + \frac{1}{2} \, (A_{\varphi(t)} \hspace{1pt} v' \mid v') - 
			(j({\varphi(t)}, \theta) \mid v').
		\end{array}
	\right. 
\end{equation}
Examples of such yanks are provided in \cref{sec:elasticity}. We  give sufficient conditions for this optimization problem to have a solution in \cref{thm:opt_problem}.
\end{enumerate}


\section{Main results}
\label{sec:theorems}


Given a compact subset $\varOmega \subset \mathbb{R}^3$, we define the seminorm 
\[
	\|v\|_{s, \infty}^\varOmega
	=
	\sum_{k=1}^s 
	\max_{x \,\in\, \varOmega} \, |D^k v(x)|
\]
on $C^s(\mathbb{R}^3, \mathbb{R}^3)$.
We require a regularity assumption on the discrepancy function $\rho$ appearing in the objective functionals \cref{eq:free.yank,eq:parametrized.yank}.

\begin{definition}
\label{def:continuous.rho}
We say that a discrepancy function $\rho: \mathscr M \times \mathscr M \to [0, +\infty)$ is continuous on $\mathscr M$ with respect to $\|\cdot\|_{s, \infty}$ if for all compact sets $M, M' \in \mathscr M$ and all sequences $(\varphi_n)_{n = 1}^\infty \subset \Diffid{s}{\mR^3}$ such that $\|\varphi_n - \varphi\|_{s, \infty}^M \rightarrow 0$ for some $\varphi \in \Diffid{s}{\mR^3}$, one has
\[
\rho(\varphi_n(M), M') \rightarrow \rho(\varphi(M), M').
\]
\end{definition}

\begin{theorem}[Free yank model]
\label{thm:ctrl_problem}
Let $A: \Diffid{1}{\mathbb{R}^3} \rightarrow \sym(V, V^*)$ be a mapping defined by $\varphi \mapsto A_\varphi$. Assume that $(A_\varphi \hspace{1pt} v \mid v) \geq 0$ for all $\varphi \in \Diffid{1}{\mathbb{R}^3}$ and $v \in V$. Let the two compact sets $M_0, M_\targ \in \mathscr M$ be given. Then the following results hold.
\begin{enumerate}[label = (\roman*), ref = \ref{thm:ctrl_problem}(\roman*), leftmargin=0.75cm]
\item
\label{thm:ctrl_problem_ode}
Suppose that $\varphi \mapsto A_\varphi$ is locally Lipschitz (for the $\|\cdot\|_{1,\infty}$ distance). Then, given $j \in \mathcal{X}_{V^*\!, \, T}^1$, the system
\begin{equation}
	\tag{\ref{eq:ctrl_problem_system}}
	\left\{
		\begin{array}{l}
			\partial_t \hspace{1pt} \varphi(t, x)
			= 
			v(t, \varphi(t, x)) , \ 
			\varphi(0, x) = x ,
			\\[5pt]
			\displaystyle
			v(t)
			=
			\underset{v' \,\in\, V}{\arg\min} \ 
			\frac{\reg}{2} \, \|v'\|_V^2
			+
			\frac{1}{2} \, (A_{\varphi(t)} \hspace{1pt} v' \mid v')
			-
			(j(t) \mid v')
		\end{array}
	\right. .
\end{equation}
has a unique solution $\varphi \in C([0, T], \Diffid{2}{\mathbb{R}^3})$.


\vspace{3pt}

\item
\label{thm:ctrl_problem_min}

Suppose that, for each $\gamma>0$, the mapping $\varphi \mapsto A_\varphi$ is Lipschitz with respect to the seminorm $\|\cdot\|_{1, \infty}^{M_0}$ on
\[
	\mathfrak{S}_\gamma
	=
	\{
		\varphi \in \Diffid{1}{\mathbb{R}^3} : 
		\|\varphi - id\|_{1, \infty} \leq \gamma
		\mbox{ and }
		\|\varphi^{-1} - id\|_{1, \infty} \leq \gamma
	\}.
\]
In addition, assume that the discrepancy function $\rho$ is continuous on $\mathscr M$ with respect to $\|\cdot\|_{1,\infty}$. Then there exists a minimizer of the optimal control problem
\[
	\min_{j \,\in\, \mathcal{X}_{V^*\!, \, T}^2} \, \int_0^T (j(t) \mid v(t)) \, dt + \rho(\varphi(T, M_0), M_\targ)
\]
where $v$ and $\varphi$ satisfy \cref{eq:ctrl_problem_system}.

\end{enumerate}
\end{theorem}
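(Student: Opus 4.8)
The plan is to prove the two parts in turn, each resting on properties of the solution operator of the inner minimization.

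\emph{Reduction of the inner problem.} For fixed $\varphi \in \Diffid{1}{\mR^3}$ and $\mu \in V^*$, the functional $v' \mapsto \tfrac{\omega}{2}\|v'\|_V^2 + \tfrac12 (A_\varphi v' \mid v') - (\mu \mid v')$ is strictly convex and coercive on $V$ because $\omega > 0$ and $(A_\varphi v' \mid v') \ge 0$, hence has a unique minimizer, characterized by the Euler--Lagrange identity $(\omega L_V + A_\varphi)v' = \mu$. Since $\omega L_V + A_\varphi \in \sym(V, V^*)$ is coercive with constant $\omega$, the Lax--Milgram theorem provides $R_\varphi := (\omega L_V + A_\varphi)^{-1} \in \mathscr{L}(V^*, V)$ with $\|R_\varphi\|_{\mathscr{L}(V^*, V)} \le 1/\omega$; thus the second line of \cref{eq:ctrl_problem_system} reads $v(t) = R_{\varphi(t)}j(t)$, and $V \hookrightarrow C_0^2(\mR^3, \mR^3)$ gives $\|v(t)\|_{2, \infty} \le (c_V/\omega)\|j(t)\|_{V^*}$. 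The resolvent identity $R_{\varphi_1} - R_{\varphi_2} = R_{\varphi_1}(A_{\varphi_2} - A_{\varphi_1})R_{\varphi_2}$ transfers the Lipschitz hypotheses on $\varphi \mapsto A_\varphi$ to $\varphi \mapsto R_\varphi$ (locally Lipschitz for $\|\cdot\|_{1, \infty}$ in the setting of \cref{thm:ctrl_problem_ode}; Lipschitz for $\|\cdot\|_{1, \infty}^{M_0}$ on each $\mathfrak{S}_\gamma$ in that of \cref{thm:ctrl_problem_min}), and shows $\|A_\varphi\|$ and $\|R_\varphi\|$ are bounded on each $\mathfrak{S}_\gamma$.

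\emph{Proof of \cref{thm:ctrl_problem_ode}.} With $v(t) = R_{\varphi(t)}j(t)$, \cref{eq:ctrl_problem_system} becomes the flow equation $\partial_t\varphi(t) = (R_{\varphi(t)}j(t))\circ\varphi(t)$, $\varphi(0) = \id$, whose driving field depends on the unknown flow. I would solve this by a fixed point: to $\psi \in C([0, \tau], \Diffid{1}{\mR^3})$ assign the flow $\Gamma(\psi)$ of the known nonautonomous field $s \mapsto R_{\psi(s)}j(s)$, which lies in $L^1([0, \tau], V) \hookrightarrow L^1([0, \tau], C_0^2(\mR^3, \mR^3))$ since $\|R_{\psi(s)}j(s)\|_V \le \omega^{-1}\|j(s)\|_{V^*}$; by the standard theory of flows of $L^1$-in-time, $C_0^2$-in-space vector fields, $\Gamma(\psi) \in C([0, \tau], \Diffid{2}{\mR^3})$. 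Combining the local Lipschitz estimate for $\varphi \mapsto R_\varphi$ with Gronwall bounds for the dependence of a flow on its generator shows $\Gamma$ is a contraction on $C([0, \tau], \Diffid{1}{\mR^3})$ for $\tau$ small, yielding a unique local solution. To globalize I would use the a priori estimate that any solution obeys $\|\varphi(t) - \id\|_{1, \infty} + \|\varphi(t)^{-1} - \id\|_{1, \infty} \le \gamma_0$ for a $\gamma_0$ depending only on $\int_0^T\|v(s)\|_{2, \infty}\,ds \le (c_V/\omega)\|j\|_{L^1([0, T], V^*)}$ (Gronwall applied to $\varphi$ and to $\varphi^{-1}$); since $\gamma_0$ is uniform, the guaranteed existence time is bounded below independently of the initial instant, so finitely many continuation steps cover $[0, T]$, and uniqueness, being local, is global. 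Membership in $\Diffid{2}{\mR^3}$ and continuity of $t \mapsto \varphi(t)$ for the $C^2$ topology follow from the $C_0^2$-valued integrability of the driving field.

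\emph{Proof of \cref{thm:ctrl_problem_min}.} This is the direct method. First, $(j(t) \mid v(t)) = \omega\|v(t)\|_V^2 + (A_{\varphi(t)}v(t)\mid v(t)) \ge 0$, so the objective $J(j) := \int_0^T(j(t)\mid v(t))\,dt + \rho(\varphi(T, M_0), M_\targ)$ is bounded below and finite (e.g.\ at $j \equiv 0$). Take a minimizing sequence $(j_n)$; then $\int_0^T\omega\|v_n(t)\|_V^2\,dt$ is bounded, so $(v_n)$ is bounded in $L^2([0, T], V)$, whence $\int_0^T\|v_n(t)\|_{2, \infty}\,dt \le c_V\sqrt{T}\,\|v_n\|_{L^2([0, T], V)}$ is bounded and the Gronwall argument above places all $\varphi_n(t), \varphi_n(t)^{-1}$ in one fixed $\mathfrak{S}_\gamma$; since $\|A_\varphi\|$ is bounded on $\mathfrak{S}_\gamma$, $j_n(t) = (\omega L_V + A_{\varphi_n(t)})v_n(t)$ is then bounded in $L^2([0, T], V^*)$. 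By reflexivity, pass to subsequences with $j_n \rightharpoonup j_\infty$ in $L^2([0, T], V^*)$ and $v_n \rightharpoonup v_\infty$ in $L^2([0, T], V)$. The uniform bound on $\int_0^T\|v_n(t)\|_{2, \infty}\,dt$ also makes $(\varphi_n)$ bounded in $C([0, T], C^2_{loc})$ with a $t$-modulus of continuity uniform in $n$ (Gronwall on $\varphi_n$, $D\varphi_n$, $D^2\varphi_n$), so by Arzel\`a--Ascoli and a diagonal argument a further subsequence has $\varphi_n \to \varphi_\infty$ in $C([0, T], C^1_{loc})$; passing to the limit in $\varphi_n(t, x) = x + \int_0^t v_n(s, \varphi_n(s, x))\,ds$ --- splitting off $v_n(s, \varphi_n(s, x)) - v_n(s, \varphi_\infty(s, x))$, controlled by $\|v_n(s)\|_{1, \infty}$ times the local uniform distance between $\varphi_n(s)$ and $\varphi_\infty(s)$, and using that for fixed $y$ the evaluation $\delta_y \in V^*$ has uniformly bounded norm so that $s \mapsto \delta_{\varphi_\infty(s, x)}$ is an admissible $L^2([0, T], V^*)$ test field against $v_n \rightharpoonup v_\infty$ --- identifies $\varphi_\infty$ as the flow of $v_\infty$. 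Because $\varphi_n(t), \varphi_\infty(t) \in \mathfrak{S}_\gamma$ and $\sup_t\|\varphi_n(t) - \varphi_\infty(t)\|_{1, \infty}^{M_0} \to 0$, the hypothesis that $\varphi \mapsto A_\varphi$ is Lipschitz for $\|\cdot\|_{1, \infty}^{M_0}$ on $\mathfrak{S}_\gamma$ gives $A_{\varphi_n(t)} \to A_{\varphi_\infty(t)}$ in $\mathscr{L}(V, V^*)$ uniformly in $t$; testing $(\omega L_V + A_{\varphi_n(t)})v_n(t) = j_n(t)$ against an arbitrary $\psi \in L^2([0, T], V)$ and letting $n \to \infty$ (the $A_{\varphi_n} - A_{\varphi_\infty}$ contribution vanishes by the operator-norm bound, the remaining terms by weak convergence and symmetry of $A_{\varphi_\infty(t)}$) yields $(\omega L_V + A_{\varphi_\infty(t)})v_\infty(t) = j_\infty(t)$ a.e.; hence $v_\infty(t)$ is the minimizer in the second line of \cref{eq:ctrl_problem_system} for $j = j_\infty$, and $(\varphi_\infty, v_\infty)$ solves that system. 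Finally, since $\int_0^T|((A_{\varphi_n(t)} - A_{\varphi_\infty(t)})v_n(t)\mid v_n(t))|\,dt \to 0$,
\[
\liminf_n\int_0^T(j_n\mid v_n)\,dt \ge \omega\liminf_n\|v_n\|_{L^2([0, T], V)}^2 + \liminf_n\int_0^T(A_{\varphi_\infty(t)}v_n(t)\mid v_n(t))\,dt \ge \int_0^T(j_\infty\mid v_\infty)\,dt,
\]
using weak lower semicontinuity of $\|\cdot\|_{L^2([0, T], V)}^2$ and of the nonnegative continuous quadratic form $v \mapsto \int_0^T(A_{\varphi_\infty(t)}v(t)\mid v(t))\,dt$; combined with $\rho(\varphi_n(T, M_0), M_\targ) \to \rho(\varphi_\infty(T, M_0), M_\targ)$ from \cref{def:continuous.rho} (as $\|\varphi_n(T) - \varphi_\infty(T)\|_{1, \infty}^{M_0} \to 0$), this gives $J(j_\infty) \le \liminf_n J(j_n) = \inf J$, so $j_\infty$ is a minimizer.

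\emph{Main obstacle.} The crux is the compactness step in the last paragraph: one must upgrade the weak convergence $v_n \rightharpoonup v_\infty$ to convergence strong enough --- namely $C^1$ convergence of $\varphi_n$ on $M_0$ --- to pass to the limit both in the $\varphi$-dependence of $A_\varphi$ in the Euler--Lagrange equation and in the discrepancy $\rho$, while still producing a limiting triple that satisfies \cref{eq:ctrl_problem_system}. The embedding $V \hookrightarrow C_0^2(\mR^3, \mR^3)$ is what makes this work: it yields the uniform $C^2$ bound on $\varphi_n$ that feeds Arzel\`a--Ascoli and legitimizes evaluation functionals as $V^*$-valued test fields in the integral flow equation, and the hypotheses of \cref{thm:ctrl_problem_min} are calibrated to exactly what this compactness produces. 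A subsidiary point needing care is stability of the a priori set: one fixes $\gamma$ from the uniform bound on $\int_0^T\|v_n(t)\|_{2, \infty}\,dt$ and checks that $\varphi_\infty$, being the flow of $v_\infty$ with $\|v_\infty\|_{L^2([0, T], V)} \le \liminf_n\|v_n\|_{L^2([0, T], V)}$, again lies in $\mathfrak{S}_\gamma$, so that the Lipschitz hypothesis applies to the limiting trajectory as well.
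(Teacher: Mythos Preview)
Your argument is correct. Part~(i) is essentially the paper's proof: the same resolvent bound $\|R_\varphi\|\le 1/\omega$, the same resolvent identity to transfer Lipschitz continuity, and a fixed-point plus Gronwall continuation. The paper packages this as an abstract controlled-ODE theorem applied to $F(j,\varphi)=R_\varphi j\circ\varphi$, while you iterate on the flow map $\psi\mapsto(\text{flow of }R_\psi j)$; the content is the same.

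Part~(ii) is where you take a genuinely different route. The paper proves a weak-to-strong continuity lemma: for any sequence $j_n\rightharpoonup j$ one has $\|\varphi_{j_n}(t)-\varphi_j(t)\|_{1,\infty}^{M_0}\to 0$, obtained by writing $\varphi_{j_n}-\varphi_j$ as a sum of three integrals (two controlled by Gronwall, one --- the piece carrying $j_n-j$ with the limiting operator $L_{\varphi_j}^{-1}$ --- handled by Arzel\`a--Ascoli on the integrated field). From this lemma, continuity of $\rho$ and weak lower semicontinuity of the running cost follow directly. You instead run the classical compactness template: extract $\varphi_\infty$ from $(\varphi_n)$ by Arzel\`a--Ascoli (using the uniform $C^2$ bounds on the flows), pass to the limit separately in the integral flow equation (via evaluation functionals in $V^*$) and in the Euler--Lagrange equation (via symmetry of $A_{\varphi_\infty}$ and the operator-norm convergence $A_{\varphi_n}\to A_{\varphi_\infty}$), and conclude by uniqueness that $\varphi_\infty=\varphi_{j_\infty}$. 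The paper's approach yields the slightly stronger statement that $j\mapsto\varphi_j$ is weakly sequentially continuous (for the full sequence, no subsequence needed), which is of independent interest; your approach is more modular and closer to the standard direct-method pattern, and avoids the three-term decomposition by letting compactness of the flows do the work up front. Both rely on the same ingredients --- the embedding $V\hookrightarrow C_0^2$, the Lipschitz hypothesis on $A$ restricted to $\mathfrak S_\gamma$, and Arzel\`a--Ascoli --- deployed in a different order.
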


\medskip

Before stating our next theorem, we remind the reader that a collection of functions is said to be equi-Lipschitz if they are all Lipschitz and there exists a common Lipschitz constant that applies to all functions in the collection.

\medskip

\begin{theorem}[Parametric yank model]
\label{thm:opt_problem}
Let $A: \Diffid{1}{\mathbb{R}^3} \rightarrow \sym(V, V^*)$ be a mapping defined by $\varphi \mapsto A_\varphi$. Assume that $(A_\varphi \hspace{1pt} v \mid v) \geq 0$ for all $\varphi \in \Diffid{1}{\mathbb{R}^3}$ and $v \in V$. Moreover, let $\Theta \subset \mathbb{R}^m$ be a compact set and let $j: \Diffid{1}{\mathbb{R}^3} \times \Theta \rightarrow V^*$. Finally, let two compact sets $M_0, M_\targ \in \mathscr M$ be given. Then the following results hold.
\begin{enumerate}[label = (\roman*), ref = \ref{thm:opt_problem}(\roman*), leftmargin=0.75cm]
\item
\label{thm:opt_problem_ode}
Suppose that $\varphi \mapsto A_\varphi$ is locally Lipschitz and that $\varphi \mapsto j(\varphi, \theta)$ is locally Lipschitz (in both cases for $\|\cdot\|_{1,\infty}$ distance) and bounded in norm. Given $\theta \in \Theta$, the system
\begin{equation}
	\tag{\ref{eq:opt_problem_system}}
	\left\{
		\begin{array}{ll}
			\partial_t \hspace{1pt} \varphi(t, x)
			= 
			v(t, \varphi(t, x)) , \ 
			\varphi(0, x) = x ,
			\\[5pt]
			\displaystyle
			v(t)
			=
			\underset{v' \,\in\, V}{\arg\min} \ 
			\frac{\reg}{2} \, \|v'\|_V^2
			+
			\frac{1}{2} \, (A_{\varphi(t)} \hspace{1pt} v' \mid v')
			-
			(j({\varphi(t)}, \theta) \mid v')
		\end{array}
	\right. .
\end{equation}
has a unique solution $\varphi \in C([0, T], \Diffid{2}{\mathbb{R}^3})$.

\item
\label{thm:opt_problem_min}
Suppose that:
\begin{enumerate}[label=$\bullet$,leftmargin=0.25cm]
\item For each $\gamma > 0$, the mapping $\varphi \mapsto A_\varphi$ is Lipschitz and  the family of mappings $\{j(\cdot, \theta): \theta \in \Theta\}$ is equi-Lipschitz, both with respect to the seminorm $\|\cdot\|_{1, \infty}^{M_0}$, on the set
\[
	\mathfrak{S}_\gamma
	=
	\{
		\varphi \in \Diffid{1}{\mathbb{R}^3} : 
		\|\varphi - id\|_{1, \infty} \leq \gamma
		\mbox{ and }
		\|\varphi^{-1} - id\|_{1, \infty} \leq \gamma
	\}.
\]
\item For all $\varphi \in \Diffid{1}{\mathbb{R}^3}$,  $j(\varphi, \cdot)$ is continuous in the sense that
\[
	\theta_n \rightarrow \theta
	\ \ \mbox{ implies } \ \ 
	j(\varphi, \theta_n) \rightharpoonup j(\varphi, \theta).
\]
\item There exists a constant $J_\Theta$ such that 
\[
	\|j(\varphi, \theta)\|_{V^*} \leq J_\Theta
	\ \ \mbox{ for all }
	\varphi \in \Diffid{1}{\mathbb{R}^3} \mbox{ and } \theta \in \Theta .
\]
\item The discrepancy function $\rho$ is continuous on $\mathscr M$  with respect to $\|\cdot\|_{1,\infty}$. 
\end{enumerate}
\medskip
Then there exists a minimizer for the finite-dimensional optimization problem
\[
	\min_{\vphantom{\rule{0pt}{6.5pt}} \theta \,\in\, \Theta} \ \rho(\varphi(T, M_0), M_\targ)
\]
where $\varphi$ satisfies \cref{eq:opt_problem_system}.
\vspace{3pt}


\end{enumerate}
\end{theorem}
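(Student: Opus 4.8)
The plan is to treat the two parts separately. Part~(i) is a variant of \cref{thm:ctrl_problem_ode} in which the state-dependence of the yank turns out to be harmless; part~(ii) is an application of the direct method. For part~(i), fix $\theta \in \Theta$. For $\varphi \in \Diffid{1}{\mathbb{R}^3}$ the functional in the second line of \cref{eq:opt_problem_system} is strictly convex and coercive on $V$ (since $\reg > 0$ and $(A_\varphi v' \mid v') \geq 0$), so its minimizer is the unique solution of the Euler equation $(\reg L_V + A_\varphi)\,v = j(\varphi, \theta)$; since $\reg L_V + A_\varphi \in \sym(V, V^*)$ is coercive with constant $\reg$, Lax--Milgram yields $v_\varphi = (\reg L_V + A_\varphi)^{-1} j(\varphi, \theta)$ with $\|v_\varphi\|_V \leq \reg^{-1}\|j(\varphi, \theta)\|_{V^*}$. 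Using the resolvent identity $(\reg L_V + A_\varphi)^{-1} - (\reg L_V + A_\psi)^{-1} = (\reg L_V + A_\varphi)^{-1}(A_\psi - A_\varphi)(\reg L_V + A_\psi)^{-1}$ together with the local Lipschitz continuity of $\varphi \mapsto A_\varphi$ and of $\varphi \mapsto j(\varphi, \theta)$ and the norm bound on $j$, one checks that $\varphi \mapsto v_\varphi$ is locally bounded and locally Lipschitz from $\Diffid{1}{\mathbb{R}^3}$ into $V \hookrightarrow C_0^2(\mathbb{R}^3, \mathbb{R}^3)$. From here the argument of \cref{thm:ctrl_problem_ode} applies essentially verbatim: Cauchy--Lipschitz in the affine Banach space $\id + C_0^1(\mathbb{R}^3, \mathbb{R}^3)$ gives a unique maximal solution, the uniform bound $\|v_{\varphi(t)}\|_V \leq \reg^{-1}\sup_\varphi\|j(\varphi, \theta)\|_{V^*}$ together with a Grönwall estimate on $\|D\varphi(t)\|$ rules out finite-time blow-up, and freezing $v = v_{\varphi(\cdot)}$ as an element of $C([0, T], V) \subset C([0, T], C_0^2(\mathbb{R}^3, \mathbb{R}^3))$ and invoking the regularity theory for flows of $C^2$ vector fields upgrades the solution to $C([0, T], \Diffid{2}{\mathbb{R}^3})$.

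For part~(ii), I would run the direct method. Take a minimizing sequence $(\theta_n) \subset \Theta$ for $\theta \mapsto \rho(\varphi_\theta(T, M_0), M_\targ)$ (with $\varphi_\theta$ the solution of \cref{eq:opt_problem_system}); by compactness of $\Theta$ we may assume $\theta_n \to \theta^\star \in \Theta$. Write $\varphi_n = \varphi_{\theta_n}$ and $v_n(t) = (\reg L_V + A_{\varphi_n(t)})^{-1} j(\varphi_n(t), \theta_n)$, so $\|v_n(t)\|_V \leq J_\Theta/\reg$ for all $n$ and $t$, hence a uniform bound on $\|v_n(t)\|_{2, \infty}$ via the embedding $V \hookrightarrow C_0^2(\mathbb{R}^3, \mathbb{R}^3)$. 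Grönwall estimates on $D\varphi_n$, $D^2\varphi_n$ and on the inverse flows $\varphi_n(t)^{-1}$ then produce a single $\gamma > 0$ with $\varphi_n(t) \in \mathfrak{S}_\gamma$ for all $n, t$, and show that the families $\{\varphi_n\}$ and $\{D\varphi_n\}$ are uniformly bounded and equicontinuous on $[0, T] \times K$ for every compact $K \subset \mathbb{R}^3$. Since their pointwise ranges lie in finite-dimensional spaces, the Arzelà--Ascoli theorem together with a diagonal extraction over an exhaustion of $\mathbb{R}^3$ by compact sets yields a subsequence (not relabeled) along which $\varphi_n \to \varphi^\star$ and $D\varphi_n \to D\varphi^\star$ locally uniformly on $[0, T] \times \mathbb{R}^3$, with $\varphi^\star(t) \in \mathfrak{S}_\gamma$ for all $t$.

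The heart of the matter is to identify $\varphi^\star$ with $\varphi_{\theta^\star}$ by passing to the limit in \cref{eq:opt_problem_system}; this is exactly where the hypotheses are tailor-made, because, restricted to the fixed compact set $M_0$, the local-uniform $C^1$ convergence controls the seminorm $\|\cdot\|_{1, \infty}^{M_0}$. Thus for each $t$ the Lipschitz bound for $A$ on $\mathfrak{S}_\gamma$ gives $\|A_{\varphi_n(t)} - A_{\varphi^\star(t)}\|_{\mathscr{L}(V, V^*)} \to 0$; the equi-Lipschitz bound for $\{j(\cdot, \theta)\}$ gives $\|j(\varphi_n(t), \theta_n) - j(\varphi^\star(t), \theta_n)\|_{V^*} \to 0$; and together with the weak continuity $j(\varphi^\star(t), \theta_n) \rightharpoonup j(\varphi^\star(t), \theta^\star)$ these give $j(\varphi_n(t), \theta_n) \rightharpoonup j(\varphi^\star(t), \theta^\star)$ in $V^*$. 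Pairing the norm convergence of $(\reg L_V + A_{\varphi_n(t)})^{-1}$ with this bounded weakly convergent sequence then yields $v_n(t) \rightharpoonup v^\star(t) := (\reg L_V + A_{\varphi^\star(t)})^{-1} j(\varphi^\star(t), \theta^\star)$ in $V$. I would then pass to the limit in $\varphi_n(t, x) = x + \int_0^t v_n(s, \varphi_n(s, x))\, ds$: the integrand converges pointwise in $s$, since $|v_n(s, \varphi_n(s,x)) - v_n(s, \varphi^\star(s,x))| \leq \|Dv_n(s)\|_\infty\,|\varphi_n(s,x) - \varphi^\star(s,x)| \to 0$ while $v_n(s, \varphi^\star(s,x)) - v^\star(s, \varphi^\star(s,x)) \to 0$ because point evaluation is a continuous linear functional on $V \hookrightarrow C_0^2(\mathbb{R}^3, \mathbb{R}^3)$ and $v_n(s) \rightharpoonup v^\star(s)$, and the integrand is dominated by $c_V J_\Theta/\reg$; dominated convergence then gives $\varphi^\star(t, x) = x + \int_0^t v^\star(s, \varphi^\star(s, x))\, ds$. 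Hence $\varphi^\star$ solves \cref{eq:opt_problem_system} with parameter $\theta^\star$, so $\varphi^\star = \varphi_{\theta^\star}$ by the uniqueness in part~(i). Finally $D\varphi_n(T) \to D\varphi_{\theta^\star}(T)$ uniformly on $M_0$, that is, $\|\varphi_n(T) - \varphi_{\theta^\star}(T)\|_{1, \infty}^{M_0} \to 0$, so \cref{def:continuous.rho} gives $\rho(\varphi_n(T, M_0), M_\targ) \to \rho(\varphi_{\theta^\star}(T, M_0), M_\targ)$; as $(\theta_n)$ was minimizing, $\theta^\star$ attains the infimum.

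I expect the main obstacle to be precisely this limiting step. Handling the finite-dimensional parameter $\theta$ is routine, but the trajectories live in an infinite-dimensional space, so their compactness must be extracted from finite-dimensionality of the pointwise ranges rather than from a soft functional-analytic argument; and the passage to the limit in the \emph{state-dependent} operators $A_\varphi$ and yanks $j(\varphi, \theta)$ goes through only because these are controlled by the $M_0$-seminorm, for which the merely local-uniform convergence of the $\varphi_n$ is enough. Keeping careful track of where weak rather than strong convergence is available --- the velocities $v_n(t)$ converge only weakly in $V$, yet point evaluations recover the pointwise convergence the flow equation needs --- is the delicate part of the bookkeeping.
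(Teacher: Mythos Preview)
Your treatment of part~(i) is correct and matches the paper's: both reduce to checking that $\varphi \mapsto v_{\varphi,\theta}\circ\varphi$ satisfies the hypotheses of a Cauchy--Lipschitz theorem on $\id + C_0^1(\mathbb{R}^3,\mathbb{R}^3)$, with the coercivity bound $\|v_\varphi\|_V \leq \reg^{-1}\|j(\varphi,\theta)\|_{V^*}$ and the resolvent identity supplying the needed estimates.

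For part~(ii) your route is genuinely different from the paper's and contains a gap. You extract an abstract limit $\varphi^\star$ of the flows $\varphi_{\theta_n}$ by Arzel\`a--Ascoli with diagonal extraction, then assert ``$\varphi^\star(t)\in\mathfrak{S}_\gamma$ for all $t$''. But local uniform convergence of $\varphi_n$ and $D\varphi_n$ does not preserve the decay at infinity: the limit satisfies $\|\varphi^\star(t)-\id\|_{1,\infty}\leq\gamma$ pointwise, yet there is no reason for $\varphi^\star(t)-\id$ to lie in $C_0^1(\mathbb{R}^3,\mathbb{R}^3)$, hence no reason for $\varphi^\star(t)\in\Diffid{1}{\mathbb{R}^3}$. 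Without this, the expressions $A_{\varphi^\star(t)}$ and $j(\varphi^\star(t),\theta^\star)$ are simply undefined (both maps are only given on $\Diffid{1}{\mathbb{R}^3}$), the Lipschitz hypotheses --- which are stated for pairs in $\mathfrak{S}_\gamma$ --- cannot be invoked with $\varphi^\star$ as one endpoint, and your definition of $v^\star(t)$ collapses.

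The paper sidesteps this by never constructing an abstract limit flow. It compares $\varphi_{\theta_n}$ directly to the \emph{known} solution $\varphi_{\theta^\star}$ (which exists by part~(i) and lies in $\mathfrak{S}_{B_\Theta}$), decomposing $\varphi_{\theta_n}(t,x)-\varphi_{\theta^\star}(t,x)$ into four integrals: three are bounded by $C_\Theta\int_0^t\|\varphi_{\theta_n}(s)-\varphi_{\theta^\star}(s)\|_{1,\infty}^{M_0}\,ds$ (the Lipschitz bounds apply because both endpoints are legitimately in $\mathfrak{S}_{B_\Theta}$), and the remainder $\Lambda_n(t)$, coming from $j(\varphi_{\theta^\star}(s),\theta_n)-j(\varphi_{\theta^\star}(s),\theta^\star)$, is shown to vanish in $\|\cdot\|_{1,\infty}^{M_0}$ via Arzel\`a--Ascoli applied on the compact set $M_0$ to functions built from the fixed flow $\varphi_{\theta^\star}$. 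Gr\"onwall then gives $\|\varphi_{\theta_n}(t)-\varphi_{\theta^\star}(t)\|_{1,\infty}^{M_0}\to 0$, i.e.\ continuity of $\theta\mapsto\varphi_\theta(T)|_{M_0}$, from which existence of a minimizer is immediate. The natural repair of your argument --- replacing the abstract $\varphi^\star$ by $\varphi_{\theta^\star}$ throughout --- leads exactly here.
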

We will prove these two theorems in \cref{sec:proofs}.
Both control systems can be considered as ordinary differential equations with values in the affine Banach space $\mathit{id} + C_0^1(\mR^d, \mR^d)$. Most of the effort in proving the long-time existence and uniqueness of solutions, part (i) of the theorems, will rely on controlling the vector field $v(t)$ by the input, $j$ in the first case and $\theta$ in the second case. A key step for this is provided by \hyperref[lemma:inequalities_3]{Lemma}~\ref{lemma:inequalities_3}, which exploits the regularization term $\frac{\omega}{2} \, \|v'\|_V^2$. After this key step, we can carry out a proof using the Banach fixed point theorem and the Picard iteration in the space $C([0, T], \Diffid{1}{\mathbb{R}^3})$. Due to the regularity of the vector field, \cref{lemma:bounds} will show that the obtained unique solution is actually in $C([0, T], \Diffid{2}{\mathbb{R}^3})$. For part (ii) of the theorems, we will use the direct method of calculus of variations to prove the existence of minimizers.
Denoting the objective function by $f$, the two key steps of the direct method are proving that a minimizing sequence is bounded and that $f$ is weakly sequentially lower semicontinuous. The choice of the objective function affects the first step (see \cref{remark:cost_j_v}). To accomplish the second step, we need to show $f(j) \leq \liminf_{n \rightarrow \infty} f(j_n)$ for any sequence $j_n \rightharpoonup j$. We prove this by going through an intermediate step:
\[
	j_n \rightharpoonup j \ \Rightarrow \ \|\varphi_{j_n}\!(t) - \varphi_j(t)\|_{1, \infty}^{M_0} \rightarrow 0 \ \Rightarrow \ f(j) \leq \liminf_{n \rightarrow \infty} f(j_n) ,
\]
where $\varphi_{j_n}$ and $\varphi_j$ are solutions given $j_n$ and $j$ respectively. The seminorm $\|\cdot\|_{1, \infty}^{M_0}$ is adopted to resolve a difficulty  in this intermediate step: although $j_n \rightharpoonup j$ implies $\varphi_{j_n}\!(t, x) \rightarrow \varphi_j(t, x)$ pointwise, we do note have the uniform convergence. Fortunately, the convergence in the seminorm given by the Arzel{\`a}--Ascoli theorem suffices.

\vspace{5pt}
\begin{remark}
We stated our theorems in dimension three because it corresponds to most interesting situations in practice, but our proofs apply without change to any dimension (and we are providing some experimental illustrations in dimension two).
\end{remark}
\begin{remark}
\label{remark:cost_j_v}
The choice we made for the control cost $(j \mid v)$ in \cref{thm:ctrl_problem} is one among a large spectrum of costs for which the conclusions of the theorem are valid. We took this specific example for simplicity and also because it provided the best results in our experiments among some other options we tried. Other possible examples could be $\|j\|^2_{V^*}$, or $\|j\|^2_{L^2}$, for which our proofs can easily be modified (actually, simplified), with details being left to the reader.
\end{remark}
\begin{remark}
\label{rem:varifold}
In the experiments presented in this paper, we will use discrepancy functions based on the varifold pseudo-metrics introduced in \cite{Charon2013}
between certain surfaces associated with the two volumes (e.g., their boundaries). In this case, $\mathscr M$ is the set of all compact subsets $M \subset \mathbb{R}^3$ whose boundary $\partial M$ is a rectifiable surface (we refer to \cite{Simon1983} for the precise definition and properties of rectifiable sets). Then, given $M$ and $M'$ in $\mathscr M$, for $S$ and $S'$ two rectifiable surfaces extracted from $M$ and $M'$  (such as for instance the boundaries of the volumes or some corresponding internal layers) the discrepancy function takes the following form:
\[
\rho(M,M') = \nu(S,S) - 2\nu(S,S') + \nu(S',S')
\]
with
\[
\nu(S,S') = \int_S\int_{S'} \chi\left(\frac{|x-x'|}{\tau}\right) (n(x)^\top n'(x'))^2 \,d\sigma(x)\, d\sigma'(x')
\]
where $\sigma$ and $\sigma'$ are volume measures on $S$ and $S'$, $n$ and $n'$ are unit normal vector fields of $S$ and $S'$ and $\chi$ is some radial kernel function, scaled by a scalar $\tau>0$, which in our experiments is taken to be the Cauchy kernel
\[
\chi(t) = (1 + t^2)^{-2}.
\]
It can be then shown, cf., \cite[Proposition~6]{Charlier2017}, that such discrepancy functions are continuous on $\mathscr M$ with respect to $\|\cdot\|_{1,\infty}$, in the sense of  \cref{def:continuous.rho}. 

As a side note, one could alternatively select $\rho$ as the volume of the symmetric difference between the two sets, i.e., $\rho(M, M')=\text{vol}(M \bigtriangleup M')$ which is continuous on compact sets with respect to $\|\cdot\|_{0,\infty}$ and thus also with respect to $\|\cdot\|_{1,\infty}$, thereby satisfying the assumption of the above theorems. In this case, the shape space $\mathscr{M}$ is composed of all compact subsets of $\mathbb{R}^3$. 
\end{remark}

\begin{remark}
A model based on principles that are similar to ours has been introduced and studied in \cite{bressan2018model} in order to model tissue growth. In \cite{bressan2018model}, the second equation in system \cref{eq:ctrl_problem_system} is replaced by the minimization of an isotropic linear elastic energy under the constraint that $\mathrm{div}\, v = g(u)$, where $g$ is a fixed function and $u$ is the concentration of morphogen, a growth-induced chemical produced by cells. The concentration of morphogen is controlled by the density of cells via a linear elliptic equation, and the density of cells is advected under the motion.
While we consider here more general elasticity models, inducing additional complications in the analysis because these elastic properties must also be advected along the flow, the main difference between the two models results from the regularization $\frac{\omega}{2} \, \|v'\|_V^2$ added in \cref{eq:ctrl_problem_system}. It is this term that guarantees the regularity of the vector field $v$ (see \cref{lemma:minimizer}) and allows us to prove long-time existence of solutions, while \cite{bressan2018model} only obtains local existence results. Such a property is necessary to formulate optimal control problems, which form the second focus of our paper, and could not be addressed in \cite{bressan2018model}.  
\end{remark}


\section{Examples of elastic operators and yank}
\label{sec:elasticity}

In this section, we provide examples of elastic operators and yank that satisfy the conditions in \cref{thm:ctrl_problem,thm:opt_problem}. Denote the space of symmetric bilinear forms on the space of 3-by-3 symmetric matrices by $\Sigma_2(\mathrm{Sym}_3(\mathbb{R}), \mathrm{Sym}_3(\mathbb{R}))$. Given $\varphi \in \Diffid{1}{\mathbb{R}^3}$, an elastic operator $A_\varphi \in \sym(V, V^*)$ takes the following form
\begin{equation}
	\label{eq:elastic_operator}
	(A_\varphi \hspace{1pt} u \mid v)
	=
	\int_{\varphi(M_0)} E_\varphi(\varepsilon_u, \varepsilon_v) \, dx
	=
	\int_{\varphi(M_0)} (E_\varphi(x))(\varepsilon_u(x), \varepsilon_v(x)) \, dx ,
\end{equation}
where $E_\varphi: \varphi(M_0) \rightarrow \Sigma_2(\mathrm{Sym}_3(\mathbb{R}), \mathrm{Sym}_3(\mathbb{R}))$ is a stiffness tensor after the shape is deformed by $\varphi$, and $\varepsilon_u$ and $\varepsilon_v$ are linear strain tensors defined by
\[
	\varepsilon_u = \frac{1}{2} \left( Du + Du^\top \right)
	\ \ \mbox{ and } \ \ 
	\varepsilon_v = \frac{1}{2} \left( Dv + Dv^\top \right) .
\]
We recall the basic assumption in our model that the deformed configuration becomes a new reference for the next infinitesimal shape changes (with material properties advected by the flow). This is reflected in equation \cref{eq:elastic_operator}.

A simple example, assuming that the elastic property of an isotropic elastic material is unaffected by deformation, or persistent, is provided by $E_\varphi(\varepsilon_u, \varepsilon_v) = \lambda \, \mathrm{tr}(\varepsilon_u) \, \mathrm{tr}(\varepsilon_v) + 2\mu \, \mathrm{tr}(\varepsilon_u^\top \varepsilon_v)$, where $\lambda$ and $\mu$ are the Lam\'e parameters. More generally, the following proposition proved in \cref{sec:proofs} provides a sufficient condition on the mapping $\varphi \mapsto E_\varphi$ ensuring that the corresponding $A_\varphi$ satisfies the conditions of \cref{thm:ctrl_problem,thm:opt_problem}.


\begin{proposition}
\label{prop:elastic_operator}
Suppose that $E_\varphi(x)$ is positive definite for all $\varphi \in \Diffid{1}{\mathbb{R}^3}$ and $x \in \varphi(M_0)$. Moreover, suppose that, for each $\gamma > 0$, there exists $\alpha_\gamma > 0$ such that
\begin{equation}
\label{eq:prop_elastic_operator}
    	\int_{M_0} \, |E_\varphi \circ \varphi - E_\psi \circ \psi| \, dx
	\leq
	\alpha_\gamma \, \|\varphi - \psi\|_{1, \infty}^{M_0}
	\ \ \mbox{ for all }
	\varphi, \psi \in \mathfrak{S}_\gamma ,
\end{equation}
where
\[
	\mathfrak{S}_\gamma
	=
	\{
		\varphi \in \Diffid{1}{\mathbb{R}^3} : 
		\|\varphi - id\|_{1, \infty} \leq \gamma
		\mbox{ and }
		\|\varphi^{-1} - id\|_{1, \infty} \leq \gamma .
	\}
\]
Then, for $A_\varphi$ defined as in \cref{eq:elastic_operator}, the mapping $\varphi \mapsto A_\varphi$ satisfies the conditions of \cref{thm:ctrl_problem,thm:opt_problem}.
\end{proposition}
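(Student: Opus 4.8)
The plan is to first fix the $\varphi$-dependent domain of integration by a change of variables, which makes symmetry, positivity and boundedness of $A_\varphi$ transparent, and then to concentrate the effort on the Lipschitz estimate, which is where assumption~\cref{eq:prop_elastic_operator} enters. For $\varphi\in\Diffid{1}{\mR^3}$ the substitution $x=\varphi(y)$, $y\in M_0$, has Jacobian $J\varphi:=\det D\varphi>0$ (it is nonvanishing and tends to $1$ at infinity, hence positive by continuity), so
\[
	(A_\varphi\,u\mid v)=\int_{M_0}(E_\varphi\circ\varphi)(y)\bigl(\varepsilon_u(\varphi(y)),\,\varepsilon_v(\varphi(y))\bigr)\,J\varphi(y)\,dy .
\]
Since $E_\varphi(x)$ is a symmetric bilinear form on $\mathrm{Sym}_3(\mR)$, the integrand is symmetric in $(u,v)$, so $A_\varphi$ is symmetric; since $E_\varphi(x)$ is positive definite, the integrand with $u=v$ is pointwise nonnegative, so $(A_\varphi v\mid v)\ge 0$. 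For boundedness, restrict to a set $\mathfrak S_\gamma$: there $J\varphi\le(1+\gamma)^3$ pointwise and $|\varepsilon_u(x)|\le\|u\|_{1,\infty}\le c_V\|u\|_V$, while, because $\id\in\mathfrak S_\gamma$ and $\|\varphi-\id\|_{1,\infty}^{M_0}\le\gamma$, \cref{eq:prop_elastic_operator} gives $\|E_\varphi\circ\varphi\|_{L^1(M_0)}\le\|E_{\id}\|_{L^1(M_0)}+\alpha_\gamma\gamma=:C_\gamma$ (the integrability of $E_{\id}$ over the compact set $M_0$ being implicit in the standing assumptions on the stiffness tensor). Hence $\|A_\varphi\|_{\mathscr L(V,V^*)}\le C_\gamma c_V^2(1+\gamma)^3$, and since every $\varphi\in\Diffid{1}{\mR^3}$ lies in some $\mathfrak S_\gamma$ (both $\varphi-\id$ and $\varphi^{-1}-\id$ are in $C_0^1$), $A_\varphi\in\sym(V,V^*)$ for all $\varphi$, with norm bounded locally uniformly in $\varphi$.

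For the Lipschitz bound, fix $\gamma>0$ and $\varphi,\psi\in\mathfrak S_\gamma$ and write $(A_\varphi u\mid v)-(A_\psi u\mid v)$ as a telescoping sum obtained by replacing, one factor at a time, $E_\psi\circ\psi$ by $E_\varphi\circ\varphi$, then $\varepsilon_u\circ\psi$ by $\varepsilon_u\circ\varphi$, then $\varepsilon_v\circ\psi$ by $\varepsilon_v\circ\varphi$, and finally $J\psi$ by $J\varphi$. In the term carrying $E_\varphi\circ\varphi-E_\psi\circ\psi$, one pulls out the remaining factors, which are uniformly bounded on $\mathfrak S_\gamma$ and contribute $\le C_\gamma\|u\|_V\|v\|_V$, and estimates what is left directly by \cref{eq:prop_elastic_operator}, obtaining $\le C_\gamma\|\varphi-\psi\|_{1,\infty}^{M_0}\|u\|_V\|v\|_V$; this is the heart of the argument, and the reason \cref{eq:prop_elastic_operator} is phrased in exactly that way. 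In the Jacobian term, $J\varphi-J\psi=\det D\varphi-\det D\psi$ is a polynomial in the entries of $D\varphi,D\psi$ with coefficients bounded on $\mathfrak S_\gamma$, so $|J\varphi(y)-J\psi(y)|\le C_\gamma|D\varphi(y)-D\psi(y)|\le C_\gamma\|\varphi-\psi\|_{1,\infty}^{M_0}$ for $y\in M_0$. In the two strain terms, $\varepsilon_u$ and $\varepsilon_v$ lie in $C_0^1$ and are globally Lipschitz with constants $\le C\|u\|_V$ and $\le C\|v\|_V$, so $|\varepsilon_u(\varphi(y))-\varepsilon_u(\psi(y))|\le C\|u\|_V\,|\varphi(y)-\psi(y)|$ on $M_0$, after which one uses the control of $\varphi-\psi$ on $M_0$ afforded by the seminorm and bounds the remaining factors on $\mathfrak S_\gamma$ as above. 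Adding the four contributions gives $\|A_\varphi-A_\psi\|_{\mathscr L(V,V^*)}\le C_\gamma\|\varphi-\psi\|_{1,\infty}^{M_0}$, which is exactly the hypothesis required in part~(ii) of \cref{thm:ctrl_problem,thm:opt_problem}. Since $\|\cdot\|_{1,\infty}^{M_0}\le\|\cdot\|_{1,\infty}$ and, by continuity of $\varphi\mapsto\varphi^{-1}$ on $\Diffid{1}{\mR^3}$, every $\varphi_0$ has a $\|\cdot\|_{1,\infty}$-neighborhood contained in some $\mathfrak S_\gamma$, the same estimate also yields the local Lipschitz continuity with respect to $\|\cdot\|_{1,\infty}$ needed in part~(i).

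The main obstacle is the Lipschitz estimate, and within it the discipline of keeping every piece estimated against the weak seminorm $\|\cdot\|_{1,\infty}^{M_0}$ rather than a stronger norm: the stiffness factor $E_\varphi\circ\varphi$ is the only one for which this is non-trivial, and handling it is precisely the content of \cref{eq:prop_elastic_operator}, while the factors $J\varphi$ and $\varepsilon_u\circ\varphi,\varepsilon_v\circ\varphi$ are Lipschitz in $\varphi$ by elementary means once $\varphi$ is confined to $\mathfrak S_\gamma$. The remaining care is bookkeeping — checking that all auxiliary constants can be taken uniform over $\mathfrak S_\gamma$, that $\mathfrak S_\gamma$ exhausts $\Diffid{1}{\mR^3}$, and that it contains $\|\cdot\|_{1,\infty}$-neighborhoods of its points — so that the hypotheses of parts (i) and (ii) of both theorems are met.
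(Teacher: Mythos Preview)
Your proposal is correct and follows essentially the same approach as the paper: change variables to pull the integral back to $M_0$, read off symmetry and positivity, bound $\|E_\varphi\circ\varphi\|_{L^1(M_0)}$ via \cref{eq:prop_elastic_operator} with $\psi=\id$, and prove the Lipschitz estimate by the same four-term telescoping (stiffness, two strain factors, Jacobian), handling the determinant by its polynomial structure and the strains by the $C^2$ embedding of $V$. Your write-up is in fact slightly more complete than the paper's, since you make explicit why the seminorm estimate on $\mathfrak S_\gamma$ also delivers the local $\|\cdot\|_{1,\infty}$-Lipschitz condition required in part~(i).
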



\begin{example}
\label{ex:elastic_operator_constant}
According to \cref{prop:elastic_operator}, the simplest example of the elastic operator is when the stiffness tensor $E_\varphi$ is constant and positive definite since the left-hand side of \cref{eq:prop_elastic_operator} is zero. Thus our example of persistent isotropic elastic material, i.e., $E_\varphi(\varepsilon_u,\varepsilon_v)=\lambda \, \mathrm{tr}(\varepsilon_u) \, \mathrm{tr}(\varepsilon_v) + 2\mu \, \mathrm{tr}(\varepsilon_u^\top \varepsilon_v)$, is a valid choice. More generally, suppose that $\varLambda: M_0 \rightarrow \Sigma_2(\mathrm{Sym}_3(\mathbb{R}), \mathrm{Sym}_3(\mathbb{R}))$ and that $\varLambda(x)$ is positive definite for all $x \in M_0$, then $E_\varphi \colonequals \varLambda \circ \varphi^{-1}$ also satisfies the conditions in \cref{prop:elastic_operator}. Note that this form of $E_\varphi$ preserves the elastic properties of the material from $x$ to $\varphi(x)$.
\end{example}


\begin{example}
Even more generally, let $F_\varphi: \varphi(M_0) \rightarrow GL(3, \mathbb{R})$ be a deformation-dependent frame field, where $GL(3, \mathbb{R})$ denotes the general linear group. We consider stiffness tensors of the form $\widetilde E_\varphi(\varepsilon_u, \varepsilon_v) \colonequals (\varLambda \circ \varphi^{-1})(F_\varphi^\top \varepsilon_u F_\varphi, F_\varphi^\top \varepsilon_v F_\varphi)$, where $\varLambda$ is the same as in \cref{ex:elastic_operator_constant}.

The following proposition, whose proof is elementary and left to the reader, provides sufficient conditions on $F$ ensuring that $\widetilde E$ satisfies the conditions in \cref{prop:elastic_operator}. Note that in this case, the elastic properties at $\varphi(x)$ are modified from the ones at $x$ through a change of the frame coordinates $F_\varphi(\varphi(x))$.
\begin{proposition}
\label{prop:elasticity_frame_change}
Suppose that $E$ satisfies the conditions in \cref{prop:elastic_operator}. Let $F_\varphi: \mathbb{R}^3 \rightarrow GL(3, \mathbb{R})$ be essentially bounded for each $\varphi \in \Diffid{1}{\mathbb{R}^3}$. If there exists $\beta_\gamma > 0$ such that
\[
	\|F_\varphi \circ \varphi - F_\psi \circ \psi\|_\infty
	\leq
	\beta_\gamma \, \|\varphi - \psi\|_{1, \infty}^{M_0}
	\ \ \mbox{ for all }
	\varphi, \psi \in \mathfrak{S}_\gamma ,
\]
then $\widetilde E$ defined by $\widetilde E_\varphi(\varepsilon_u, \varepsilon_v) = E_\varphi(F_\varphi^\top \varepsilon_u F_\varphi, F_\varphi^\top \varepsilon_v F_\varphi)$ also satisfies the conditions in \cref{prop:elastic_operator}. 
\end{proposition}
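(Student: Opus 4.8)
The plan is to verify directly that $\widetilde E$ inherits the two properties making up the hypotheses of \cref{prop:elastic_operator}, namely the pointwise positive-definiteness of $\widetilde E_\varphi(x)$ and the integrated Lipschitz estimate \cref{eq:prop_elastic_operator}. The convenient device is, for $G \in GL(3, \mathbb{R})$, the linear endomorphism $T_G$ of $\Sigma_2(\mathrm{Sym}_3(\mathbb{R}), \mathrm{Sym}_3(\mathbb{R}))$ given by $(T_G B)(S, S') = B(G^\top S G, G^\top S' G)$; since $G^\top S G$ is symmetric whenever $S$ is and $S \mapsto G^\top S G$ is a linear automorphism of $\mathrm{Sym}_3(\mathbb{R})$, this is well-defined, and by construction $\widetilde E_\varphi(y) = T_{F_\varphi(y)}(E_\varphi(y))$, so that $(\widetilde E_\varphi \circ \varphi)(x) = T_{(F_\varphi \circ \varphi)(x)}\big((E_\varphi \circ \varphi)(x)\big)$ for $x \in M_0$. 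Positive-definiteness is then immediate: for $0 \neq S \in \mathrm{Sym}_3(\mathbb{R})$, invertibility of $F_\varphi(x)$ makes $F_\varphi(x)^\top S F_\varphi(x)$ a nonzero symmetric matrix, so $(\widetilde E_\varphi(x))(S, S) > 0$ because $E_\varphi(x)$ is positive definite.

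For the Lipschitz estimate I would first record two elementary bounds on $T_G$: from $|G^\top S G| \leq |G|^2 |S|$ one gets $|T_G B| \leq |G|^4 |B|$; and from the identity $G_1^\top S G_1 - G_2^\top S G_2 = G_1^\top S (G_1 - G_2) + (G_1 - G_2)^\top S G_2$, telescoping in the two arguments of $B$, one gets $|T_{G_1} B - T_{G_2} B| \leq C \, (|G_1| + |G_2|)^3 \, |G_1 - G_2| \, |B|$ for a universal constant $C$. Next I would extract uniform bounds over $\mathfrak{S}_\gamma$: since $\id \in \mathfrak{S}_\gamma$ and $\|\varphi - \id\|_{1, \infty}^{M_0} \leq \|\varphi - \id\|_{1, \infty} \leq \gamma$ for $\varphi \in \mathfrak{S}_\gamma$, taking $\psi = \id$ in the frame hypothesis and in \cref{eq:prop_elastic_operator} gives, uniformly in $\varphi \in \mathfrak{S}_\gamma$,
\[
	\|F_\varphi \circ \varphi\|_\infty \leq \|F_{\id}\|_\infty + \beta_\gamma \gamma =: b_\gamma,
	\qquad
	\int_{M_0} |E_\varphi \circ \varphi| \, dx \leq \int_{M_0} |E_{\id}| \, dx + \alpha_\gamma \gamma =: m_\gamma,
\]
the base quantities $\|F_{\id}\|_\infty$ and $\int_{M_0} |E_{\id}| \, dx$ being finite by the standing assumptions ($F_{\id}$ essentially bounded, and $A_{\id} \in \sym(V, V^*)$ well-defined).

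I would then conclude by splitting, for $\varphi, \psi \in \mathfrak{S}_\gamma$ and $x \in M_0$, writing $G_\varphi = (F_\varphi \circ \varphi)(x)$, $B_\varphi = (E_\varphi \circ \varphi)(x)$ and likewise for $\psi$,
\[
	(\widetilde E_\varphi \circ \varphi)(x) - (\widetilde E_\psi \circ \psi)(x)
	= T_{G_\varphi}(B_\varphi - B_\psi) + (T_{G_\varphi} - T_{G_\psi})(B_\psi) .
\]
The two algebraic bounds together with $|G_\varphi|, |G_\psi| \leq b_\gamma$ give the pointwise estimate $|(\widetilde E_\varphi \circ \varphi)(x) - (\widetilde E_\psi \circ \psi)(x)| \leq b_\gamma^4 |B_\varphi - B_\psi| + 8 C b_\gamma^3 |G_\varphi - G_\psi| \, |B_\psi|$. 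Integrating over $M_0$: the first term is at most $b_\gamma^4 \alpha_\gamma \|\varphi - \psi\|_{1, \infty}^{M_0}$ by \cref{eq:prop_elastic_operator}, and the second, by H\"older's inequality with exponents $\infty$ and $1$ followed by the frame hypothesis, is at most $8 C b_\gamma^3 \, \|F_\varphi \circ \varphi - F_\psi \circ \psi\|_\infty \int_{M_0} |E_\psi \circ \psi| \, dx \leq 8 C b_\gamma^3 m_\gamma \beta_\gamma \, \|\varphi - \psi\|_{1, \infty}^{M_0}$. Hence $\widetilde E$ satisfies \cref{eq:prop_elastic_operator} with $\widetilde\alpha_\gamma = b_\gamma^4 \alpha_\gamma + 8 C b_\gamma^3 m_\gamma \beta_\gamma$, and \cref{prop:elasticity_frame_change} follows.

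The step I expect to be the main obstacle — in fact the only place the argument is not purely formal — is this last integration: \cref{eq:prop_elastic_operator} controls $E_\varphi \circ \varphi$ only in $L^1(M_0)$, not in $L^\infty(M_0)$, so in the cross term $(T_{G_\varphi} - T_{G_\psi})(B_\psi)$ one must take care to assign the $L^\infty$ estimate to the frame difference $G_\varphi - G_\psi$ and the $L^1$ estimate to $B_\psi$; this $L^\infty$/$L^1$ pairing is precisely why the frame hypothesis is stated in the uniform norm rather than an integrated one. Everything else reduces to the elementary multilinear-algebra bounds above and to bookkeeping of the $\gamma$-dependent constants.
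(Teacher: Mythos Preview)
Your proof is correct. The paper does not actually give a proof of \cref{prop:elasticity_frame_change} --- it states that the proof ``is elementary and left to the reader'' --- and the argument you wrote is precisely the intended elementary verification: positive-definiteness via invertibility of $F_\varphi$, the splitting $T_{G_\varphi}(B_\varphi - B_\psi) + (T_{G_\varphi} - T_{G_\psi})(B_\psi)$, and the $L^\infty/L^1$ pairing of the frame difference against $E_\psi\circ\psi$ (the same device the paper uses in the proof of \cref{prop:elastic_operator} itself, where $\int_{M_0}|E_\psi\circ\psi|\,dx \leq \int_{M_0}|E_{\id}|\,dx + \alpha_\gamma\gamma$ appears explicitly).
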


Let $w_1, w_2$, and $w_3$ be linearly independent vector fields on $M_0$. Examples of frame fields $F_\varphi$ that satisfy the previous assumptions include
\[
	F_\varphi
	=
	\left[
		\begin{array}{ccc}
			\displaystyle \frac{D\varphi \, w_1}{|D\varphi \, w_1|}, & 
			\displaystyle \frac{D\varphi \, w_2}{|D\varphi \, w_2|}, & 
			\displaystyle \frac{D\varphi \, w_3}{|D\varphi \, w_3|}
		\end{array}
	\right] \circ \varphi^{-1}
\]
and
\begin{align}
	\label{eq:frame_field}
	F_\varphi
	=
	\left[
		\begin{array}{ccc}
			\displaystyle \frac{D\varphi \, w_1}{|D\varphi \, w_1|}, &
			\displaystyle \frac{( D\varphi \, w_1 \times D\varphi \, w_2) \times D\varphi \, w_1}
			                   {|( D\varphi \, w_1 \times D\varphi \, w_2) \times D\varphi \, w_1|}, &
			\displaystyle \frac{D\varphi \, w_3}{|D\varphi \, w_3|}
		\end{array}
	\right] \circ \varphi^{-1} .
\end{align}
Note that the first two vectors of the latter $F_\varphi$ are orthonormal for all deformation $\varphi$.
\end{example}

\begin{example}
\label{ex:layers}
An elastic operator inspired by the laminar organization of the cerebral cortex using the frame field \cref{eq:frame_field} was introduced in \cite{Hsieh2019}; we describe it here for completeness. Suppose that a compact subset $M_0 \subset \mathbb{R}^3$ has two surfaces $\mathcal{M}_{\mathrm{bottom}}$ and $\mathcal{M}_{\mathrm{top}}$ as bottom and top layers. Moreover, suppose that we are given a diffeomorphism $\varPhi: [0, 1] \times \mathcal{M_{\mathrm{bottom}}} \rightarrow M_0$ such that $\varPhi(0, \mathcal{M_{\mathrm{bottom}}}) = \mathcal{M_{\mathrm{bottom}}}$ and $\varPhi(1, \mathcal{M_{\mathrm{bottom}}}) = \mathcal{M_{\mathrm{top}}}$. Note that $\varPhi(\nu, \mathcal{M_{\mathrm{bottom}}}) \equalscolon \mathcal{M}_\nu$ is a surface for each $\nu \in [0, 1]$. We refer to $\varPhi$ as a layered structure of $M_0$. Such a structure induces a transversal vector field $S \colonequals \partial_\nu \varPhi$ (\cref{fig:layered_structure}). Let $T_1$ and $T_2$ be linearly independent vector fields on $M_0$ such that $\restrict{T_1}{\mathcal{M}_\nu}$ and $\restrict{T_2}{\mathcal{M}_\nu}$ are tangent to $\mathcal{M}_\nu$. Then, $T_1$, $T_2$, and $S$ are linearly independent vector fields on $M_0$. If we let $w_1 = T_1$, $w_2 = T_2$, and $w_3 = S$ in \cref{eq:frame_field} and define
\begin{align}
	\label{eq:layered_stiffness}
	\begin{split}
	\bar \varLambda(\varepsilon, \varepsilon)
	&=
	\lambda_{\mathrm{tan}}(\varepsilon_{11} + \varepsilon_{22})^2
	+
	\mu_{\mathrm{tan}}(\varepsilon_{11}^2 + \varepsilon_{22}^2 + 2 \varepsilon_{12}^2)
	\\
	&\hspace{13pt}
	\phantom{}
	+
	\mu_{\mathrm{tsv}} \, \varepsilon_{33}^2
	+
	\mu_{\mathrm{ang}}(2 \varepsilon_{13}^2 + 2 \varepsilon_{23}^2) ,
	\end{split}
\end{align}
where $\varepsilon_{ij}$ denotes the $ij$th element of $\varepsilon \in \mathrm{Sym}_3(\mathbb{R})$ and $\lambda_{\mathrm{tan}}$, $\mu_{\mathrm{tan}}$, $\mu_{\mathrm{tsv}}$, and $\mu_\mathrm{ang}$ are constants, then the corresponding elastic operator
\begin{align}
	\label{eq:layered_elastic_operator}
	(A_\varphi \hspace{1pt} u \mid v)
	=
	\int_{\varphi(M_0)} \bar \varLambda(F_\varphi^\top \varepsilon_u F_\varphi, F_\varphi^\top \varepsilon_v F_\varphi) \, dx
\end{align}
is well-defined \cite{Hsieh2019} and a valid choice by \cref{ex:elastic_operator_constant,prop:elasticity_frame_change}. Note that the layered structure on a deformed shape $\varphi(M_0)$ becomes $(\nu, x) \mapsto \varphi \circ \varPhi(\nu, \varphi^{-1}(x))$. The elastic material corresponding to this elastic operator has the property that it is isotropic along the directions tangent to the layers. \Cref{fig:layered_tangential_deformation,fig:layered_transversal_deformation} illustrate deformations $\varphi_j(T, M_0)$ according to system \cref{eq:ctrl_problem_system} under different elastic parameters $\mu_{\mathrm{tan}}$ and $\mu_{\mathrm{tsv}}$ when we apply the same yank $j$ to a layered shape $M_0$ (\cref{fig:layered_shape}).

\begin{figure}[hbt]
	\centering
	\includegraphics[trim = 30 50 30 20, clip, width = 0.3\textwidth]{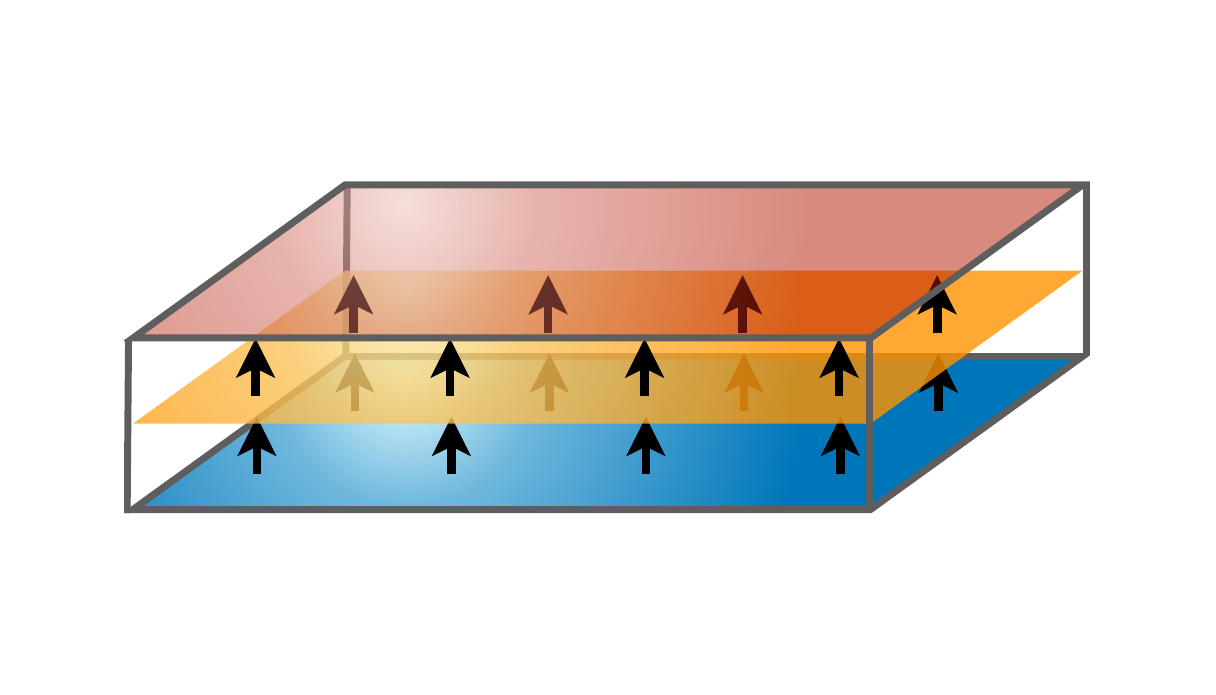}
	\hfill
	\includegraphics[trim = 30 50 30 20, clip, width = 0.3\textwidth]{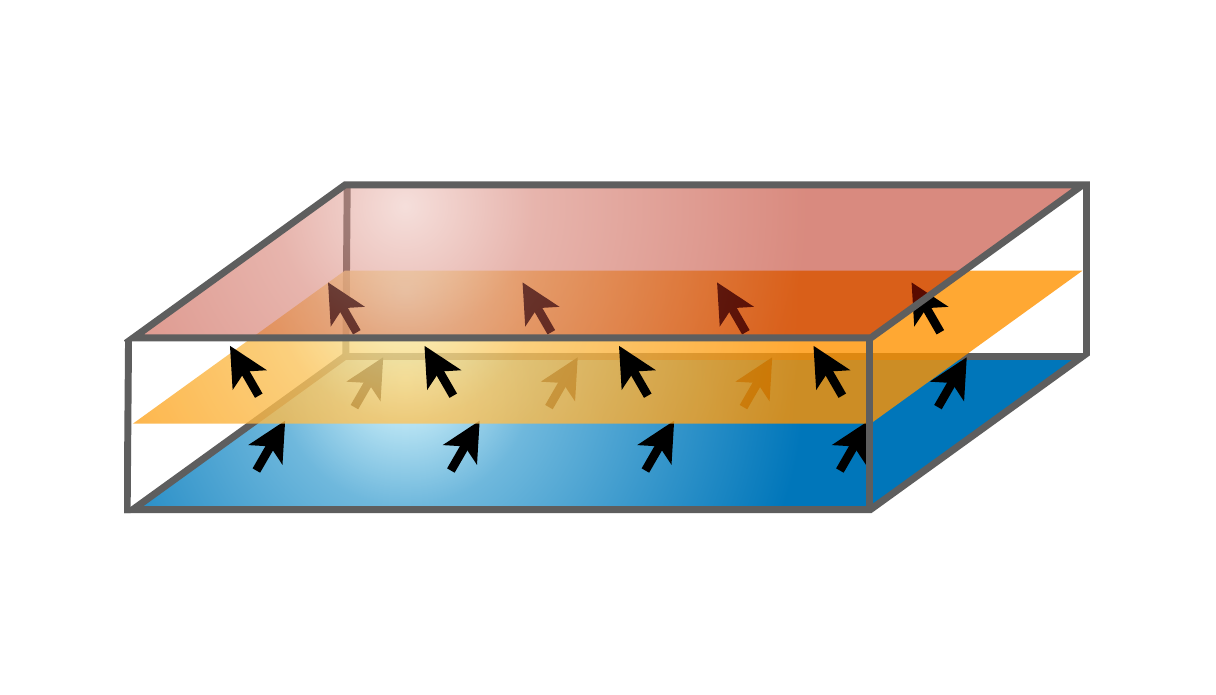}
	\hfill
	\includegraphics[trim = 30 50 30 20, clip, width = 0.3\textwidth]{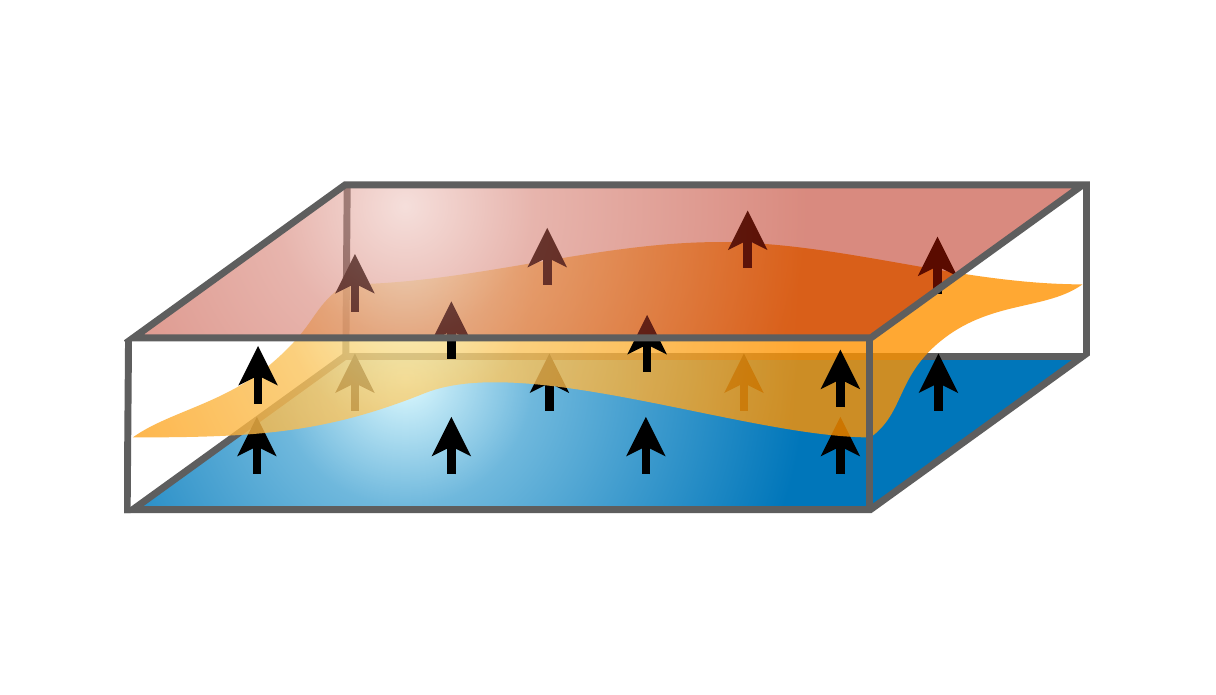}
	\caption{Different layered structures of the same rectangular region. Shown in the figures are top layer, one middle layer, bottom layer, and the transversal vector field.}
	\label{fig:layered_structure}
\end{figure}

\begin{figure}[hbt]
	\centering
	\begin{subfigure}[t]{0.48\textwidth}
		\centering
		\includegraphics[trim = 70 100 50 90, clip, width = 0.95\textwidth]{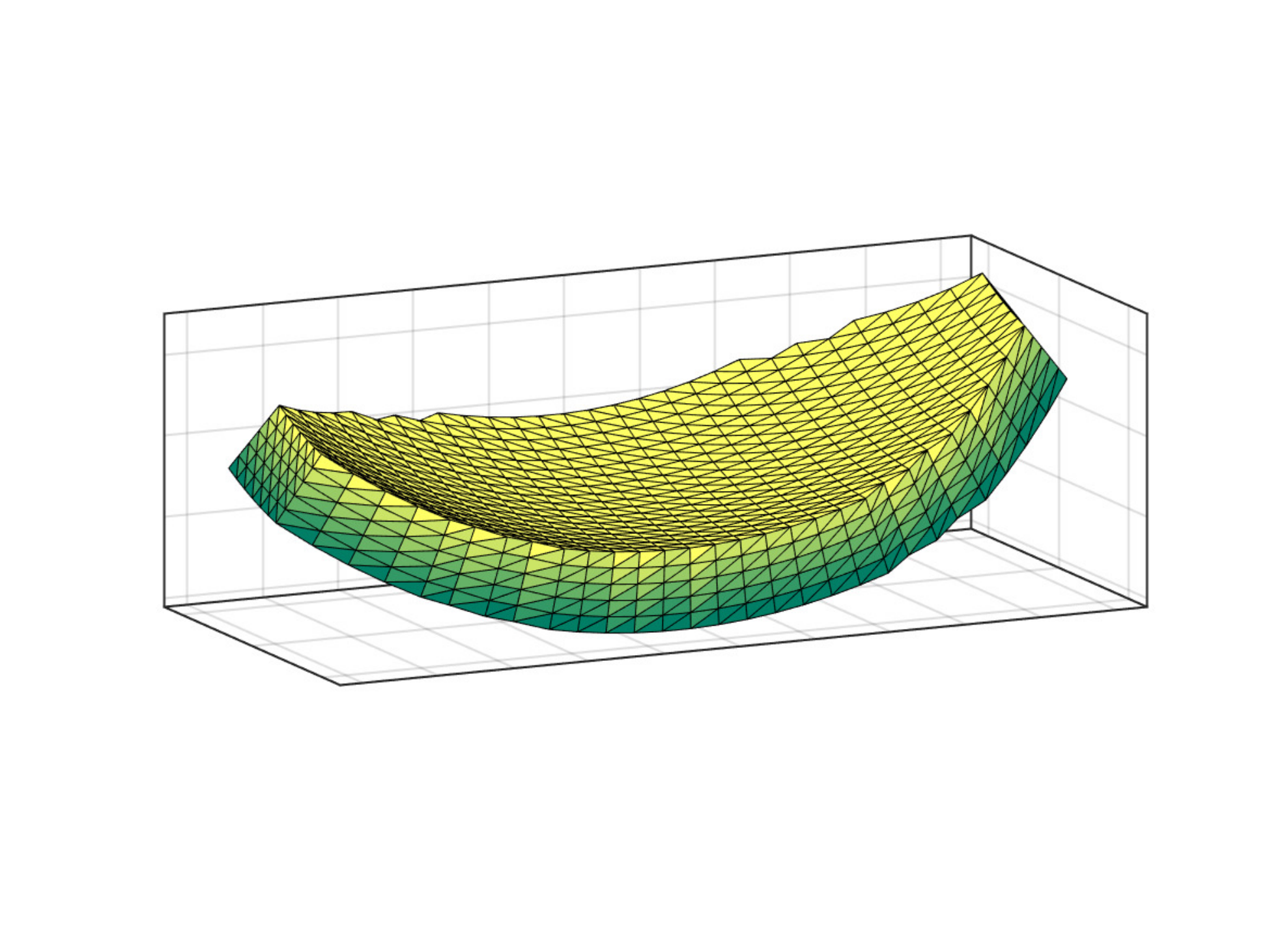}
		\caption{A simulated layered shape.}
		\label{fig:layered_shape}
	\end{subfigure}
	\\[20pt]
	\begin{subfigure}[t]{0.48\textwidth}
		\centering
		\includegraphics[trim = 70 100 50 90, clip, width = 0.95\textwidth]{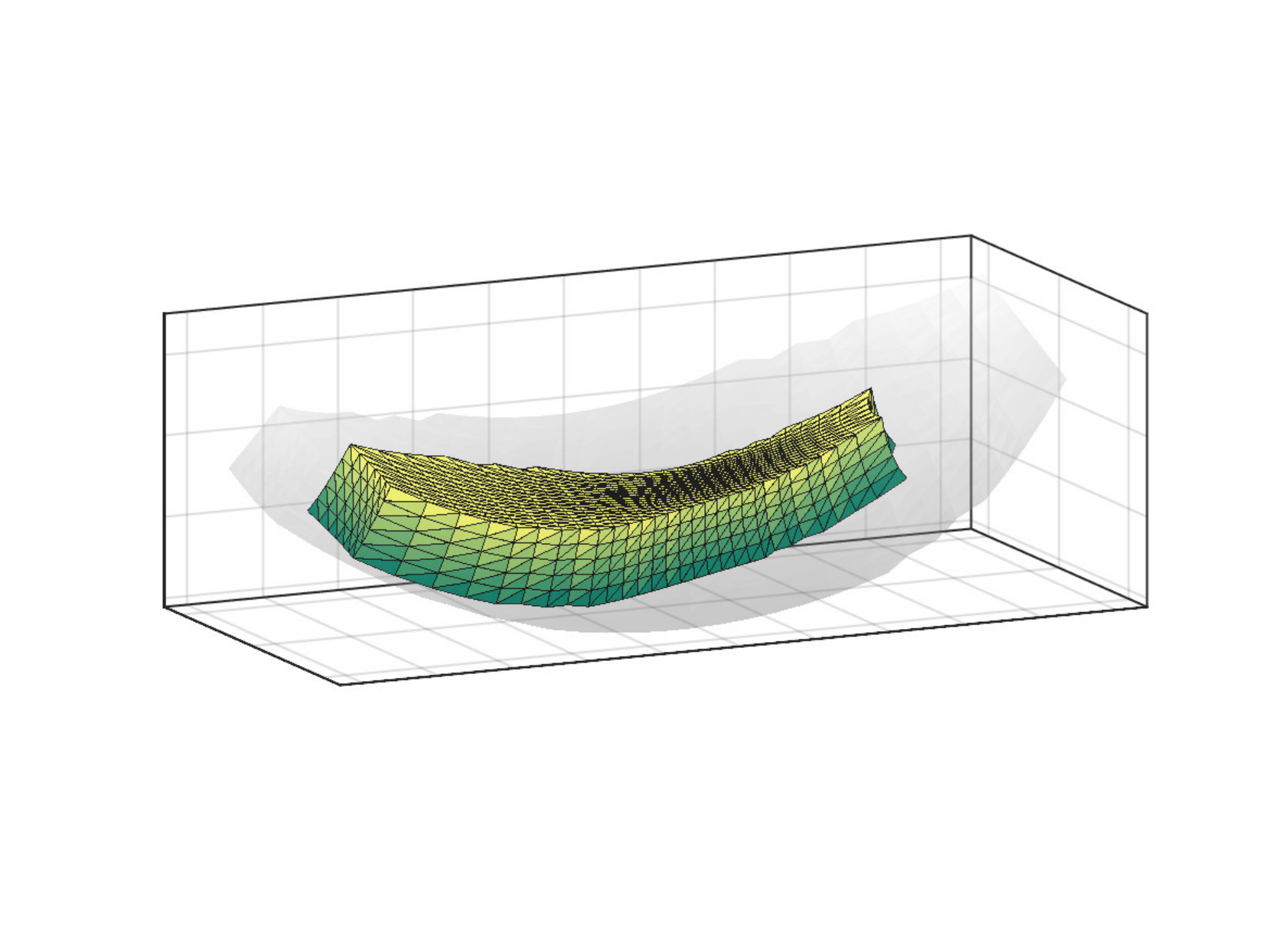}
		\caption{Tangential deformation.}
		\label{fig:layered_tangential_deformation}
	\end{subfigure}
	\begin{subfigure}[t]{0.48\textwidth}
		\centering
		\includegraphics[trim = 70 100 50 90, clip, width = 0.95\textwidth]{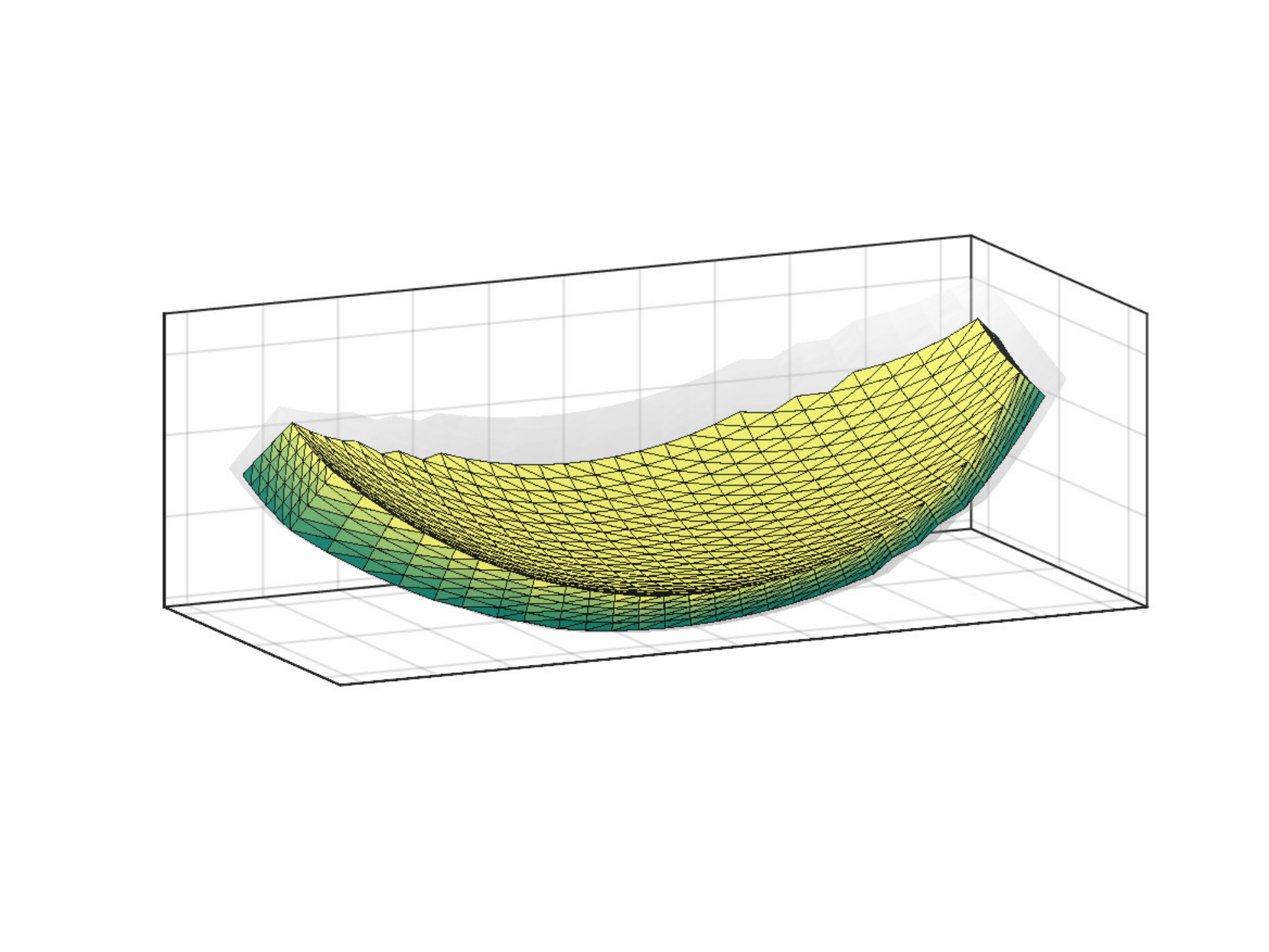}
		\caption{Transversal deformation.}
		\label{fig:layered_transversal_deformation}
	\end{subfigure}
	\caption{Responses to the {same} yank under different layered elastic parameters. In (b), $\mu_{\mathrm{tan}} = 0.02 \, \mu_{\mathrm{tsv}}$. In (c),  $\mu_{\mathrm{tsv}} = 0.02 \, \mu_{\mathrm{tan}}$.}
\end{figure}
\end{example}

\begin{example}
\label{ex:force}
Now we provide an example of yank which has a density as the gradient of a transported potential. Let $\Theta \subset \mathbb{R}^m$ be a compact set and let $g: \Theta \rightarrow L^\infty(\mathbb{R}^3, \mathbb{R})$ defined by $\theta \mapsto g_\theta$. We interpret $g_\theta$ as a parametrized potential. We assume that there exists $G_\Theta > 0$ such that 
$\|g_\theta\|_\infty \leq G_\Theta$ for all $\theta\in \Theta$
and $g_{\theta_n}\!(x) \rightarrow g_\theta(x)$ for all $x \in \mathbb{R}^3$ when $\theta_n \rightarrow \theta$. For technical reasons, let $\varOmega$ be a fixed bounded subset of $\mathbb{R}^3$ and let $\chi: \mathbb{R}^3 \rightarrow [0, 1]$ be a $C^\infty$ cutoff function of compact support such that $\restrict{\chi}{\varOmega} \equiv 1$. Under this setting, the yank $j(\varphi, \theta)$ defined by
\[
	(j(\varphi, \theta) \mid v) = -\int_{\varphi(M_0)} \chi \ g_\theta \circ \varphi^{-1} \, \mathrm{div}(v) \, dx
\]
satisfies the conditions in \hyperref[thm:opt_problem_min]{Theorem}~\ref{thm:opt_problem_min}. Note that if $\varphi(M_0) \subset \varOmega$ and $g_\theta$ is differentiable with support in the interior of $M_0$, then $(j(\varphi, \theta) \mid v) = \int_{\varphi(M_0)} \nabla(g_\theta \circ \varphi^{-1})^\top v \ dx$. In this case, it follows that $j(\varphi, \theta) = \nabla(g_\theta \circ \varphi^{-1}) \, \mathbbm{1}_{\varphi(M_0)} \, dx$, which motivates the above formulation.

We check that the conditions on $j$ in \hyperref[thm:opt_problem_min]{Theorem}~\ref{thm:opt_problem_min} are satisfied. Since
\[
	|(j(\varphi, \theta) \mid v)|
	\leq
	\|g_\theta\|_\infty \, \|v\|_{1, \infty} \, \|\chi\|_{L^1}
	\leq
	G_\Theta \, 
	c_V \|v\|_V \, \|\chi\|_{L^1} ,
\]
we see that $j(\varphi, \theta) \in V^*$ for all $\varphi \in \Diffid{1}{\mathbb{R}^3}$ and $\theta \in \Theta$, and $\|j(\varphi, \theta)\|_{V^*} \leq G_\Theta \, c_V \|\chi\|_{L^1} \equalscolon J_\Theta$.
For a convergent sequence $\theta_n \rightarrow \theta$ in $\Theta$, the assumption $g_{\theta_n}\!(x) \rightarrow g_{\theta}(x)$ for all $x \in \mathbb{R}^3$ and the dominated convergence theorem imply $(j(\varphi, \theta_n) \mid v) \rightarrow (j(\varphi, \theta) \mid v)$ for all $\varphi \in \Diffid{1}{\mathbb{R}^3}$ and $v \in V$. It remains to check that 
$\{j(\cdot, \theta): |\theta| \in \Theta\}$ 
is equi-Lipschitz with respect to the seminorm $\|\cdot\|_{1, \infty}^{M_0}$ on $\mathfrak{S}_\gamma$. Note that $A \mapsto \det A$ is a polynomial of degree $3$ in elements of $A \in \mathbb{R}^{3 \times 3}$. By the mean value theorem, there exists a constant $C > 0$ such that 
\begin{equation}
\label{eq:ex4.det}
|\det A - \det B| \leq C \, (|A| + |B|)^2 \, |A - B| 
\end{equation}
for all $A, B \in \mathbb{R}^{3 \times 3}$. It follows that, for all $\varphi, \psi\in \mathfrak S_\gamma$,
\begin{align*}
	&\hspace{15pt}
	|(j(\varphi, \theta) \mid v) - (j(\psi, \theta) \mid v)|
	\\
	&\leq
	\int_{M_0}
	\left|
		\vphantom{\sum}
		g_\theta \ (\chi \circ \varphi) \ (\mathrm{div}(v) \circ \varphi) \, |\det D\varphi|
		-
		g_\theta \ (\chi \circ \psi) \ (\mathrm{div}(v) \circ \psi) \, |\det D\psi|
	\right|
	dx
	\\
	&\leq
	\|g_\theta\|_\infty
	\left(
		\vphantom{\sum}
		\|\nabla \chi\|_\infty \, \|v\|_{1, \infty} \, \|\det D\varphi\|_\infty
		+
		 \|v\|_{2, \infty} \, \|\det D\varphi\|_\infty
	\right.
	\\
	&\hspace{45pt}
	\left.
		\vphantom{\sum}
		\phantom{}
		+
		 \|v\|_{1, \infty} \ C \, (\|D\varphi\|_\infty + \|D\psi\|_\infty)^2
	\right)
	\|\varphi - \psi\|_{1, \infty}^{M_0} \,
	\mathrm{vol}(M_0)
	\\
	&\leq
	G_\Theta \, C_\gamma \, \|v\|_V \,
	\|\varphi - \psi\|_{1, \infty}^{M_0} ,
\end{align*}
where we have made a change of variables to obtain the first inequality, split the integrand into several terms, then used \cref{eq:ex4.det} in the second inequality, and  the assumptions $\|g_\theta\|_\infty \leq G_\Theta$ and $\varphi, \psi\in \mathfrak S_\gamma$ in the last inequality. 

\end{example}


\section{Experiments}
\label{sec:experiments}

We performed experiments on simulated and real data. 
We used 2D simulated data to compare retrieved solutions with known solutions. In all experiments, we assume that shapes have a layered structure described in  \cref{ex:layers} and illustrated in \cref{fig:layered_structure}. The discrepancy function $\rho(\cdot, \cdot)$ is defined based on the varifold pseudo-metrics of \cite{Charon2013} (cf., also \cref{rem:varifold}), and is used to register certain layers of $M_0$ and $M_\targ$. In addition, to prevent applied forces to only induce rigid motions on the generated shapes, our simulations penalize the motion of the bottom layer. This is achieved by adding a penalty to the operator $A_\varphi$, replacing the second equation in \cref{eq:ctrl_problem_system} by
\begin{equation}
\label{eq:bottom.penalty}
	v
	=
	\underset{v' \,\in\, V}{\arg\min} \ 
	\frac{\reg}{2} \, \|v'\|_V^2
	+
	\frac{1}{2} \, (A_\varphi \, v' \mid v')
	-
	(j \mid v')
	+
	\frac{\beta}{2} \int_{\varphi(\mathcal{M}_{\mathrm{bottom}})} (v'^\top n)^2 \, d\sigma ,
\end{equation}
where $n$ is a unit normal vector field to $\varphi(\mathcal{M}_{\mathrm{bottom}})$. Note that we can define
\[
	(\widetilde A_\varphi \hspace{1pt} u \mid w)
	=
	(A_\varphi \hspace{1pt} u \mid w)
	+
	\beta \int_{\varphi(\mathcal{M}_{\mathrm{bottom}})} (u^\top n)(w^\top n) \, d\sigma
\]
and apply our results to $\widetilde A$. Indeed, the added term satisfies the assumption of \cref{thm:ctrl_problem,thm:opt_problem} (this will be justified in \cref{sec:proofs} at the end of the proof of \cref{prop:elastic_operator}).

All computations are implemented in CUDA and run on a computer equipped with GPU NVIDIA GeForce RTX 2080 Ti.

\subsection{2D simulations}
We take $V$ to be the RKHS associated to a Mat\'ern kernel of order 3 with width $\sigma = 0.2$ in our 2D simulations (see \cref{sec:notation}). For the varifold pseudo-metric, we use a Cauchy kernel with width 0.3 for the spatial kernel and a Binet kernel for the Grassmannian kernel (i.e., $\rho$ is as described in \cref{rem:varifold} with $\tau = 0.3$). We fix the end time $T = 1$.

\subsubsection{Free yank problem}
\Cref{fig:mixSin_template} shows a simulated layered shape with the layered structure $\varPhi: [0, 1] \times [0, 3] \rightarrow \mathbb{R}^2$ given by
\[
	\varPhi(\nu, x)
	=
	\frac{1}{20} \, \nu \left( 20 + \sin(6 x) + \frac{1}{2} \sin(10 x) + \sin(14 x) + \frac{3}{10} \sin(18 x) \right) .
\]
Denote the discretized triangles by $\{\mathcal{T}_k\}_{k = 1}^K$. We approximate $j \in V^*$ by a simple function $j = \sum_{k = 1}^K j_k \, \mathrm{area}(\mathcal{T}_k) \, \mathbbm{1}_{\mathcal{T}_k}$, where $\mathbbm{1}_{\mathcal{T}_k}$ is the indicator function on $\mathcal{T}_k$. For the purpose of illustration, we generated a deformed shape (\cref{fig:mixSin_target}) using a yank which is supported in three spatial regions, two on the top layer and one on the middle layer (\cref{fig:mixSin_yank}). The vectors $j_k$ are mapped on the vertices for visualization. 
Note that the support of the yank is simply advected by the deformation. We used the persistent isotropic elastic operator in this case, that is, $E_\varphi(\varepsilon_u, \varepsilon_v) = \lambda \, \mathrm{tr}(\varepsilon_u) \, \mathrm{tr}(\varepsilon_v) + 2\mu \, \mathrm{tr}(\varepsilon_u^\top \varepsilon_v)$, with $\lambda = 0$ and $\mu = 0.5$. Since we assume that the deformed shape is isotropic at all time, here the layered structure is actually irrelevant to the definition of the elastic operator. Using layers extracted from the deformed shape as targets, we then searched a minimizer of our free yank problem using limited-memory BFGS.

\begin{figure}
	\centering
	\begin{subfigure}{0.48\textwidth}
		\centering
		\includegraphics[width = 0.9\textwidth]{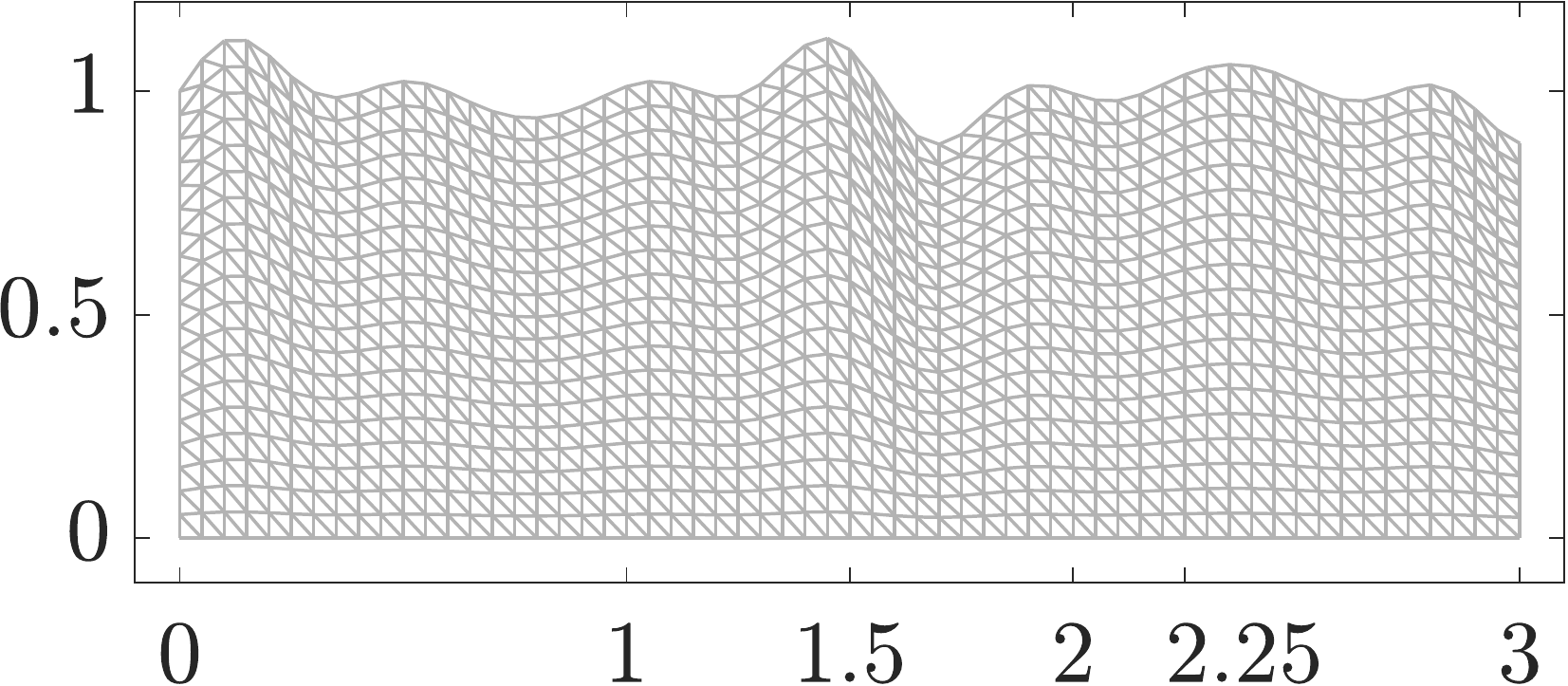}
		\caption{Template.}
		\label{fig:mixSin_template}
	\end{subfigure}
	\begin{subfigure}{0.48\textwidth}
		\centering
		\includegraphics[width = 0.9\textwidth]{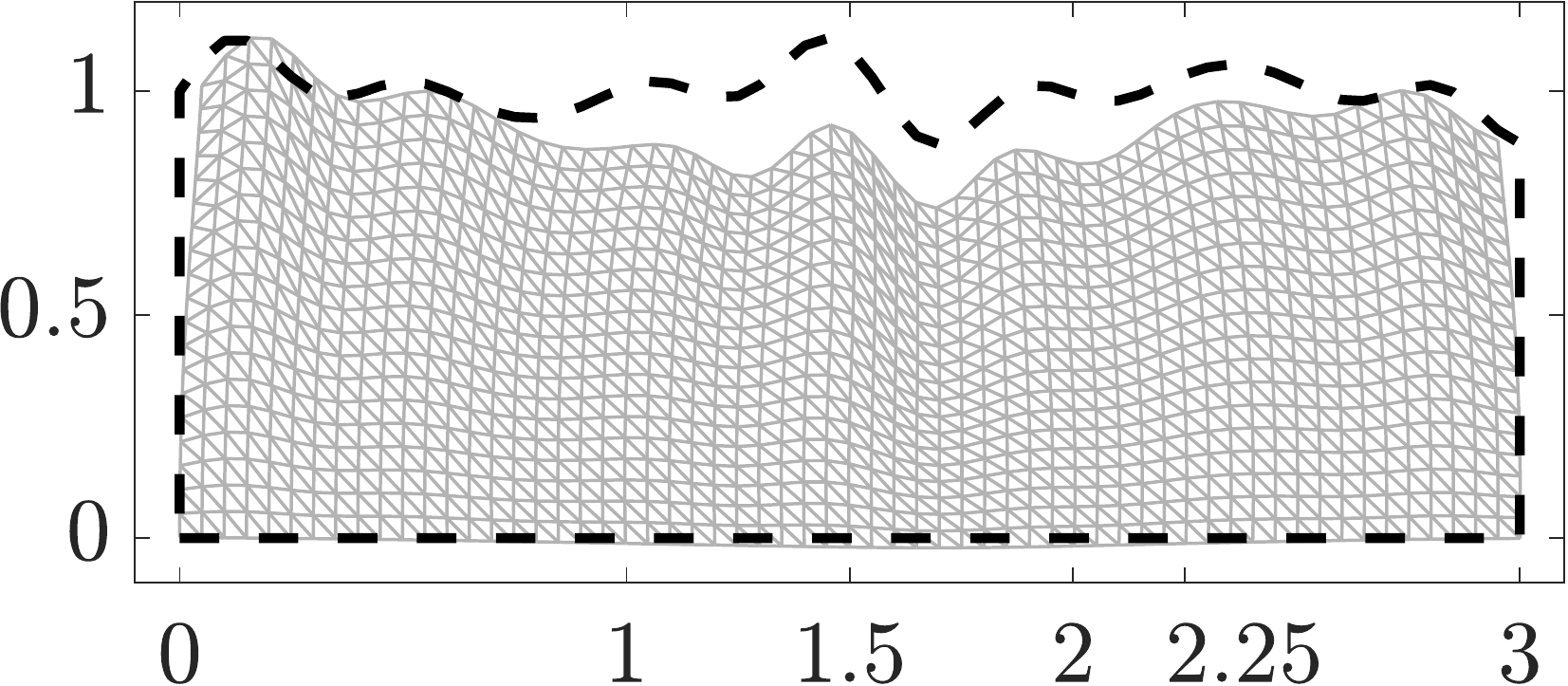}
		\caption{Deformed template.}
		\label{fig:mixSin_target}
	\end{subfigure}
	\\[15pt]
	\begin{subfigure}{\textwidth}
		\centering
		\includegraphics[width = 0.4\textwidth]{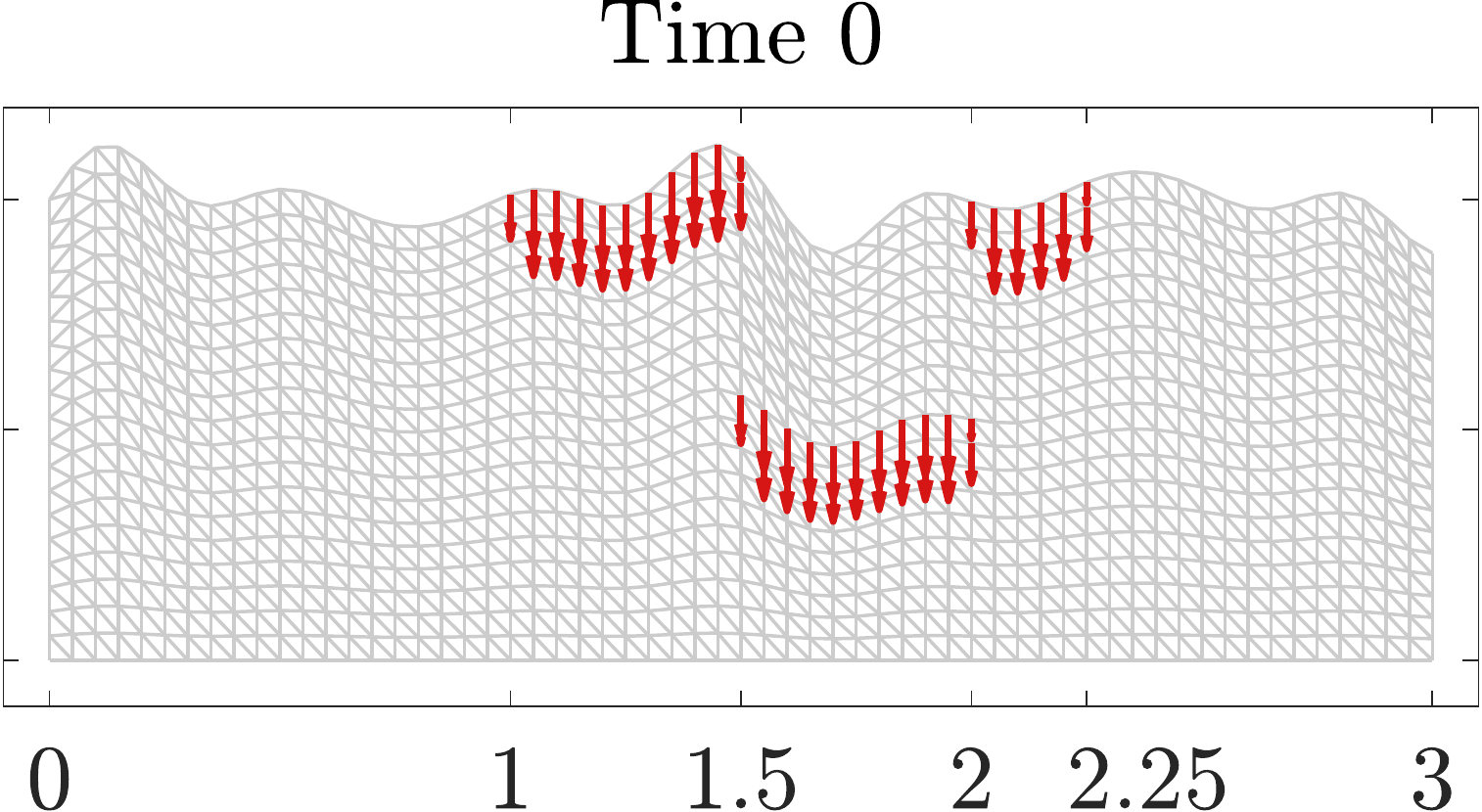}
		\hspace{10pt}
		\includegraphics[width = 0.4\textwidth]{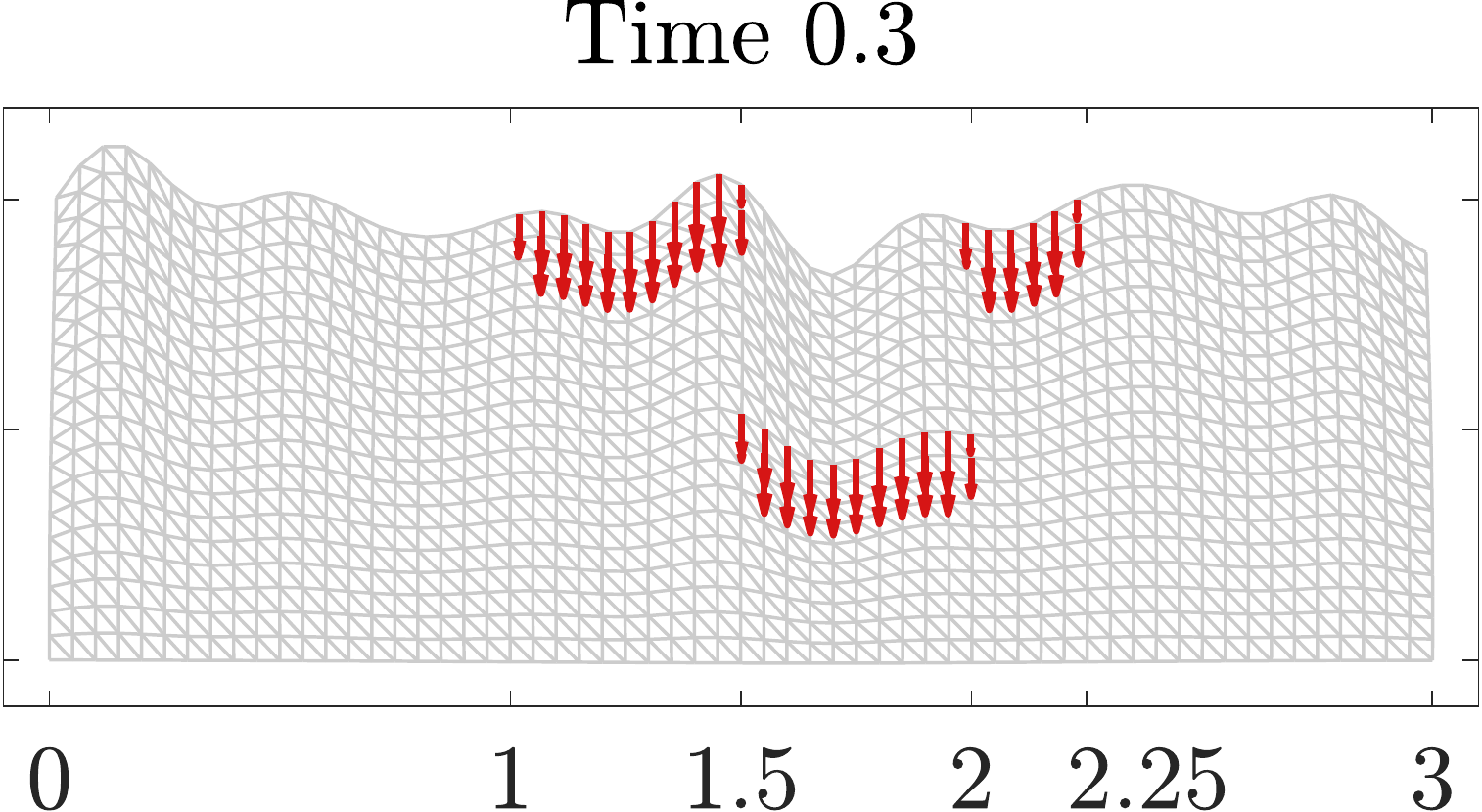}
		\\[10pt]
		\hspace{0pt}
		\includegraphics[width = 0.4\textwidth]{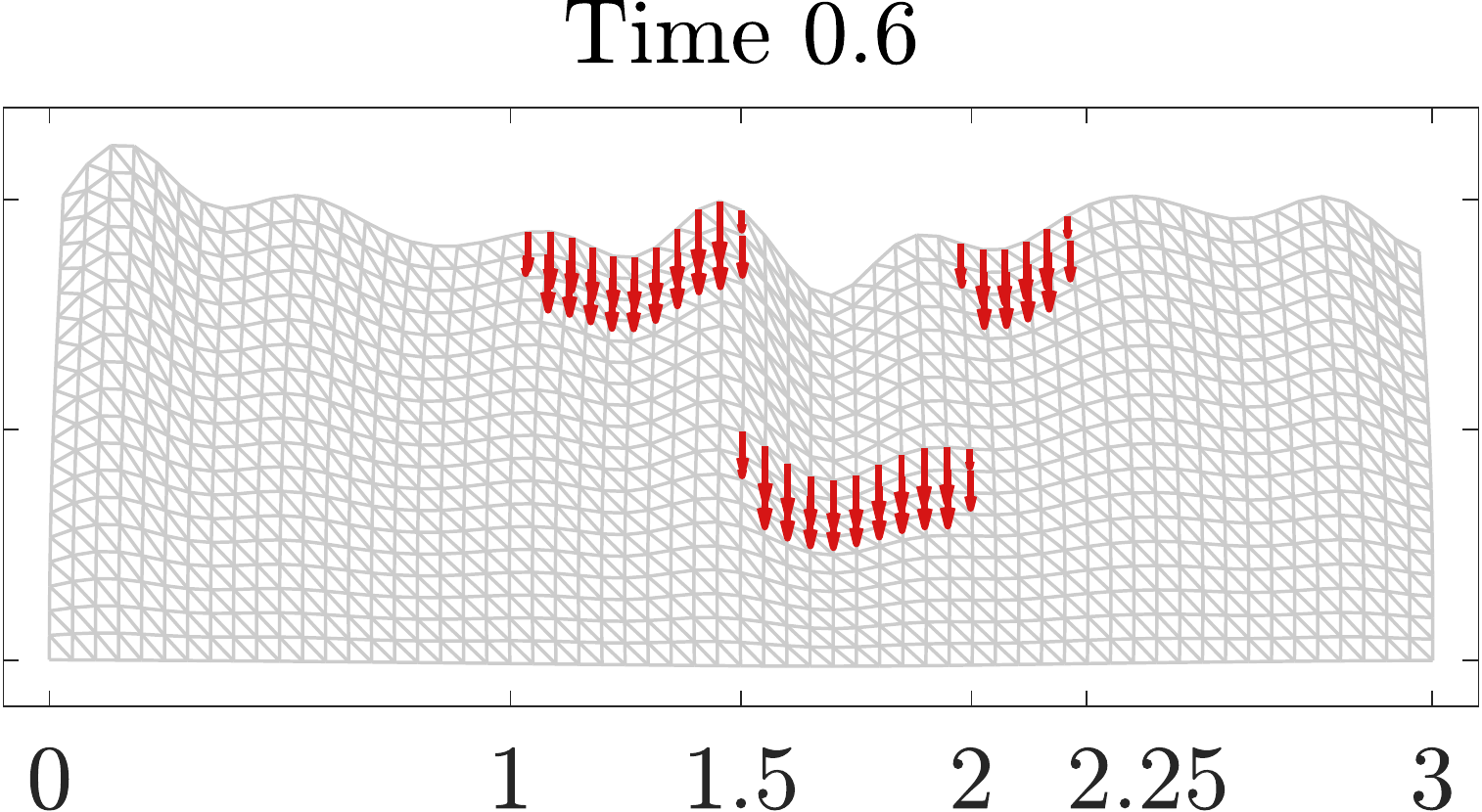}
		\hspace{10pt}
		\includegraphics[width = 0.4\textwidth]{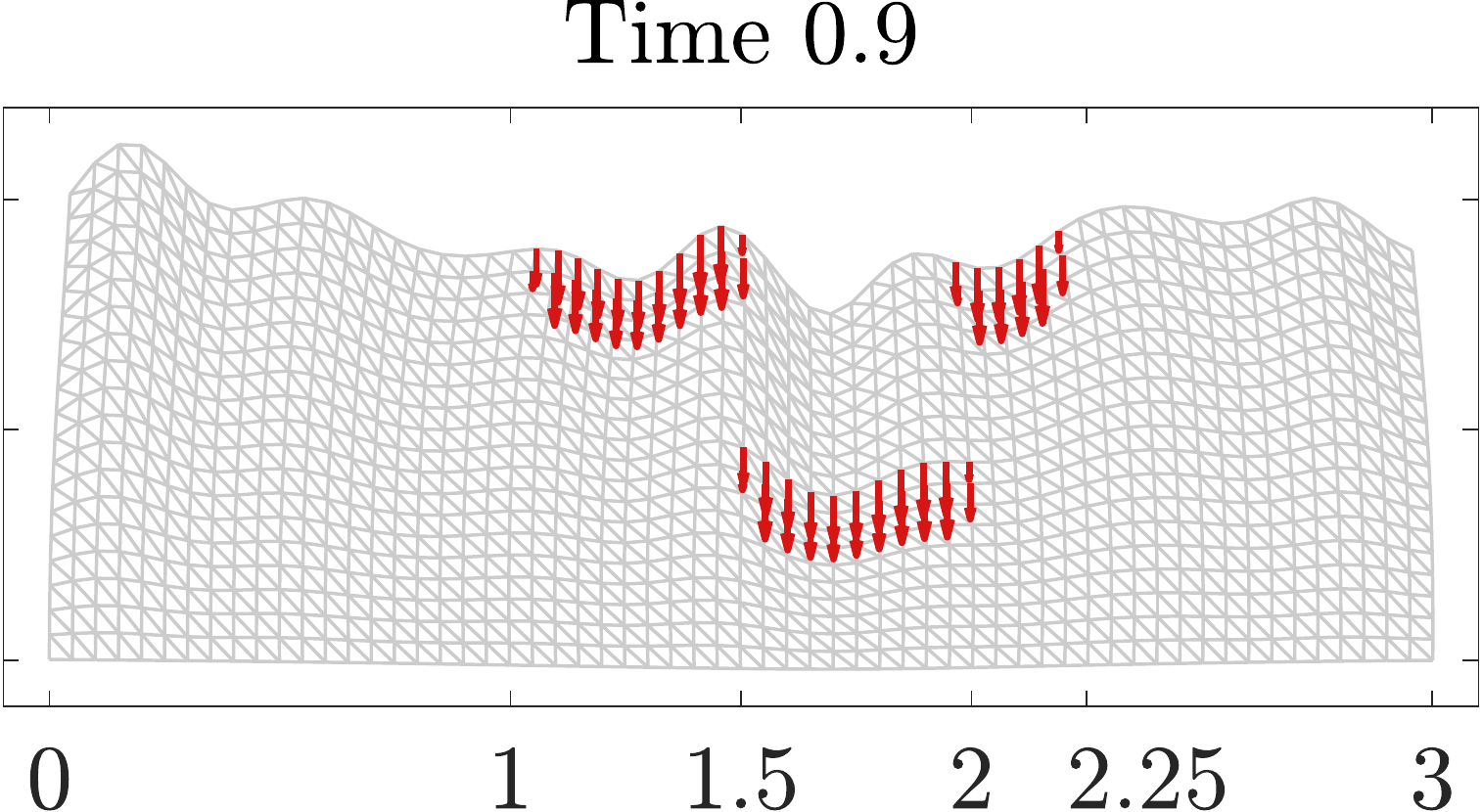}
		\vspace{5pt}
		\caption{Time-dependent yank used to generate the deformed template in (b) (vectors scaled by 20).}
		\label{fig:mixSin_yank} 
	\end{subfigure}
	\caption{Simulated data for the free yank problem.}
\end{figure}

We first consider registering top and bottom layers from ``template'' ($M_0$) to ``target'' ($M_\targ$), depicted in \cref{fig:mixSin_twoLayers_template,fig:mixSin_twoLayers_target}. Assuming the correct elastic model parameters $\lambda$ and $\mu$ are used, the registration and retrieved yank are shown in \cref{fig:mixSin_twoLayers_registration,fig:mixSin_twoLayers_yank}. We observe from \cref{fig:mixSin_twoLayers_yank} that large magnitude of the retrieved yank mainly occurs on top and bottom layers. Although the horizontal position of the true yank in the interval $[1, 2.25]$ is captured quite accurately, no yank is found in the middle layer due to the lack of information regarding the internal deformation in the discrepancy cost $\rho$.

\begin{figure}[hbt!]
	\centering
	\begin{subfigure}{0.48\textwidth}
		\centering
		\includegraphics[width = 0.9\textwidth]{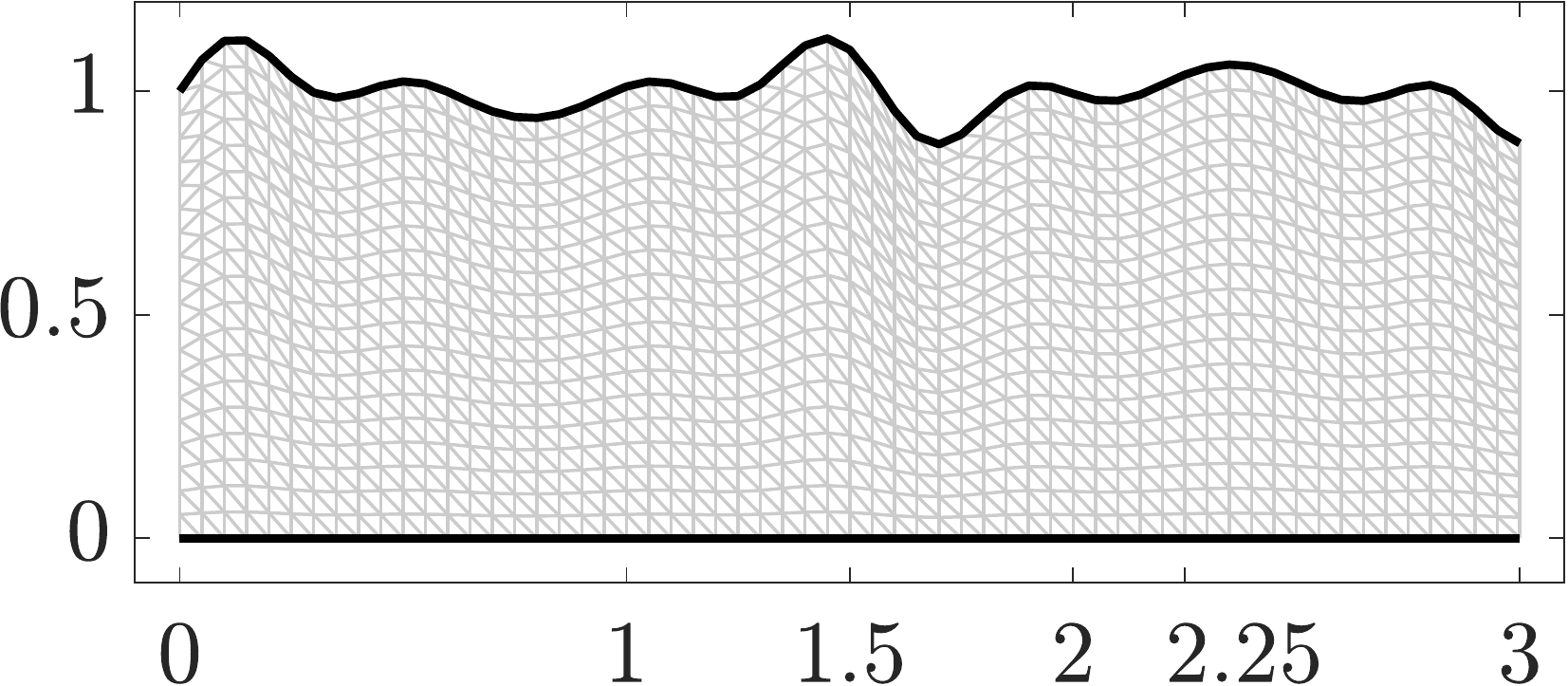}
		\caption{Template.}
		\label{fig:mixSin_twoLayers_template}
	\end{subfigure}
	\begin{subfigure}{0.48\textwidth}
		\centering
		\includegraphics[width = 0.9\textwidth]{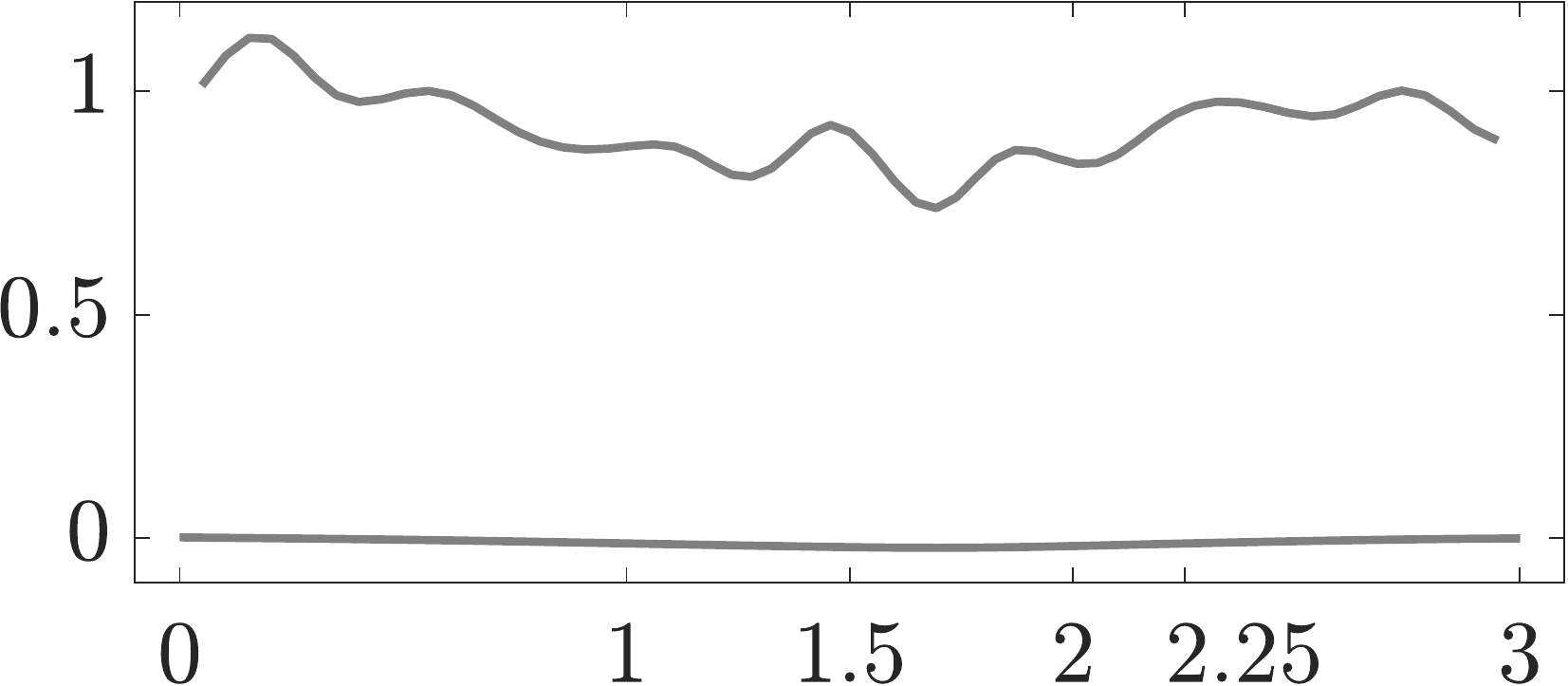}
		\caption{Simulated target.}
		\label{fig:mixSin_twoLayers_target}
	\end{subfigure}
	\\[10pt]
	\begin{subfigure}{\textwidth}
		\centering
		\includegraphics[width = 0.432\textwidth]{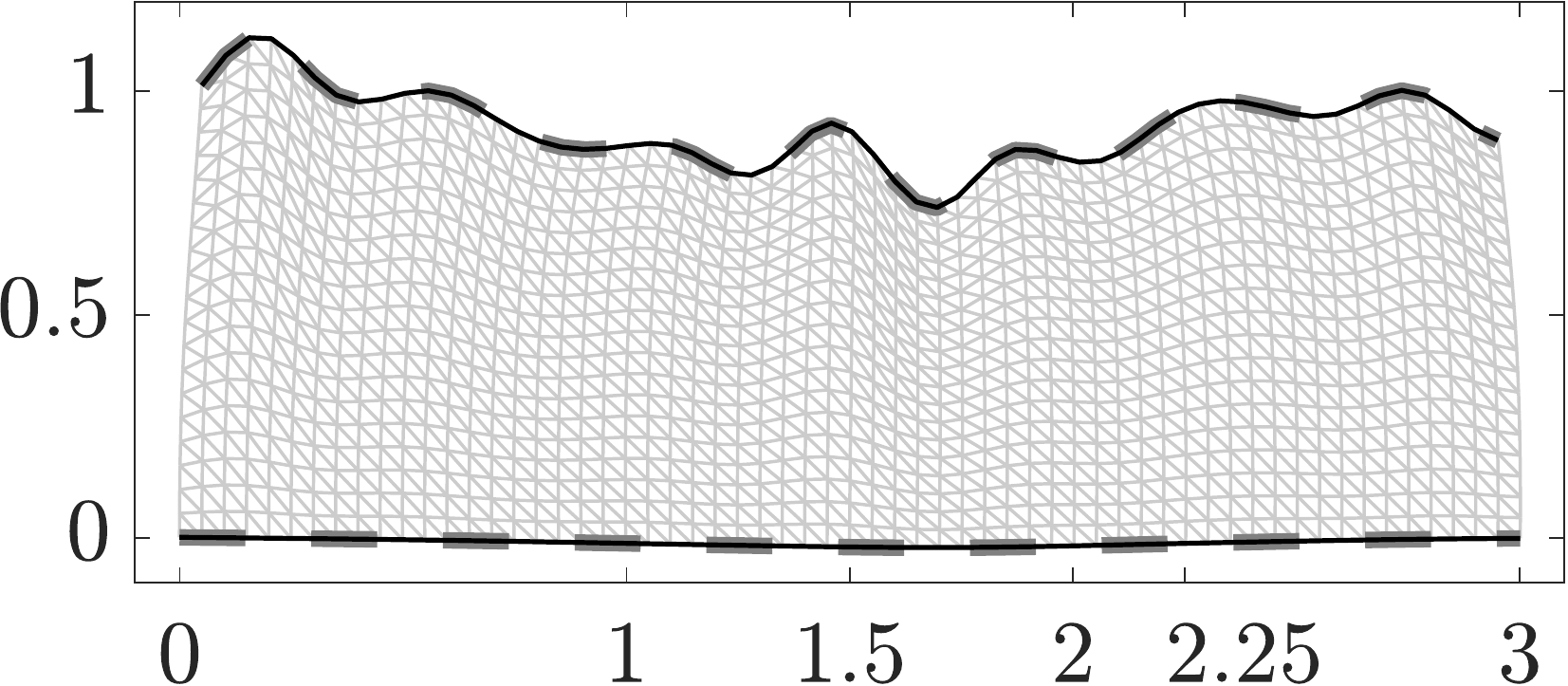}
		\caption{Registration result. The black lines indicate the deformed top and bottom layers of the template; the dashed gray lines indicate the ones of the target.}
		\label{fig:mixSin_twoLayers_registration}
	\end{subfigure}
	\\[15pt]
	\begin{subfigure}{\textwidth}
		\centering
		\includegraphics[width = 0.4\textwidth]{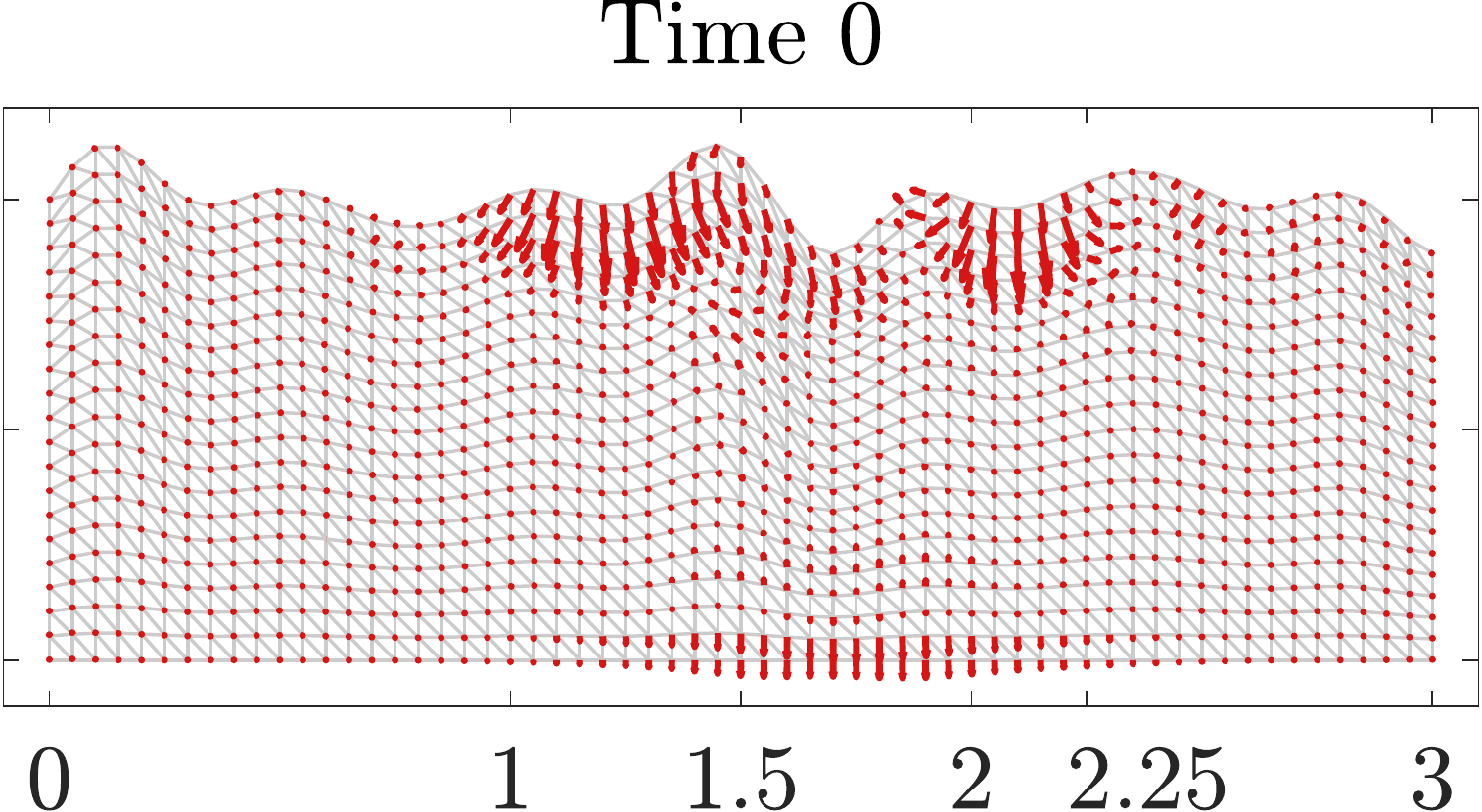}
		\hspace{10pt}
		\includegraphics[width = 0.4\textwidth]{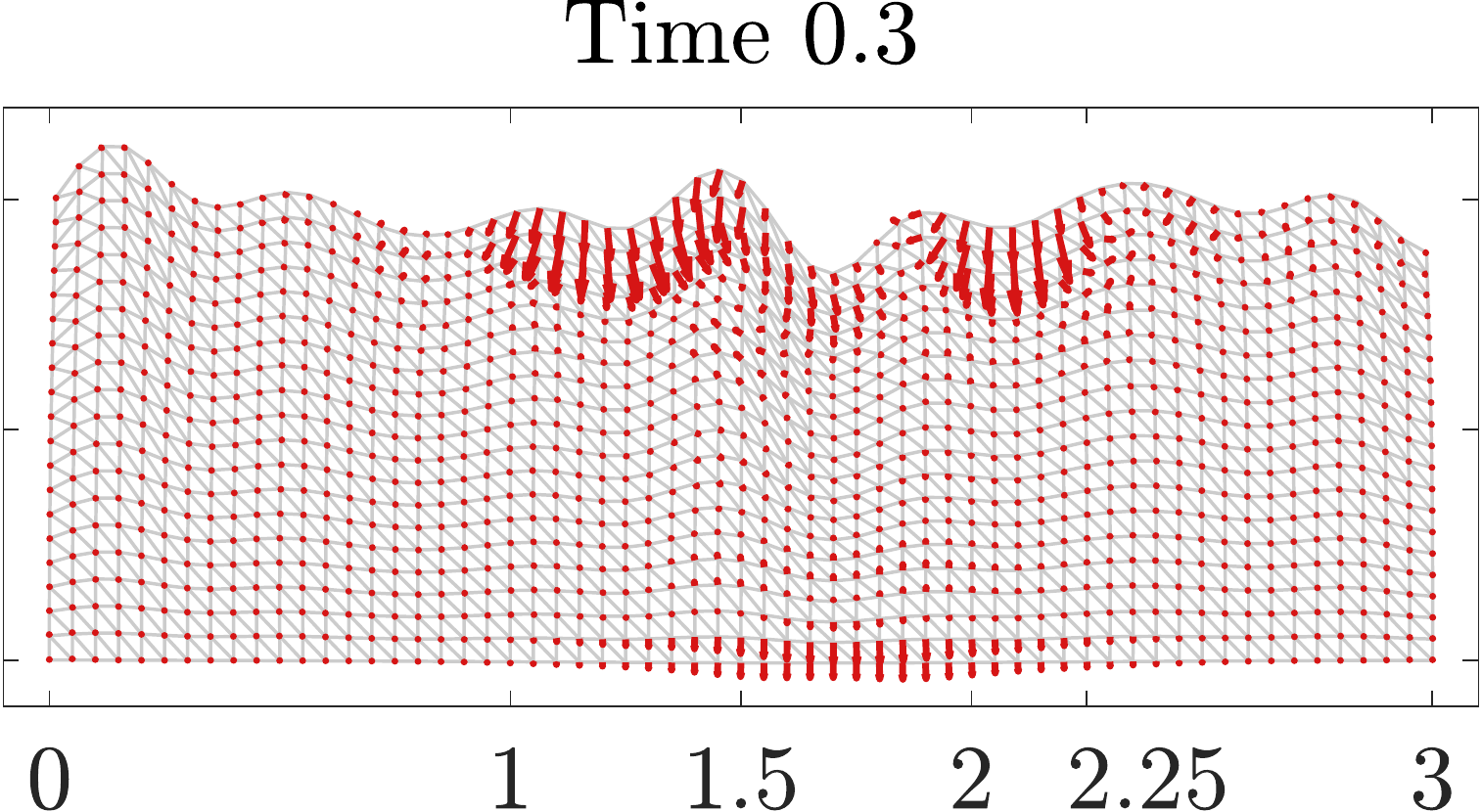}
		\\[10pt]
		\hspace{0pt}
		\includegraphics[width = 0.4\textwidth]{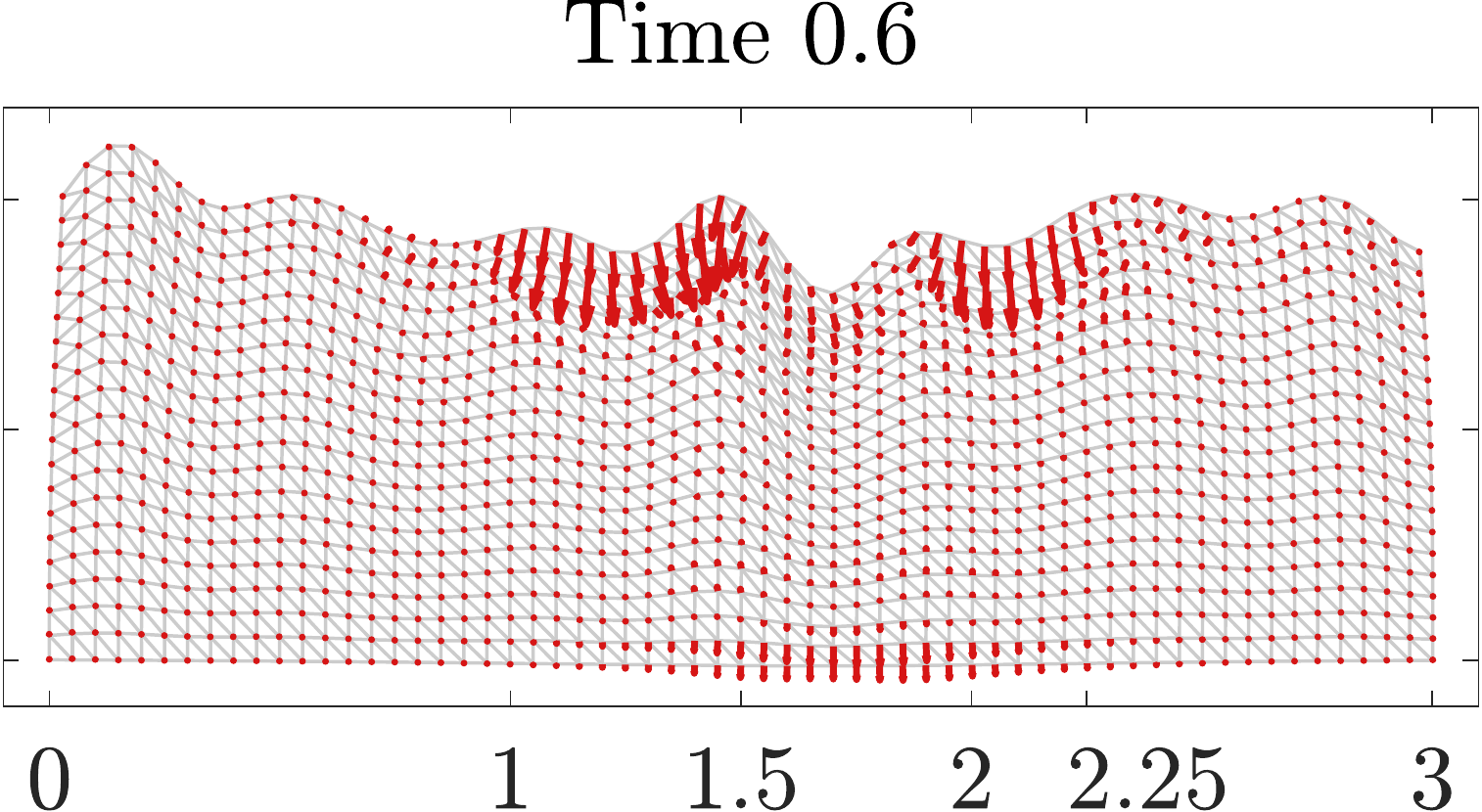}
		\hspace{10pt}
		\includegraphics[width = 0.4\textwidth]{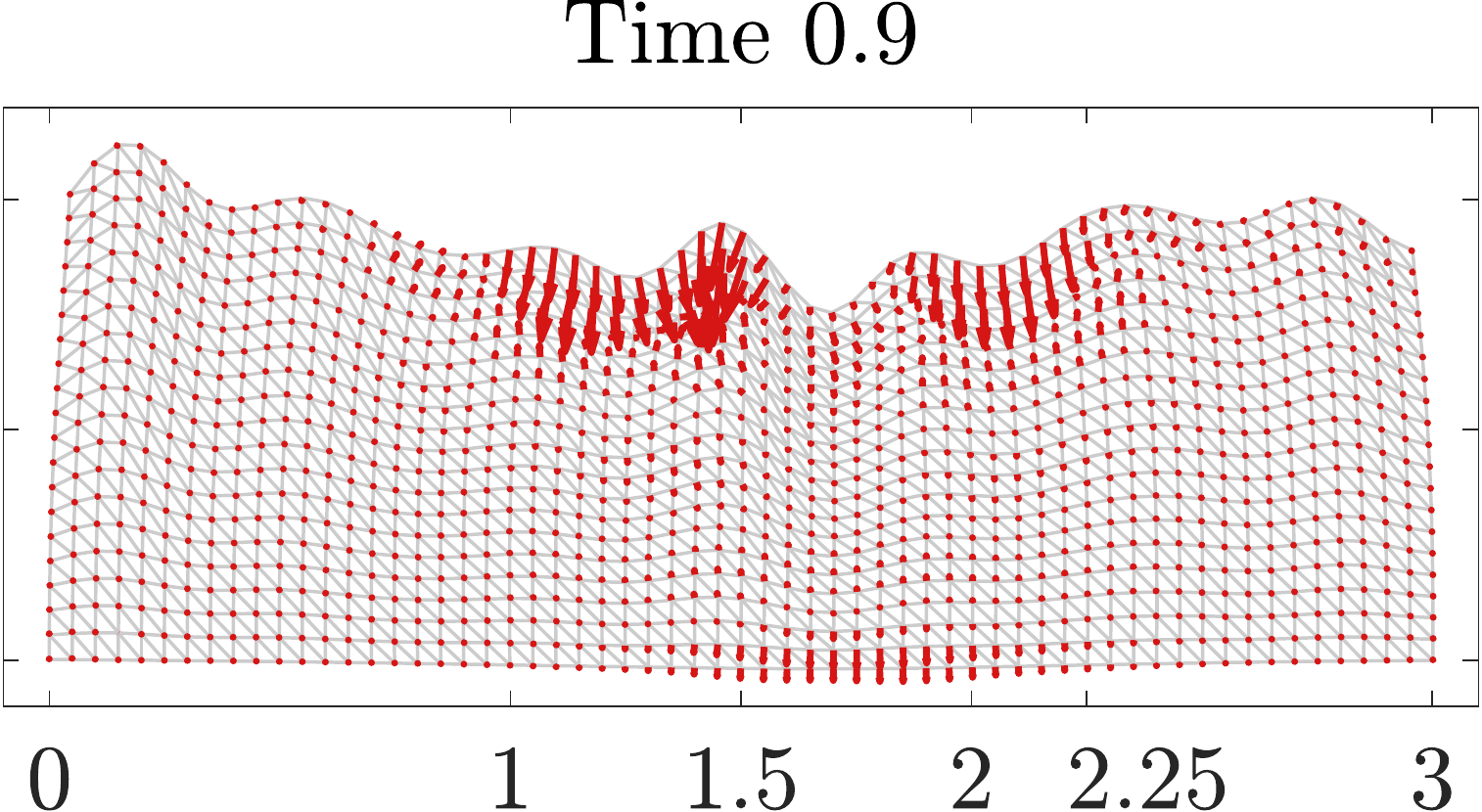}
		\vspace{5pt}
		\caption{Retrieved time-dependent yank (vectors scaled by 20).}
		\label{fig:mixSin_twoLayers_yank}
	\end{subfigure}
	\caption{Result of the free yank problem registering top and bottom layers.}
\end{figure}


In comparison, \cref{fig:mixSin_allLayers} shows the estimated registration and yank when the deformation of of all layers is observed (up to tangential motion along the layers) and  taken into account in the matching by adding discrepancy terms for each of these layers. We see that in this case the three spatial regions of support of the true yank can be located. However, observing the internal layer structure of the target is not typical in applications where usually only the external boundary of the considered volumes can be acquired or segmented.

If one does not want to assume that too much information, such as internal displacements, is available from observed data, it becomes necessary to impose more constraints on the yank itself, by assuming that prior information is known on its structure. This motivates our second model using a parametric yank. 

\begin{figure}[hbt!]
	\centering
	\begin{subfigure}{0.48\textwidth}
		\centering
		\includegraphics[width = 0.9\textwidth]{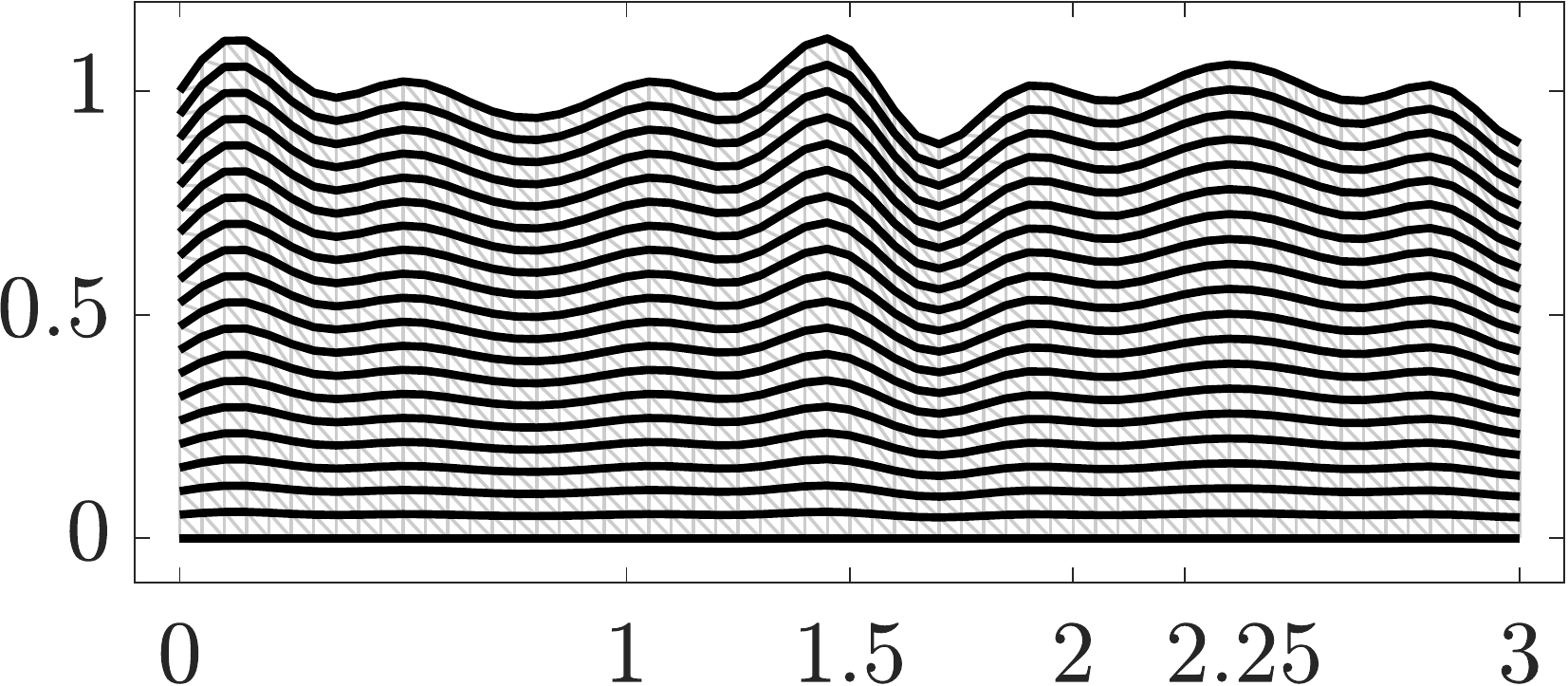}
		\caption{Template.}
	\end{subfigure}
	\begin{subfigure}{0.48\textwidth}
		\centering
		\includegraphics[width = 0.9\textwidth]{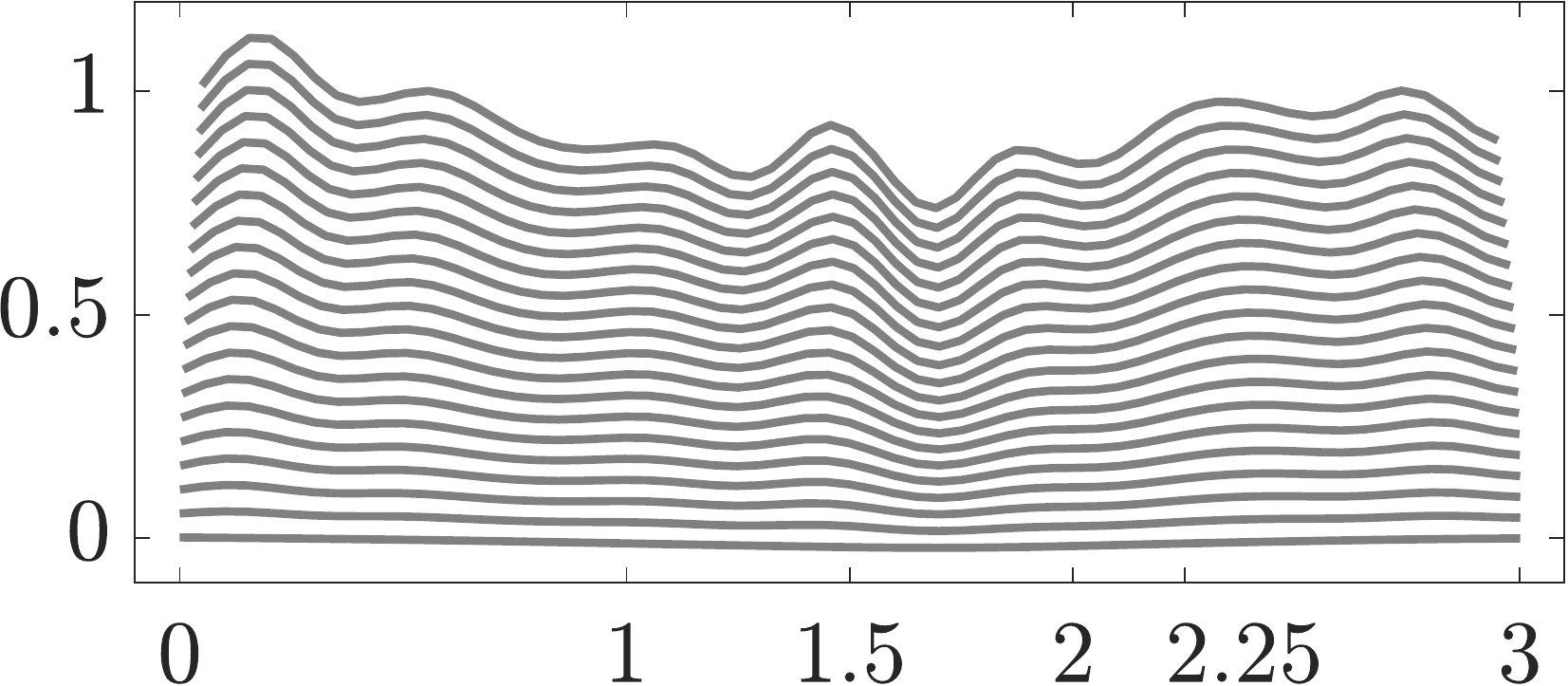}
		\caption{Simulated target.}
	\end{subfigure}
	\\[10pt]
	\begin{subfigure}{\textwidth}
		\centering
		\includegraphics[width = 0.432\textwidth]{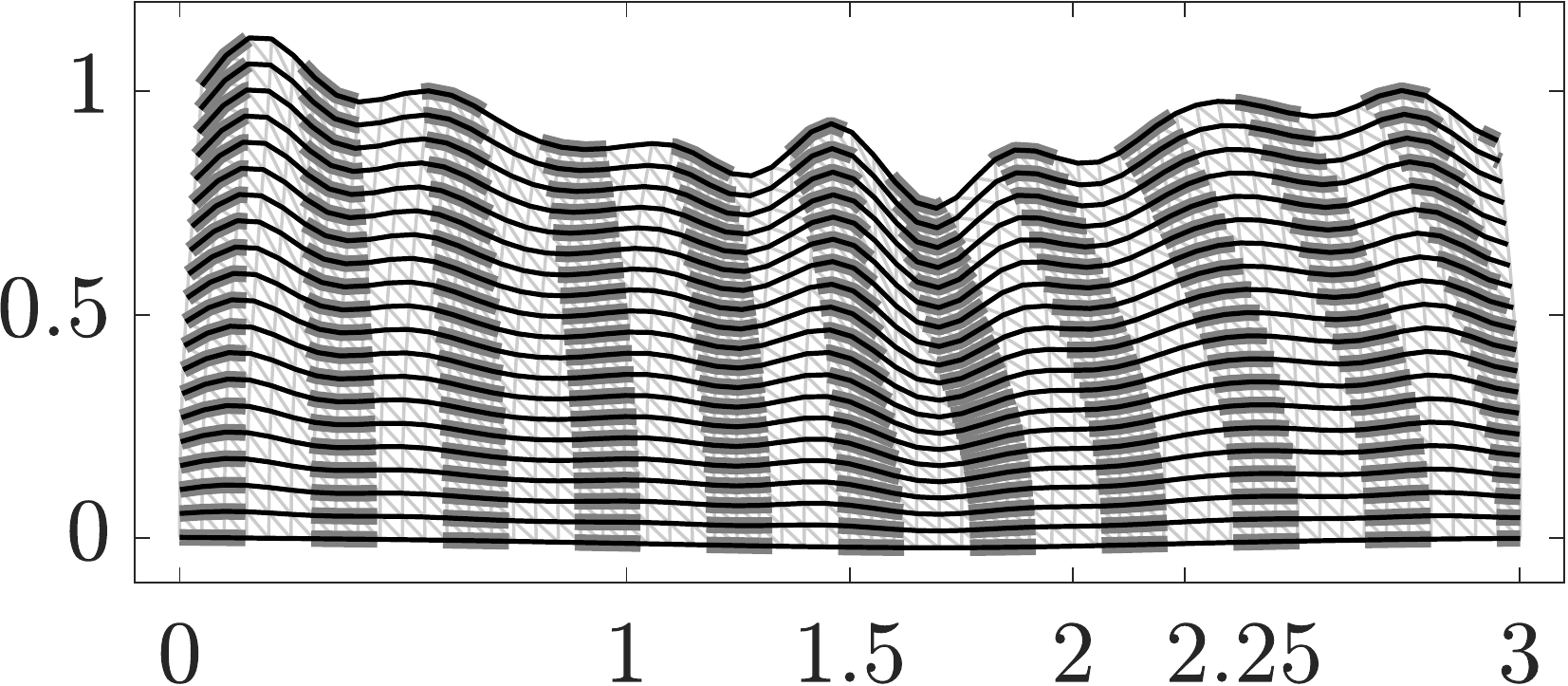}
		\caption{Registration result. The black lines indicate the deformed layers of the template; the dashed gray lines indicate the ones of the target.}
	\end{subfigure}
	\\[15pt]
	\begin{subfigure}{\textwidth}
		\centering
		\includegraphics[width = 0.4\textwidth]{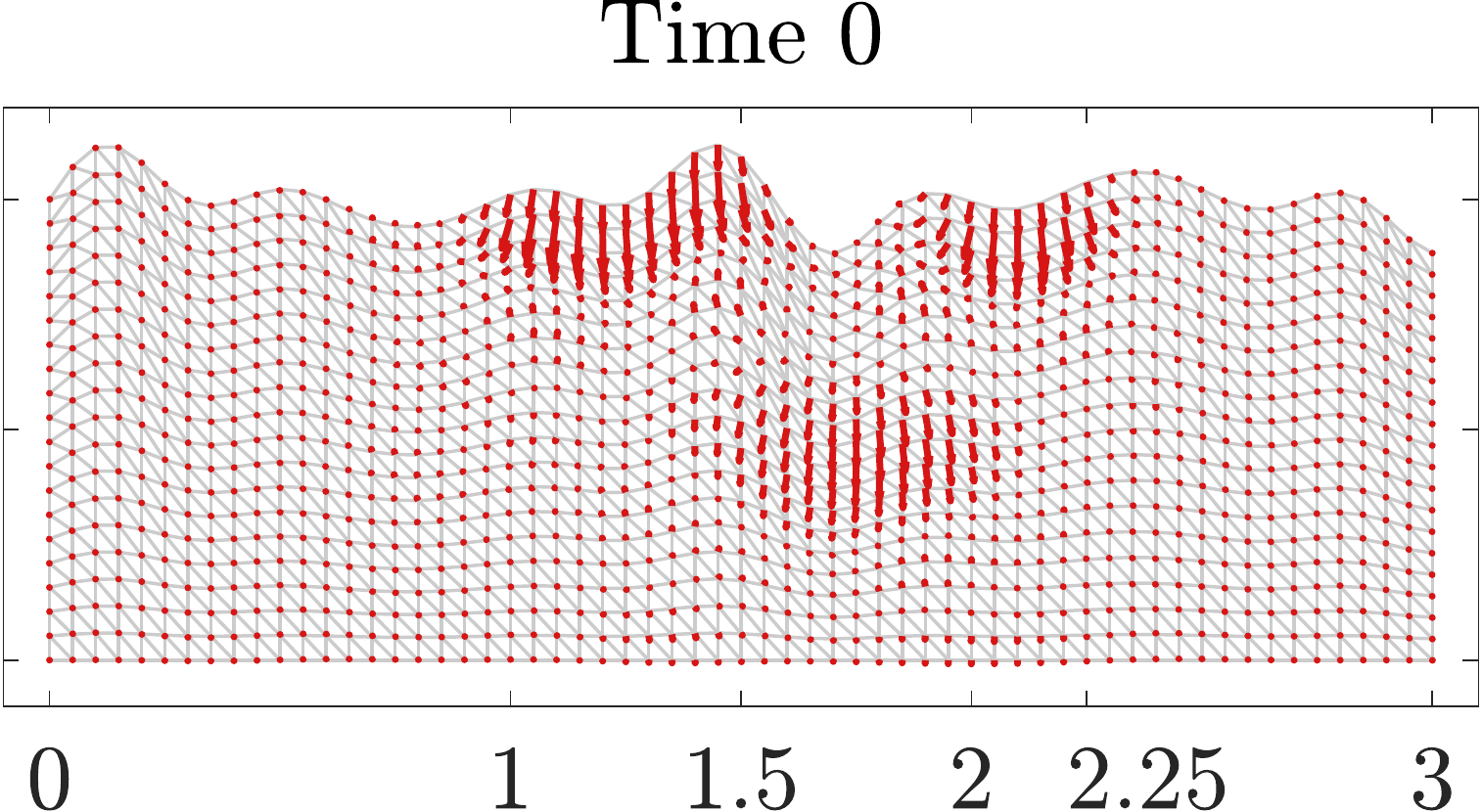}
		\hspace{10pt}
		\includegraphics[width = 0.4\textwidth]{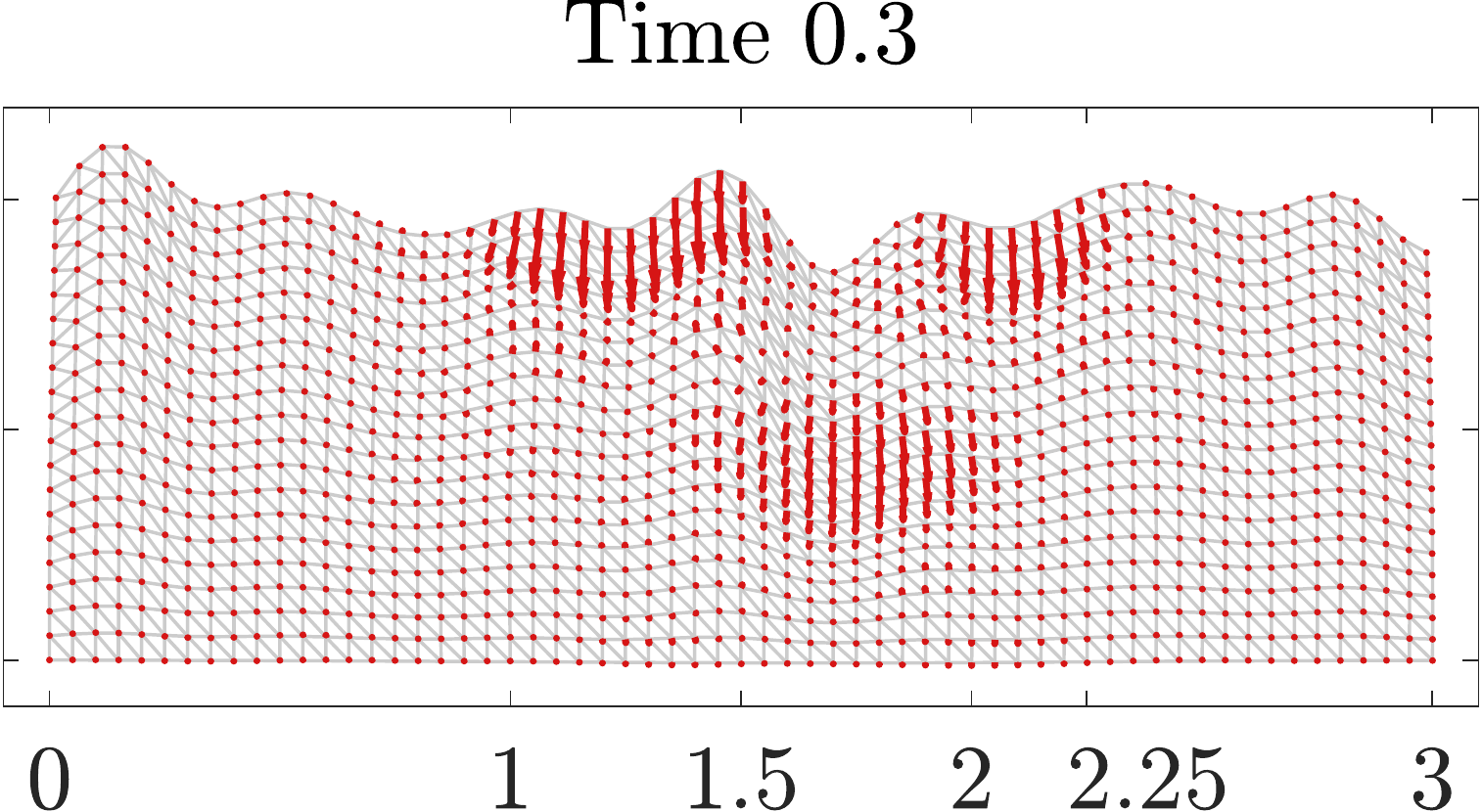}
		\\[10pt]
		\hspace{0pt}
		\includegraphics[width = 0.4\textwidth]{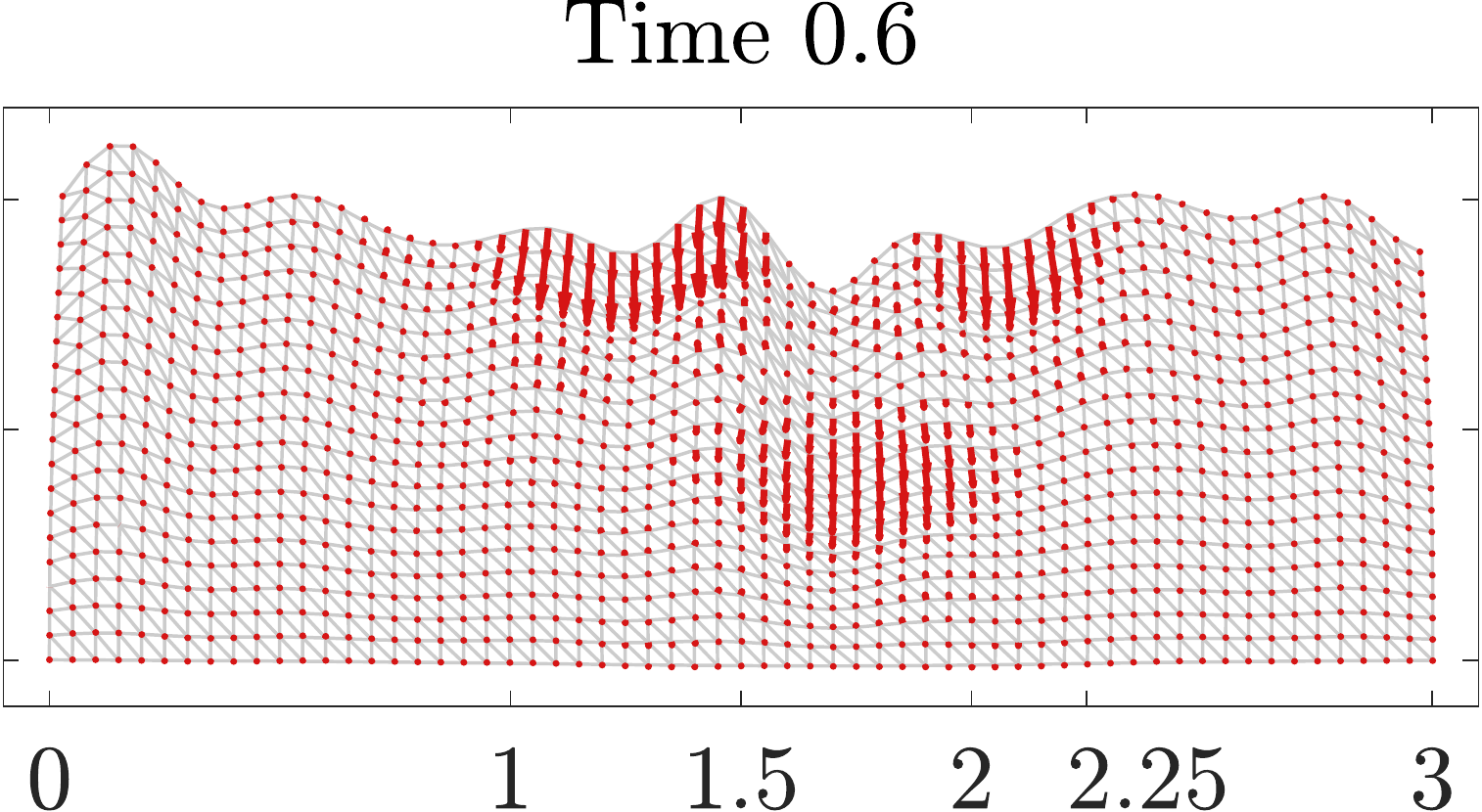}
		\hspace{10pt}
		\includegraphics[width = 0.4\textwidth]{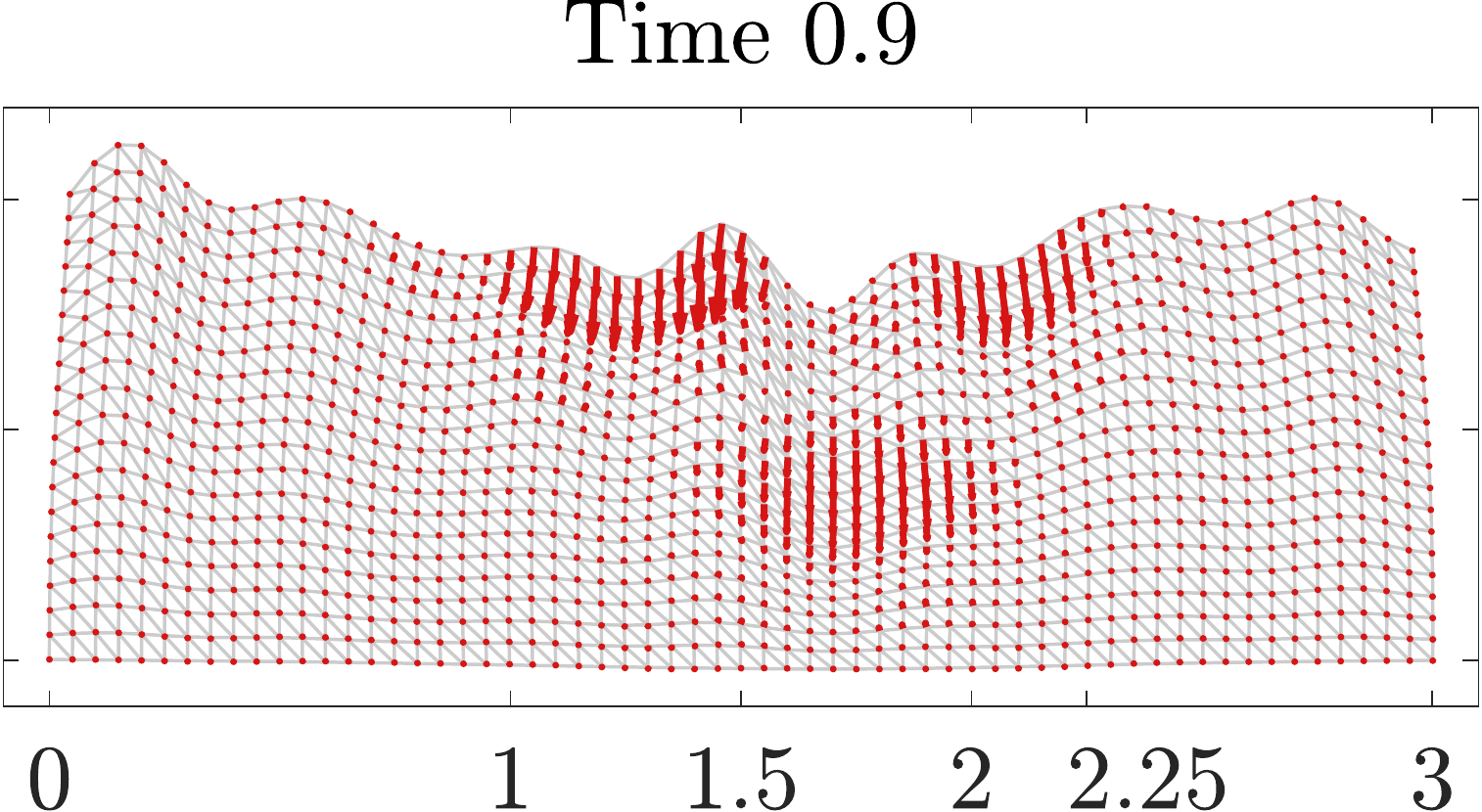}
		\vspace{5pt}
		\caption{Retrieved time-dependent yank (vectors scaled by 20).}
	\end{subfigure}
	\caption{Result of the free yank problem registering all layers.}
	\label{fig:mixSin_allLayers}
\end{figure}

\subsubsection{Parametric yank problem}

To mimic the laminar organization of cortical volumes \cite{damasio1995human,amunts2013bigbrain}, we simulated a layered shape  for this experiment. \Cref{fig:sin_template} shows our simulated shape whose middle layer is the graph of $x \mapsto 0.25 \, \cos(2.5 (x - 0.1)) + 0.6$. Other layers are generated through normal displacement starting from the middle layer with a step size 0.05. We use a parametric yank of the form of \cref{ex:force}, that is, 
\[
	(j(\varphi, \theta) \mid v) = -\int_{\varphi(M_0)} \chi \ g_\theta \circ \varphi^{-1} \, \mathrm{div}(v) \, dx .
\]
The potential $g_\theta$ we used is a $C^1$ compactly supported function
\begin{equation}
    \label{eq:g.theta}
	g_{(c, h)}(x; r)
	=
	\left\{ 
		\begin{array}{ll}
			\displaystyle h \left( \frac{|x - c|^2}{r^2} - 1 \right)^2 & \mbox{ if } |x - c| \leq r \\
			0 & \mbox{ otherwise }
		\end{array}
	\right. .
\end{equation}
Note that the parameter $\theta = (c, h)$ is composed of the center $c = (c_x, c_y) \in \mathbb{R}^2$ and the height $h \in \mathbb{R}$.
We assume that the radius $r$ is known. \Cref{fig:sin_potential} shows the potential with $c = (1.5, 0.5)$, $h = 2$, and $r = 0.25$. Given $\theta = (c, h)$, we then computed the solution $\varphi_\theta$ to the system \cref{eq:opt_problem_system} under the layered elastic operator (equation \cref{eq:layered_stiffness}) with $\lambda_{\mathrm{tan}} = 0$ and $\mu_{\mathrm{tan}} = \mu_{\mathrm{tsv}} = \mu_{\mathrm{ang}} = 1$. The deformation $\varphi_\theta(1, M_0)$ is shown in \cref{fig:sin_target}, and the yank $j(\varphi_\theta(t), \theta)$ is shown in \cref{fig:sin_yank}. The top and bottom layers of $\varphi_\theta(1, M_0)$ were extracted as the target for our finite-dimensional optimization problem. Using a BFGS optimization method with multiple starting points sampled by a Latin hypercube design, we can retrieve $(c_x, c_y, h) = (1.5, 0.5, 2)$ within an absolute error $10^{-4}$.
\begin{figure}[hbt!]
	\centering
	\begin{subfigure}[b]{0.48\textwidth}
		\centering
		\includegraphics[width = 0.93\textwidth]{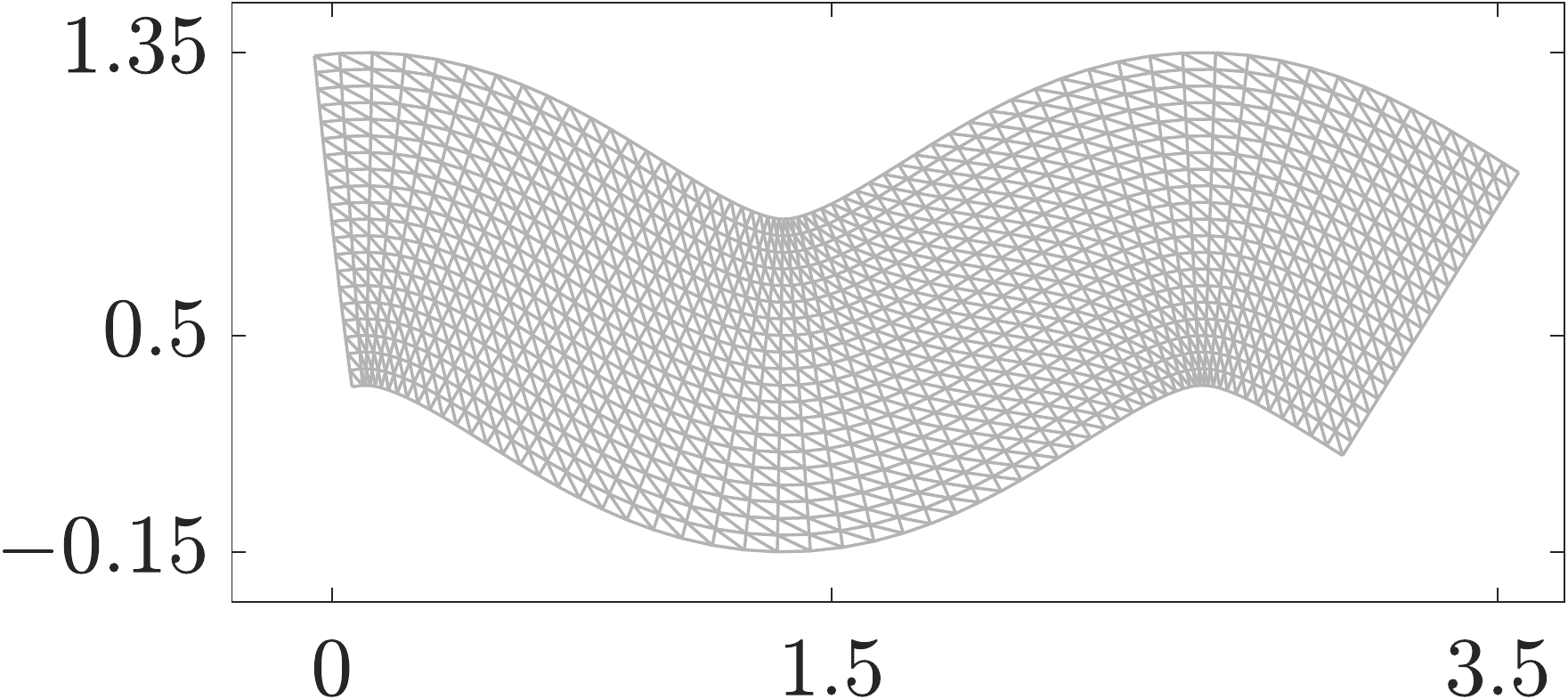}
		\caption{Template.}
		\label{fig:sin_template}
	\end{subfigure}
	\ 
	\begin{subfigure}[b]{0.48\textwidth}
		\centering
		\includegraphics[trim = 0 95 40 100, clip, width = \textwidth]{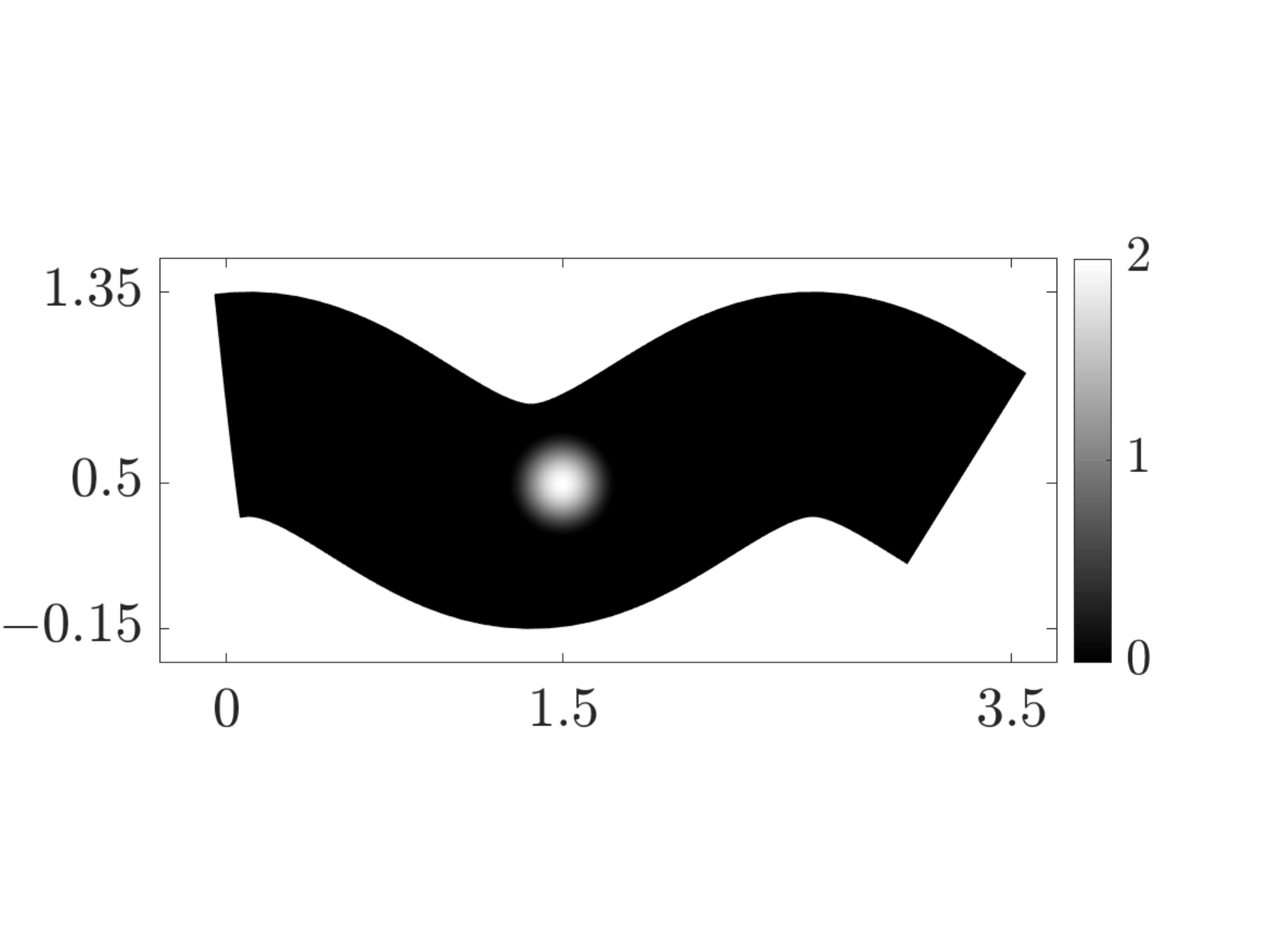}
		\caption{Potential at time zero.}
		\label{fig:sin_potential}
	\end{subfigure}
	\\[10pt]
	\begin{subfigure}[b]{0.48\textwidth}
		\centering
		\includegraphics[width = 0.93\textwidth]{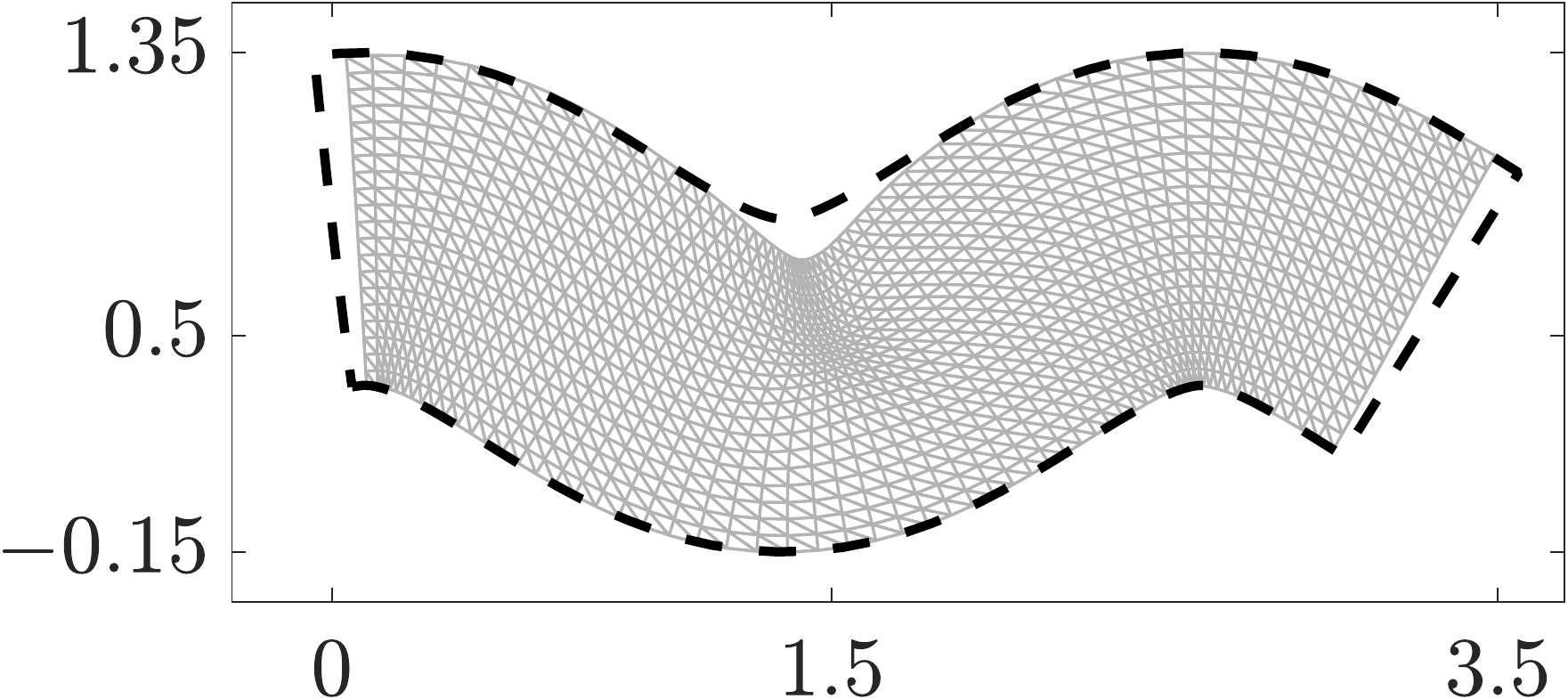}
		\caption{Deformed template.}
		\label{fig:sin_target}
	\end{subfigure}
	\\[15pt]
	\begin{subfigure}{\textwidth}
		\centering
		\includegraphics[width = 0.4\textwidth]{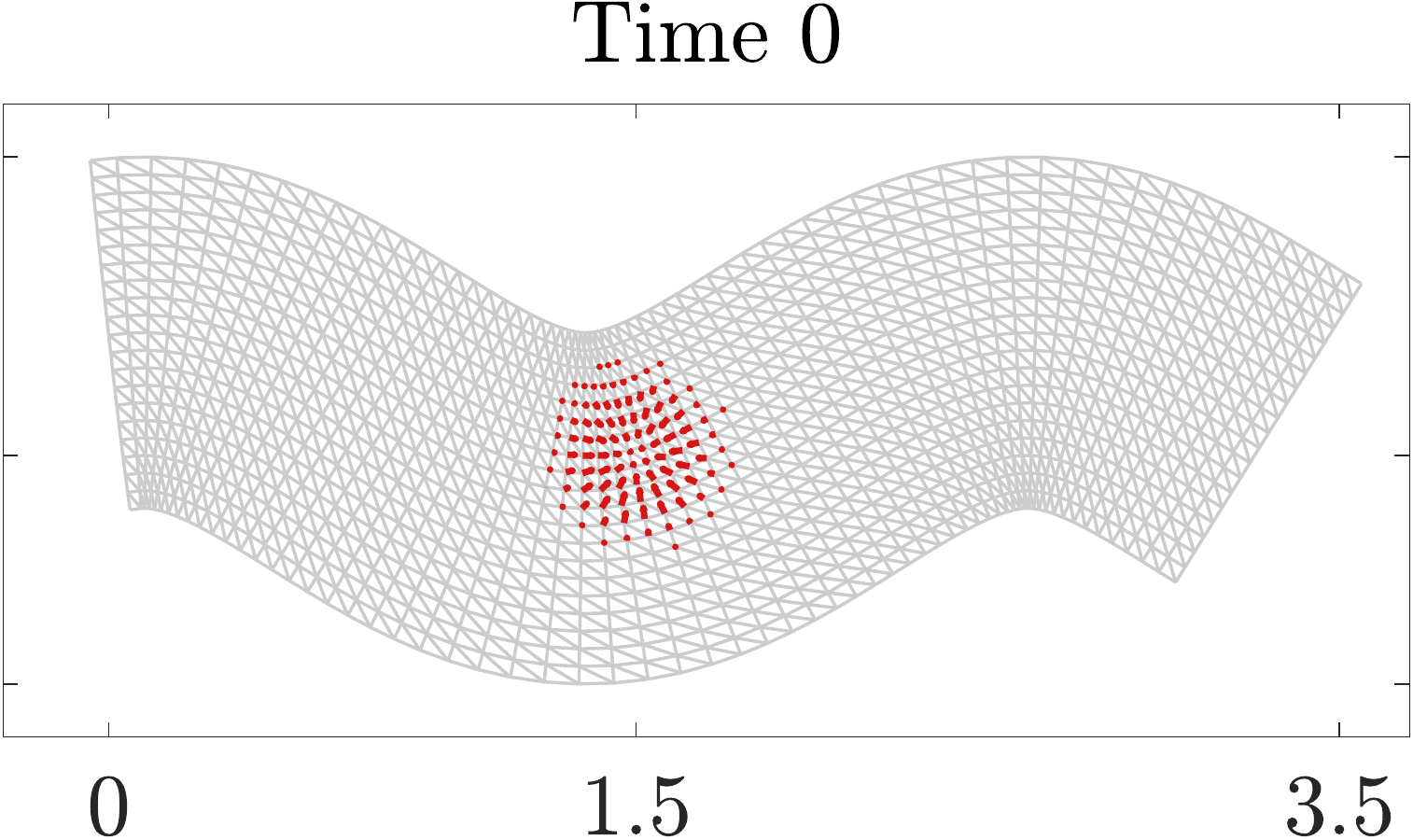}
		\hspace{10pt}
		\includegraphics[width = 0.4\textwidth]{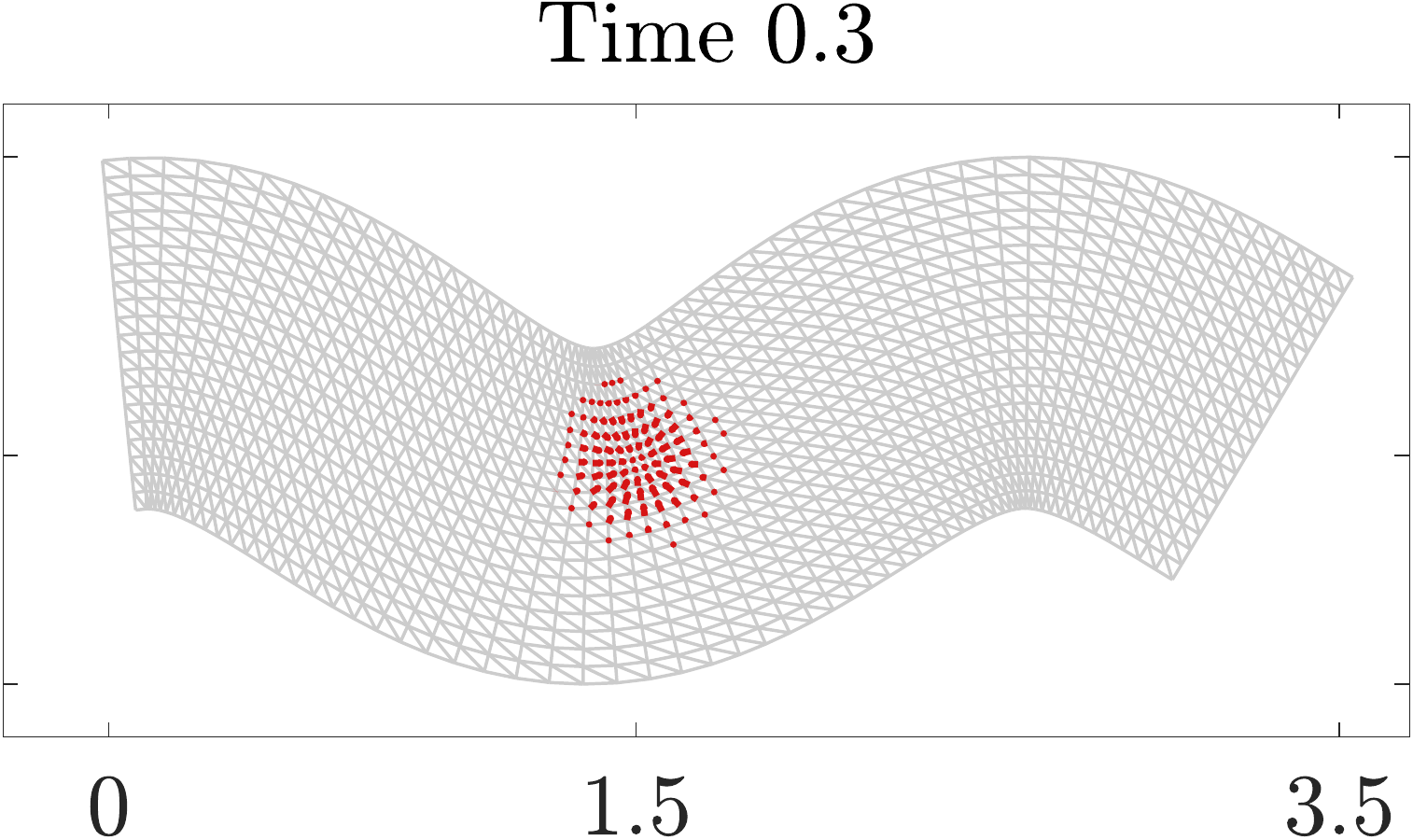}
		\\[10pt]
		\includegraphics[width = 0.4\textwidth]{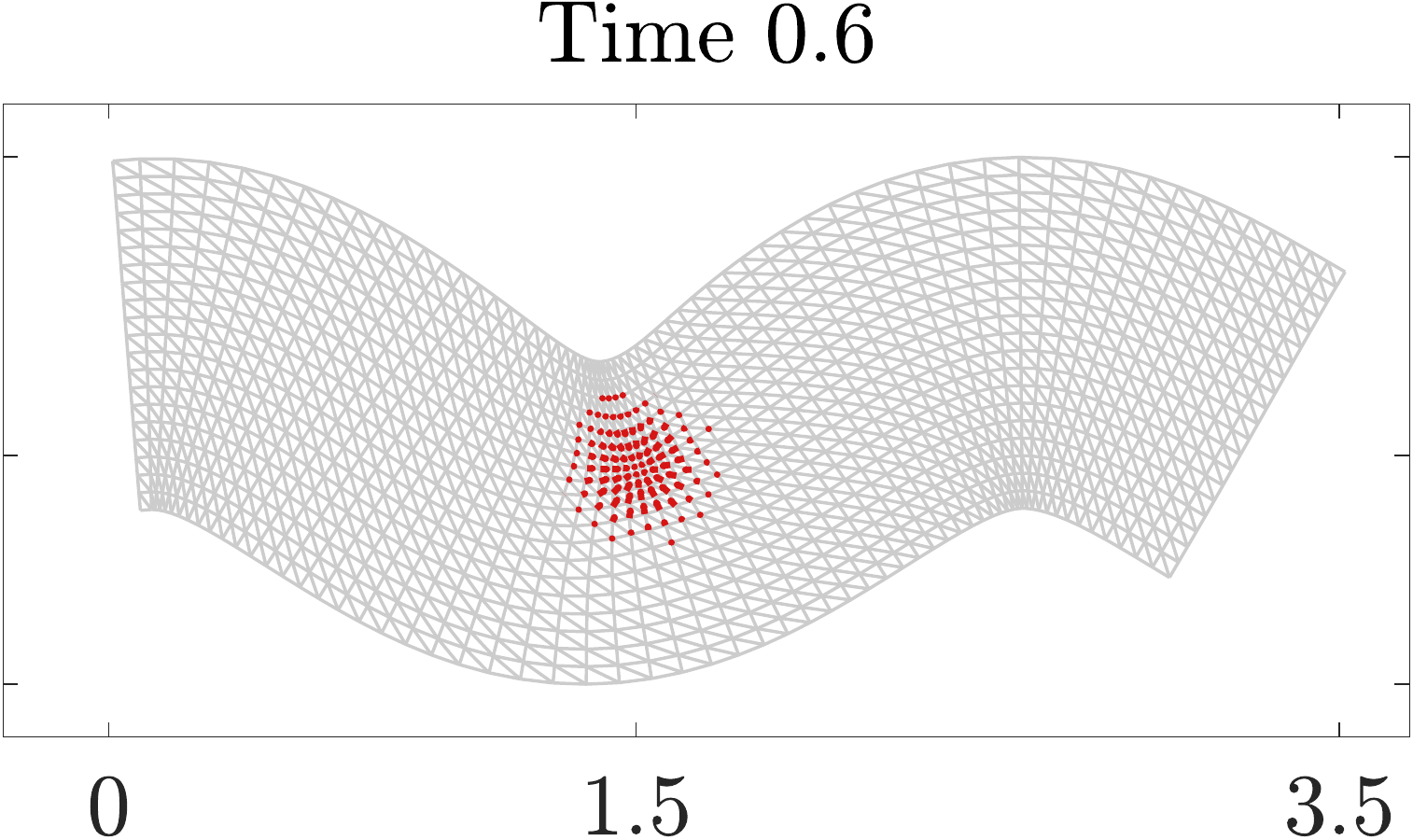}
		\hspace{10pt}
		\includegraphics[width = 0.4\textwidth]{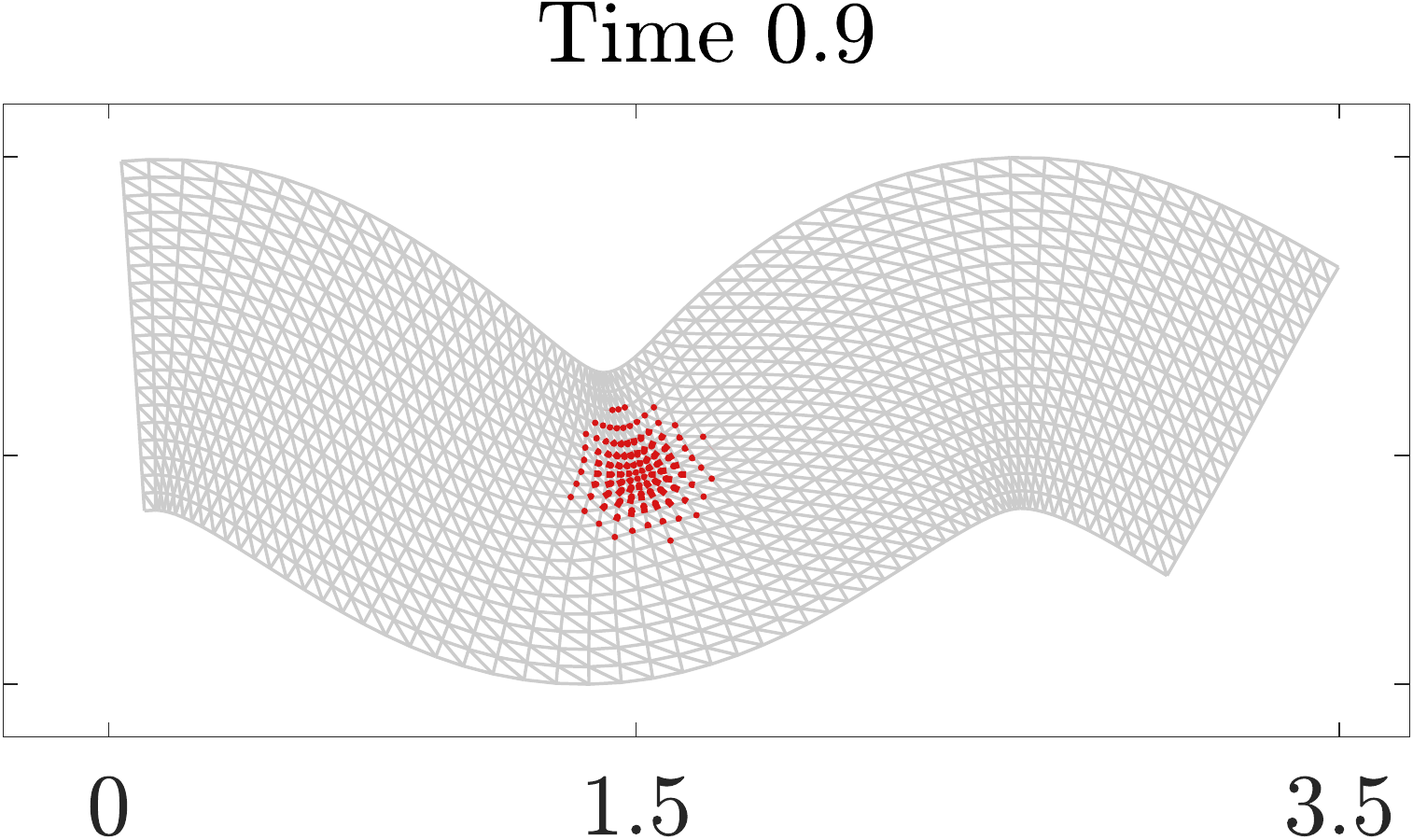}
		\vspace{5pt}
		\caption{Time-dependent yank induced by the transported potential.}
		\label{fig:sin_yank}
	\end{subfigure}
	\caption{Simulated data for the parametrized yank problem.}
\end{figure}

We now examine the robustness of our method  when $r$ or the elastic parameters that are used in the inverse problem differ from those used to generate the  target. In \cref{fig:sin_sensitivity_radius}, we plot the computed minimizer $\theta^* = (c_x^*, c_y^*, h^*)$ when we fix a different $r$. While the retrieved height $h^*$ is inversely proportional to the radius $r$ with a fitted relationship $h^* = \mathcal{O}(r^{-2.37})$, the retrieved center $(c_x^*, c_y^*)$ remains close to the true one $(1.5, 0.5)$. We remark that the relationship $h^* = \mathcal{O}(r^{-p})$ was also observed in other simulated shapes, but with a different $p > 0$. The retrieved center is also quite stable when we vary the elastic parameters as we can see from \cref{fig:sin_sensitivity_elastic}, except for very small $\mu_{\mathrm{tan}}$ or $\mu_{\mathrm{tsv}}$.

\begin{figure}[hbt!]
	\centering
		\includegraphics[height = 100pt]{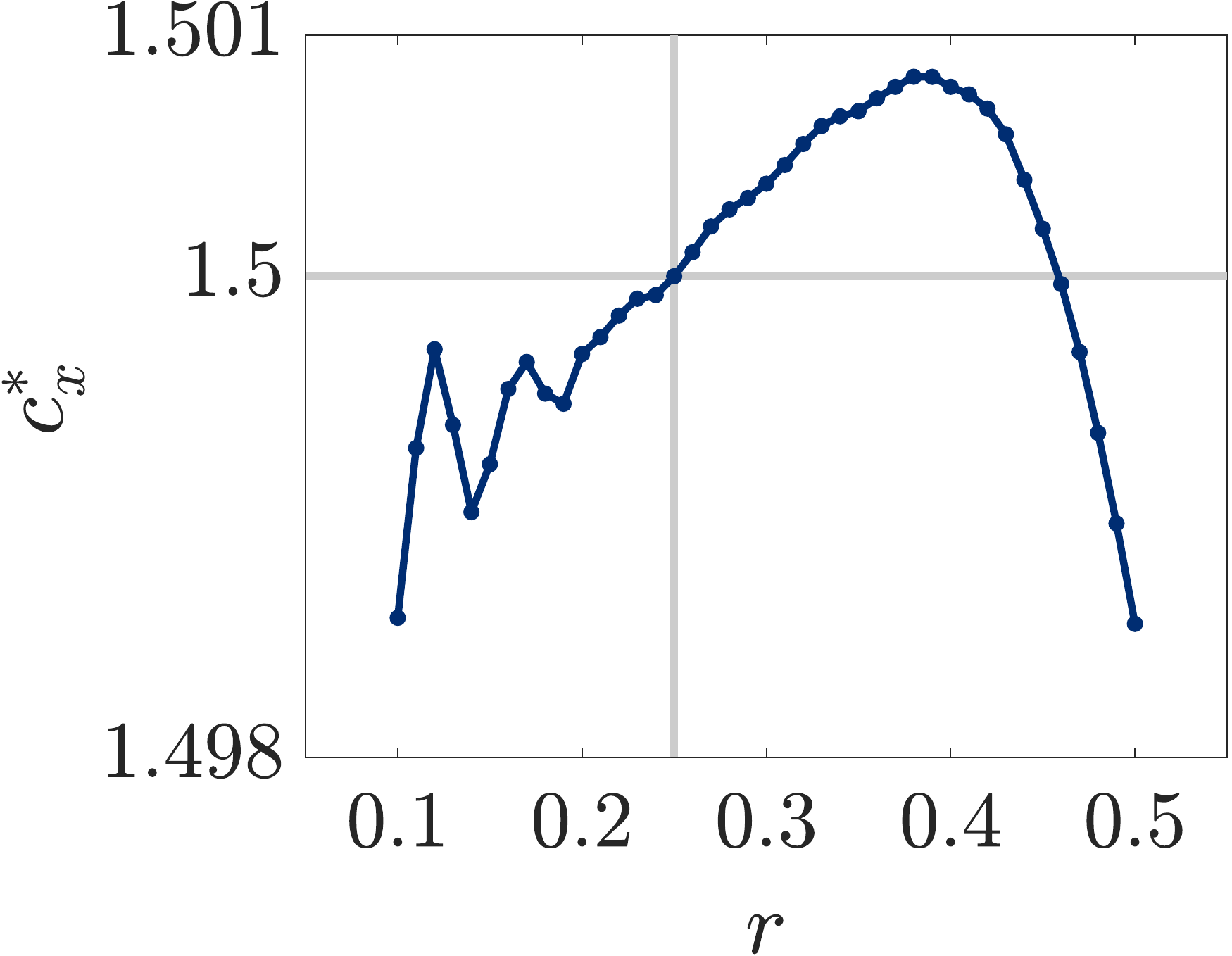}
		\ \ \ 
		\includegraphics[height = 100pt]{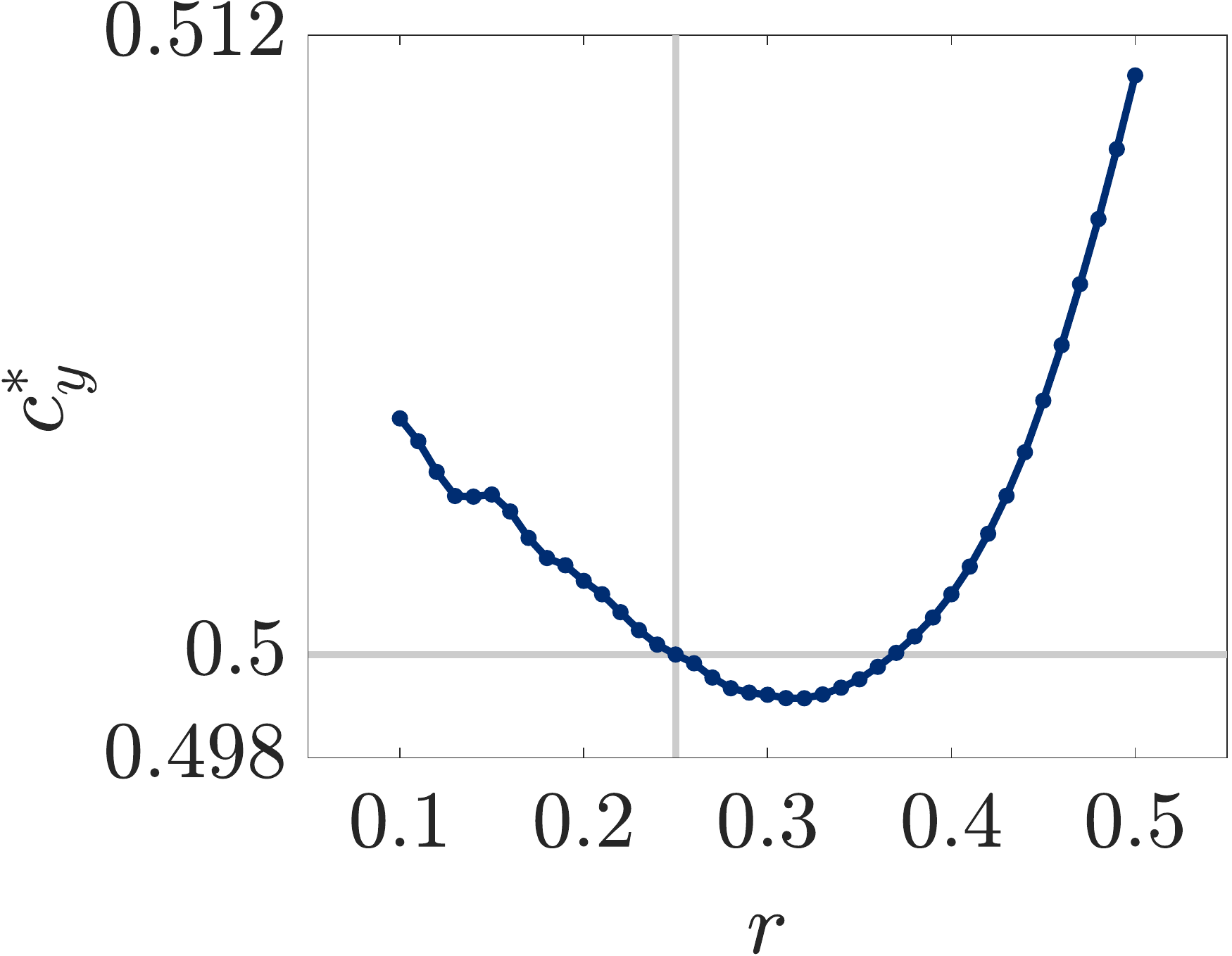}
		\ \ \ 
		\includegraphics[height =  97pt]{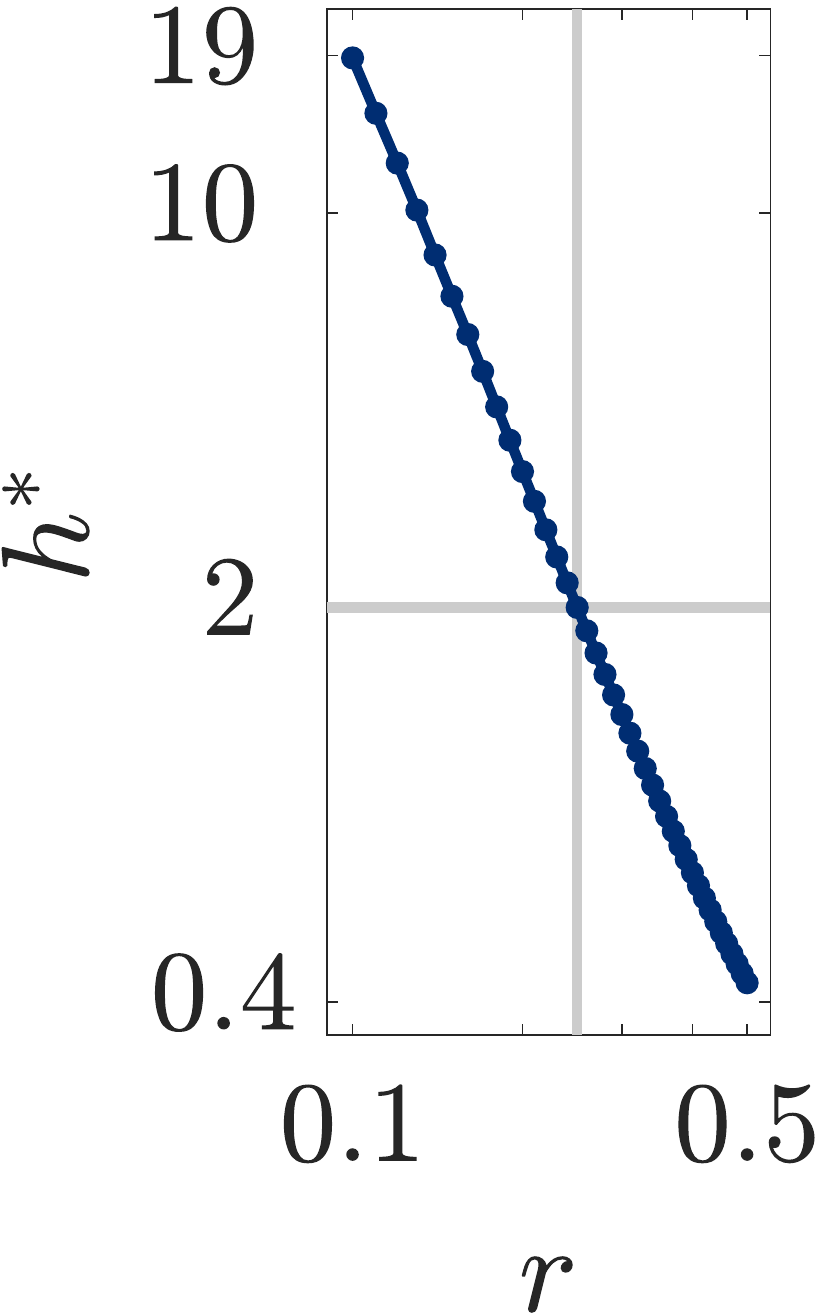}
	\caption{Sensitivity of minimizers with respect to the radius of potential. Gray lines indicate the true parameters. The slope of the log-log plot on the right is $-2.37$.}
	\label{fig:sin_sensitivity_radius}
\end{figure}

\begin{figure}[hbt!]
	\centering
	\begin{subfigure}[b]{0.325\textwidth}
		\centering
		\includegraphics[width = \textwidth]{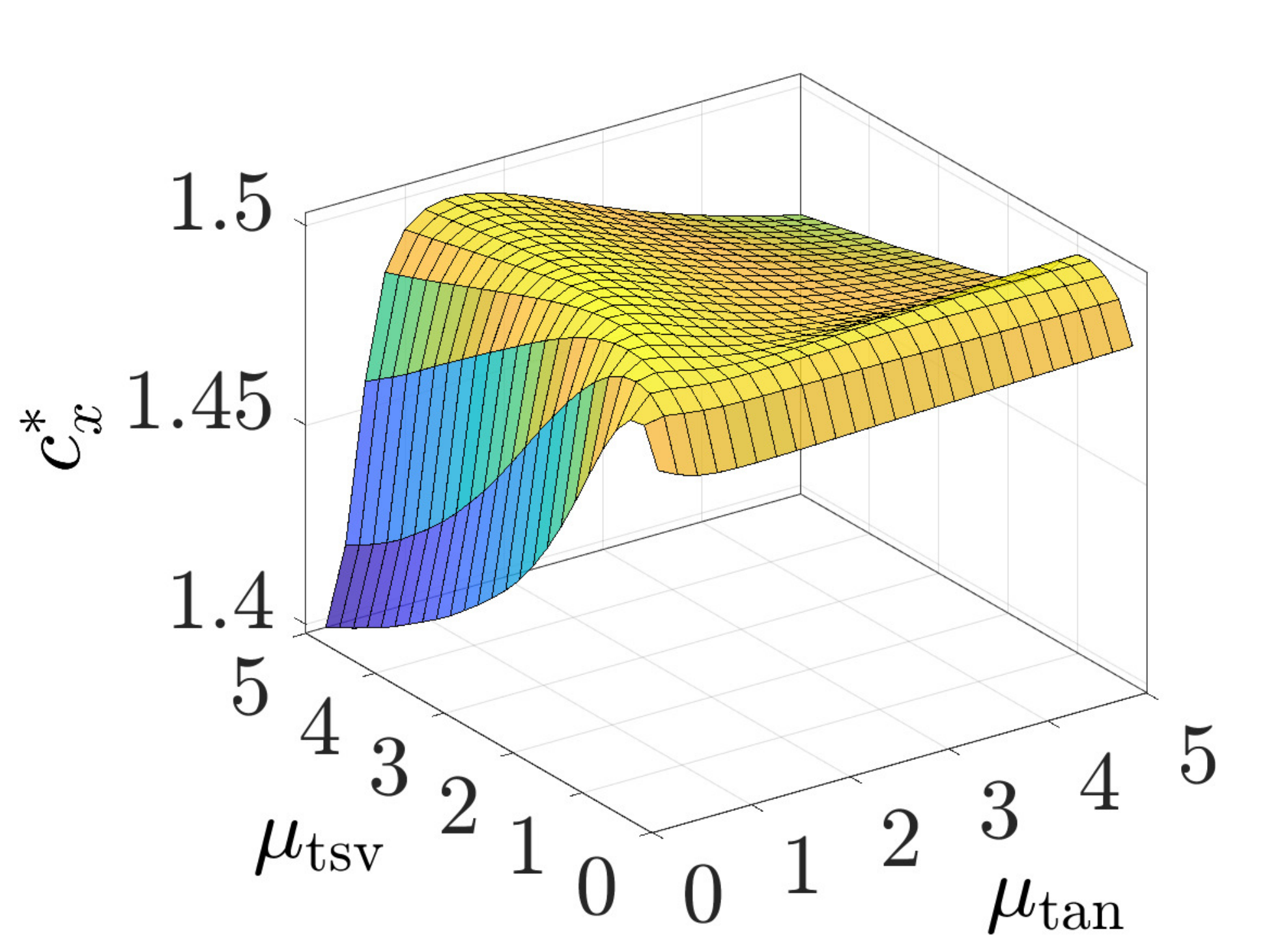}
	\end{subfigure}
	\begin{subfigure}[b]{0.325\textwidth}
		\centering
		\includegraphics[width = \textwidth]{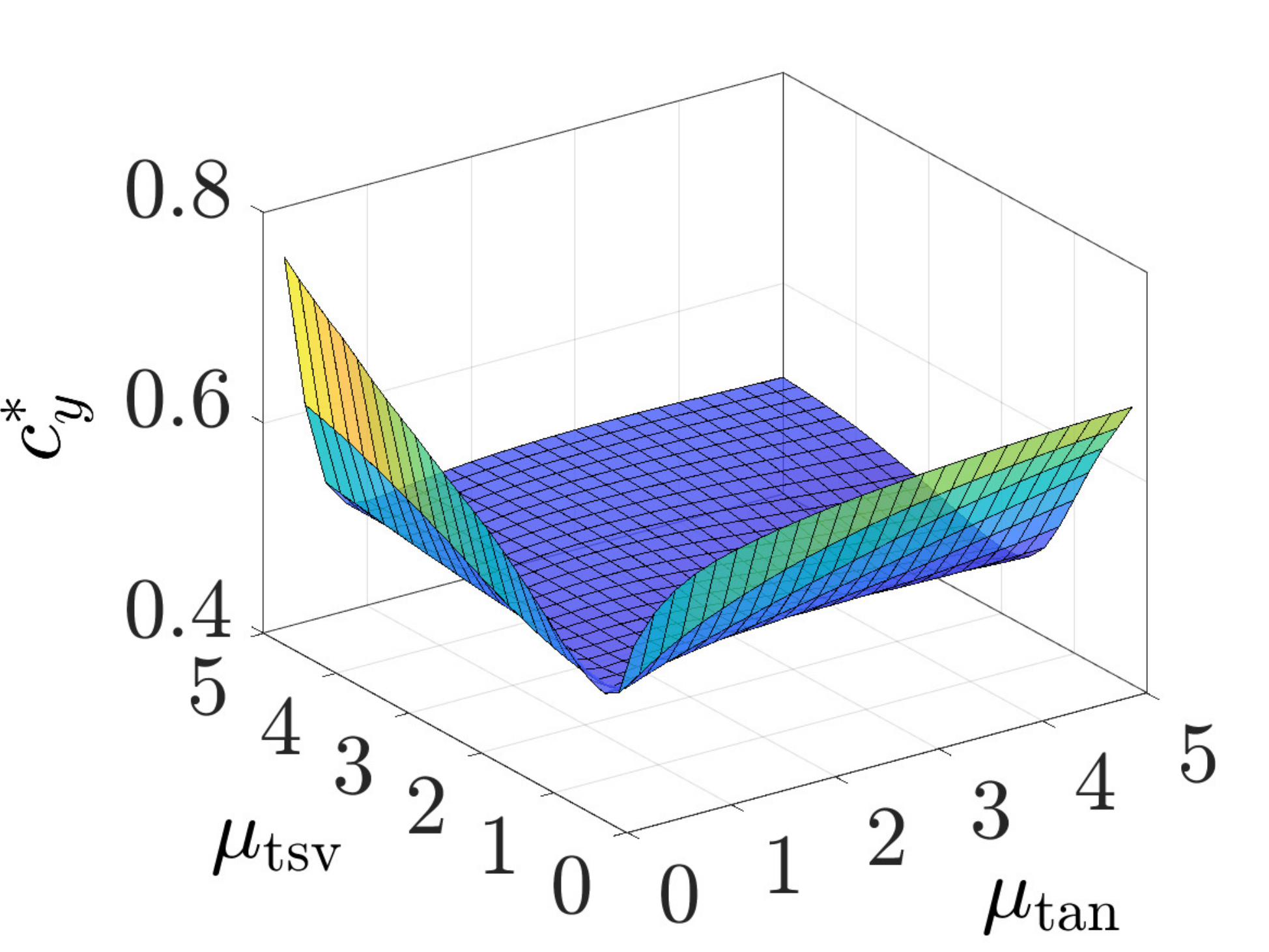}
	\end{subfigure}
	\begin{subfigure}[b]{0.325\textwidth}
		\centering
		\includegraphics[width = \textwidth]{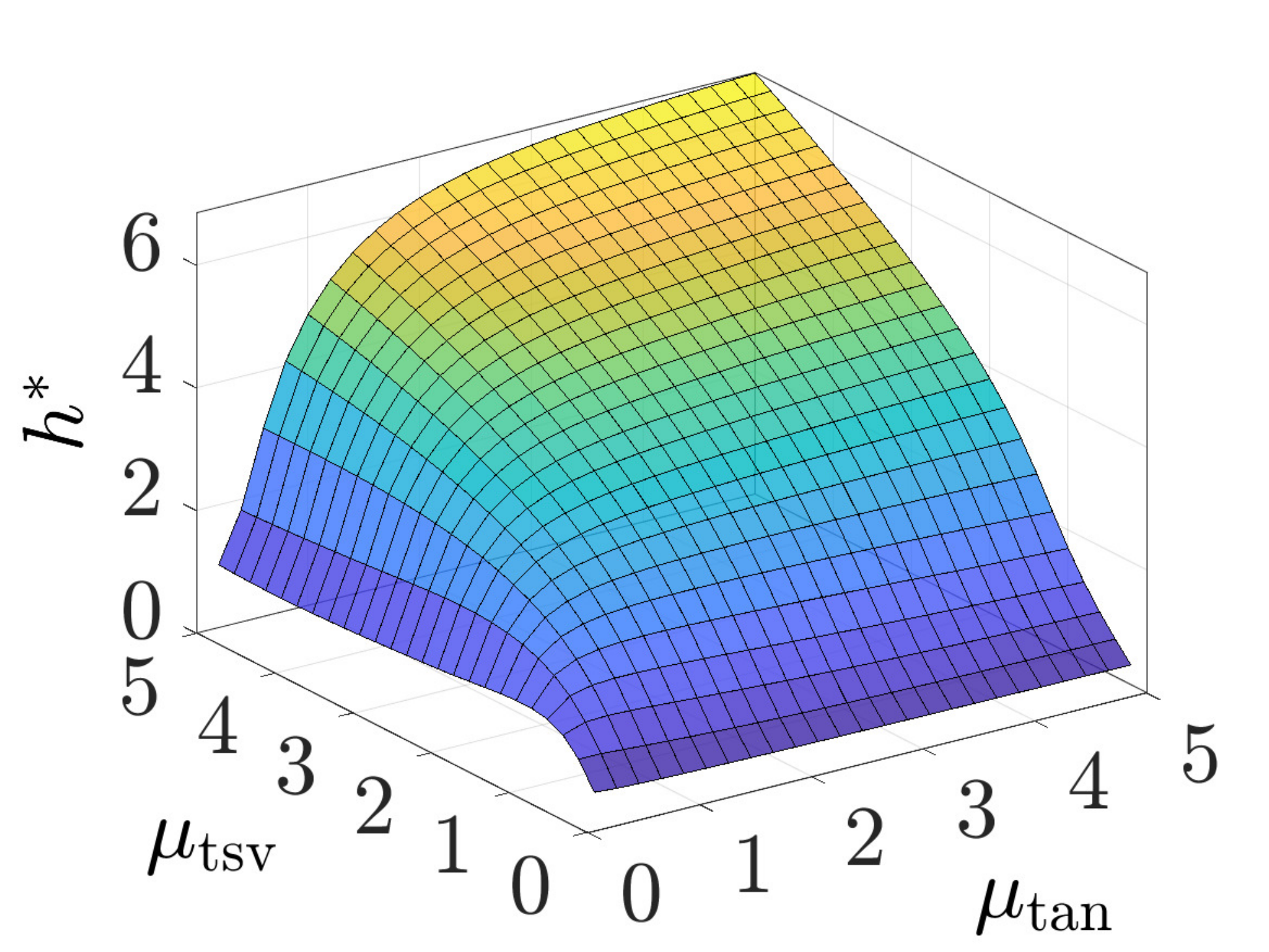}
	\end{subfigure}
	\caption{Sensitivity of minimizers with respect to elastic parameters.}
	\label{fig:sin_sensitivity_elastic}
\end{figure}

\subsection{3D real data}
We now propose an experiment using 3D data derived from the BIOCARD dataset \cite{miller2015amygdalar}, which is a longitudinal study of Alzheimer's disease. More precisely, the template and target shown in \cref{fig:real.data} were obtained by computing shape averages \cite{ma2010bayesian} of scans of the entorhinal cortex of subjects diagnosed with mild cognitive impairment in the cohort, using scans at the beginning of the study for the template, and after ten years of study for the target (the study is still ongoing with new scans being acquired). Participants enrolled in the BIOCARD cohort were all cognitively normal when the MRI scans were first acquired so that any observed atrophy in these brain volumes  among those who progress to cognitive impairment provides significant information.  

The layered structure on the source volume was inferred using the algorithm defined in \cite{ratnanather20183d,younes2019normal}, which uses a normal propagation scheme between the lower and upper surfaces delimiting the shapes.  
The initial potential function estimated in this experiment is a sum of two compactly-supported functions such as defined in equation \cref{eq:g.theta}. \Cref{fig:real.data.results} summarizes the result that were obtained, with the location of the estimated potential and the resulting deformation. Note that these results are only provided here as an illustration of the proposed method, and we do not attempt to provide any new explanation yet on the pathogenesis of the disease. We hope, however, that this method may lead to new developments in this context in future work.

\begin{figure}[hbt!]
	\centering
	\begin{subfigure}{0.48\textwidth}
		\centering
		\includegraphics[trim = 0 40 0 50, clip, width = \textwidth]{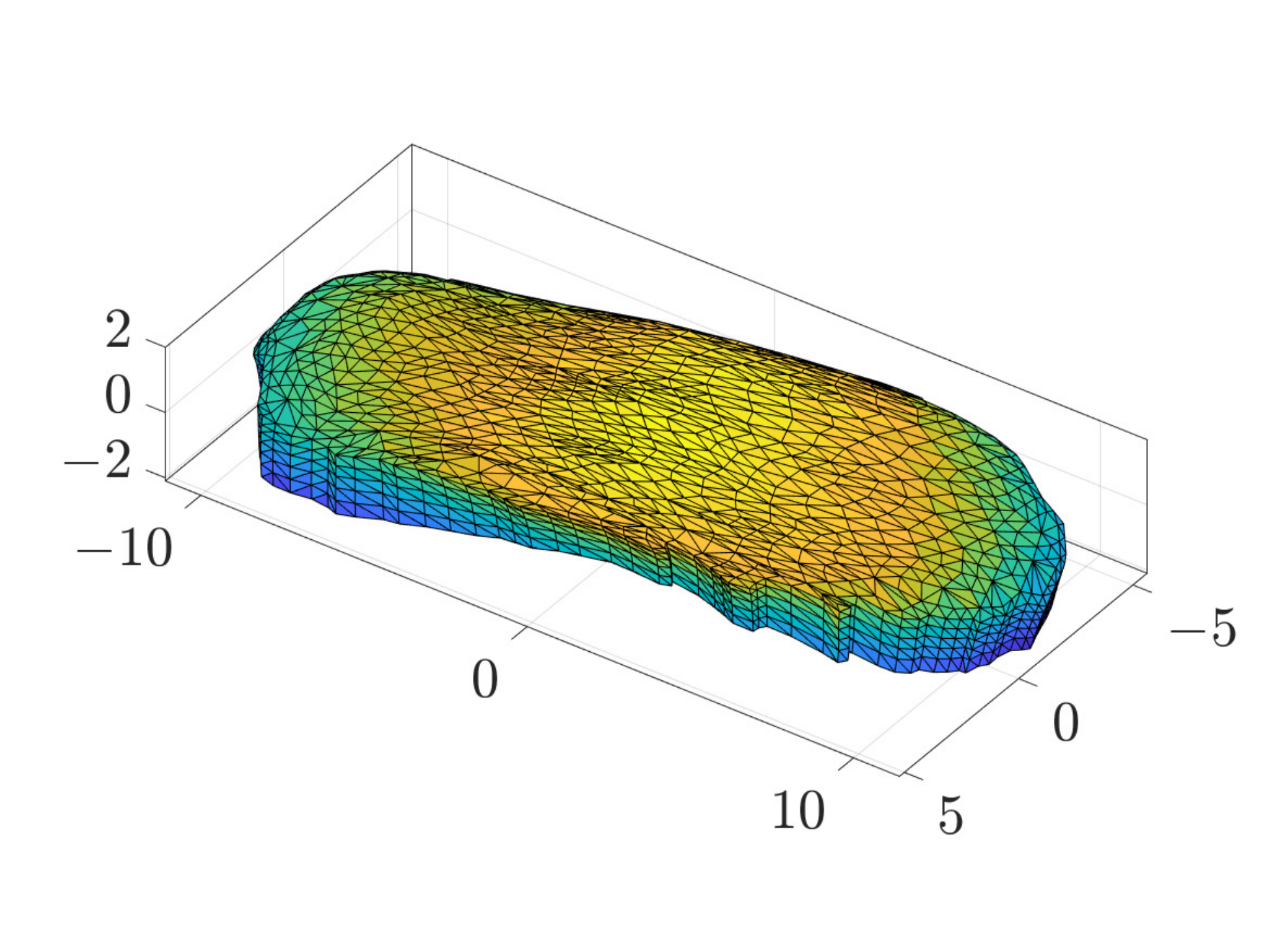}
		\caption{Template.}
	\end{subfigure}
	\begin{subfigure}{0.48\textwidth}
		\centering
		\includegraphics[trim = 0 40 0 50, clip, width = \textwidth]{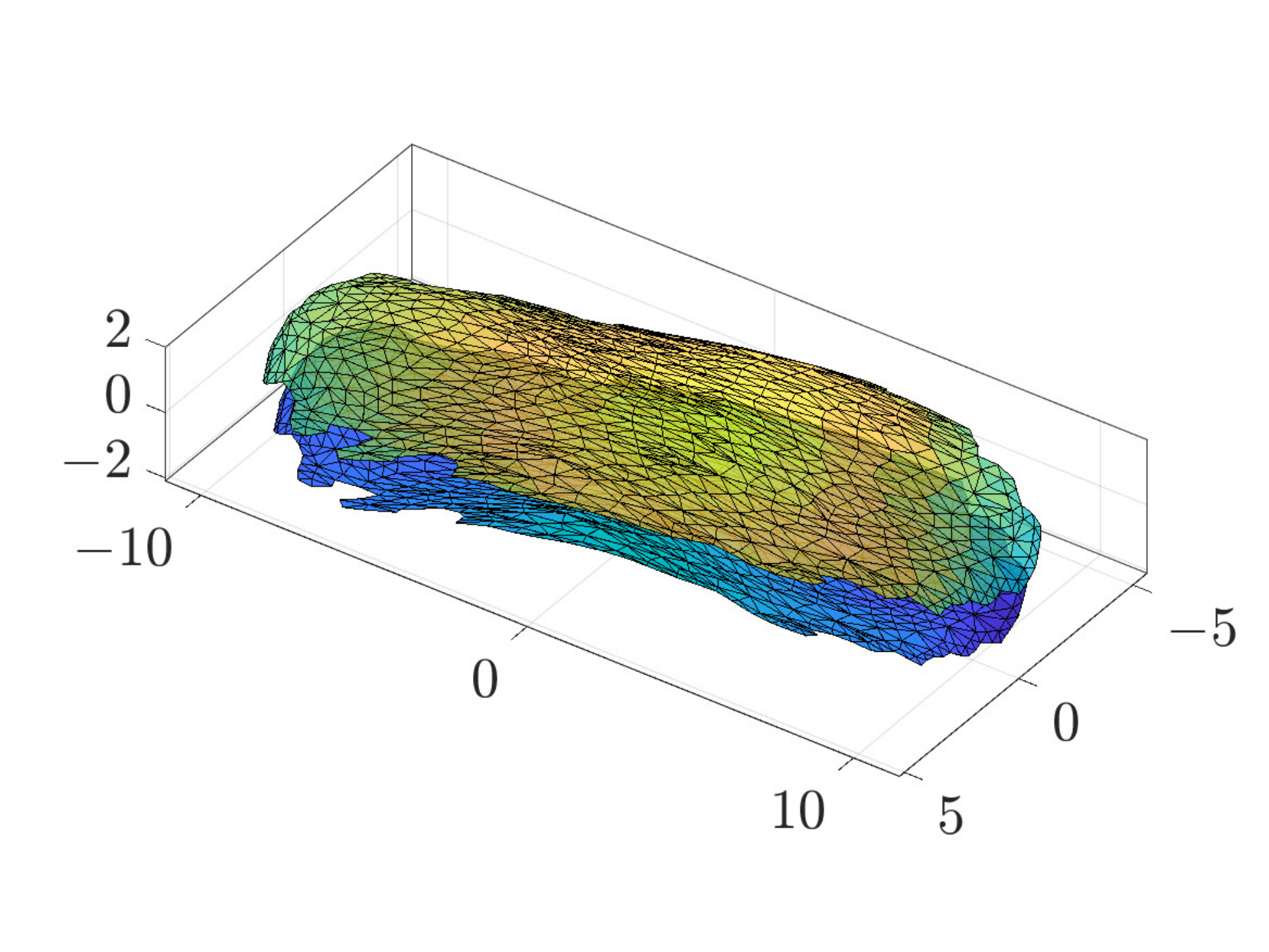}
		\caption{Target.}
	\end{subfigure}
	\\[15pt]
	\begin{subfigure}{\textwidth}
		\centering
		\includegraphics[trim = 0 20 0 10, clip, width = 0.48\textwidth]{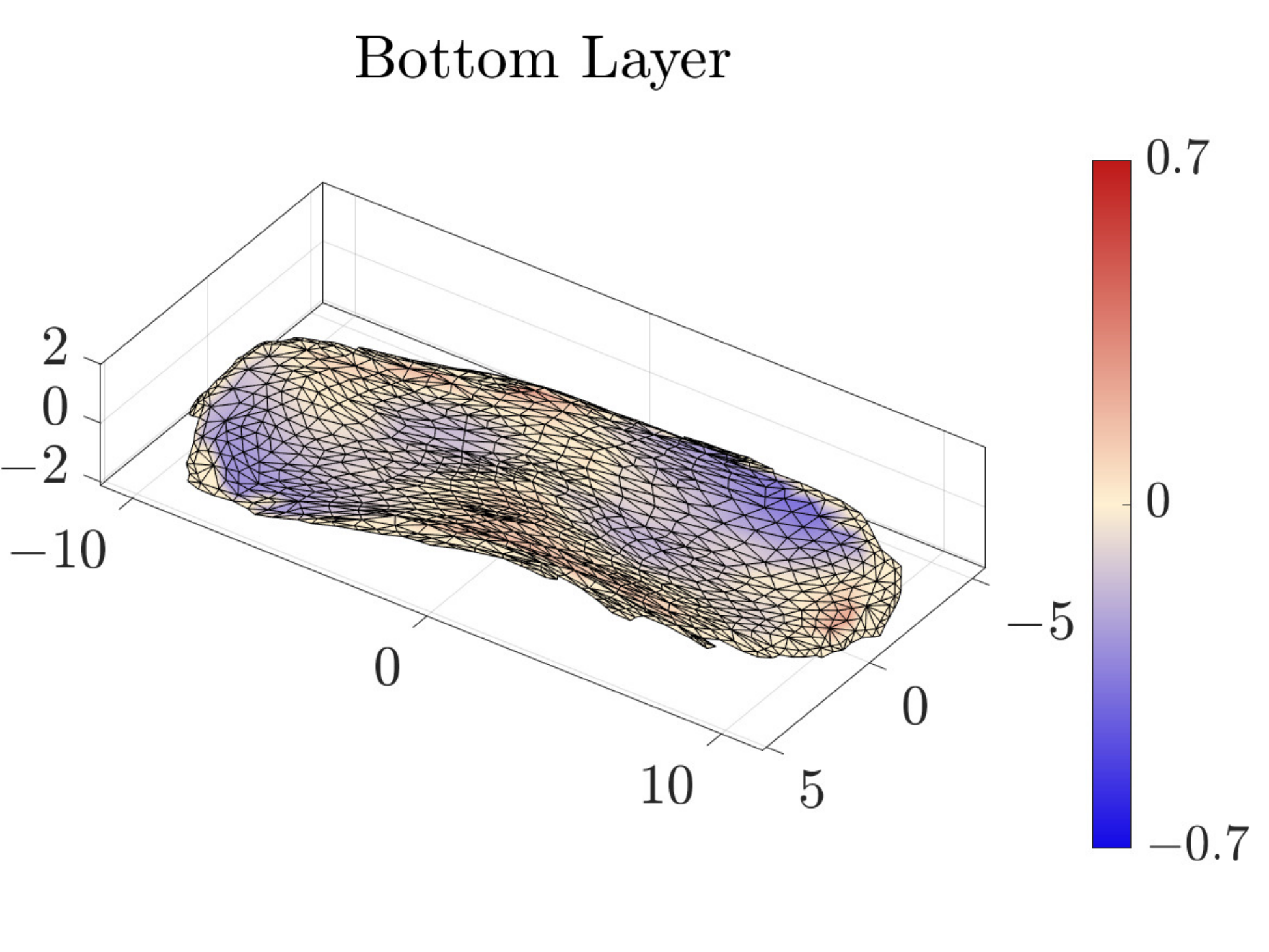}
		\includegraphics[trim = 0 20 0 10, clip, width = 0.48\textwidth]{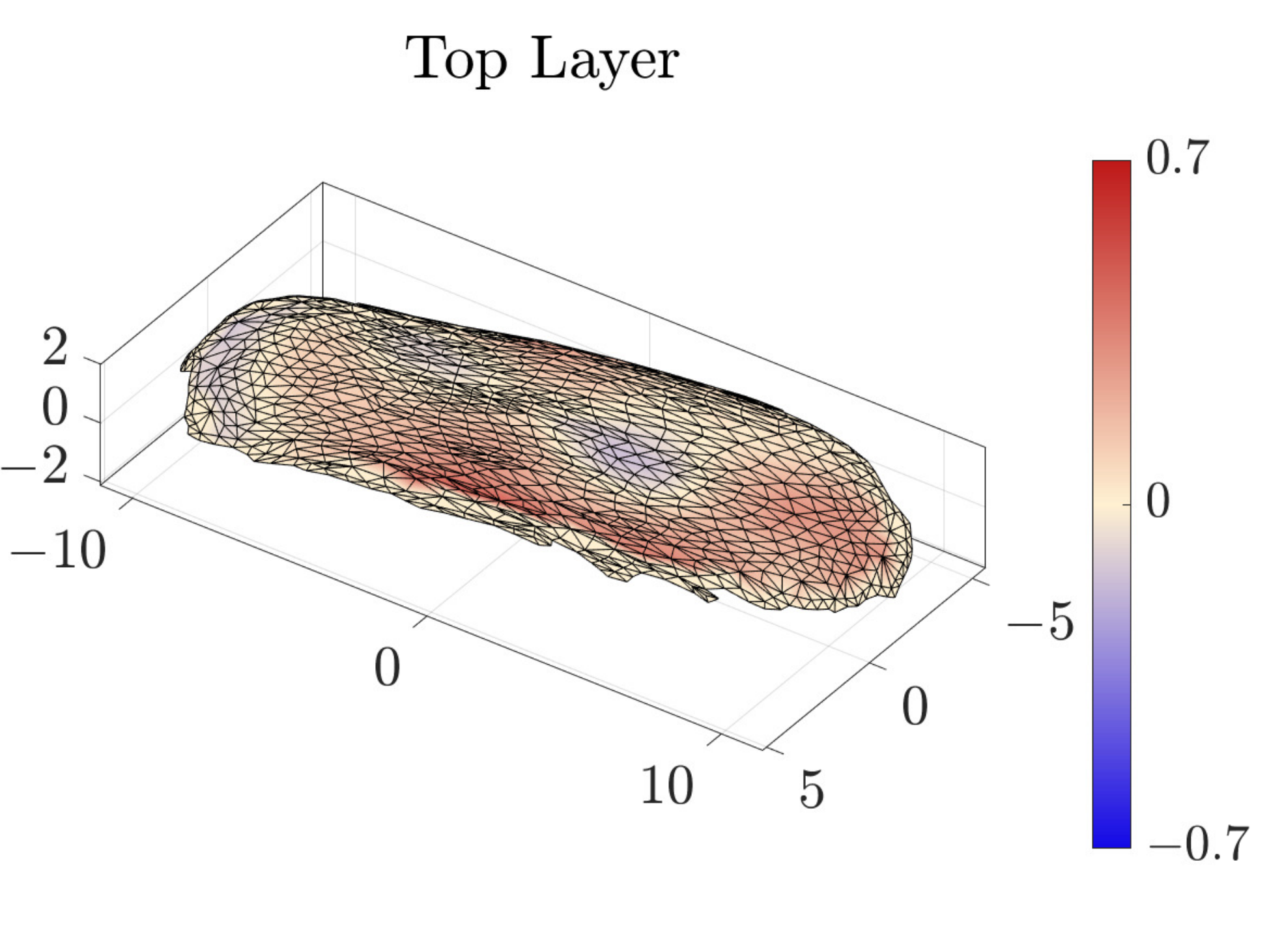}
		\caption{Comparison between the template and target. The figures show the bottom and top layer of the template, and the color represents the $z$-coordinate of the template minus the one of the target.}
	\end{subfigure}
	\caption{\label{fig:real.data} Entorhinal cortex volumes averaged over multiple patients from the BIOCARD dataset. }
\end{figure}
\begin{figure}[hbt!]
	\begin{subfigure}{\textwidth}
		\centering
		\includegraphics[trim = 20 60 0 60, clip, width = 0.48\textwidth]{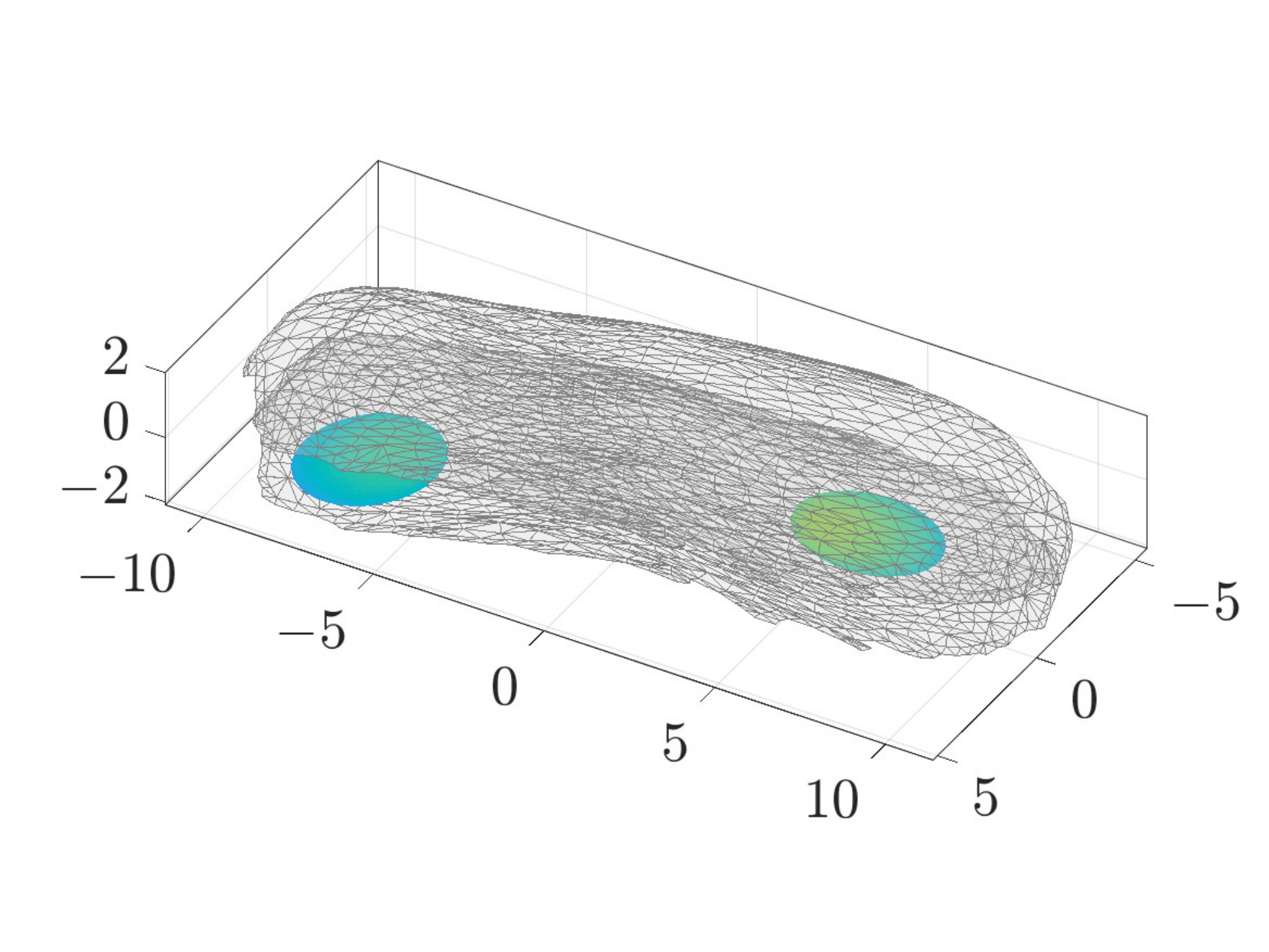}
		\includegraphics[trim = 20 60 0 60, clip, width = 0.48\textwidth]{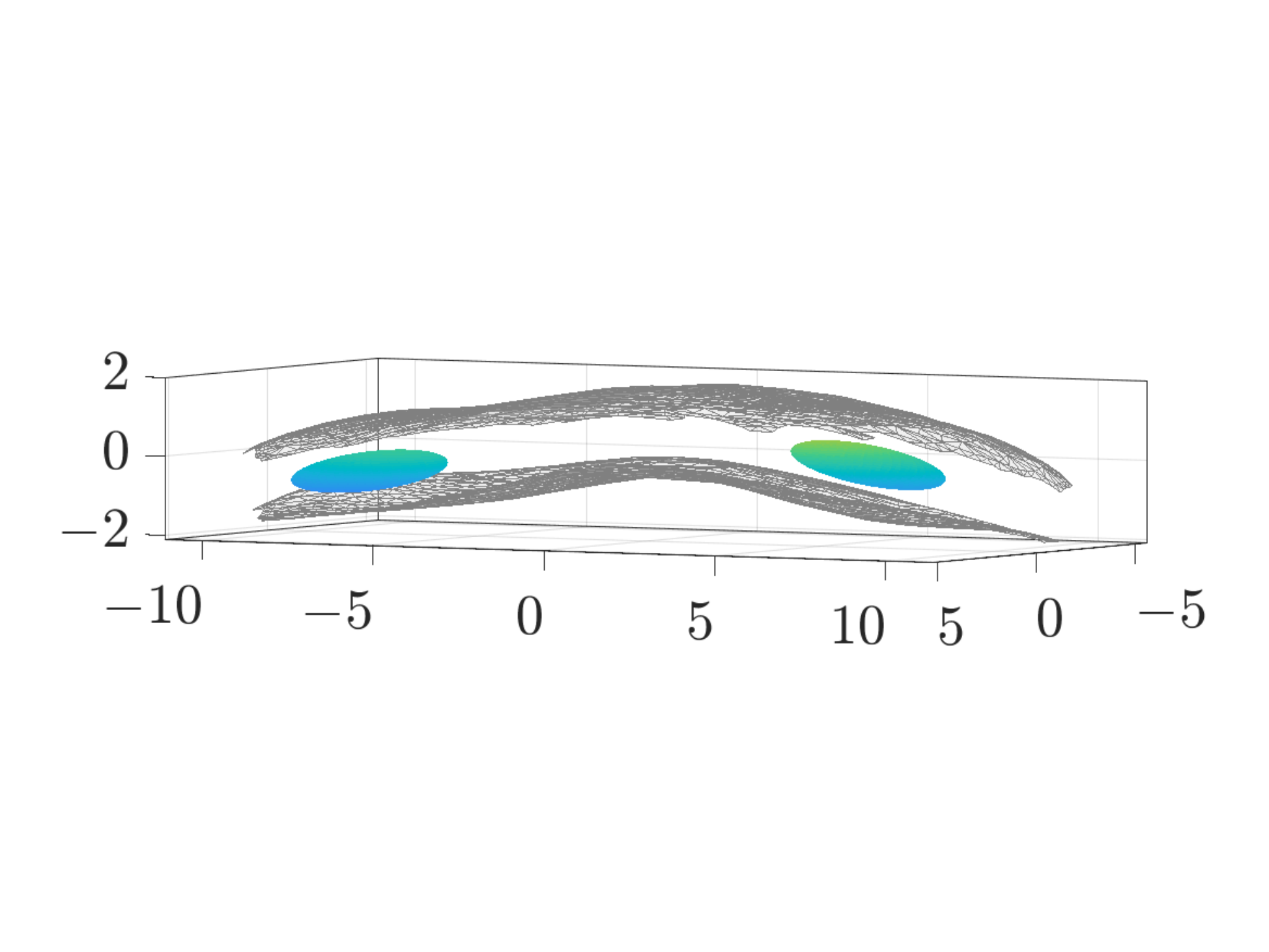}
		\caption{Support of potential.}
	\end{subfigure}
	\\[15pt]
	\begin{subfigure}{\textwidth}
		\centering
		\includegraphics[trim = 0 40 110 0, clip, width = 0.27\textwidth]{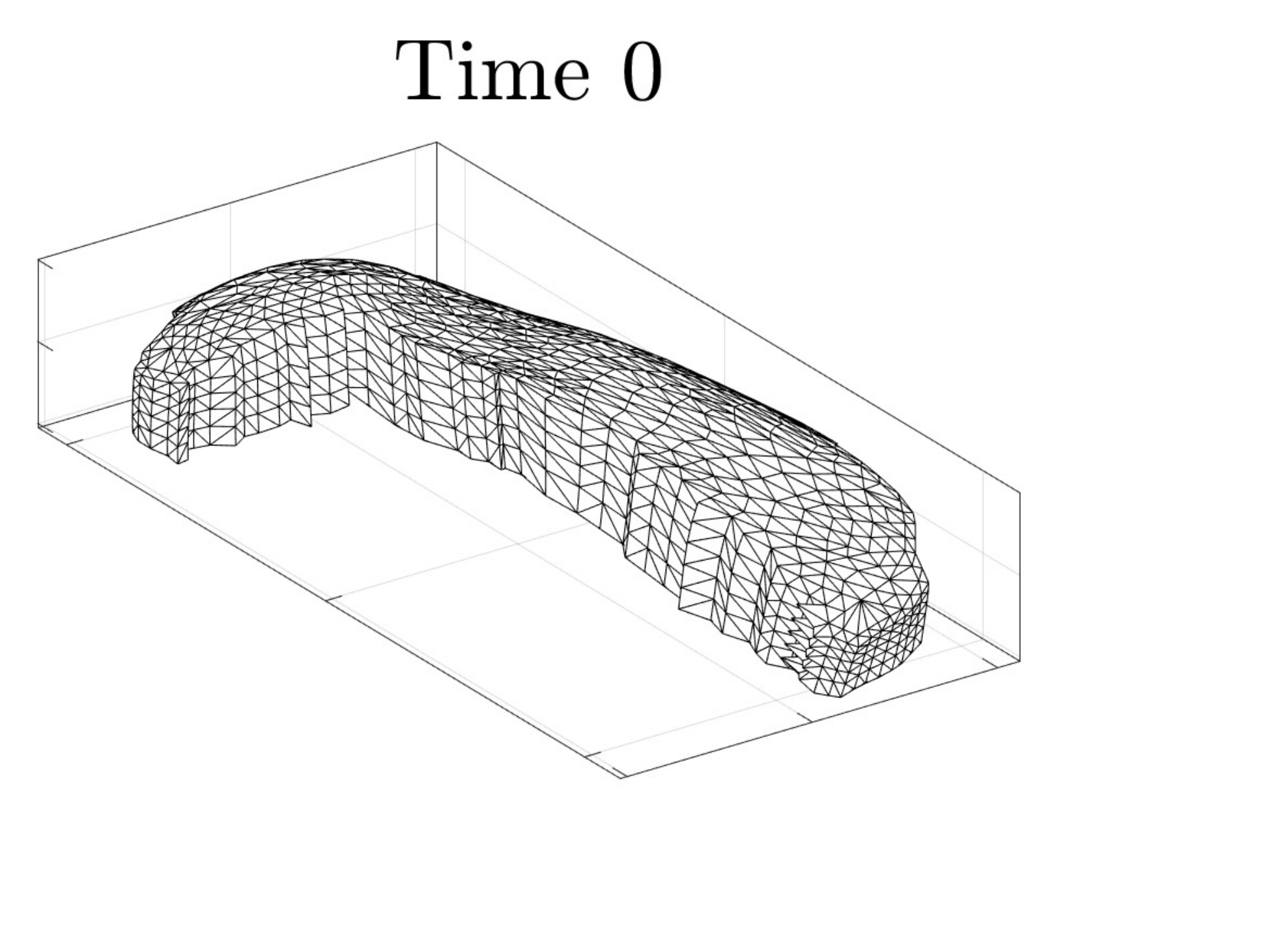}
		\includegraphics[trim = 0 40 110 0, clip, width = 0.27\textwidth]{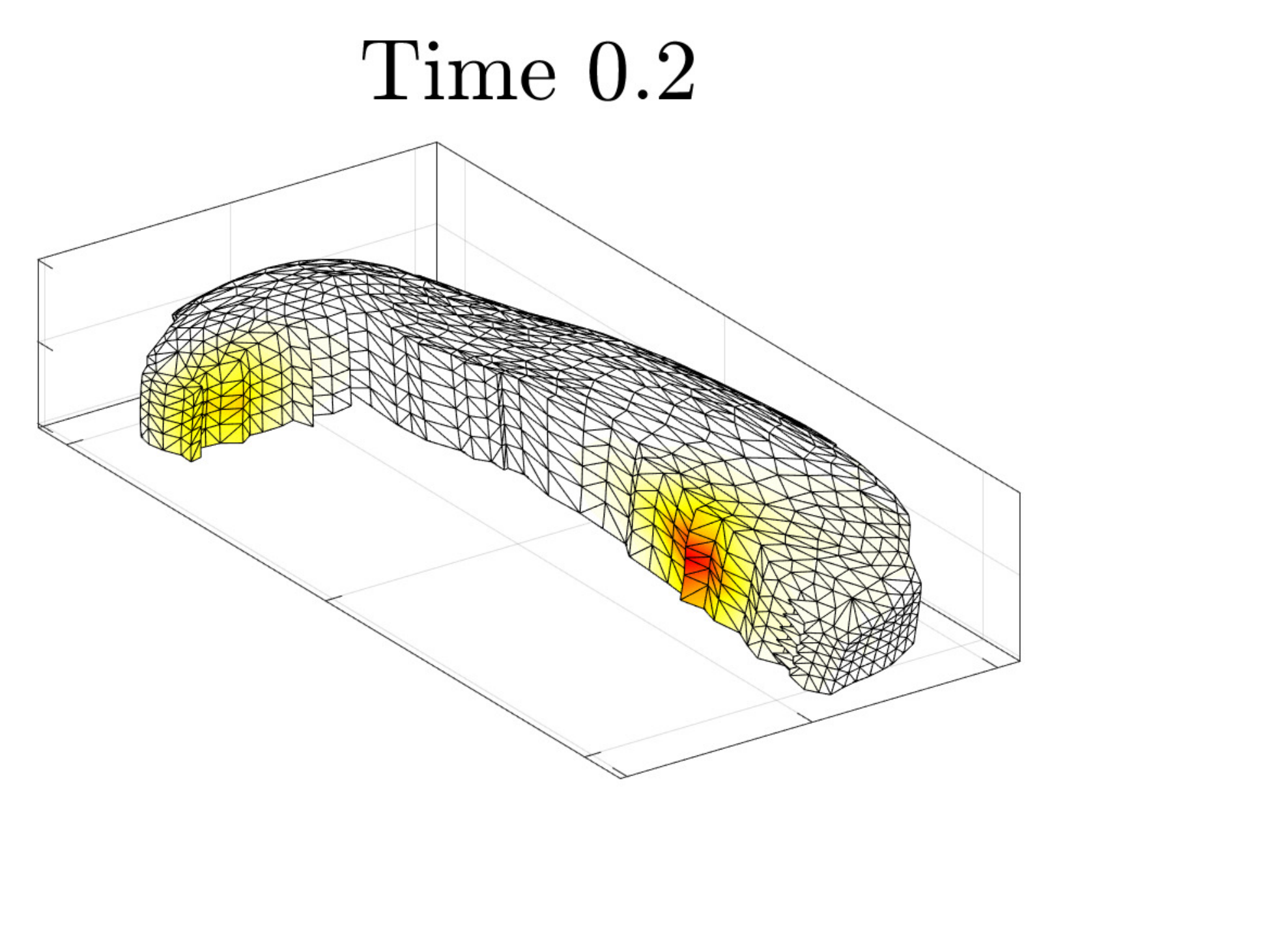}
		\includegraphics[trim = 0 40  20 0, clip, width = 0.32\textwidth]{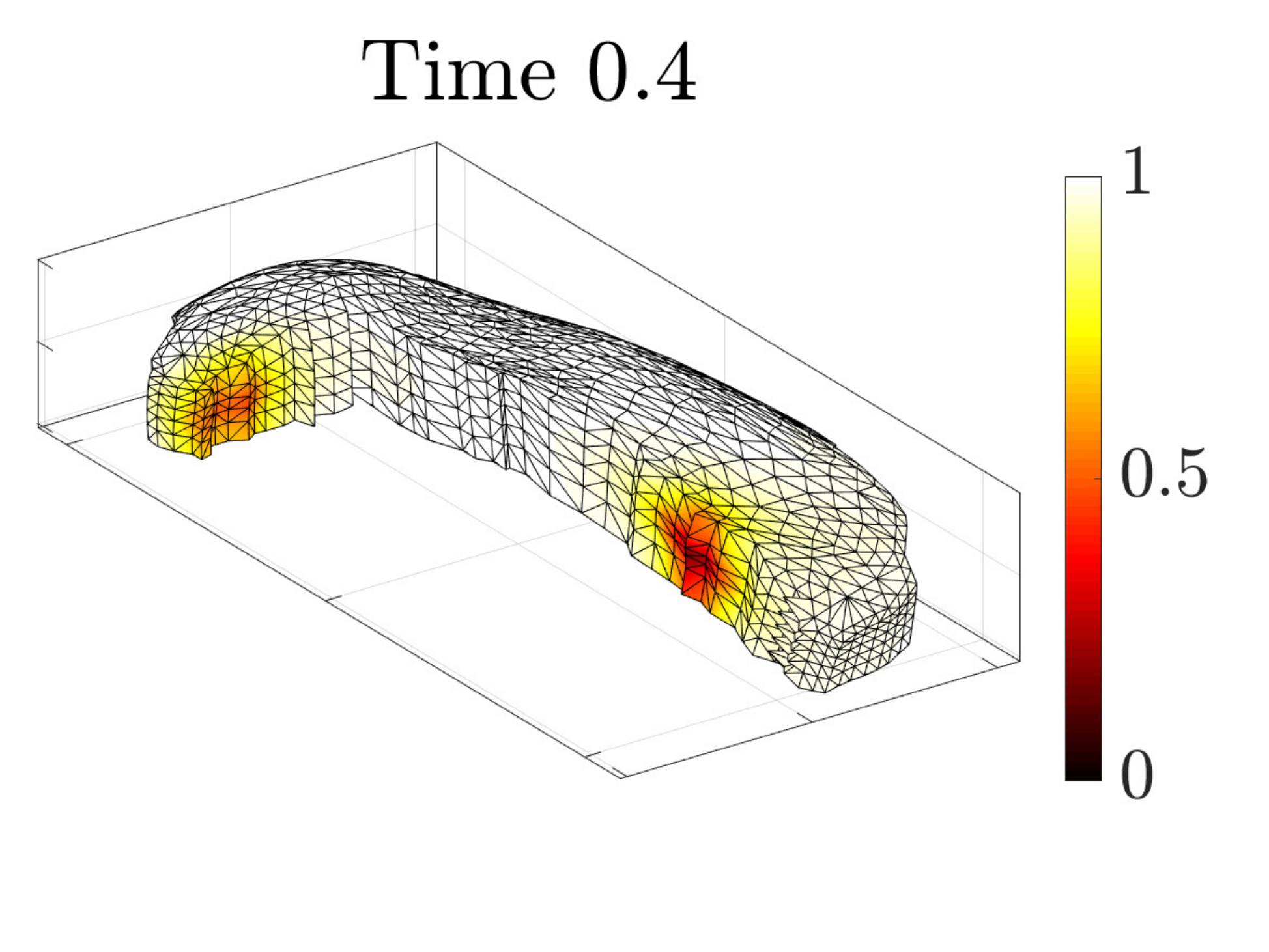}
		\\
		\includegraphics[trim = 0 40 110 0, clip, width = 0.27\textwidth]{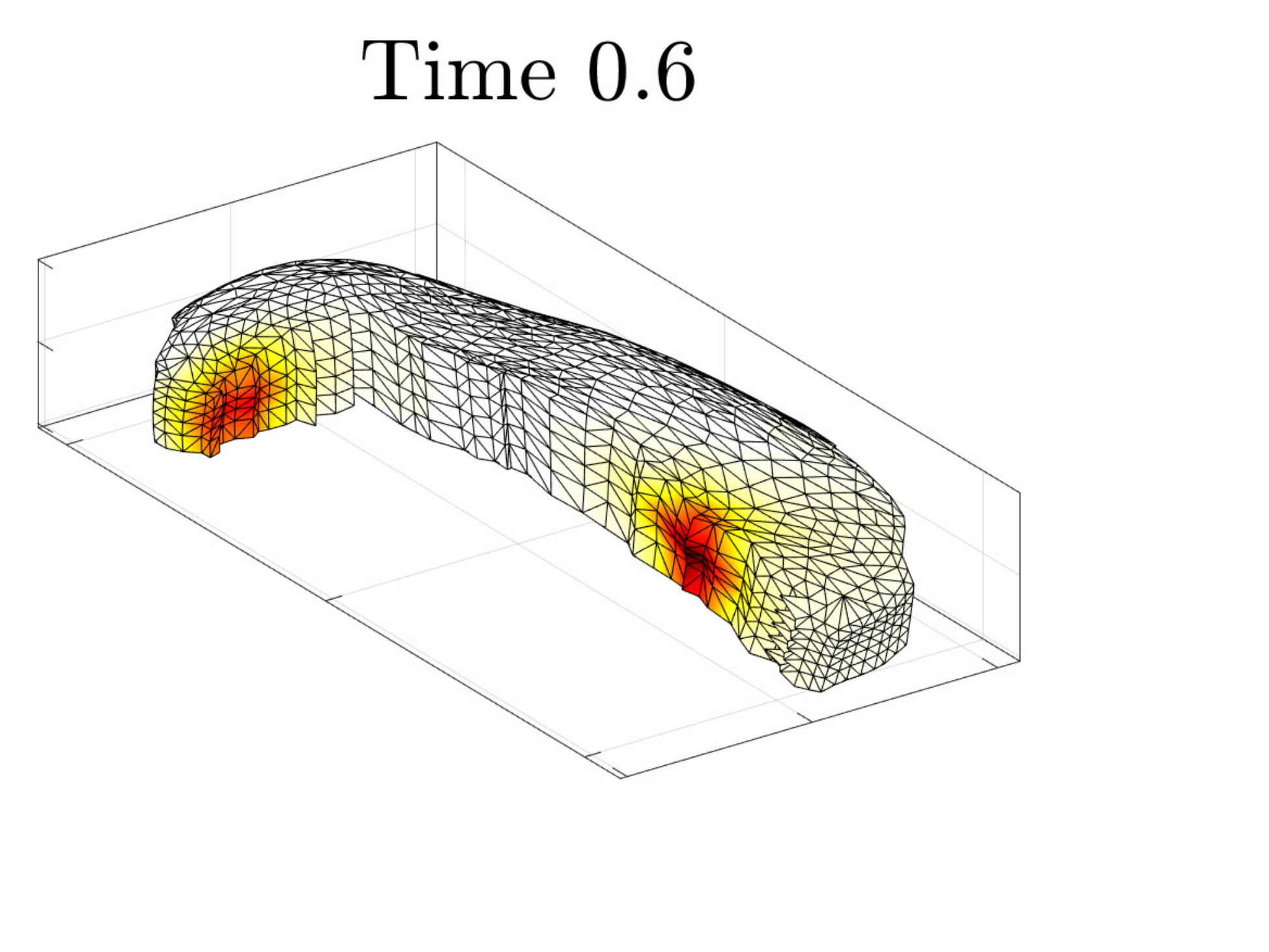}
		\includegraphics[trim = 0 40 110 0, clip, width = 0.27\textwidth]{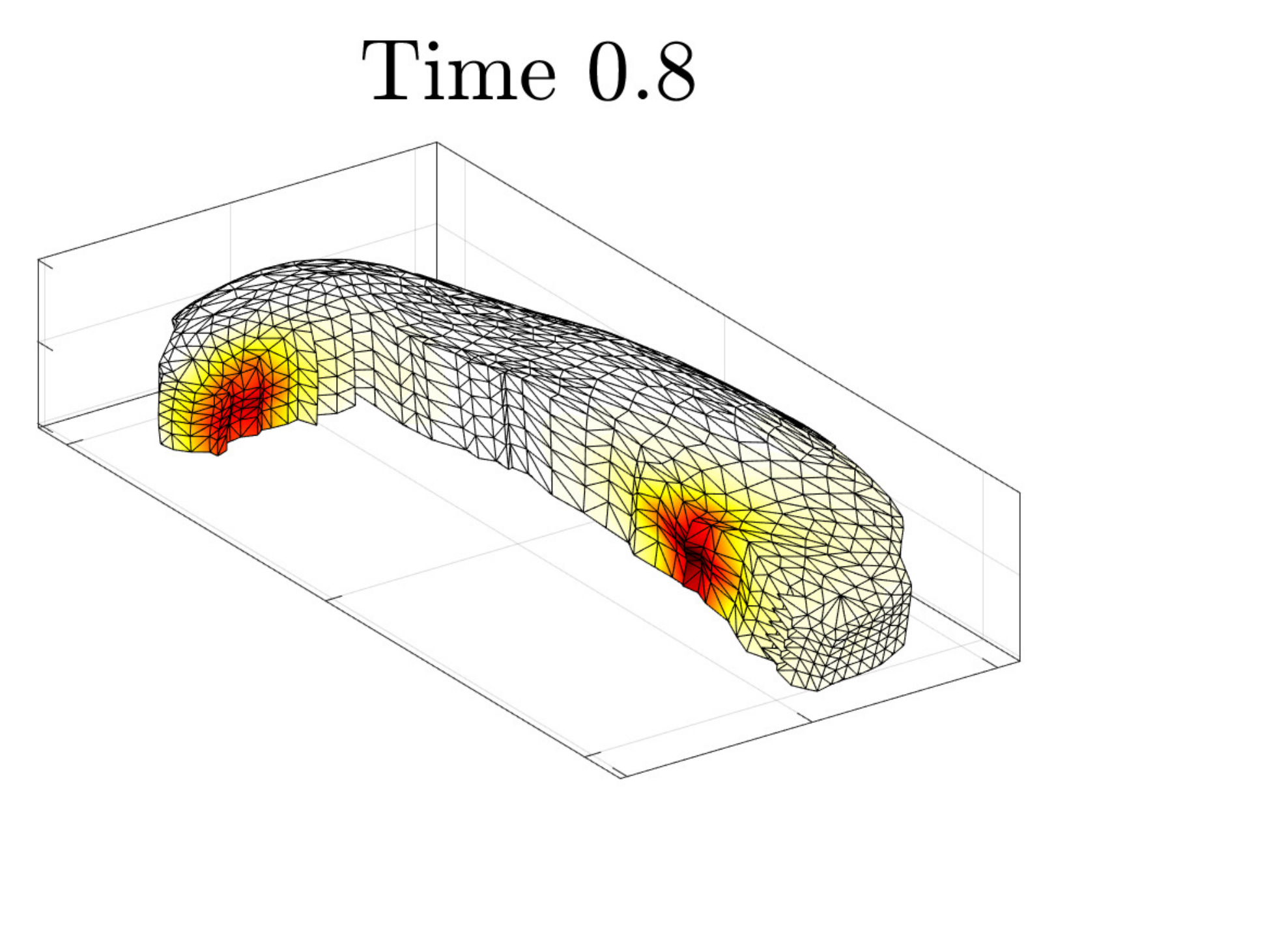}
		\includegraphics[trim = 0 40  20 0, clip, width = 0.32\textwidth]{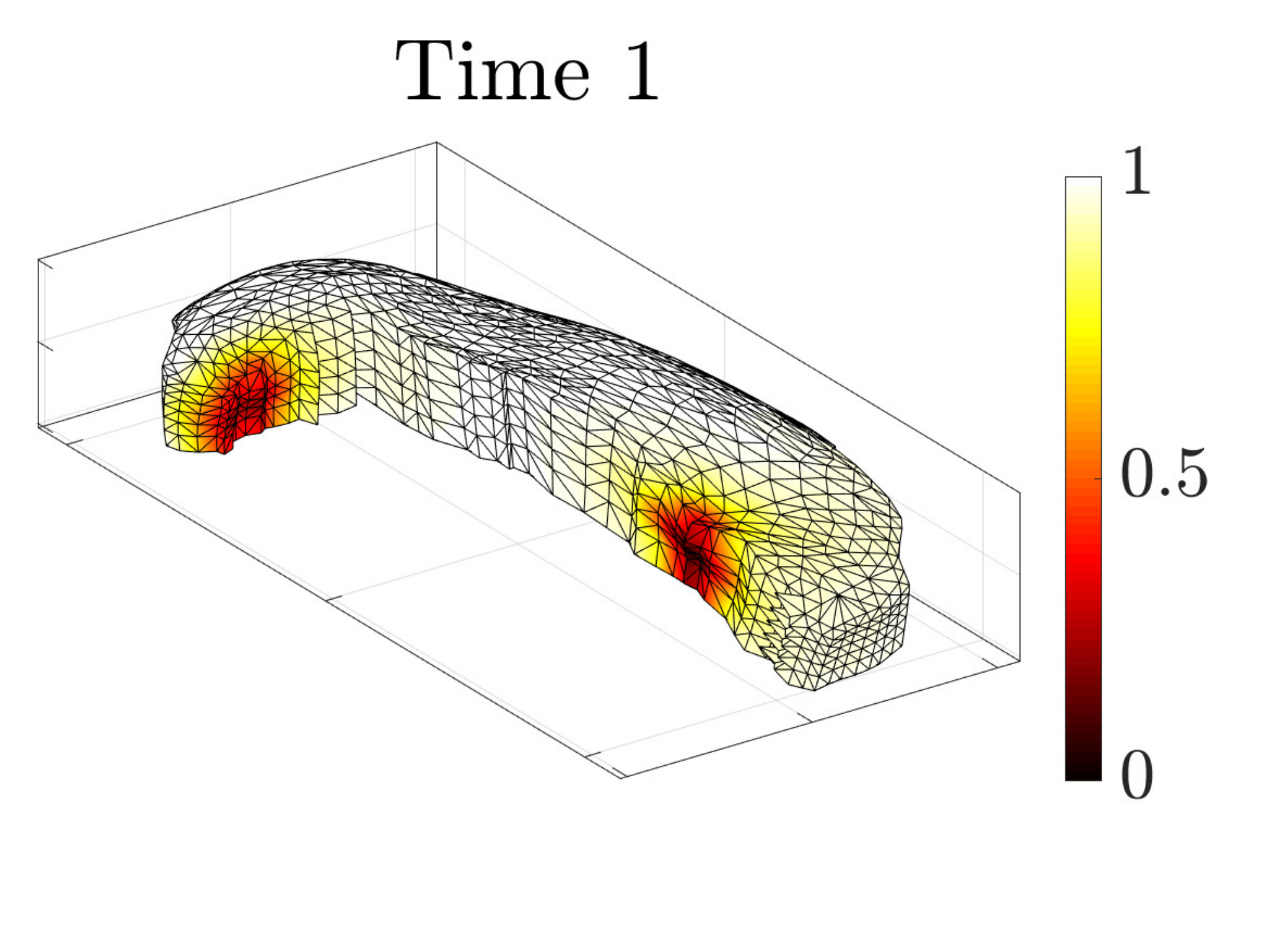}
		\caption{Deformation. The color represents the Jacobian $\det D\varphi(t)$.}
	\end{subfigure}
	\\[15pt]
	\begin{subfigure}{\textwidth}
		\centering
		\includegraphics[trim = 0 20 0 10, clip, width = 0.48\textwidth]{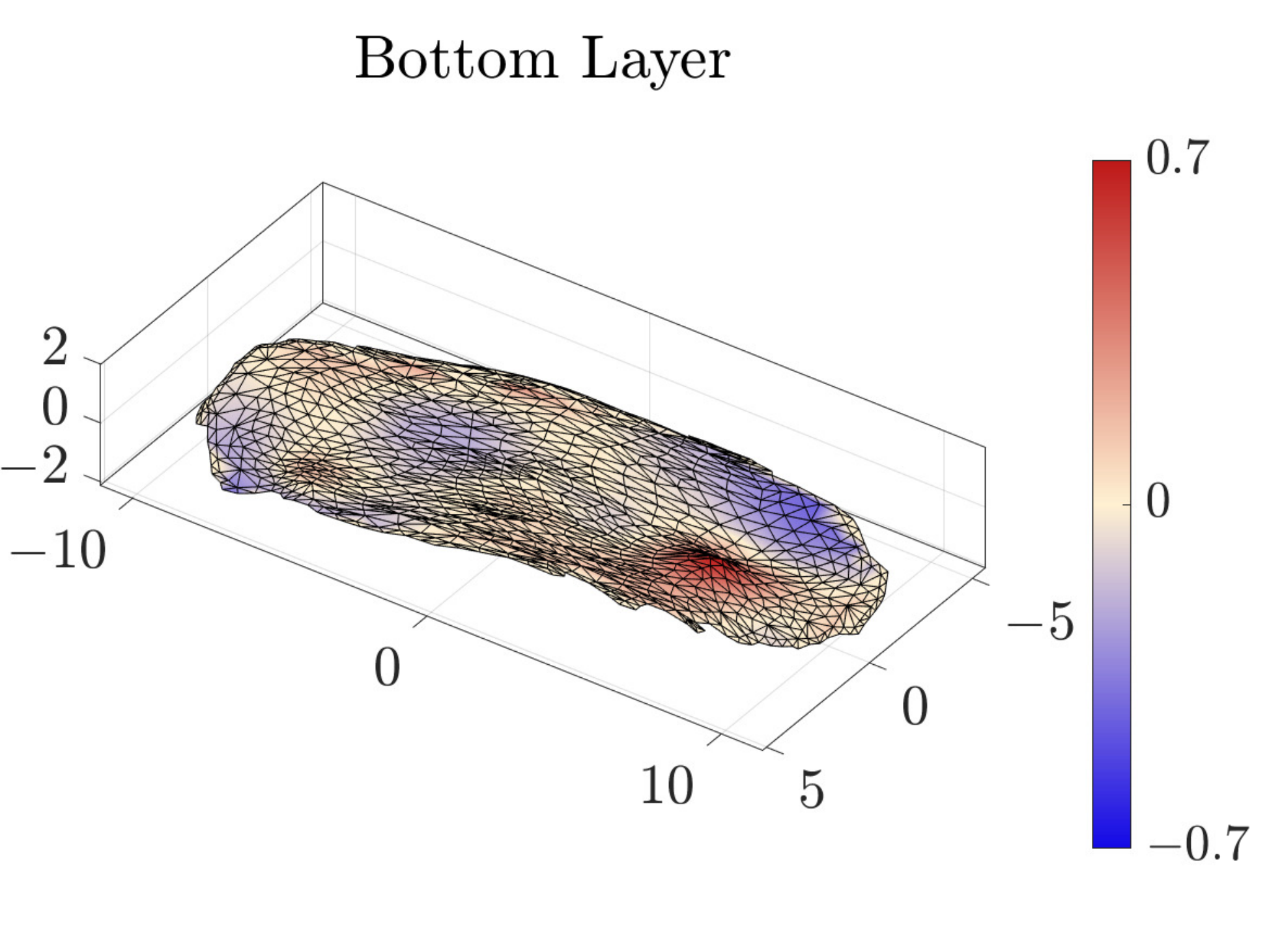}
		\includegraphics[trim = 0 20 0 10, clip, width = 0.48\textwidth]{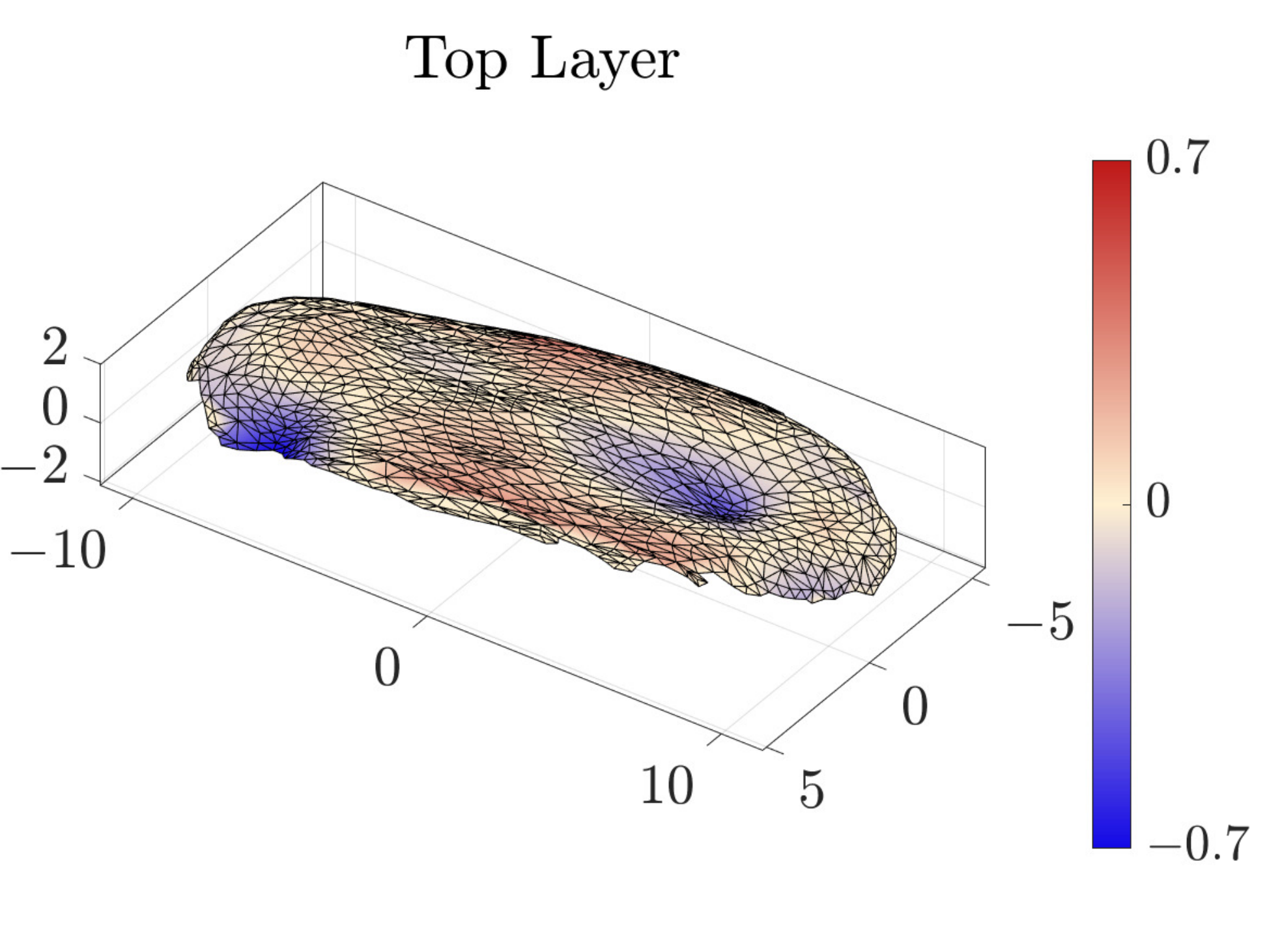}
		\caption{Registration result. The figures show the bottom and top layer of the deformed template, and the color represents the $z$-coordinate of the deformed template minus the one of the target.}
	\end{subfigure}
	\caption{\label{fig:real.data.results} Experimental results on averaged entorhinal cortex volumes.}
\end{figure}

\section{Proofs}
\label{sec:proofs}
We now prove  \cref{thm:ctrl_problem,thm:opt_problem}.
The two proofs being similar, it will be convenient to address together parts (i) of both theorems, and then parts (ii) together.

\subsection{Existence and uniqueness of solutions of controlled ODEs}
\label{sec:proofs.ode}
We will use the following version of local existence and uniqueness for controlled ordinary differential equations. In the following, we will say that a function $u$ defined on $[0,T]$ with values in a metric space $U$ is admissible if there exists a sequence of simple functions $u_n: [0,T] \to U$ such that $u_n(t)$ converges to $u(t)$ for almost every $t \in [0, T]$.
\begin{theorem}
\label{th:ode.control}
Let $U$ be a metric space, and let $O$ be an open subset of a Banach space $\mB$.
Let $F: U\times O \to \mB$ be a continuous function that satisfies the following properties:
\begin{enumerate}[label=(\arabic*),ref = Theorem \ref{th:ode.control}(\arabic*)]
\item
\label{th:ode.control:1}
There exists a function $\gamma: U \to (0, \infty)$ such that, for all $u\in U$, $y\in O$:
\[
\|F(u,y)\|_\mB \leq \gamma(u) \, (1 + \|y\|_\mB)\,.
\]
\item 
\label{th:ode.control:2}
For all $y_0\in O$, there exists $r_{y_0}>0$ and a function $\gamma_{y_0}: U \to (0, +\infty)$ such that for all $y,y'\in O$ with $\max(\|y-y_0\|_\mB, \|y'-y_0\|_\mB)\leq r_{y_0}$ and all $u\in U$
\begin{equation}
    \label{eq:loc.lip}
\|F(u,y) - F(u, y')\|_\mB \leq \gamma_{y_0}(u) \, \|y-y'\|_\mB\,.
\end{equation}

\end{enumerate}
Fix $T>0$ and
let $u: [0,T] \to U$ be admissible such that (i) $\gamma(u(t))$ is integrable on $[0,T]$ and (ii) for all $y_0\in O$, $\gamma_{y_0}(u(t))$ is integrable on $[0,T]$. 
Then, for all $y_0\in O$, there exists a largest $T_0\leq T$ such that the ODE
\[
\partial_t y(t) = F(u(t), y(t))
\]
has a unique solution on $[0,T_0)$ satisfying $y(0) = y_0$.\\
In addition, one has
\begin{equation}
    \label{eq:ode.control.bound}
\sup_{t \,\in\, [0, T_0)} \|y(t)\|_\mB \leq
C \exp\left(\int_0^{T} \gamma(u(s)) ds\right).
\end{equation}
where $C = \|y_0\|_{\mB} + \int_0^T \gamma(u(s)) ds$ is a constant that only depends on $y_0$ and on the control $u$.  Moreover, 
$y(t)$ has a limit in $\mB$ when $t$ tends to $T_0$ and, if $T_0<T$, then
$\lim_{t\to T_0} y(t) \not\in O$.

\end{theorem}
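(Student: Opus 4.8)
The plan is to run the classical Picard iteration (Banach fixed point) scheme, adapted to the Carath\'eodory setting in which the time dependence enters only through $L^1$ controls, and to an infinite-dimensional state space. Fix $y_0\in O$ and choose $r\in(0,r_{y_0}]$ small enough that the closed ball $\overline B\colonequals\{y\in\mB:\|y-y_0\|_\mB\le r\}$ is contained in $O$; being closed in a Banach space, $\overline B$ is complete, and hence so is $Y_\tau\colonequals C([0,\tau],\overline B)$ with the uniform metric, for any $\tau\in(0,T]$. On $Y_\tau$ define $\Phi(y)(t)=y_0+\int_0^t F(u(s),y(s))\,ds$. The first point to settle is that $s\mapsto F(u(s),y(s))$ is Bochner integrable for $y\in Y_\tau$: since $u$ is admissible it is the a.e.\ limit of simple functions $u_n$, each $s\mapsto F(u_n(s),y(s))$ is strongly measurable (on each piece where $u_n$ is constant it is continuous in $s$), so the a.e.\ limit $s\mapsto F(u(s),y(s))$ is strongly measurable by continuity of $F$; integrability then follows from \cref{th:ode.control:1}, which bounds the integrand by $\gamma(u(s))(1+\|y_0\|_\mB+r)$ with $\gamma(u(\cdot))\in L^1$.

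Using \cref{th:ode.control:1} once more gives $\|\Phi(y)(t)-y_0\|_\mB\le(1+\|y_0\|_\mB+r)\int_0^\tau\gamma(u(s))\,ds$, and \cref{th:ode.control:2} gives $\sup_t\|\Phi(y)(t)-\Phi(y')(t)\|_\mB\le\bigl(\int_0^\tau\gamma_{y_0}(u(s))\,ds\bigr)\sup_t\|y(t)-y'(t)\|_\mB$. By absolute continuity in the upper limit of the Lebesgue integrals $\int_0^\tau\gamma(u)$ and $\int_0^\tau\gamma_{y_0}(u)$, both coefficients tend to $0$ as $\tau\to0$; hence for $\tau$ small $\Phi$ maps $Y_\tau$ into itself and is a contraction, and its unique fixed point solves the ODE on $[0,\tau]$. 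Local uniqueness follows from the same Lipschitz bound: if $y_1,y_2$ solve the ODE on a common interval and agree at a point $a$, then with $z\colonequals y_1(a)=y_2(a)$ and both solutions staying (for $t$ near $a$) in a ball on which \cref{th:ode.control:2} holds, one has $\|y_1(t)-y_2(t)\|_\mB\le\int_a^t\gamma_z(u(s))\,\|y_1(s)-y_2(s)\|_\mB\,ds$, so Gronwall forces equality near $a$; a connectedness argument promotes this to equality on the whole common interval.

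Define $T_0$ as the supremum of all $\tau\in(0,T]$ for which a solution exists on $[0,\tau)$. By local existence $T_0>0$, and by local uniqueness all such solutions are consistent, so gluing them produces a unique solution $y$ on $[0,T_0)$, and $T_0$ is the largest time with this property. On $[0,T_0)$, \cref{th:ode.control:1} and the integral equation give $\|y(t)\|_\mB\le C+\int_0^t\gamma(u(s))\|y(s)\|_\mB\,ds$ with $C=\|y_0\|_\mB+\int_0^T\gamma(u(s))\,ds$, so Gronwall yields \eqref{eq:ode.control.bound}. In particular $\sup_{[0,T_0)}\|y\|_\mB<\infty$, so $\|F(u(s),y(s))\|_\mB\le\gamma(u(s))(1+\sup_{[0,T_0)}\|y\|_\mB)\in L^1([0,T_0))$, whence $\int_0^{T_0}F(u(s),y(s))\,ds$ converges absolutely and $y(t)=y_0+\int_0^tF(u(s),y(s))\,ds$ has a limit $y(T_0^-)\in\mB$ as $t\to T_0$. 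Finally, if $T_0<T$ and $y(T_0^-)\in O$, then rerunning the local existence argument from initial time $T_0$ and initial value $y(T_0^-)$ (the hypotheses on the control restrict to $[T_0,T]$ without change) extends $y$ beyond $T_0$, contradicting maximality; hence $y(T_0^-)\notin O$ whenever $T_0<T$.

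The work here is bookkeeping rather than conceptual: the care needed is to confine the fixed-point argument to a ball $\overline B\subset O$ small enough that both the inclusion in $O$ and the local Lipschitz bound of \cref{th:ode.control:2} hold, and to repeatedly exploit the absolute continuity in the upper limit of the integrals of $\gamma(u(\cdot))$ and $\gamma_{y_0}(u(\cdot))$ to accommodate a time dependence that is only $L^1$ rather than bounded. The one genuinely infinite-dimensional point, the strong measurability of $s\mapsto F(u(s),y(s))$, is handled by approximating the admissible control by simple functions and using the continuity of $F$.
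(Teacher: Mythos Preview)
Your proof is correct and follows essentially the same route as the paper: a Picard/Banach fixed-point argument on a closed ball inside $O$ on which both the inclusion and the local Lipschitz estimate hold, with the time step chosen small via absolute continuity of $\int_0^\tau\gamma(u)$ and $\int_0^\tau\gamma_{y_0}(u)$, followed by Gronwall for \eqref{eq:ode.control.bound} and the standard maximality argument for the blow-up alternative. You are actually a bit more explicit than the paper in justifying Bochner integrability of $s\mapsto F(u(s),y(s))$ (which the paper defers to the reader) and in spelling out the gluing/uniqueness step.
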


\begin{remark}
By a solution of $\partial_t y(t) = F(u(t), y(t))$ over $[0, T_{0})$ we mean a continuous function $y: [0. T_0) \to O$ satisfying
\[
y(t) = y_{0} + \int_{0}^{t} F(u(s), y(s)) ds
\]
for all $t\in [0, T_{0})$. The integral on the right-hand side is the Bochner integral. The fact that  $s \mapsto F(u(s), y(s))$ is Bochner integrable  is always true under the assumptions of \cref{th:ode.control}, the proof being left to the reader.

Note that, letting $y(T_0) = \lim_{t\to T_0} y(t)$, we have, taking limits on both sides,  
\[
y(T_0) = y_{0} + \int_{0}^{T_0} F(u(s), y(s)) ds
\]
\end{remark}

\begin{proof}
The proof of the theorem is given here for completeness, and because the statement slightly deviates from that usually found in standard textbooks.

Fix $y_0\in O$ and $r_{y_0}, \gamma_{y_0}$ such that \cref{eq:loc.lip} is true. Let $\Omega$ denote the closed ball of center $y_0$ and radius $r_{y_0}$ in $\mB$. Reducing $r_{y_0}$ if needed, assume that $\Omega\subset O$. Introduce a small $\eta>0$, let $I_\eta = [0,\eta]$ and let $\mathcal B_{\eta}$ denote the complete metric space of continuous functions $y: I_\eta \to \Omega$ equipped with the metric $d(y, y') = \sup_{t \,\in\, [0, \eta]} \|y(t) - y'(t)\|_{\mB}$. On this space, define, for $t\in I_\eta$,
\[
\Gamma(y)(t) = y_0 + \int_0^t F(u(s), y(s)) ds.
\]
Then, if $y\in \mathcal{B}_{\eta}$,
\[
\|\Gamma(y)(t) - y_0\|_\mB \leq (1+ \|y_0\|_{\mB} + r_{y_0}) \int_0^\eta \gamma(u(s)) ds
\]
so that $\Gamma$ maps $\mathcal{B}_{\eta}$ onto itself as soon as $\int_0^\eta \gamma_0(u(s)) ds\leq r_{y_0}/(1+ \|y_0\|_{\mB} + r_{y_0})$. Moreover, for all $t\in I_\eta$ and all $y,y' \in \mathcal B_\eta$,
\[
d(\Gamma(y) - \Gamma(y')) \leq \left(\int_0^\eta \gamma_{y_0}(u(s))ds\right) d(y, y'),
\]
which shows that $\Gamma$ is a contraction if  $\int_0^\eta \gamma_{y_0}(u(s))ds<1$. Taking $\eta$ small enough, this shows that $\Gamma$ has a unique fixed point in $\mathcal B_\eta$ and therefore proves local existence and uniqueness.

Now consider a solution $y$ defined on $[0, T_0)$ with $T_0\leq T$. For all $0\leq t<T_0$:
\begin{align*}
 \|y(t)\|_\mB &\leq \|y_0\|_\mB +  \int_0^{t} \gamma(u(s)) (1+\|y(s)\|_\mB) ds \\
 &\leq \|y_0\|_\mB + \int_0^T \gamma(u(s)) ds + \int_0^t \gamma(u(s)) \|y(s)\|_\mB ds ,
\end{align*}
which by Gronwall's inequality leads to 
\[
 \|y(t)\|_\mB \leq \left(\|y_0\|_\mB + \int_0^T \gamma(u(s)) ds \right) \exp\left(\int_0^{T} \gamma(u(s)) ds\right) 
\]
and \cref{eq:ode.control.bound} follows by taking the supremum over all $t\in[0,T_0)$.
Using this bound of solutions, we get that for all $t,t'\in [0, T_0)$:
\[
\|y(t') - y(t)\|_\mB \leq \int_t^{t'} \gamma(u(s)) (1+\|y(s)\|_\mB) ds
\leq \widetilde{C} \int_t^{t'} \gamma(u(s)) ds
\]
for some constant $\widetilde{C}$ that depends on $y_0$ and $u$. This shows 
that $y$ can be extended by continuity to $[0, T_0]$. Denote the limit by $y(T_0)$: if $T_0<T$ and $y(T_0) \in O$, the solution can be extended further and we get a contradiction to the assumption that $T_0$ defines the largest interval.
This concludes the proof of \cref{th:ode.control}.
\end{proof}
\bigskip

\subsection{Proof of \hyperref[thm:ctrl_problem_ode]{Theorem}~\ref{thm:ctrl_problem_ode}}
\label{subsec:proof_ctrl_problem_ode}
We now return to our original problem and first note, by stating the following lemma, that $v(t)$ in \cref{eq:ctrl_problem_system} is well defined.
\begin{lemma}
\label{lemma:minimizer}
If $j \in V^*$ and $\varphi \in \Diffid{1}{\mathbb{R}^3}$, then
\[
	f(v)
	=
	\frac{\reg}{2} \, \|v\|_V^2
	+
	\frac{1}{2} \, (A_\varphi \, v \mid v)
	-
	(j \mid v)
\]
has a unique minimizer $v_{\varphi, \hspace{1pt} j}$ in $V$ given by $v_{\varphi, \hspace{1pt} j} = L_\varphi^{-1} j$, where $L_\varphi = \reg K_V^{-1} + A_\varphi \in \mathscr{L}(V, V^*)$.
\end{lemma}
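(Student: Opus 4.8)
The plan is to recognize $f$ as a strictly convex, coercive quadratic form on the Hilbert space $V$ and to identify its unique critical point by completing the square.

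First I would record the structural facts. Since $\|v\|_V^2 = (L_V\, v \mid v)$ and $K_V^{-1} = L_V$ by the definitions in \cref{sec:notation}, one can write $f(v) = \tfrac12 (L_\varphi\, v \mid v) - (j \mid v)$ with $L_\varphi = \reg\, L_V + A_\varphi$. Because $A_\varphi \in \sym(V, V^*) \subset \mathscr{L}(V, V^*)$ and $L_V \in \mathscr{L}(V,V^*)$, the operator $L_\varphi$ is a bounded symmetric operator from $V$ to $V^*$; and using the hypothesis $(A_\varphi\, v \mid v) \geq 0$ together with $(L_V\, v \mid v) = \|v\|_V^2$, it is coercive, with coercivity constant $\reg$ independent of $\varphi$:
\[
(L_\varphi\, v \mid v) = \reg\, \|v\|_V^2 + (A_\varphi\, v \mid v) \geq \reg\, \|v\|_V^2 , \qquad v \in V .
\]
By the Lax--Milgram theorem, the bounded coercive bilinear form $(v,w) \mapsto (L_\varphi\, v \mid w)$ then yields a bounded inverse $L_\varphi^{-1} \in \mathscr{L}(V^*, V)$, so that $v_{\varphi,\, j} \colonequals L_\varphi^{-1} j$ is a well-defined element of $V$.

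Next I would identify and characterize the minimizer. Using symmetry of $L_\varphi$ to expand the cross terms, completing the square gives, for every $v \in V$,
\[
f(v) = \tfrac12 \bigl( L_\varphi(v - L_\varphi^{-1} j) \mid v - L_\varphi^{-1} j \bigr) - \tfrac12 (L_\varphi^{-1} j \mid j) .
\]
By coercivity the first term is bounded below by $\tfrac{\reg}{2}\, \|v - L_\varphi^{-1} j\|_V^2 \geq 0$, with equality if and only if $v = L_\varphi^{-1} j$. Hence $f$ attains its infimum at the unique point $v_{\varphi,\, j} = L_\varphi^{-1} j$. (Equivalently, one could argue existence by the direct method — $f$ is coercive and weakly sequentially lower semicontinuous as a continuous convex functional — and uniqueness by strict convexity, then differentiate $f$ to obtain the Euler equation $L_\varphi\, v = j$.)

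The only step requiring genuine care is the invertibility of $L_\varphi$, i.e.\ that adding the bounded positive semidefinite perturbation $A_\varphi$ to the isomorphism $\reg\, L_V$ preserves bijectivity onto $V^*$; this is exactly where the regularization term $\tfrac{\reg}{2}\|v\|_V^2$ is used, and the uniformity of the coercivity constant $\reg$ is precisely what will later allow the control of $v(t)$ by $j$ in \hyperref[lemma:inequalities_3]{Lemma}~\ref{lemma:inequalities_3}.
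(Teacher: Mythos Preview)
Your proof is correct and follows essentially the same route as the paper's. The paper's argument is terser: it simply notes that $f$ is strictly convex and differentiable with $Df(v) = L_\varphi v - j$, so the unique minimizer satisfies $L_\varphi v = j$, whence $L_\varphi$ is invertible and $v_{\varphi,j} = L_\varphi^{-1} j$. Your version is slightly more careful in that you invoke Lax--Milgram to secure the invertibility of $L_\varphi$ up front and then complete the square rather than differentiate, but this is the same standard quadratic-functional argument dressed in marginally different clothing.
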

\begin{proof}
Since $f$ is strictly convex and differentiable with
\[
	Df(v) = (\reg K_V^{-1} + A_\varphi) \, v - j = L_\varphi v - j ,
\]
the unique minimizer is characterized by $Df(v_{\varphi, \hspace{1pt} j}) = 0$, i.e., $L_\varphi$ is invertible and $v_{\varphi, \hspace{1pt} j} = L_\varphi^{-1} j$.
\end{proof}

System \cref{eq:ctrl_problem_system} can be rewritten as 
\[
\partial_t \varphi = F(j, \varphi) := v_{\varphi,j} \circ \varphi
\]
and, letting $\varphi = \mathit{id} + h$ we want to apply \cref{th:ode.control} to the equation $\partial_t h = F(j,\mathit{id}+h)$, where we will take $U = V^*$, $\mB = C_0^1(\mR^d, \mR^d)$ and  $O = \Diffid{1}{\mR^d} - \mathit{id}$.  We will show that the conditions for local existence are satisfied, and that, for any $T_{0} \leq T$ such that a solution exists over $[0, T_{0})$, the limit $\varphi(T_{0})$ belongs to $\Diffid{1}{\mR^{d}}$, which will prove existence on the full interval. Note that, by assumption, $j$ is Bochner integrable on $[0,T]$ and therefore admissible in the sense of \cref{th:ode.control}.

The following lemma summarizes some useful inequalities.
\begin{lemma}
\leavevmode
\label{lemma:inequalities}
\begin{enumerate}[label = (\roman*), ref = \ref{lemma:inequalities}(\roman*)]
\item
\label{lemma:inequalities_1}
If $v \in C_0^1(\mathbb{R}^3, \mathbb{R}^3)$ and 
$\varphi \in \Diffid{1}{\mR^3}$,
then $v \circ \varphi \in C_0^1(\mathbb{R}^3, \mathbb{R}^3)$ and
\[
	\|v \circ \varphi\|_{1, \infty} \leq (2 + \|\varphi - id\|_{1, \infty}) \, \|v\|_{1, \infty} .
\]

\item
\label{lemma:inequalities_2}
If $v \in C_0^2(\mathbb{R}^3, \mathbb{R}^3)$ and 
$\varphi, \psi \in \Diffid{1}{\mR^3}$, 
then 
\[
	\|v \circ \varphi - v \circ \psi\|_{1, \infty}
	\leq
	(2 + \|\psi - id\|_{1, \infty}) \, \|v\|_{2, \infty} \, \|\varphi - \psi\|_{1, \infty} .
\]

\item 
\label{lemma:inequalities_3}
If $j \in V^*$ and $\varphi \in \Diffid{1}{\mathbb{R}^3}$, then
\[
	\|v_{\varphi, \hspace{1pt} j}\|_V
	=
	\|L_\varphi^{-1} j\|_V \leq \frac{1}{\reg} \left\|j \right\|_{V^*} ,
\]
i.e., $L_\varphi^{-1} \in \mathscr{L}(V^*\!, V)$ with $\|L_\varphi^{-1}\|_{\mathscr{L}(V^*\!, \, V)} \leq \frac{1}{\reg}$.

\item
\label{lemma:inequalities_4}
If $j \in V^*$ and $\varphi, \, \psi \in \Diffid{1}{\mathbb{R}^3}$, then
\[
	\|L_\varphi^{-1} - L_\psi^{-1}\|_{\mathscr{L}(V^*\!, \, V)}
	\leq
	\frac{1}{\reg^2} \, \|A_\varphi - A_\psi\|_{\mathscr{L}(V, \, V^*)} ,
\]
which implies
\begin{equation*}
	\|v_{\varphi, \hspace{1pt} j} - v_{\psi, \hspace{1pt} j}\|_V
	\leq
	\frac{1}{\reg^2} \, \|A_\varphi - A_\psi\|_{\mathscr{L}(V, \, V^*)} \, \|j\|_{V^*} .
\end{equation*}

\end{enumerate}
\end{lemma}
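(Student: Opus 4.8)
The four estimates split into two groups: (i)--(ii) are elementary statements about composition in $C^1_0(\mR^3,\mR^3)$, handled by the chain rule and the mean value inequality, while (iii)--(iv) are Hilbert-space estimates that rely on \cref{lemma:minimizer} and on the positivity assumption $(A_\varphi v\mid v)\geq 0$.

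For (i), note that $v\circ\varphi$ is $C^1$ with $D(v\circ\varphi)(x) = Dv(\varphi(x))\,D\varphi(x)$, and that it lies in $C^1_0$ because $\varphi(x)\to\infty$ as $x\to\infty$ (since $\varphi-\id\in C^0_0$) while $v$ and $Dv$ vanish at infinity and $D\varphi = I + D(\varphi-\id)\to I$. As $\varphi$ is a bijection, $\|v\circ\varphi\|_\infty\leq\|v\|_\infty$, and $\|D(v\circ\varphi)\|_\infty\leq\|Dv\|_\infty\,\|D\varphi\|_\infty\leq\|Dv\|_\infty\,(1+\|\varphi-\id\|_{1,\infty})$; adding these and bounding $\|v\|_\infty$ and $\|Dv\|_\infty$ by $\|v\|_{1,\infty}$ gives (i). For (ii), the sup-norm part is $\|v\circ\varphi - v\circ\psi\|_\infty\leq\|Dv\|_\infty\,\|\varphi-\psi\|_\infty$ by the mean value inequality, and for the derivative I split
\[
D(v\circ\varphi)-D(v\circ\psi) = Dv(\varphi)\,(D\varphi-D\psi) + \bigl(Dv(\varphi)-Dv(\psi)\bigr)\,D\psi,
\]
bounding the first summand by $\|Dv\|_\infty\,\|\varphi-\psi\|_{1,\infty}$ and the second by $\|D^2v\|_\infty\,\|\varphi-\psi\|_\infty\,(1+\|\psi-\id\|_{1,\infty})$ (this is where $v\in C^2_0$ is needed, via the mean value inequality applied to $Dv$). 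Collecting terms, using $\|\varphi-\psi\|_\infty\leq\|\varphi-\psi\|_{1,\infty}$ and $\|Dv\|_\infty,\|D^2v\|_\infty\leq\|v\|_{2,\infty}$, and tracking coefficients yields the constant $2+\|\psi-\id\|_{1,\infty}$.

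For (iii), by \cref{lemma:minimizer} the vector $v_{\varphi,j}=L_\varphi^{-1}j$ satisfies $\reg K_V^{-1}v_{\varphi,j} + A_\varphi v_{\varphi,j} = j$; pairing both sides with $v_{\varphi,j}$ gives $\reg\|v_{\varphi,j}\|_V^2 + (A_\varphi v_{\varphi,j}\mid v_{\varphi,j}) = (j\mid v_{\varphi,j})$, and discarding the nonnegative middle term and using $(j\mid v_{\varphi,j})\leq\|j\|_{V^*}\|v_{\varphi,j}\|_V$ yields $\|v_{\varphi,j}\|_V\leq\reg^{-1}\|j\|_{V^*}$, i.e. $\|L_\varphi^{-1}\|_{\mathscr{L}(V^*,V)}\leq\reg^{-1}$. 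For (iv), I would use the resolvent-type identity $L_\varphi^{-1}-L_\psi^{-1} = L_\varphi^{-1}(L_\psi-L_\varphi)L_\psi^{-1}$ together with the cancellation $L_\psi-L_\varphi = A_\psi-A_\varphi$, and apply the norm bound from (iii) to both factors $L_\varphi^{-1}$ and $L_\psi^{-1}$; this gives $\|L_\varphi^{-1}-L_\psi^{-1}\|_{\mathscr{L}(V^*,V)}\leq\reg^{-2}\|A_\varphi-A_\psi\|_{\mathscr{L}(V,V^*)}$, and the final displayed inequality follows by applying this operator to $j$ and using $v_{\varphi,j}-v_{\psi,j}=(L_\varphi^{-1}-L_\psi^{-1})j$.

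I do not expect a genuine obstacle here. The two points requiring the most care are the constant bookkeeping in (ii), so that the factor comes out exactly as $2+\|\psi-\id\|_{1,\infty}$, and verifying the decay-at-infinity claims needed to place $v\circ\varphi$ in $C^1_0$ in (i); both are routine.
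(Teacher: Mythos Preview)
Your proposal is correct. Parts (i), (ii), and (iv) match the paper exactly: the paper explicitly leaves (i)--(ii) as ``straightforward applications of the chain rule'' and proves (iv) via the same resolvent identity $L_\varphi^{-1}-L_\psi^{-1}=L_\varphi^{-1}(L_\psi-L_\varphi)L_\psi^{-1}$ combined with the bound from (iii).

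The one genuine difference is in (iii). You argue by pairing the Euler--Lagrange equation $L_\varphi v_{\varphi,j}=j$ with $v_{\varphi,j}$, dropping the nonnegative term $(A_\varphi v_{\varphi,j}\mid v_{\varphi,j})$, and applying the duality bound --- essentially a coercivity/energy argument. The paper instead proves the equivalent lower bound $\|v\|_V\leq\reg^{-1}\|L_\varphi v\|_{V^*}$ directly by squaring: using the isometry $K_V:V^*\to V$ it expands
\[
\Bigl(\tfrac{1}{\reg}\|L_\varphi v\|_{V^*}\Bigr)^2
=\|v\|_V^2+\tfrac{1}{\reg^2}\|K_VA_\varphi v\|_V^2+\tfrac{2}{\reg}(A_\varphi v\mid v)\geq\|v\|_V^2.
\]
Your route is shorter and more in the spirit of Lax--Milgram; the paper's route avoids invoking the solution $v_{\varphi,j}$ and instead establishes injectivity of $L_\varphi$ with the quantitative bound in one stroke (the existence of $L_\varphi^{-1}$ having already been noted in \cref{lemma:minimizer}). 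Either way the key input is the same positivity hypothesis $(A_\varphi v\mid v)\geq 0$.
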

\begin{proof}
\leavevmode
The proofs of (i) and (ii) are straightforward applications of the chain rule and  left to the reader.  
To show (iii), it is equivalent to prove that, for all $v \in V$,
$\|v\|_V \leq (1/\reg)\left\| L_\varphi \, v \right\|_{V^*}$ .
We have, letting $\mathit{Id}_V$ denote the identity operator on $V$,
\begin{align*}
	\left( \frac{1}{\reg} \left\| L_\varphi \, v \right\|_{V^*} \right)^2
	&=
	\left( \frac{1}{\reg} \left\| K_V \left( \reg K_V^{-1} + A_\varphi \right) v \right\|_V \right)^2
	\\
	&=
	\frac{1}{\reg^2} \left\| \left( \reg \, \mathit{Id}_V + K_V A_\varphi \right) v \right\|_V^2
	\\
	&=
	\|v\|_V^2 + \frac{1}{\reg^2} \, \|K_V A_\varphi \hspace{1pt} v\|_V^2 + \frac{2}{\reg} \, \langle v, K_V A_\varphi \hspace{1pt} v \rangle_V
	\\
	&=
	\|v\|_V^2 + \frac{1}{\reg^2} \, \|K_V A_\varphi \hspace{1pt} v\|_V^2 + \frac{2}{\reg} \, (A_\varphi \hspace{1pt} v \mid v)
	\, \geq \,
	\|v\|_V^2 ,
\end{align*}
where the last inequality follows from $(A_\varphi \hspace{1pt} v \mid v) \geq 0$.

\medskip
We now prove (iv), writing:
\begin{align*}
	\|L_\varphi^{-1} - L_\psi^{-1}\|_{\mathscr{L}(V^*\!, \, V)}
	&=
	\left\| L_\varphi^{-1} \left( L_\psi - L_\varphi \right) L_\psi^{-1} \right\|_{\mathscr{L}(V^*\!, \, V)}
	\\
	&=
	\left\| L_\varphi^{-1} \left( A_\psi - A_\varphi \right) L_\psi^{-1} \right\|_{\mathscr{L}(V^*\!, \, V)}
	\\
	&\leq
	\|L_\varphi^{-1}\|_{\mathscr{L}(V^*\!, \, V)} \ 
	\|A_\varphi - A_\psi\|_{\mathscr{L}(V, \, V^*)} \ 
	\|L_\psi^{-1}\|_{\mathscr{L}(V^*\!, \, V)} \ 
	\\
	&\leq
	\frac{1}{\reg^2} \, \|A_\varphi - A_\psi\|_{\mathscr{L}(V, \, V^*)},
\end{align*}
since (iii)  implies that $\|L_\varphi^{-1}\|_{\mathscr{L}(V^*\!, \, V)} \leq ({1}/{\reg})$.
\end{proof}

\begin{corollary}
\label{cor:lipschitz.F}
Let $F(j, \varphi) = v_{\varphi,j} \circ \varphi$ be defined on $V^* \times \Diffid{1}{\mR^d}$. Then, $F$ is continuous and for all $j\in V^*$, and $\varphi \in \Diffid{1}{\mR^{d}}$, 
\[
\|F(j, \varphi)\|_{1, \infty} \leq \frac{2c_{V}}{\reg} (1+\|\varphi-\mathit{id}\|_{1, \infty}) \|j\|_{V^{*}}
\]
so that the assumption \ref{th:ode.control:1} holds for $\tilde F: (j, h) \mapsto F(j, \id +h)$  with $\gamma(j) = (2c_{V}/\reg) \|j\|_{V^{*}}$. 

Moreover, for all $j\in V^*$, and $\varphi, \psi \in \Diffid{1}{\mR^{d}}$
\[
\|F(j, \varphi)- F(j, \psi)\|_{1, \infty} \leq C_\psi \, (\|\varphi - \psi\|_{1, \infty} +  \|A_\varphi - A_\psi\|_{\mathscr{L}(V, \, V^*)}) \|j\|_{V^{*}} .
\] 
Thus, if $A_{\varphi}$ is locally Lipschitz, so is $F$ and $\tilde F$, and the assumption \ref{th:ode.control:2} holds.
\end{corollary}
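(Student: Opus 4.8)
The plan is to read everything off \cref{lemma:minimizer} (which gives $v_{\varphi,j}=L_\varphi^{-1}j$) together with the four estimates in \cref{lemma:inequalities} and the embedding inequality $\|\cdot\|_{2,\infty}\le c_V\|\cdot\|_V$ (so in particular $\|\cdot\|_{1,\infty}\le c_V\|\cdot\|_V$). First I would note that $v_{\varphi,j}=L_\varphi^{-1}j\in V\hookrightarrow C_0^2(\mR^d,\mR^d)\subset C_0^1(\mR^d,\mR^d)$, so \cref{lemma:inequalities_1} applies and shows both that $F(j,\varphi)=v_{\varphi,j}\circ\varphi$ lies in $C_0^1(\mR^d,\mR^d)$ and that
\[
\|F(j,\varphi)\|_{1,\infty}\le (2+\|\varphi-\id\|_{1,\infty})\,\|v_{\varphi,j}\|_{1,\infty}\le (2+\|\varphi-\id\|_{1,\infty})\,c_V\,\|v_{\varphi,j}\|_V .
\]
Bounding $\|v_{\varphi,j}\|_V\le \|j\|_{V^*}/\reg$ by \cref{lemma:inequalities_3} and using $2+x\le 2(1+x)$ for $x\ge0$ yields the first displayed inequality of the corollary, hence \ref{th:ode.control:1} holds for $\tilde F$ with $\gamma(j)=(2c_V/\reg)\|j\|_{V^*}$; since $j\in L^1([0,T],V^*)$ is admissible, $\gamma(j(\cdot))$ is integrable on $[0,T]$.

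For the second displayed inequality the point is to route the intermediate term through $\psi$ so that the constant depends on $\psi$ alone. I would write
\[
F(j,\varphi)-F(j,\psi)=\bigl(v_{\varphi,j}\circ\varphi-v_{\varphi,j}\circ\psi\bigr)+\bigl((v_{\varphi,j}-v_{\psi,j})\circ\psi\bigr).
\]
For the first bracket I would invoke \cref{lemma:inequalities_2} (legitimate since $v_{\varphi,j}\in C_0^2(\mR^d,\mR^d)$) together with $\|v_{\varphi,j}\|_{2,\infty}\le c_V\|v_{\varphi,j}\|_V\le c_V\|j\|_{V^*}/\reg$, obtaining a bound $(2+\|\psi-\id\|_{1,\infty})(c_V/\reg)\|j\|_{V^*}\,\|\varphi-\psi\|_{1,\infty}$. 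For the second bracket, \cref{lemma:inequalities_1} applied to the map $\psi$ gives $(2+\|\psi-\id\|_{1,\infty})c_V\|v_{\varphi,j}-v_{\psi,j}\|_V$, and \cref{lemma:inequalities_4} bounds $\|v_{\varphi,j}-v_{\psi,j}\|_V\le \reg^{-2}\|A_\varphi-A_\psi\|_{\mathscr L(V,V^*)}\|j\|_{V^*}$. Adding the two and setting $C_\psi=(2+\|\psi-\id\|_{1,\infty})\,c_V\max(\reg^{-1},\reg^{-2})$ gives exactly the claimed inequality. Specializing to $\varphi=\id+h$, $\psi=\id+h'$ inside a neighborhood of a given $h_0$ on which $\varphi\mapsto A_\varphi$ is $L$-Lipschitz for $\|\cdot\|_{1,\infty}$, we get $\|\tilde F(j,h)-\tilde F(j,h')\|_{1,\infty}\le C_{\id+h'}(1+L)\|j\|_{V^*}\,\|h-h'\|_{1,\infty}$, which is \ref{th:ode.control:2} with $\gamma_{h_0}(j)=C\|j\|_{V^*}$, again integrable along $j$.

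Finally, for continuity of $F$ at a point $(j_0,\psi)$ I would combine the two estimates: writing $\|F(j,\varphi)-F(j_0,\psi)\|_{1,\infty}\le \|F(j,\varphi)-F(j,\psi)\|_{1,\infty}+\|(L_\psi^{-1}(j-j_0))\circ\psi\|_{1,\infty}$, the first term is controlled by the Lipschitz estimate and tends to $0$ as $\varphi\to\psi$ because $\varphi\mapsto A_\varphi$ is continuous (being locally Lipschitz), while the second is at most $(2+\|\psi-\id\|_{1,\infty})(c_V/\reg)\|j-j_0\|_{V^*}\to0$; the same two bounds show $F$ and $\tilde F$ are locally Lipschitz once $A$ is. I do not expect any real obstacle here, since every step reduces to \cref{lemma:inequalities}; the one place that requires genuine care is the choice of intermediate composition in the Lipschitz estimate, because composing through $\varphi$ instead of $\psi$ would leave a factor $(2+\|\varphi-\id\|_{1,\infty})$ and force the constant to depend on $\varphi$ as well, whereas routing both differences through $\psi$ — so that \cref{lemma:inequalities_2} absorbs the change of base point and \cref{lemma:inequalities_1,lemma:inequalities_4} absorb the change of vector field — is precisely what produces a constant $C_\psi$ depending on $\psi$ alone.
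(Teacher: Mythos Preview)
Your proposal is correct and follows essentially the same route as the paper: the same chain \cref{lemma:inequalities_1} $\to$ embedding $\to$ \cref{lemma:inequalities_3} for the growth bound, and the same decomposition $v_{\varphi,j}\circ\varphi - v_{\varphi,j}\circ\psi$ plus $(v_{\varphi,j}-v_{\psi,j})\circ\psi$ handled via \cref{lemma:inequalities_2} and \cref{lemma:inequalities_1}+\cref{lemma:inequalities_4} for the Lipschitz estimate. Your continuity argument is spelled out more explicitly than the paper's (which simply notes that $F$ is linear and bounded in $j$ and continuous in $\varphi$), but the content is identical.
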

\begin{proof}
We have
\begin{align*}
\|F(j, \varphi)\|_{1, \infty} &\leq (2+\|\varphi-\mathit{id}\|_{1, \infty}) \|v_{\varphi, j}\|_{1, \infty}\\
&\leq c_{V} (2+\|\varphi-\mathit{id}\|_{1, \infty}) \|v_{\varphi, j}\|_{V}\\
&\leq \frac{c_{V}}{\reg} (2+\|\varphi-\mathit{id}\|_{1, \infty}) \|j\|_{V^{*}}\,.
\end{align*}
This inequality also shows the continuity of $F$ in $j$, since $F(j, \varphi) = (L_\varphi^{-1} j) \circ \varphi$ is linear in $j$.

Similarly
\begin{align*}
\|F(j, \varphi)- F(j, \psi)\|_{1, \infty} &\leq \|v_{\varphi,j} \circ \varphi - v_{\varphi,j} \circ \psi\|_{1, \infty} + \|v_{\varphi,j} \circ \psi - v_{\psi,j} \circ \psi\|_{1, \infty}\\
&\leq (2+ \|\psi - \mathit{id}\|_{1, \infty}) \|\varphi - \psi\|_{1, \infty} \|v_{\varphi,j}\|_{2, \infty}\\
&  + (2+ \|\psi - \mathit{id}\|_{1, \infty}) \|v_{\varphi,j} - v_{\psi,j}\|_{1, \infty} \\
&\leq \frac{c_{V}}{\reg} (2+ \|\psi - \mathit{id}\|_{1, \infty}) \|\varphi - \psi\|_{1, \infty} \|j\|_{V^{*}} \\
& + \frac{1}{\reg^2} (2+ \|\psi - \mathit{id}\|_{1, \infty})\, \|A_\varphi - A_\psi\|_{\mathscr{L}(V, \, V^*)} \|j\|_{V^{*}}.
 \end{align*}

Since $F(j, \varphi)$ is continuous and linear in $j$ and continuous in $\varphi$, we know that $F$ is continuous.
\end{proof}

\Cref{cor:lipschitz.F} therefore implies that \cref{eq:ctrl_problem_system} has a unique local solution as soon as $j(t)$ is integrable. To conclude the proof of  (i) in \cref{thm:ctrl_problem}, it suffices to show that any solution on an interval $[0, T_{0})$ is such that $\varphi(T_{0}) \colonequals \lim_{t \,\uparrow\, T_0} \varphi(t) \in \Diffid{1}{\mR^{d}}$. This is true because, if one lets $w(t) = v_{\varphi(t), j(t)}$, then $w \in L^1([0,T_0], V)$ and $\varphi$ is, by construction, the flow associated to the ordinary differential equation $\partial_t z = w(t,z)$, i.e., it satisfies $\partial_t \varphi(t,x) = w(t, \varphi(t,x))$. This shows that (see, e.g., \cite{younes2010shapes} Chap.\ 8) $\varphi \in C([0, T_0], \Diffid{2}{\mathbb{R}^3})$, which completes the proof of part (i) of \cref{thm:ctrl_problem}. 

In particular, for $t\in [0, T_0]$, one has $\varphi(t, \cdot)^{-1} = \psi(t, \cdot)$ where $\psi$ is the flow associated with the equation $\partial_s z = \tilde w^{(t)}(s,z)$, with $\tilde w^{(t)}(s) = -v_{\varphi(t-s), j(t-s)}$. Note that we have not defined $w$ at time $T_0$ (or $\tilde w$ at time 0), which is not a problem since these time-dependent vector fields only need to be defined almost everywhere (in $t$) for the result to hold. (One can actually show using Cauchy sequences that $w(t)$ has a limit when $t$ tends to $T_0$ anyway). 

We also point out that, using the inequalities of \cref{lemma:inequalities}, the transcription of \cref{eq:ode.control.bound} to the present case becomes $\|\varphi(t) - id\|_{1, \infty} \leq B_j$
with
\begin{equation}
    \label{eq:bj}
	B_j
	=
	 \frac{2c_v}{\reg} \|j\|_{\mathcal X^1_{V^*\!,\,T}}  \exp\!\left( \frac{c_V}{\reg} \, \|j\|_{\mathcal{X}_{V^*\!, \, T}^1} \right).
\end{equation}
(here, the initial condition is always $\varphi(0,\cdot)=\id$) and thus $B_j$ only depends on $j$. 

We also have $\|\varphi^{-1}(t) - id\|_{1, \infty} \leq B_j$. This can be proved for each $t$ by applying  \cref{th:ode.control} to $F(\tilde w, \psi) = \tilde w \circ \psi$, with $U = V$, which satisfies the hypotheses with $\gamma(\tilde w) = c_V \|\tilde w\|_V$. To obtain the same constant $B_j$, one only needs to notice that, for all $s$,  $\|\tilde w^{(t)}(s)\|_V = \|w(t-s)\|_V$

We summarize this discussion in the next lemma.

\begin{lemma}
\label{lemma:bounds}
Suppose that $j \in \mathcal{X}_{V^*\!, \, T}^1$. If $\varphi \in C([0, T_0], \Diffid{1}{\mathbb{R}^3})$ is a local solution to  system~\cref{eq:ctrl_problem_system} for some $T_0 \leq T$, then $\|\varphi(t) - id\|_{1, \infty} \leq B_j$ and $\|\varphi^{-1}(t) - id\|_{1, \infty} \leq B_j$ for all $t \in [0, T_0]$, where $B_j$ is defined in equation \cref{eq:bj}.

Moreover, the local solution $\varphi$ is actually in $C([0, T_0], \Diffid{2}{\mathbb{R}^3})$ and there exists a constant $C_j$ such that
\begin{equation}
	\label{eq:proof_sol_D2phi_bound}
\|D^2\varphi(t)\|_\infty \leq C_j 	\exp\!\left( \frac{c_V}{\reg} \, \|j\|_{\mathcal{X}_{V^*\!, \, T}^1} \right) 
\end{equation}
for all $t\in [0,T_0]$.
\end{lemma}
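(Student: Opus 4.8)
## Proof plan for Lemma \ref{lemma:bounds}

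The plan is to formalize the discussion immediately preceding the lemma: every bound is obtained by a Gronwall argument applied to the relevant ODE in $\id + C_0^1(\mathbb{R}^3,\mathbb{R}^3)$. For the bound on $\varphi(t)-\id$, write $\varphi = \id + h$, so that the local solution satisfies $h(t) = \int_0^t F(j(s),\varphi(s))\,ds$ with $h(0)=0$, where $F(j,\varphi)=v_{\varphi,j}\circ\varphi$. The chain of inequalities in the proof of \cref{cor:lipschitz.F} gives the sharp estimate $\|F(j,\varphi)\|_{1,\infty} \leq (c_V/\reg)\,(2+\|\varphi-\id\|_{1,\infty})\,\|j\|_{V^*}$; substituting it into the integral equation and splitting off the term proportional to $\|h(s)\|_{1,\infty}$ yields $\|h(t)\|_{1,\infty} \leq (2c_V/\reg)\,\|j\|_{\mathcal{X}^1_{V^*\!,\,T}} + (c_V/\reg)\int_0^t \|j(s)\|_{V^*}\,\|h(s)\|_{1,\infty}\,ds$, and Gronwall's inequality gives exactly $\|\varphi(t)-\id\|_{1,\infty}\leq B_j$ with $B_j$ as in \cref{eq:bj}.

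For the bound on $\varphi(t)^{-1}-\id$, fix $t\in[0,T_0]$ and use that $\varphi(t)^{-1}$ is the time-$t$ value of the flow $\psi^{(t)}$ of the reversed field $\tilde w^{(t)}(s) = -v_{\varphi(t-s),j(t-s)}$, i.e.\ $\psi^{(t)}$ solves $\partial_s\psi = \tilde w^{(t)}(s)\circ\psi$, $\psi^{(t)}(0)=\id$; existence and uniqueness of this flow follow from \cref{th:ode.control} with $U=V$, whose hypotheses hold with $\gamma(\tilde w)=c_V\|\tilde w\|_V$ by \cref{lemma:inequalities_1}. Running the same Gronwall argument, with $\|v\circ\psi\|_{1,\infty}\leq c_V(2+\|\psi-\id\|_{1,\infty})\|v\|_V$, gives $\|\psi^{(t)}(s)-\id\|_{1,\infty}\leq 2c_V\Lambda_s\exp(c_V\Lambda_s)$ where $\Lambda_s=\int_0^s\|\tilde w^{(t)}(r)\|_V\,dr$. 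By \cref{lemma:inequalities_3}, $\|\tilde w^{(t)}(r)\|_V \leq (1/\reg)\|j(t-r)\|_{V^*}$, so the substitution $r\mapsto t-r$ gives $\Lambda_t\leq (1/\reg)\|j\|_{\mathcal{X}^1_{V^*\!,\,T}}$; since $\lambda\mapsto 2c_V\lambda e^{c_V\lambda}$ is nondecreasing, taking $s=t$ gives $\|\varphi(t)^{-1}-\id\|_{1,\infty}\leq B_j$.

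Finally, since $w(t)=v_{\varphi(t),j(t)}\in V\hookrightarrow C_0^2(\mathbb{R}^3,\mathbb{R}^3)$ and $w\in L^1([0,T_0],V)$, the standard regularity theory of flows (\cite{younes2010shapes}, Chap.\ 8, as already noted before the lemma) gives $\varphi\in C([0,T_0],\Diffid{2}{\mathbb{R}^3})$ and licenses differentiating $\partial_t\varphi(t,x)=w(t,\varphi(t,x))$ twice in $x$, giving $\partial_t D^2\varphi(t,x)=D^2w(t,\varphi(t,x))[D\varphi(t,x),D\varphi(t,x)]+Dw(t,\varphi(t,x))\,D^2\varphi(t,x)$. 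Bounding $\|D\varphi(t)\|_\infty\leq 1+\|\varphi(t)-\id\|_{1,\infty}\leq 1+B_j$ by the first part and $\max(\|Dw(t)\|_\infty,\|D^2w(t)\|_\infty)\leq\|w(t)\|_{2,\infty}\leq c_V\|w(t)\|_V\leq(c_V/\reg)\|j(t)\|_{V^*}$ by \cref{lemma:inequalities_3}, and using $D^2\varphi(0)=0$, one gets $\|D^2\varphi(t)\|_\infty \leq (c_V(1+B_j)^2/\reg)\|j\|_{\mathcal{X}^1_{V^*\!,\,T}} + (c_V/\reg)\int_0^t\|j(s)\|_{V^*}\|D^2\varphi(s)\|_\infty\,ds$, and a last Gronwall step yields \cref{eq:proof_sol_D2phi_bound} with $C_j=(c_V(1+B_j)^2/\reg)\|j\|_{\mathcal{X}^1_{V^*\!,\,T}}$. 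The argument is routine; the only points requiring care are that the time-reversal for the inverse flow preserves the $L^1$-in-time $V$-norm of the generating field (so the same $B_j$ reappears), and that $Dw$ and $D^2w$ must be controlled via the embedding $V\hookrightarrow C_0^2$ rather than directly by $\|j\|_{V^*}$.
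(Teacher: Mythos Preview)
Your proposal is correct and follows essentially the same approach as the paper: the bounds on $\varphi(t)-\id$ and $\varphi(t)^{-1}-\id$ are obtained exactly as in the discussion preceding the lemma (Gronwall via \cref{cor:lipschitz.F} for the forward flow, and the time-reversed flow $\tilde w^{(t)}$ together with $\|\tilde w^{(t)}(s)\|_V=\|w(t-s)\|_V$ for the inverse), and the $D^2\varphi$ bound is derived by the same chain-rule expansion and Gronwall argument the paper gives right after the lemma statement, with the same constant $C_j=(c_V(1+B_j)^2/\reg)\|j\|_{\mathcal{X}^1_{V^*\!,\,T}}$.
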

Only the last inequality remains to be shown. We have
\begin{align}
	\label{eq:proof_sol_D2phi}
	\begin{split}
	|D^2\varphi(t, x)|
	&\leq
	\int_0^t
	\left(
		\vphantom{\sum}
		|D^2 v_{\varphi(s), \hspace{1pt} j(s)}(\varphi(s, x))| \, |D\varphi(s, x)|^2
	\right.
	\\
	&\hspace{30pt}
	\left.
		\vphantom{\sum}
		\phantom{}
		+ 
		|Dv_{\varphi(s), \hspace{1pt} j(s)}(\varphi(s, x))| \, |D^2\varphi(s, x)|
	\right)
	ds ,\\
	&\leq  \frac{c_V}{\reg} \|j\|_{\chi^1_{V^*\!,\,T}} \|D\varphi(t)\|^2_{\infty} + \frac{c_V}{\reg} \int_0^t \|j(s)\|_{V^*} 	|D^2\varphi(s, x)|\, ds
	\end{split}
\end{align}
and inequality \cref{eq:proof_sol_D2phi_bound} follows from  Gronwall's lemma, since $\|D\varphi(t)\|^2_{\infty} \leq (1+B_j)^2$.

 \subsection{Proof of \hyperref[thm:opt_problem_ode]{Theorem}~\ref{thm:opt_problem_ode}}
\label{subsec:proof_opt_problem_ode}
Denote $v_{\varphi, \theta} = v_{\varphi, j(\varphi, \theta)}$ and let
\[
G_\theta(\varphi) = v_{\varphi, \theta} \circ \varphi.
\]
We only need to apply the standard existence theorem for the ODE $\partial_t \varphi = G_\theta(\varphi)$. For this,  it is sufficient to prove
that, for any $\theta\in \Theta$,
\begin{enumerate}[label=(\arabic*)]
\item
There exists $\gamma >0$ such that, for all $\varphi\in \Diffid{1}{\mR^d}$:
\[
\|G_\theta(\varphi)\|_{1, \infty} \leq \gamma (1 + \|\varphi - \id\|_{1, \infty})\,. 
\]
\item 
For all $\varphi_0\in \Diffid{1}{\mR^d}$, there exists $r_{\varphi_0}>0$ and $\gamma_{\varphi_0} >0$ such that for all $\varphi,\psi\in \Diffid{1}{\mR^d}$ with $\max(\|\varphi-\varphi_0\|_\mB, \|\psi-\varphi_0\|_\mB)\leq r_{\varphi_0}$
\[
\|G_\theta(\varphi) - G_\theta(\psi)\|_{1, \infty} \leq \gamma_{\varphi_0} \|\varphi-\psi\|_{1, \infty}\,.
\]
\end{enumerate}
These properties are easily deduced from \cref{lemma:inequalities} and the boundedness and Lipschitz assumptions made on $j(\varphi, \theta)$, and we skip the argument.

\subsection{Proof of \hyperref[thm:ctrl_problem_min]{Theorem}~\ref{thm:ctrl_problem_min}}
\label{subsec:proof_ctrl_problem_min}

Given $j \in \mathcal{X}_{V^*\!, \, T}^2 \subset \mathcal{X}_{V^*\!, \, T}^1$, we have proved that system~\cref{eq:ctrl_problem_system} has a unique solution $\varphi_j \in C([0, T], \Diffid{2}{\mathbb{R}^3})$. Denote the well-defined objective function by
\[
	f(j) = \int_0^T (j(t) \mid v_{\varphi_j(t), \hspace{1pt} j(t)}) \, dt + \rho(\varphi_j(T, M_0), M_\targ) . 
\]
To prove the existence of minimizers of $f$, we show that minimizing sequences of $f$ are bounded and $f$ is weakly sequentially lower semicontinuous. Since $\mathcal{X}_{V^*\!, \, T}^2$ is a Hilbert space, the existence of minimizers will follow by applying the direct method of the calculus of variations. 

\begin{lemma}
Minimizing sequences of $f$ are bounded.
\end{lemma}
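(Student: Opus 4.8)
The plan is to exploit that, because of the regularization term and the positivity assumption on $A_\varphi$, the control cost already controls $\int_0^T \|v(t)\|_V^2\,dt$, which in turn confines the trajectory in a way that does \emph{not} depend on the input. For a minimizing sequence $(j_n)$, set $v_n(t) = v_{\varphi_{j_n}(t),\,j_n(t)} = L_{\varphi_{j_n}(t)}^{-1} j_n(t)$, so that $j_n(t) = \reg K_V^{-1} v_n(t) + A_{\varphi_{j_n}(t)} v_n(t)$. Then, using $(K_V^{-1}v \mid v) = \|v\|_V^2$ and the hypothesis $(A_\varphi v \mid v) \geq 0$,
\[
	(j_n(t) \mid v_n(t)) = \reg\,\|v_n(t)\|_V^2 + (A_{\varphi_{j_n}(t)} v_n(t) \mid v_n(t)) \geq \reg\,\|v_n(t)\|_V^2 .
\]
Since $\rho \geq 0$ and $\inf f \leq f(0) = \rho(M_0, M_\targ) < \infty$, we may assume $\sup_n f(j_n) =: C_0 < \infty$, and then $\int_0^T \|v_n(t)\|_V^2\,dt \leq f(j_n)/\reg \leq C_0/\reg$ for every $n$.

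The next step is to turn this time-$L^2$ bound on $v_n$ into a uniform confinement of the flow. Since $\varphi_{j_n}$ is the flow of $v_n \in L^1([0,T], V)$, \cref{lemma:inequalities} gives $\|\partial_t(\varphi_{j_n}(t) - \id)\|_{1,\infty} \leq (2 + \|\varphi_{j_n}(t) - \id\|_{1,\infty})\, c_V \|v_n(t)\|_V$, so integrating in time and applying Gronwall's inequality, together with the Cauchy--Schwarz bound $\|v_n\|_{L^1([0,T],V)} \leq \sqrt{T}\,\|v_n\|_{L^2([0,T],V)} \leq \sqrt{T C_0/\reg}$, yields $\|\varphi_{j_n}(t) - \id\|_{1,\infty} \leq \gamma$ with $\gamma$ depending only on $C_0$, $T$, $c_V$ and $\reg$; applying the same estimate to the time-reversed field $s \mapsto -v_n(t-s)$ whose flow yields $\varphi_{j_n}(t)^{-1}$ (cf.\ the discussion following \cref{th:ode.control}) gives $\|\varphi_{j_n}^{-1}(t) - \id\|_{1,\infty} \leq \gamma$ as well. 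Hence $\varphi_{j_n}(t) \in \mathfrak{S}_\gamma$ for all $t$ and all $n$. Since $\id \in \mathfrak{S}_\gamma$ and $\varphi \mapsto A_\varphi$ is Lipschitz on $\mathfrak{S}_\gamma$ for the seminorm $\|\cdot\|_{1,\infty}^{M_0}$, which is dominated by $\|\cdot\|_{1,\infty}$, one obtains a uniform operator-norm bound $\|A_{\varphi_{j_n}(t)}\|_{\mathscr{L}(V, V^*)} \leq \|A_{\id}\|_{\mathscr{L}(V, V^*)} + \ell_\gamma \gamma =: C_2$ valid for all $t$ and $n$.

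Finally, from $j_n(t) = \reg K_V^{-1} v_n(t) + A_{\varphi_{j_n}(t)} v_n(t)$ and the isometry $\|K_V^{-1} v\|_{V^*} = \|v\|_V$ I would conclude $\|j_n(t)\|_{V^*} \leq (\reg + C_2)\,\|v_n(t)\|_V$, hence
\[
	\|j_n\|_{\mathcal{X}_{V^*\!, \, T}^2}^2 = \int_0^T \|j_n(t)\|_{V^*}^2\,dt \leq (\reg + C_2)^2 \int_0^T \|v_n(t)\|_V^2\,dt \leq (\reg + C_2)^2\, C_0/\reg ,
\]
a bound independent of $n$, which proves the claim. The main obstacle is the apparent circularity: one cannot simply invoke the a priori bound $B_{j_n}$ of \cref{lemma:bounds} to confine the trajectory, because $B_{j_n}$ depends on $\|j_n\|_{\mathcal{X}_{V^*\!, \, T}^1}$ — precisely the quantity one is trying to control — so the confinement must instead be extracted from the cost-driven $L^2$ bound on $v_n$. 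Everything else is a routine combination of \cref{lemma:inequalities}, Gronwall's inequality, and the Lipschitz hypothesis on $A$.
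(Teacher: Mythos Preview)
Your proof is correct and follows essentially the same route as the paper: bound $\int_0^T \|v_n\|_V^2\,dt$ from the cost via positivity of $A_\varphi$, use this (via Cauchy--Schwarz and Gronwall) to confine the flows in some $\mathfrak{S}_\gamma$ independently of $n$, invoke the Lipschitz hypothesis on $A$ to get a uniform operator-norm bound, and then transfer the $L^2$ bound on $v_n$ to one on $j_n$ through $j_n = L_{\varphi_{j_n}} v_n$. The only cosmetic difference is that the paper packages the Gronwall step by invoking \cref{th:ode.control} with control space $U=V$ and $F(v,h)=v\circ(\id+h)$, whereas you spell it out directly; your explicit remark about the circularity of appealing to $B_{j_n}$ from \cref{lemma:bounds} is exactly why the paper switches control spaces at that point.
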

\begin{proof}
Let $(j_n)_{n = 1}^\infty$ be a minimizing sequence of $f$. 
We denote for short $v_n(t) \colonequals v_{\varphi_{j_n}\!(t), \hspace{1pt} j_n(t)}$.
Note that the solution of $j \equiv 0$ is $\varphi_j \equiv \mathit{id}$, so we can assume $f(j_n) \leq f(0) = \rho(M_0, M_\targ)$ without loss of generality. Using \cref{lemma:minimizer}, we have $j_n(t) = (\reg K_V^{-1} + A_{\varphi_{j_n}\!(t)}) \, v_n(t)$. It follows that
\begin{align}
	\begin{split}
	\label{eq:claim_bounded}
	\rho(M_0, M_\targ)
	&\geq
	f(j_n)
	\geq
	\int_0^T (j_n(t) \mid v_n(t)) \, dt
	\\
	&\geq
	\int_0^T \reg \, (K_V^{-1} v_n(t) \mid v_n(t)) \, dt\\
	&=
	\reg \int_0^T \|v_n(t)\|_V^2 \, dt \geq \frac \reg T \left( \int_0^T \|v_n(t)\|_V \, dt \right)^2
	\end{split}
\end{align}
From
\Cref{th:ode.control} (applied to $F(v, h) = v\circ (\id + h)$ and $U=V$), we know that the  boundedness of $\int_0^T \|v_n(t)\|_V \, dt$ implies the boundedness of solutions, i.e., there exists a constant $B$ such that $\|\varphi_{j_n}\!(t) - \mathit{id}\|_{1, \infty} \leq B$ for all $t \in [0, T]$ and $n \in \mathbb{N}$. Since $\varphi \mapsto A_\varphi$ is Lipschitz with respect to the seminorm $\|\cdot\|_{1, \infty}^{M_0}$ on $\mathfrak{S}_B$, denoting the Lipschitz constant by $\ell_A(B)$ leads to
\[
	\|A_{\varphi_{j_n}\!(t)}\|_{\mathscr{L}(V, \, V^*)}
	\leq
	\|A_{\mathit{id}}\|_{\mathscr{L}(V, \, V^*)} + \ell_A \, \|\varphi_{j_n}\!(t) - \mathit{id}\|_{1, \infty}^{M_0}
	\leq
	C_B .
\]
Now we return to inequality \cref{eq:claim_bounded} and write
\begin{align*}
	\rho(M_0, M_\targ)
	&\geq
	\reg \int_0^T \|v_n(t)\|_V^2 \, dt
	\\
	&\geq
	\reg \int_0^T \left( \frac{\|j_n(t)\|_{V^*}}{\|\reg K_V^{-1} + A_{\varphi_{j_n}\!(t)}\|_{\mathscr{L}(V, \, V^*)}} \right)^2 dt
	\\
	&\geq
	\reg \int_0^T \left( \frac{\|j_n(t)\|_{V^*}}{\reg  + C_B}  \right)^2 dt
	=
	C_B \, \|j_n\|_{\mathcal{X}_{V^*\!, \, T}^2}^2 ,
\end{align*}
where we have used the fact that $K_V^{-1}$ is an isometry from $V$ to $V^*$ in the third inequality. The above inequality shows that a minimizing sequence $(j_n)_{n = 1}^\infty$ is bounded.
\end{proof}

To prove that $f$ is weakly sequentially lower semicontinuous, we separate the two terms in $f$ and show in 
\cref{claim:discrepancy,claim:running_cost} respectively that the first term $j \mapsto \int_0^T (j(t) \mid v_{\varphi_j(t),j(t)}) \, dt$ and the second term $j \mapsto \rho(\varphi_j(T, M_0), M_\targ)$ are both weakly sequentially lower semicontinuous.
\smallskip

Let $j_n \rightharpoonup j$ in $\mathcal{X}_{V^*\!, \, T}^2$. We first derive some preliminary bounds that will be used in the following 
\cref{claim:seminorm_continuous,claim:discrepancy,claim:running_cost}. Since $j_n \rightharpoonup j$, there exists $J > 0$ such that $\|j_n\|_{\mathcal{X}_{V^*\!, \, T}^2} \leq J$ for all $n \in \mathbb{N}$. It follows that
\[
	\|j\|_{\mathcal{X}_{V^*\!, \, T}^2} \leq \liminf_{n \rightarrow \infty} \|j_n\|_{\mathcal{X}_{V^*\!, \, T}^2} \leq J .
\]
For the solution $\varphi_{j_n}$,  \cref{lemma:bounds} shows that for every $t \in [0, T]$
\[
	\|\varphi_{j_n}\!(t) - \mathit{id}\|_{1, \infty}
	\leq
	\frac{2c_V}{\reg} \, \|j_n\|_{\mathcal{X}_{V^*\!, \, T}^1}
	\exp\!\left( {\frac{c_V}{\reg} \|j_n\|_{\mathcal{X}_{V^*\!, \, T}^1}} \right) ,
\]
so
\[
	\|\varphi_{j_n}\!(t) - \mathit{id}\|_{1, \infty}
	\leq
	\frac{2c_V}{\reg} \, J \sqrt{T} \ \exp\!\left( {\frac{c_V}{\reg} J \sqrt{T}} \right)
	\equalscolon
	B_J .
\]
Similarly, we have
\begin{align*}
	\|\varphi_j(t) - \mathit{id}\|_{1, \infty}
	&\leq
	\frac{2c_V}{\reg} \, \|j\|_{\mathcal{X}_{V^*\!, \, T}^1}
	\exp\!\left( {\frac{c_V}{\reg} \|j\|_{\mathcal{X}_{V^*\!, \, T}^1}} \right)
	\\
	&\leq
	\frac{2c_V}{\reg} \, J \sqrt{T} \ \exp\!\left( {\frac{c_V}{\reg} J \sqrt{T}} \right)
	=
	B_J .
\end{align*}
Still from \cref{lemma:bounds}, it also holds that
\[
	\|\varphi_{j_n}^{-1}(t) - \mathit{id}\|_{1, \infty} \leq B_J
	\ \ \mbox{ and } \ \ 
	\|\varphi_j^{-1}(t) - \mathit{id}\|_{1, \infty} \leq B_J .
\]
We then have $(\varphi_{j_n})_{n = 1}^\infty \subset \mathfrak{S}_{B_J}$ and $\varphi_j \in \mathfrak{S}_{B_J}$, where $\mathfrak{S}_{B_J}$ is defined in the statement of the theorem.
\smallskip

The following lemma is a preliminary step towards 
\cref{claim:discrepancy,claim:running_cost}, which prove the weakly sequentially lower semicontinuity of $f$.

\begin{lemma}
\label{claim:seminorm_continuous}
If $j_n \rightharpoonup j$ in $\mathcal{X}_{V^*\!, \, T}^2$, then $\|\varphi_{j_n}\!(t) - \varphi_j(t)\|_{1, \infty}^{M_0} \rightarrow 0$ for all $t \in [0, T]$.
\end{lemma}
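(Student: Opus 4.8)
The plan is to follow the roadmap the authors themselves sketched right after the statement of \cref{thm:ctrl_problem,thm:opt_problem}: establish uniform (in $n$) bounds on the flows $\varphi_{j_n}$, extract a convergent subsequence via Arzel\`a--Ascoli on the compact set $M_0$, identify the limit as $\varphi_j$ using the pointwise convergence coming from weak convergence $j_n \rightharpoonup j$, and then upgrade subsequential convergence to full convergence by a standard contradiction argument. The uniform bounds are already in hand: \cref{lemma:bounds} gives $\|\varphi_{j_n}(t) - \id\|_{1,\infty} \le B_J$ and $\|D^2\varphi_{j_n}(t)\|_\infty \le C_J$ for all $n$ and all $t\in[0,T]$, with $B_J$, $C_J$ depending only on the uniform bound $J$ on $\|j_n\|_{\mathcal X^2_{V^*,T}}$. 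These bounds make the family $\{\varphi_{j_n}(t)\mid_{M_0}\}_n$ equibounded in $C^1(M_0)$ with equi-Lipschitz first derivatives, so for each fixed $t$ Arzel\`a--Ascoli yields a subsequence converging in $\|\cdot\|_{1,\infty}^{M_0}$.

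The core of the argument is to show the limit of any such subsequence must be $\varphi_j(t)$. First I would show pointwise convergence $\varphi_{j_n}(t,x) \to \varphi_j(t,x)$ for each $x$. This uses that $v_n(t) = L_{\varphi_{j_n}(t)}^{-1} j_n(t)$: weak convergence $j_n \rightharpoonup j$ combined with the Lipschitz dependence of $L_\varphi^{-1}$ on $\varphi$ (\cref{lemma:inequalities_4}) and the integral form of the flow equation $\varphi_{j_n}(t,x) = x + \int_0^t v_n(s,\varphi_{j_n}(s,x))\,ds$ should give a Gronwall-type estimate. The subtle point here is that $j_n \rightharpoonup j$ only weakly, so one cannot directly pass to the limit inside $\int_0^t (j_n(s)\mid \cdot)\,ds$ unless the test vector field is fixed; the standard device is to write the difference $v_n(s) - v_{\varphi_j(s)}(s)$ as $(L_{\varphi_{j_n}(s)}^{-1} - L_{\varphi_j(s)}^{-1}) j_n(s) + L_{\varphi_j(s)}^{-1}(j_n(s) - j(s))$, bound the first term by $\frac{1}{\reg^2}\|A_{\varphi_{j_n}(s)} - A_{\varphi_j(s)}\| \, \|j_n(s)\|_{V^*}$ using the Lipschitz property of $A$ on $\mathfrak S_{B_J}$ (which holds with the seminorm $\|\cdot\|_{1,\infty}^{M_0}$), and handle the second term — which does go to zero weakly but must be integrated against the time-varying $\varphi_{j_n}(s,x)$ — by first establishing convergence for the fixed base point $\varphi_j(s,x)$ and then controlling the error. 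Because the Lipschitz constant for $A$ is with respect to $\|\cdot\|_{1,\infty}^{M_0}$, the Gronwall estimate closes in the seminorm on $M_0$, giving $\sup_{x\in M_0}|\varphi_{j_n}(t,x) - \varphi_j(t,x)| \to 0$, at least along the subsequence.

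Once pointwise (indeed uniform on $M_0$) convergence of positions is known, the convergence of first derivatives $D\varphi_{j_n}(t,x) \to D\varphi_j(t,x)$ uniformly on $M_0$ follows similarly: differentiate the flow equation to get $D\varphi_{j_n}(t,x) = \mathrm{Id} + \int_0^t Dv_n(s,\varphi_{j_n}(s,x)) D\varphi_{j_n}(s,x)\,ds$, use the $C^2$ bound on $v_n$ (controlled via $\|j_n\|_{V^*}$ through $\|v_n\|_V$ and the embedding $V \hookrightarrow C_0^2$) together with the already-established position convergence, and apply Gronwall once more. This shows that the Arzel\`a--Ascoli limit of any subsequence is exactly $\varphi_j(t)$ in $\|\cdot\|_{1,\infty}^{M_0}$. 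A routine subsequence-of-subsequence argument then promotes this to convergence of the full sequence: if $\|\varphi_{j_n}(t) - \varphi_j(t)\|_{1,\infty}^{M_0} \not\to 0$, pass to a subsequence bounded away from zero, apply Arzel\`a--Ascoli to that subsequence to extract a further subsequence converging to $\varphi_j(t)$, contradiction.

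The main obstacle I anticipate is precisely the interplay between the weak-only convergence $j_n \rightharpoonup j$ and the moving evaluation points $\varphi_{j_n}(s,x)$ inside the time integral: one cannot naively commute the weak limit with the composition. Resolving it requires the two-step decomposition above — peel off the contribution of the (known to be Lipschitz) operator difference $A_{\varphi_{j_n}} - A_{\varphi_j}$ from the genuinely weak piece $j_n - j$ tested against the \emph{limit} flow $\varphi_j$ — and then close a Gronwall loop in the $M_0$-seminorm. Everything else (the Arzel\`a--Ascoli extraction, the derivative bootstrap, the final contradiction argument) is standard once the uniform $C^2$ bounds from \cref{lemma:bounds} and the Lipschitz estimates of \cref{lemma:inequalities} are invoked.
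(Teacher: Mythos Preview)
Your three-term decomposition (operator difference, evaluation-point difference, weak residual at the fixed flow $\varphi_j$) and the Gronwall strategy are exactly what the paper does. The gap is in \emph{where} Arzel\`a--Ascoli is applied. You propose applying it to $\varphi_{j_n}(t)|_{M_0}$, extracting a subsequential limit $\psi$, and then identifying $\psi=\varphi_j(t)$ via pointwise convergence. But the pointwise convergence $\varphi_{j_n}(t,x)\to\varphi_j(t,x)$ cannot be established independently: your own bound on the first term reads $\|A_{\varphi_{j_n}(s)}-A_{\varphi_j(s)}\|_{\mathscr L(V,V^*)}\le\ell_A\,\|\varphi_{j_n}(s)-\varphi_j(s)\|_{1,\infty}^{M_0}$, so even a pointwise Gronwall for $|\varphi_{j_n}(t,x)-\varphi_j(t,x)|$ carries the full seminorm on the right-hand side --- the very quantity you are trying to show vanishes. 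Nor can you identify $\psi$ by passing to the limit in the integral equation, since $A_\psi$ is only defined for $\psi\in\mathfrak S_\gamma\subset\Diffid{1}{\mathbb R^3}$ and your Arzel\`a--Ascoli limit is known only on $M_0$.

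The paper instead applies Arzel\`a--Ascoli to the residual $\Lambda_n(t,\cdot)$ and to $D\Lambda_n(t,\cdot)$ directly. Because
\[
\Lambda_n(t,x)=\int_0^t\big(L_{\varphi_j(s)}^{-1}(j_n(s)-j(s))\big)(\varphi_j(s,x))\,ds
\]
involves only the \emph{fixed} flow $\varphi_j$, one gets pointwise convergence in $x$ from $j_n\rightharpoonup j$ (for each $x$ the map $j'\mapsto\int_0^t(L_{\varphi_j(s)}^{-1}j'(s))(\varphi_j(s,x))\,ds$ is a bounded linear functional on $\mathcal X^2_{V^*,T}$), uniform boundedness from $\|j_n\|_{\mathcal X^2_{V^*,T}}\le J$, and equicontinuity of $D\Lambda_n$ from the bound $\|D^2\varphi_j(s)\|_\infty\le C_J$ of \cref{lemma:bounds}. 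This yields $\|\Lambda_n(t)\|_{1,\infty}^{M_0}\to 0$, after which the Gronwall in the seminorm closes \emph{for the full sequence} with no subsequence extraction on $\varphi_{j_n}$ at all. In short: redirect your Arzel\`a--Ascoli from $\varphi_{j_n}$ to $\Lambda_n$; the outer compactness on $\varphi_{j_n}$ then becomes unnecessary, and without the inner one the identification step does not close.
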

\begin{proof}
Note that
\begin{align*}
	&\hspace{14pt}
	\varphi_{j_n}\!(t, x) - \varphi_j(t, x)
	\\
	&=
	\int_0^t
	\left(
		v_{\varphi_{j_n}\!(s), \hspace{1pt} j_n(s)}(\varphi_{j_n}\!(s, x))
		-
		v_{\varphi_j(s), \hspace{1pt} j(s)}(\varphi_j(s, x))
	\right)
	ds
	\\
	&=
	\int_0^t
	\left(
		\left( L_{\varphi_{j_n}\!(s)}^{-1} \, j_n(s) \right)(\varphi_{j_n}\!(s, x))
		-
		\left( L_{\varphi_j(s)}^{-1} \, j_n(s) \right)(\varphi_{j_n}\!(s, x))
	\right)
	ds
	\\
	&
	\hspace{12pt}
	\phantom{}
	+
	\int_0^t
	\left(
		\left( L_{\varphi_j(s)}^{-1} \, j_n(s) \right)(\varphi_{j_n}\!(s, x))
		-
		\left( L_{\varphi_j(s)}^{-1} \, j_n(s) \right)(\varphi_j(s, x))
	\right)
	ds
	\\
	&
	\hspace{12pt}
	\phantom{}
	+
	\int_0^t
	\left(
		\left( L_{\varphi_j(s)}^{-1} \, j_n(s) \right)(\varphi_j(s, x))
		-
		\left( L_{\varphi_j(s)}^{-1} \, j(s) \right)(\varphi_j(s, x))
	\right)
	ds
	\\
	&\equalscolon
	I_{1, n}(t)+ I_{2, n}(t) + \varLambda_n(t)
\end{align*}
Taking $\|\cdot\|_{1, \infty}^{M_0}$ on both sides, we will show that
\begin{equation}
	\label{eq:existence_minimizer_ineq}
	\|I_{1, n}(t)\|_{1, \infty}^{M_0} + \|I_{2, n}(t)\|_{1, \infty}^{M_0}
	\leq
	C_J \left( \int_0^t \left( \|\varphi_{j_n}\!(s) - \varphi_j(s) \|_{1, \infty}^{M_0} \right)^2 ds \right)^{\frac{1}{2}} ,
\end{equation}
and that
\begin{equation}
	\label{eq:existence_minimizer_limit}
	\lim_{n \rightarrow \infty}
	\left\| \varLambda_n(t) \right\|_{1, \infty}^{M_0} = 0
	\ \ \mbox{ for all } t \in [0, T] .
\end{equation}
Identities \cref{eq:existence_minimizer_ineq,eq:existence_minimizer_limit} combined with Gronwall's lemma will then lead to $\|\varphi_{j_n}\!(t) - \varphi_j(t)\|_{1, \infty}^{M_0} \rightarrow 0$ for all $t \in [0, T]$.

We estimate $\|I_{1, n}(t)\|_{1, \infty}^{M_0}$ as follows.
\begin{align*}
	&\hspace{13pt}
	\|I_{1, n}(t)\|_{1, \infty}^{M_0}
	\\
	&\leq
	\int_0^t
		\left\|
			\left( L_{\varphi_{j_n}\!(s)}^{-1} \, j_n(s) \right) \circ \varphi_{j_n}\!(s)
			-
			\left( L_{\varphi_j(s)}^{-1} \, j_n(s) \right) \circ \varphi_{j_n}\!(s)
		\right\|_{1, \infty}
	ds
	\\
	&\leq
	\int_0^t \,
	(2 + \|\varphi_{j_n}\!(s) - \mathit{id}\|_{1, \infty}) \,
	\left\|
		L_{\varphi_{j_n}\!(s)}^{-1} \, j_n(s)
		-
		L_{\varphi_j(s)}^{-1} \, j_n(s)
	\right\|_{1, \infty} \,
	ds
	&
	\hspace{-0.5cm}
	\mbox{(\hyperref[lemma:inequalities_1]{Lemma}~\ref{lemma:inequalities_1})}
	\\
	&\leq
	\int_0^t \,
	(2 + B_J) \ 
	\frac{c_V}{\reg^2} \, \|A_{\varphi_{j_n}\!(s)} - A_{\varphi_j(s)}\|_{\mathscr{L}(V, \, V^*)} \ 
	\|j_n(s)\|_{V^*} \,
	ds
	&
	\hspace{-0.5cm}
	\mbox{(\hyperref[lemma:inequalities_4]{Lemma}~\ref{lemma:inequalities_4})}
\end{align*}
Since $\varphi \mapsto A_\varphi$ is Lipschitz on $\mathfrak{S}_{B_J}$ with respect to $\|\cdot\|_{1, \infty}^{M_0}$, denoting the Lipschitz constant by $\ell_A(B_J)$ leads to
\begin{align}
	\|I_{1, n}(t)\|_{1, \infty}^{M_0}
	&\leq
	\frac{c_V}{\reg^2} \, (2 + B_J) 
	\int_0^t \, 
	\ell_A \ 
	\|\varphi_{j_n}\!(s) - \varphi_j(s)\|_{1, \infty}^{M_0} \ \,
	\|j_n(s)\|_{V^*} \,
	ds
	\label{eq:existence_minimizer_ineq1_middle}
	\\
	&\leq
	\frac{c_V}{\reg^2} \, (2 + B_J) \ 
	\ell_A
	\left( \int_0^t \left( \|\varphi_{j_n}\!(s) - \varphi_j(s)\|_{1, \infty}^{M_0} \right)^2 ds \right)^{\frac{1}{2}}
	\|j_n\|_{\mathcal{X}_{V^*\!, \, T}^2}
	\notag
	\\
	&\leq
	C_J \left( \int_0^t \left( \|\varphi_{j_n}\!(s) - \varphi_j(s) \|_{1, \infty}^{M_0} \right)^2 ds \right)^{\frac{1}{2}} .
	\notag
\end{align}
\medskip

We now pass to  $\|I_{2, n}(t)\|_{1, \infty}^{M_0}$. Similarly to \hyperref[lemma:inequalities_4]{Lemma}~\ref{lemma:inequalities_2}, we have
\[
	\|v \circ \varphi - v \circ \psi\|_{1, \infty}^{M_0}
	\leq
	(2 + \|\psi - \mathit{id}\|_{1, \infty}^{M_0}) \, \|v\|_{2, \infty} \, \|\varphi - \psi\|_{1, \infty}^{M_0}.
\]
It follows that
\begin{align}
	&\hspace{13pt}
	\|I_{2, n}(t)\|_{1, \infty}^{M_0}
	\notag
	\\
	&\leq
	\int_0^t \,
	\left\|
		\left( L_{\varphi_j(s)}^{-1} \, j_n(s) \right) \circ \varphi_{j_n}\!(s)
		-
		\left( L_{\varphi_j(s)}^{-1} \, j_n(s) \right) \circ \varphi_j(s)
	\right\|_{1, \infty}^{M_0}
	ds
	\notag
	\\
	&\leq
	\int_0^t \,
	\left( 2 + \|\varphi_j(s) - \mathit{id}\|_{1, \infty}^{M_0} \right) \,
	\|L_{\varphi_j(s)}^{-1} \, j_n(s)\|_{2, \infty} \,
	\|\varphi_{j_n}\!(s) - \varphi_j(s)\|_{1, \infty}^{M_0} \,
	ds
	\notag
	\\
	&\leq
	\int_0^t \,
	(2 + B_J) \ 
	\frac{c_V}{\reg} \, \|j_n(s)\|_{V^*} \,
	\|\varphi_{j_n}\!(s) - \varphi_j(s)\|_{1, \infty}^{M_0} \,
	ds
	\label{eq:existence_minimizer_ineq2_middle}
	\\
	&\leq
	C_J \left( \int_0^t \left( \|\varphi_{j_n}\!(s) - \varphi_j(s) \|_{1, \infty}^{M_0} \right)^2 ds \right)^{\frac{1}{2}} .
	\notag
\end{align}
\medskip

We proceed to show that \cref{eq:existence_minimizer_limit} holds.
To simplify notations, define $u_n: \mathbb{R}^3 \rightarrow \mathbb{R}^3$ and $u: \mathbb{R}^3 \rightarrow \mathbb{R}^3$ by
\[
	u_n(x)
	=
	\int_0^t \left( L_{\varphi_j(s)}^{-1} \, j_n(s) \right) \circ \varphi_j(s, x) \, ds
	\ \ \mbox{ and } \ \ 
	u(x)
	=
	\int_0^t \left( L_{\varphi_j(s)}^{-1} \, j(s) \right) \circ \varphi_j(s, x) \, ds 
\]
and prove that $\|u_n - u\|_\infty^{M_0}$ and $\|Du_n - Du\|_\infty^{M_0}$ converge to $0$ as $n \rightarrow \infty$. We aim to prove pointwise convergence, uniform boundedness, and equicontinuity of the two sequences, so as to invoke the Arzel{\`a}--Ascoli theorem. Given $x \in \mathbb{R}^3$, we define linear operators $\mathcal{L}_x: \mathcal{X}_{V^*\!, \, T}^2 \rightarrow \mathbb{R}^3$ and $\widetilde{\mathcal{L}}_x: \mathcal{X}_{V^*\!, \, T}^2 \rightarrow \mathbb{R}^{3 \times 3}$ by
\[
	\mathcal{L}_x \, j'
	=
	\int_0^t \left( L_{\varphi_j(s)}^{-1} \, j'(s) \right) \circ \varphi_j(s, x) \, ds
	\ \ \mbox{ and } \ \ 
	\widetilde{\mathcal{L}}_x \, j'
	=
	\int_0^t D\!\left( L_{\varphi_j(s)}^{-1} \, j'(s) \circ \varphi_j(s, x) \right) \, ds .
\]
Note that
\[
	|\mathcal{L}_x \, j'| + |\widetilde{\mathcal{L}}_x \, j'|
	\leq
	\int_0^t \left\| \left( L_{\varphi_j(s)}^{-1} \, j'(s) \right) \circ \varphi_j(s) \right\|_{1, \infty} ds
	\leq
	\frac{c_V \sqrt{T}}{\reg} \, (2 + B_J) \, \|j'\|_{\mathcal{X}_{V^*\!, \, T}^2} ,
\]
so both $\mathcal{L}_x$ and $\widetilde{\mathcal{L}}_x$ are bounded linear operators for all $x \in \mathbb{R}^3$. Since $j_n \rightharpoonup j$, pointwise convergence of $(u_n)_{n = 1}^\infty$ and $(D u_n)_{n = 1}^\infty$ now follows from
\[
	u_n(x) = \mathcal{L}_x \, j_n \rightarrow \mathcal{L}_x \, j = u(x)
	\ \ \mbox{ and } \ \ 
	D u_n(x) = \widetilde{\mathcal{L}}_x \, j_n \rightarrow \widetilde{\mathcal{L}}_x \, j = D u(x) .
\]
We also have
\[
	|u_n(x)| + |D u_n(x)|
	=
	|\mathcal{L}_x \, j_n| + |\widetilde{\mathcal{L}}_x \, j_n|
	\leq
	\frac{c_V \sqrt{T}}{\reg} \, (2 + B_J) \, \|j_n\|_{\mathcal{X}_{V^*\!, \, T}^2}
	\leq
	C_J ,
\]
which shows that the two sequences are uniformly bounded. The sequence $(u_n)_{n = 1}^\infty$ is equicontinuous on $\mathbb{R}^3$ because
\[
	|u_n(x) - u_n(y)|
	\leq
	\int_0^t
	\left\| D\!\left( L_{\varphi_j(s)}^{-1} \, j_n(s) \right) \right\|_\infty
	\|D\varphi_j(s)\|_\infty \, |x - y| \, 
	ds
	\leq
	C_J \, |x - y| .
\]
The sequence $(D u_n)_{n = 1}^\infty$ is equicontinuous on $\mathbb{R}^3$ because
\begin{align*}
	&\hspace{13pt}
	|Du_n(x) - Du_n(y)|
	\\
	&\leq
	\int_0^t
	\left(
		\vphantom{\int}
		\left|
			\left( D\!\left( L_{\varphi_j(s)}^{-1} \, j_n(s) \right) \circ \varphi_j(s, x) \right) D\varphi_j(s, x)
		\right.
	\right.
	\\
	&\hspace{38pt}
	\left.
		\left.
			\phantom{}
			-
			\left( D\!\left( L_{\varphi_j(s)}^{-1} \, j_n(s) \right) \circ \varphi_j(s, x) \right) D\varphi_j(s, y)
		\right|
	\right.
	\\
	&\hspace{22pt}
	\left.
		\phantom{}
		+
		\left|
			\left( D\!\left( L_{\varphi_j(s)}^{-1} \, j_n(s) \right) \circ \varphi_j(s, x) \right) D\varphi_j(s, y)
		\right.
	\right.
	\\
	&\hspace{37pt}
	\left.
		\vphantom{\int}
		\left.
			\phantom{}
			-
			\left( D\!\left( L_{\varphi_j(s)}^{-1} \, j_n(s) \right) \circ \varphi_j(s, y) \right) D\varphi_j(s, y)
		\right|
	\right)
	ds
	\\
	&\leq
	\int_0^t
	\left(
		\left\| D\!\left( L_{\varphi_j(s)}^{-1} \, j_n(s) \right) \right\|_\infty
		\|D^2 \varphi_j(s)\|_\infty \, |x - y|
	\right.
	\\
	&\hspace{33pt}
	\left.
		\phantom{}
		+
		\left\| D^2\!\left( L_{\varphi_j(s)}^{-1} \, j_n(s) \right) \right\|_\infty
		\|D\varphi_j(s)\|_\infty \, |x - y| \ \|D\varphi_j(s)\|_\infty
	\right)
	ds
	\\
	&\leq
	C_J \, |x - y| ,
\end{align*}
where we have used $\|D^2 \varphi_j(s)\|_\infty \leq C_J$ by \cref{eq:proof_sol_D2phi_bound}.
From the Arzel{\`a}--Ascoli theorem, we know that every subsequence of $(u_n)_{n = 1}^\infty$ has a further subsequence that converges uniformly to $u$ on $M_0$, which implies $\|u_n - u\|_\infty^{M_0} \rightarrow 0$. Applying the same argument to the sequence $(D u_n)_{n = 1}^\infty$ gives $\|Du_n - Du\|_\infty^{M_0} \rightarrow 0$.
\medskip

In summary, we have proved~\cref{eq:existence_minimizer_ineq,eq:existence_minimizer_limit}, which lead to
\begin{align}
	\begin{split}
		\|\varphi_{j_n}\!(t) - \varphi_j(t)\|_{1, \infty}^{M_0}
		&\leq
		\|I_{1, n}(t)\|_{1, \infty}^{M_0} + \|I_{2, n}(t)\|_{1, \infty}^{M_0} + \|\varLambda_n(t)\|_{1, \infty}^{M_0}
		\\
		&\leq
		\lambda_n(t)
		+
		C_J
		\left( \int_0^t \left( \|\varphi_{j_n}\!(s) - \varphi_j(s) \|_{1, \infty}^{M_0} \right)^2 ds \right)^{\frac{1}{2}} ,
	\end{split}
	\label{eq:existence_minimizer_summary}
\end{align}
where $\lambda_n(t) = \|\varLambda_n(t)\|_{1, \infty}^{M_0} \rightarrow 0$ as $n \rightarrow \infty$. This implies
\[
	\left( \|\varphi_{j_n}\!(t) - \varphi_j(t)\|_{1, \infty}^{M_0} \right)^2
	\leq
	2 \, \lambda_n^2(t)
	+
	\int_0^t
	2 \, C_J^2
	\left( \|\varphi_{j_n}\!(s) - \varphi_j(s) \|_{1, \infty}^{M_0} \right)^2
	ds.
\]
By Gronwall's lemma, we finally obtain
\begin{equation}
	\label{eq:existence_minimizer_final}
	\left( \|\varphi_{j_n}\!(t) - \varphi_j(t)\|_{1, \infty}^{M_0} \right)^2
	\leq
	2 \, \lambda_n^2(t)
	+
	\int_0^t 4 \, \lambda_n^2(s) \, C_J^2 \, \exp\!\left(2 \, C_J^2 (t-s) \right) \, ds .
\end{equation}
Note that
\begin{align}
	\begin{split}
		\lambda_n(t)
		&\leq
		\left\|
			\int_0^t
			\left(
				\left( L_{\varphi_j(s)}^{-1} \, j_n(s) \right) \circ \varphi_j(s)
				-
				\left( L_{\varphi_j(s)}^{-1} \, j(s) \right) \circ \varphi_j(s)
			\right)
			ds
		\right\|_{1, \infty}
		\\
		&\leq
		\int_0^t
		(2 + \|\varphi_j(s) - \mathit{id}\|_{1, \infty}) \ 
		\left\| L_{\varphi_j(s)}^{-1} \, j_n(s) - L_{\varphi_j(s)}^{-1} \, j(s) \right\|_{1, \infty} \,
		ds
		\\
		&\leq
		\int_0^t
		(2 + B_J) \ 
		\frac{c_V}{\reg} \, (\|j_n(s)\|_{V^*} + \|j(s)\|_{V^*}) \,
		ds
		\,\leq\,
		C_J ,
	\end{split}
	\label{eq:proof_lambda_bounded}
\end{align}
so $\lambda_n(t)$ is uniformly bounded in $n$ and $t$. The dominated convergence theorem then shows that the right hand integral of \cref{eq:existence_minimizer_final} converges to $0$ as $n\rightarrow \infty$ and thus
\[
	\|\varphi_{j_n}\!(t) - \varphi_j(t)\|_{1, \infty}^{M_0}
	\rightarrow 0 \ \mbox{ as } \ n \rightarrow \infty ,
\]
which completes the proof.
\end{proof}

We immediately have the following lemma as a corollary.

\begin{lemma}
\label{claim:discrepancy}
The second term $j' \mapsto \rho(\varphi_{j'}(T, M_0), M_\targ)$ in $f$ is weakly sequentially continuous; hence it is weakly sequentially lower semicontinuous.
\end{lemma}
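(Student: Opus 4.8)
The plan is to obtain this as an immediate consequence of \cref{claim:seminorm_continuous} and the regularity hypothesis on $\rho$ encoded in \cref{def:continuous.rho}. Let $(j_n)_{n=1}^\infty \subset \mathcal{X}_{V^*\!,\,T}^2$ with $j_n \rightharpoonup j$. Applying \cref{claim:seminorm_continuous} at time $t = T$ gives $\|\varphi_{j_n}\!(T) - \varphi_j(T)\|_{1,\infty}^{M_0} \rightarrow 0$.

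Next I would record that, as already established at the beginning of \cref{subsec:proof_ctrl_problem_min}, the diffeomorphisms $\varphi_{j_n}$ and $\varphi_j$ all lie in $C([0,T], \Diffid{2}{\mathbb{R}^3})$, so in particular $\varphi_{j_n}\!(T), \varphi_j(T) \in \Diffid{1}{\mathbb{R}^3}$. Thus the situation is exactly the one described in \cref{def:continuous.rho} with $s = 1$, $M = M_0$, and $M' = M_\targ$: the sequence $(\varphi_{j_n}\!(T))$ lies in $\Diffid{1}{\mathbb{R}^3}$ and converges to $\varphi_j(T) \in \Diffid{1}{\mathbb{R}^3}$ in the seminorm $\|\cdot\|_{1,\infty}^{M_0}$. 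Since $\rho$ is assumed continuous on $\mathscr M$ with respect to $\|\cdot\|_{1,\infty}$, we conclude
\[
	\rho(\varphi_{j_n}\!(T, M_0), M_\targ) \rightarrow \rho(\varphi_j(T, M_0), M_\targ),
\]
which is precisely weak sequential continuity of $j' \mapsto \rho(\varphi_{j'}(T, M_0), M_\targ)$. Weak sequential lower semicontinuity then follows trivially, since a convergent sequence of real numbers equals its own $\liminf$.

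There is essentially no obstacle here: all of the substantive analysis has already been carried out in \cref{claim:seminorm_continuous}, where weak convergence of the controls was promoted to $C^1$ convergence of the flows uniformly on $M_0$ via the Arzel{\`a}--Ascoli theorem. The only point requiring a moment's care is checking that this mode of convergence is exactly the one under which $\rho$ was postulated to be continuous in \cref{def:continuous.rho}; once that identification is made, the statement is immediate.
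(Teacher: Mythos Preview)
Your proposal is correct and follows essentially the same approach as the paper: invoke \cref{claim:seminorm_continuous} at $t=T$ to obtain $\|\varphi_{j_n}\!(T) - \varphi_j(T)\|_{1,\infty}^{M_0} \to 0$, then apply the assumed continuity of $\rho$ from \cref{def:continuous.rho}. Your version is slightly more explicit in verifying that the hypotheses of \cref{def:continuous.rho} are met, but the argument is identical in substance.
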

\begin{proof}
\Cref{claim:seminorm_continuous} shows, in particular at $t = T$, that $j_n \rightharpoonup j$ implies $\|\varphi_{j_n}\!(T) - \varphi_j(T)\|_{1, \infty}^{M_0} \rightarrow 0$. Since the discrepancy function $\rho$ is continuous on $\mathscr M$ with respect to $\|\cdot\|_{1, \infty}$, we have $\rho(\varphi_{j_n}\!(T, M_0), M_\targ) \rightarrow \rho(\varphi_{j}(T, M_0), M_\targ)$, yielding that $j' \mapsto \rho(\varphi_{j'}(T, M_0), M_\targ)$ is weakly sequentially continuous.
\end{proof}

\begin{lemma}
\label{claim:running_cost}
The first term $j' \mapsto \int_0^T (j'(t) \mid v_{\varphi_{j'}(t), \hspace{1pt} j'(t)}) \, dt$ in $f$ is weakly sequentially lower semicontinuous.
\end{lemma}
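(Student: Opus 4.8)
The plan is to exploit the variational characterization of the optimal vector field from \cref{lemma:minimizer} to bound the running cost $f_1(j') := \int_0^T (j'(t)\mid v_{\varphi_{j'}(t),\,j'(t)})\,dt$ from below by a family of functionals of $j'$ indexed by auxiliary fields $v\in L^2([0,T],V)$, each of which passes to the limit cleanly under the available modes of convergence; weak sequential lower semicontinuity of a pointwise supremum of such functionals is then automatic. First I would fix an arbitrary $v\in L^2([0,T],V)$ and note that, by \cref{lemma:minimizer}, for each $t$ the field $v_{\varphi_{j'}(t),\,j'(t)}$ minimizes over $v'\in V$ the map $v'\mapsto \frac{\reg}{2}\|v'\|_V^2 + \frac12(A_{\varphi_{j'}(t)}v'\mid v') - (j'(t)\mid v')$. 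Using $j'(t) = \reg K_V^{-1}v_{\varphi_{j'}(t),\,j'(t)} + A_{\varphi_{j'}(t)}v_{\varphi_{j'}(t),\,j'(t)}$, the minimum value of this map equals $-\tfrac12(j'(t)\mid v_{\varphi_{j'}(t),\,j'(t)})$, so comparing with its value at $v(t)$ and integrating over $[0,T]$ yields
\[
	\tfrac12\, f_1(j')
	\ \ge\
	\int_0^T\Big[\,(j'(t)\mid v(t)) - \tfrac{\reg}{2}\|v(t)\|_V^2 - \tfrac12\,(A_{\varphi_{j'}(t)}v(t)\mid v(t))\,\Big]\,dt .
\]

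Next I would apply this to $j' = j_n$ (with $j_n\rightharpoonup j$) and let $n\to\infty$, keeping $v$ fixed. Since $j'\mapsto\int_0^T(j'(t)\mid v(t))\,dt$ is a bounded linear functional on the Hilbert space $\mathcal{X}_{V^*\!, \, T}^2$, one has $\int_0^T(j_n(t)\mid v(t))\,dt\to\int_0^T(j(t)\mid v(t))\,dt$. For the elastic term, the bounds established before \cref{claim:seminorm_continuous} give $\varphi_{j_n}(t),\varphi_j(t)\in\mathfrak S_{B_J}$ for all $t,n$; then the Lipschitz continuity of $\varphi\mapsto A_\varphi$ on $\mathfrak S_{B_J}$ with respect to $\|\cdot\|_{1,\infty}^{M_0}$ together with \cref{claim:seminorm_continuous} gives $(A_{\varphi_{j_n}(t)}v(t)\mid v(t))\to(A_{\varphi_j(t)}v(t)\mid v(t))$ pointwise in $t$, with the $n$-independent domination $|(A_{\varphi_{j_n}(t)}v(t)\mid v(t))|\le C_{B_J}\|v(t)\|_V^2\in L^1([0,T])$, so dominated convergence handles it. This yields, for every $v\in L^2([0,T],V)$,
\[
	\liminf_{n\to\infty}\ \tfrac12\, f_1(j_n)
	\ \ge\
	\int_0^T\Big[\,(j(t)\mid v(t)) - \tfrac{\reg}{2}\|v(t)\|_V^2 - \tfrac12\,(A_{\varphi_j(t)}v(t)\mid v(t))\,\Big]\,dt .
\]
Finally I would take the particular choice $v(t) = v_{\varphi_j(t),\,j(t)} = L_{\varphi_j(t)}^{-1}j(t)$, which lies in $L^2([0,T],V)$ because $\|v(t)\|_V\le\reg^{-1}\|j(t)\|_{V^*}$ by \cref{lemma:inequalities_3} and $t\mapsto L_{\varphi_j(t)}^{-1}$ is continuous by \cref{lemma:inequalities_4}, so $t\mapsto v(t)$ is measurable; for this choice the right-hand side above equals $\tfrac12 f_1(j)$ by the same computation as before, so $f_1(j)\le\liminf_n f_1(j_n)$, i.e., $f_1$ is weakly sequentially lower semicontinuous. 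Combined with \cref{claim:discrepancy} and the boundedness of minimizing sequences already established, the direct method of the calculus of variations then produces a minimizer of $f$, completing the proof of \cref{thm:ctrl_problem}.

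The step I expect to be the main obstacle is the passage to the limit in the elastic term. A more naive route — writing $f_1(j_n)$ directly in terms of $v_n := v_{\varphi_{j_n}(\cdot),\,j_n(\cdot)}$, which is bounded in $L^2([0,T],V)$ but not in $L^\infty([0,T],V)$ — must discard the perturbation term $\big((A_{\varphi_{j_n}(t)}-A_{\varphi_j(t)})v_n(t)\mid v_n(t)\big)$, and since $\|\varphi_{j_n}(t)-\varphi_j(t)\|_{1,\infty}^{M_0}\to 0$ holds only pointwise in $t$ (not uniformly), this runs into a genuine concentration obstruction: the $L^1$ mass of $\|v_n(t)\|_V^2$ could accumulate precisely where the operator perturbation is not yet small. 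Testing against a \emph{fixed} $v$ before optimizing is exactly what makes the dominating function $t\mapsto C_{B_J}\|v(t)\|_V^2$ independent of $n$, which is what lets dominated convergence go through; this is the crux of the argument.
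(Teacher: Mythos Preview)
Your proof is correct and takes a genuinely different route from the paper's own argument. The paper first establishes that $v_n\rightharpoonup v$ in $\mathcal{X}_{V,T}^2$ (where $v_n(t)=v_{\varphi_{j_n}(t),j_n(t)}$), then rewrites $\int_0^T(j_n\mid v_n)\,dt=\int_0^T(L_{\varphi_j(t)}v_n\mid v_n)\,dt+\int_0^T((A_{\varphi_{j_n}(t)}-A_{\varphi_j(t)})v_n\mid v_n)\,dt$ and invokes the weak lower semicontinuity of the convex quadratic form $v'\mapsto\int_0^T(L_{\varphi_j(t)}v'\mid v')\,dt$ on the first piece while arguing the perturbation piece vanishes. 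Your approach instead exploits the variational identity $-\tfrac12(j\mid v_{\varphi,j})=\min_{v'}\big[\tfrac{\reg}{2}\|v'\|_V^2+\tfrac12(A_\varphi v'\mid v')-(j\mid v')\big]$ to represent $\tfrac12 f_1$ as a pointwise supremum over fixed test fields $v\in L^2([0,T],V)$, and then passes to the limit with $v$ frozen.

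What this buys you is exactly what your closing paragraph flags: in the paper's decomposition the perturbation term is bounded by $\int_0^T\|A_{\varphi_{j_n}(t)}-A_{\varphi_j(t)}\|\,\|v_n(t)\|_V^2\,dt$, and since the operator difference tends to zero only \emph{pointwise} in $t$ while $\|v_n(\cdot)\|_V^2$ is merely bounded in $L^1$ without any a priori equi-integrability, dominated convergence does not directly apply and one must argue more carefully. Your argument sidesteps this entirely because the dominating function $C_{B_J}\|v(t)\|_V^2$ is independent of $n$. The price you pay is minor: you never need to show $v_n\rightharpoonup v$, but you do need the measurability of $t\mapsto L_{\varphi_j(t)}^{-1}j(t)$, which you correctly dispatch via the continuity of $t\mapsto L_{\varphi_j(t)}^{-1}$ from \cref{lemma:inequalities_4}. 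Overall your route is shorter and more robust; the paper's route yields the extra information $v_n\rightharpoonup v$, which is of some independent interest but is not needed for \cref{thm:ctrl_problem}.
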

\begin{proof}
Let $j_n \rightharpoonup j$ in $\mathcal{X}_{V^*\!, \, T}^2$. Denote $v_n(t) \colonequals v_{\varphi_{j_n}\!(t), \hspace{1pt} j_n(t)}$ and $v(t) \colonequals v_{\varphi_j(t), \hspace{1pt} j(t)}$.
We first show that $v_n, v \in \mathcal{X}_{V, \, T}^2$ for all $n \in \mathbb{N}$ and $v_n \rightharpoonup v$ in $\mathcal{X}_{V, \, T}^2$. \hyperref[lemma:inequalities_3]{Lemma}~\ref{lemma:inequalities_3} gives
\[
	\int_0^T \|v_n(t)\|_V^2 \, dt
	=
	\int_0^T \|L_{\varphi_{j_n}\!(t)}^{-1} \, j_n(t)\|_V^2 \, dt
	\leq
	\frac{1}{\reg^2} \, \|j_n\|_{\mathcal{X}_{V^*\!, \, T}}^2 ,
\]
so $v_n \in \mathcal{X}_{V, \, T}^2$. Moreover, 
\[
\|v_n\|_{\mathcal{X}_{V, \, T}^2} \leq \frac{1}{\reg} \, \|j_n\|_{\mathcal{X}_{V^*\!, \, T}^2} \leq \frac{J}{\reg} 
\]
for all $n \in \mathbb{N}$. The same argument shows that $v \in \mathcal{X}_{V, \, T}^2$. To see $v_n \rightharpoonup v$ in $\mathcal{X}_{V, \, T}^2$, consider an arbitrary $\mu \in (\mathcal{X}_{V, \, T}^2)^* \simeq \mathcal{X}_{V, \, T}^2 \simeq \mathcal{X}_{V^*\!, \, T}^2$ and observe that
\begin{align*}
	&\hspace{12pt}
	\left| (\mu \mid v_n) - (\mu \mid v) \right|
	\\
	&=
	\left| \int_0^T \left( \vphantom{\sum} (\mu(t) \mid v_n(t)) - (\mu(t) \mid v(t)) \right) dt \right|
	\\
	&=
	\left|
		\int_0^T
		\left(
			\big( \mu(t) \mid (L_{\varphi_{j_n}\!(t)}^{-1} - L_{\varphi_j(t)}^{-1}) \, j_n(t) \big)
			+
			\big( \mu(t) \mid L_{\varphi_j(t)}^{-1} \, (j_n(t) - j(t)) \big)
		\right)
		dt
	\right|
	\\
	&\leq
	\int_0^T \|\mu(t)\|_{V^*} \, \|L_{\varphi_{j_n}\!(t)}^{-1} - L_{\varphi_j(t)}^{-1}\|_{\mathscr{L}(V^*\!, \, V)} \, \|j_n(t)\|_{V^*} \, dt
	\\
	&\hspace{20pt}
	\phantom{}
	+
	\left| \int_0^T \big( \mu(t) \mid L_{\varphi_j(t)}^{-1} \, (j_n(t) - j(t)) \big) \, dt \right| .
\end{align*}
\hyperref[lemma:inequalities_4]{Lemma}~\ref{lemma:inequalities_4}, the Lipschitz condition on $\varphi \mapsto A_\varphi$, and \cref{claim:seminorm_continuous} imply
\begin{align}
	\label{eq:L_inv_converges}
	\begin{split}
	\|L_{\varphi_{j_n}\!(t)}^{-1} - L_{\varphi_j(t)}^{-1}\|_{\mathscr{L}(V^*\!, \, V)}
	&\leq
	\frac{1}{\reg^2} \, \|A_{\varphi_{j_n}\!(t)} - A_{\varphi_j(t)}\|_{\mathscr{L}(V, \, V^*)}
	\\
	&\leq
	\frac{\ell_A}{\reg^2} \, \|\varphi_{j_n}\!(t) - \varphi_j(t)\|_{1, \infty}^{M_0}
	\rightarrow 0 .
	\end{split}
\end{align}
In addition, note that $j' \mapsto \int_0^T (\mu(t) \mid L_{\varphi_j(t)}^{-1} \, j'(t)) \, dt$ is a bounded linear functional on $\mathcal{X}_{V^*\!, \, T}^2$. We conclude that $\left| (\mu \mid v_n) - (\mu \mid v) \right| \rightarrow 0$ by the dominated convergence theorem and $j_n \rightharpoonup j$ in $\mathcal{X}_{V^*\!, \, T}^2$.

Now we show that
\[
	\int_0^T (j(t) \mid v(t)) \, dt
	\leq
	\liminf_{n \rightarrow \infty} \int_0^T (j_n(t) \mid v_n(t)) \, dt . 
\]
The mapping $v' \mapsto \int_0^T (L_{\varphi_j}(t) \, v'(t) \mid v'(t)) \, dt$ is strongly continuous and convex on $\mathcal{X}_{V, \, T}^2$ since
\begin{align*}
	\|L_{\varphi_j(t)}\|_{\mathscr{L}(V, \, V^*)}
	&=
	\|\reg K_V^{-1} + A_{\varphi_j(t)}\|_{\mathscr{L}(V, \, V^*)}
	\\
	&\leq
	\reg  + \|A_{\mathit{id}}\|_{\mathscr{L}(V, \, V^*)} + \ell_A \, \|\varphi_{j_n}\!(t) - \varphi_j(t)\|_{1, \infty}^{M_0}
	\leq
	C_J
\end{align*}
and
$	(L_{\varphi_j(t)} \, v'' \mid v'') = \reg \, \|v''\|_V^2 + (A_{\varphi_j(t)} \, v'' \mid v'') \geq 0 $.

The mapping is therefore weakly lower semicontinuous. We then have
\begin{align*}
	&\hspace{12pt}
	\liminf_{n \rightarrow \infty} \int_0^T (j_n(t) \mid v_n(t)) \, dt 
	\\
	&=
	\liminf_{n \rightarrow \infty}
	\int_0^T
	\left(
		\reg \, \|v_n(t)\|_V^2
		+
		(A_{\varphi_{j_n}\!(t)} \, v_n(t) \mid v_n(t))
	\right)
	dt
	\\
	&=
	\liminf_{n \rightarrow \infty}
	\int_0^T
	\left(
		\vphantom{\sum}
		(L_{\varphi_j}(t) \, v_n(t) \mid v_n(t))
		+
		((A_{\varphi_{j_n}\!(t)} - A_{\varphi_j(t)}) \, v_n(t) \mid v_n(t))
	\right)
	dt
	\\
	&\geq
	\int_0^T (L_{\varphi_j}(t) \, v(t) \mid v(t)) dt
	=
	\int_0^T (j(t) \mid v(t)) \, dt ,
\end{align*}
where the second to last inequality follows from $v_n \rightharpoonup v$, the weak lower semicontinuity of $v' \mapsto \int_0^T (L_{\varphi_j}(t) \, v'(t) \mid v'(t)) \, dt$, the inequality $\|v_n\|_{\mathcal{X}_{V, \, T}^2} \leq \frac{J}{\reg}$, and $\|A_{\varphi_{j_n}\!(t)} - A_{\varphi_j(t)}\|_{\mathscr{L}(V, \, V^*)} \rightarrow 0$ by \cref{eq:L_inv_converges}.
\end{proof}


\subsection{Proof of \hyperref[thm:opt_problem_min]{Theorem}~\ref{thm:opt_problem_min}}
\label{subsec:proof_opt_problem_min}

The analysis is similar. It suffices to show that for a convergent sequence $\theta_n \rightarrow \theta$ in $\Theta$, one has $\|\varphi_{\theta_n}\!(T) - \varphi_\theta(T)\|_{1, \infty}^{M_0} \rightarrow 0$, where $\varphi_{\theta_n}$ and $\varphi_\theta$ are the solutions to system~\cref{eq:opt_problem_system} corresponding to $\theta_n$ and $\theta$. 

For all $\varphi \in C([0, T], \Diffid{1}{\mathbb{R}^3})$ and 
$\theta' \in \Theta$, the boundedness assumption  on $j(\cdot, \theta')$ gives $\int_0^T \|j(\varphi(t), \theta')\|_{V^*} \, dt \leq J_\Theta \, T$, so \cref{lemma:bounds} implies $(\varphi_{\theta_n})_{n = 1}^\infty \subset \mathfrak{S}_{B_\Theta}$ and $\varphi_\theta \in \mathfrak{S}_{B_\Theta}$, where
\[
	B_\Theta
	=
	\frac{2c_V}{\reg} \, (J_\Theta \, T) \, \exp\!\left( {\frac{c_V}{\reg} (J_\Theta \, T)} \right) .
\]
We denote the Lipschitz constants of $\varphi \mapsto A_\varphi$ and $\{j(\cdot, \theta): \theta \in \Theta\}$
on $\mathfrak{S}_{B_\Theta}$ by $\ell_A$ and $\ell_j$ respectively. 

Observe that
\begin{align*}
	&\hspace{15pt}
	\varphi_{\theta_n}\!(t, x) - \varphi_\theta(t, x)
	\\
	&=
	\int_0^t
	\left(
		v_{\varphi_{\theta_n}\!(s), \hspace{1pt} j(\varphi_{\theta_n}\!(s), \, \theta_n)} \circ \varphi_{\theta_n}\!(s, x)
		-
		v_{\varphi_{\theta}(s), \hspace{1pt} j(\varphi_{\theta}(s), \, \theta)} \circ \varphi_\theta(s, x)
	\right)
	ds
	\\
	&=
	\int_0^t
	\left(
		\left( L_{\varphi_{\theta_n}\!(s)}^{-1} \, j(\varphi_{\theta_n}\!(s), \theta_n) \right) \circ \varphi_{\theta_n}\!(s, x)
		-
		\left( L_{\varphi_\theta(s)}^{-1} \, j(\varphi_{\theta_n}\!(s), \theta_n) \right) \circ \varphi_{\theta_n}\!(s, x)
	\right)
	ds
	\\
	&\hspace{10pt}
	\phantom{}
	+
	\int_0^t
	\left(
		\left( L_{\varphi_\theta(s)}^{-1} \, j(\varphi_{\theta_n}\!(s), \theta_n) \right) \circ \varphi_{\theta_n}\!(s, x)
		-
		\left( L_{\varphi_\theta(s)}^{-1} \, j(\varphi_{\theta_n}\!(s), \theta_n) \right) \circ \varphi_\theta(s, x)
	\right)
	ds
	\\
	&\hspace{10pt}
	\phantom{}
	+
	\int_0^t
	\left(
		\left( L_{\varphi_\theta(s)}^{-1} \, j(\varphi_{\theta_n}\!(s), \theta_n) \right) \circ \varphi_\theta(s, x)
		-
		\left( L_{\varphi_\theta(s)}^{-1} \, j(\varphi_\theta(s), \theta_n) \right) \circ \varphi_\theta(s, x)
	\right)
	ds
	\\
	&\hspace{10pt}
	\phantom{}
	+
	\int_0^t
	\left(
		\left( L_{\varphi_\theta(s)}^{-1} \, j(\varphi_\theta(s), \theta_n) \right) \circ \varphi_\theta(s, x)
		-
		\left( L_{\varphi_\theta(s)}^{-1} \, j(\varphi_\theta(s), \theta) \right) \circ \varphi_\theta(s, x)
	\right)
	ds
	\\
	&\equalscolon
	I_{1, n}(t) + I_{2, n}(t) + I_{3, n}(t) + \varLambda_n(t)
\end{align*}
With the same reasoning as in \hyperref[subsec:proof_ctrl_problem_min]{the proof of \hyperref[thm:ctrl_problem_min]{Theorem}~\ref{thm:ctrl_problem_min}}, we bound $\|I_{1, n}(t)\|_{1, \infty}^{M_0}$ (see~\cref{eq:existence_minimizer_ineq1_middle}) by
\begin{align*}
	&\hspace{13pt}
	\|I_{1, n}(t)\|_{1, \infty}^{M_0}
	\\
	&\leq
	\frac{c_V}{\reg^2} \, (2 + B_\Theta) \, \ell_A
	\int_0^t \|\varphi_{\theta_n}\!(s) - \varphi_\theta(s)\|_{1, \infty}^{M_0} \ \|j(\varphi_{\theta_n}\!(s), \theta_n)\|_{V^*} \, ds
	\\
	&\leq
	C_\Theta \int_0^t \|\varphi_{\theta_n}\!(s) - \varphi_\theta(s)\|_{1, \infty}^{M_0} \, ds ,
\end{align*}
and $\|I_{2, n}(t)\|_{1, \infty}^{M_0}$ (see~\cref{eq:existence_minimizer_ineq2_middle}) by
\begin{align*}
	&\hspace{13pt}
	\|I_{2, n}(t)\|_{1, \infty}^{M_0}
	\\
	&\leq
	\frac{c_V}{\reg} \, (2 + B_\Theta)
	\int_0^t \|j(\varphi_{\theta_n}\!(s), \theta_n)\|_{V^*} \, \|\varphi_{\theta_n}\!(s) - \varphi_\theta(s)\|_{1, \infty}^{M_0} \, ds
	\\
	&\leq
	C_\Theta \int_0^t \|\varphi_{\theta_n}\!(s) - \varphi_\theta(s)\|_{1, \infty}^{M_0} \, ds .
\end{align*}
The assumption that $\{j(\cdot, \theta): \theta \in \Theta\}$ is equi-Lipschitz with respect to $\|\cdot\|_{1, \infty}^{M_0}$ gives the following estimate of $\|I_{3, n}(t)\|_{1, \infty}^{M_0}$:
\begin{align*}
	&\hspace{13pt}
	\|I_{3, n}(t)\|_{1, \infty}^{M_0}
	\\
	&=
	\left\|
		\int_0^t
		\left(
			\left( L_{\varphi_\theta(s)}^{-1} \, j(\varphi_{\theta_n}\!(s), \theta_n) \right) \circ \varphi_\theta(s)
			-
			\left( L_{\varphi_\theta(s)}^{-1} \, j(\varphi_\theta(s), \theta_n) \right) \circ \varphi_\theta(s)
		\right)
		ds
	\right\|_{1, \infty}^{M_0}
	\\
	&\leq
	\int_0^t
	\left\|
		\left( L_{\varphi_\theta(s)}^{-1} \, j(\varphi_{\theta_n}\!(s), \theta_n) \right) \circ \varphi_\theta(s)
		-
		\left( L_{\varphi_\theta(s)}^{-1} \, j(\varphi_\theta(s), \theta_n) \right) \circ \varphi_\theta(s)
	\right\|_{1, \infty}
	ds
	\\
	&\leq
	\int_0^t
	(2 + \|\varphi_\theta(s) - \mathit{id}\|_{1, \infty})
	\left\|
		L_{\varphi_\theta(s)}^{-1} \, j(\varphi_{\theta_n}\!(s), \theta_n)
		-
		L_{\varphi_\theta(s)}^{-1} \, j(\varphi_\theta(s), \theta_n)
	\right\|_{1, \infty}
	ds
	\\
	&\leq
	(2 + B_\Theta) \ \frac{c_V}{\reg} \ \ell_j
	\int_0^t \|\varphi_{\theta_n}\!(s) - \varphi_\theta(s)\|_{1, \infty}^{M_0} \, ds
	\\
	&=
	C_\Theta \int_0^t \|\varphi_{\theta_n}\!(s) - \varphi_\theta(s)\|_{1, \infty}^{M_0} \, ds .
\end{align*}
We now work on $\|\varLambda_n(t)\|_{1, \infty}^{M_0}$.
Recall that
\[
	\varLambda_n(t)
	=
	\int_0^t
	\left(
		\left( L_{\varphi_\theta(s)}^{-1} \, j(\varphi_\theta(s), \theta_n) \right) \circ \varphi_\theta(s)
		-
		\left( L_{\varphi_\theta(s)}^{-1} \, j(\varphi_\theta(s), \theta) \right) \circ \varphi_\theta(s)
	\right)
	ds .
\]
To carry over the last set of arguments in \hyperref[subsec:proof_ctrl_problem_min]{the proof of \hyperref[thm:ctrl_problem_min]{Theorem}~\ref{thm:ctrl_problem_min}}, it remains to show that $\lim_{n \rightarrow \infty} \|\varLambda_n(t)\|_{1, \infty}^{M_0} = 0$ for each $t \in [0, T]$ and $\|\varLambda_n(t)\|_{1, \infty}^{M_0}$ is uniformly bounded in $n$ and $t$. Following inequality~\cref{eq:proof_lambda_bounded}, the uniform boundedness of $\|\varLambda_n(t)\|_{1, \infty}^{M_0}$ in $n$ and $t$ is given by
\[
	\|\varLambda_n(t)\|_{1, \infty}^{M_0}
	\leq
	\int_0^t
	(2 + B_\Theta) \,
	\frac{c_V}{\reg} \, \left( \vphantom{\sum} \|j(\varphi_\theta(s), \theta_n)\|_{V^*} + \|j(\varphi_\theta(s), \theta)\|_{V^*} \right) \,
	ds
	\leq
	C_\Theta .
\]
To prove $\lim_{n \rightarrow \infty} \|\varLambda_n(t)\|_{1, \infty}^{M_0} = 0$ for each $t$,  we define the {integrands} of the two terms in $\varLambda_n(t)$ as $u_n: [0, T] \times \mathbb{R}^3 \rightarrow \mathbb{R}^3$ and $u: [0, T] \times \mathbb{R}^3 \rightarrow \mathbb{R}^3$ by
\[
	u_n(s, x)
	=
	\left( L_{\varphi_\theta(s)}^{-1} \, j(\varphi_\theta(s), \theta_n) \right) \circ \varphi_\theta(s, x)
\]
and
\[ 
	u(s, x)
	=
	\left( L_{\varphi_\theta(s)}^{-1} \, j(\varphi_\theta(s), \theta) \right) \circ \varphi_\theta(s, x) .
\]
We show that the two sequences $(\int_0^t u_n(s) \, ds)_{n = 1}^\infty$ and $(\int_0^t Du_n(s) \, ds)_{n = 1}^\infty$ converge pointwise, are uniformly bounded, and are equicontinuous. Given $s \in [0, T]$ and $x \in \mathbb{R}^3$, define linear operators $\mathcal{L}_{s, x}: V^* \rightarrow \mathbb{R}^3$ and $\widetilde{\mathcal{L}}_{s, x}: V^* \rightarrow \mathbb{R}^{3 \times 3}$ by
\[
	\mathcal{L}_{s, x} \, j'
	=
	\left( L_{\varphi_\theta(s)}^{-1} \, j' \right) \circ \varphi_\theta(s, x)
	\ \ \mbox{ and } \ \ 
	\widetilde{\mathcal{L}}_{s, x} \, j'
	=
	D\!\left( L_{\varphi_\theta(s)}^{-1} \, j' \circ \varphi_\theta(s, x) \right),
\]
which can be estimated by
\begin{equation}
	\label{eq:proof_operators_bounded}
	|\mathcal{L}_{s, x} \, j'| + |\widetilde{\mathcal{L}}_{s, x} \, j'|
	\leq
	\left\| \left( L_{\varphi_\theta(s)}^{-1} \, j' \right) \circ \varphi_\theta(s) \right\|_{1, \infty}
	\leq
	\frac{c_V}{\reg} \, (2 + B_\Theta) \, \|j'\|_{V^*} ,
\end{equation}
so both $\mathcal{L}_{s, x}$ and $\widetilde{\mathcal{L}}_{s, x}$ are bounded linear operators for all $s \in [0, T]$ and $x \in \mathbb{R}^3$. Since $j(\varphi_\theta(s), \theta_n) \rightharpoonup j(\varphi_\theta(s), \theta)$ when $\theta_n \rightarrow \theta$, we obtain pointwise convergence of $(u_n(s))_{n = 1}^\infty$ and $(Du_n(s))_{n = 1}^\infty$ for each $s \in [0, T]$. Uniform boundedness and pointwise convergence of $(\int_0^t u_n(s) \, ds)_{n = 1}^\infty$ and $(\int_0^t Du_n(s) \, ds)_{n = 1}^\infty$ follow from inequality \cref{eq:proof_operators_bounded}, $\int_0^T \|j(\varphi_\theta(s), \theta_n)\|_{V^*} \, ds \leq J_\Theta \, T$, and the dominated convergence theorem. The same process as the one in \hyperref[subsec:proof_ctrl_problem_min]{the proof of \hyperref[thm:ctrl_problem_min]{Theorem}~\ref{thm:ctrl_problem_min}} shows equicontinuity under different constants. Invoking the Arzel{\`a}--Ascoli theorem gives us $\lim_{n \rightarrow \infty} \|\int_0^t (u_n(s) - u(s)) \, ds\|_\infty^{M_0} = 0$ and $\lim_{n \rightarrow \infty} \|\int_0^t (Du_n(s) - Du(s)) \, ds\|_\infty^{M_0} = 0$, which in turn implies $\|\varLambda_n(t)\|_{1, \infty}^{M_0} \rightarrow 0$ for each $t \in [0, T]$.

We have thus proved that
\[
	\|\varphi_{\theta_n}\!(t) - \varphi_\theta(t)\|_{1, \infty}^{M_0}
	\leq
	\|\varLambda_n(t)\|_{1, \infty}^{M_0} + C_\Theta \int_0^t \|\varphi_{\theta_n}\!(s) - \varphi_\theta(s)\|_{1, \infty}^{M_0} \, ds ,
\]
where $\lim_{n \rightarrow \infty} \|\varLambda_n(t)\|_{1, \infty}^{M_0} = 0$ for each $t \in [0, T]$ and $\|\varLambda_n(t)\|_{1, \infty}^{M_0}$ is uniformly bounded in $n$ and $t$. Applying Gronwall's lemma and the dominated convergence theorem as in \hyperref[subsec:proof_ctrl_problem_min]{the proof of \hyperref[thm:ctrl_problem_min]{Theorem}~\ref{thm:ctrl_problem_min}}, we conclude that for each $t \in [0, T]$,
\[
	\|\varphi_{\theta_n}\!(t) - \varphi_\theta(t)\|_{1, \infty}^{M_0} \rightarrow 0
	\ \mbox{ as } \ n \rightarrow \infty .
\]

\subsection{Proof of \cref{prop:elastic_operator}}
\label{subsec:proof_elastic_operator}
We need to check that: (1) $A_\varphi \in \sym(V, V^*)$ for all $\varphi \in \Diffid{1}{\mathbb{R}^3}$; (2) $(A_\varphi \hspace{1pt} v \mid v) \geq 0$ for all $\varphi \in \Diffid{1}{\mathbb{R}^3}$ and $v \in V$; (3) The mapping $\varphi \mapsto A_\varphi$ is Lipschitz with respect to $\|\cdot\|_{1, \infty}^{M_0}$ on $\mathfrak{S}_\gamma$. Point (2) is obvious from the assumption that $E_\varphi(x)$ is positive definite for all $x \in \varphi(M_0)$. Since $id \in \mathfrak{S}_\gamma$ for all $\gamma > 0$, we can derive point (1) from the inequality
\begin{align*}
	|(A_\varphi \hspace{1pt} u \mid v)|
	&\leq
	\int_{M_0}
	\left|
		\left( \vphantom{\sum} E_\varphi(\varepsilon_u, \varepsilon_v) \right) \circ \varphi \ |\det D\varphi|
	\right|
	dx
	\\
	&\leq
	\|Du\|_{\infty} \, \|Dv\|_{\infty} \, \|\det D\varphi\|_\infty
	\left( \int_{M_0} |E_{\mathit{id}} \circ \mathit{id}| \, dx + C_\varphi \right)
	\\
	&
	\leq
	(c_V^2 \, C_\varphi) \ \|u\|_V \, \|v\|_V\,.
\end{align*}

We now proceed to proving point (3).
For $\varphi, \psi \in \mathfrak{S}_\gamma$, we show that there exists $C_\gamma$ such that
\[
	|(A_\varphi \hspace{1pt} u \mid v) - (A_\psi \hspace{1pt} u \mid v)|
	\leq
	C_\gamma \, \|\varphi - \psi\|_{1, \infty}^{M_0} \ \|u\|_V \, \|v\|_V .
\]
We make a change of variables and write
\begin{align}
	&\hspace{13pt}
	|(A_\varphi \hspace{1pt} u \mid v) - (A_\psi \hspace{1pt} u \mid v)|
	\notag
	\\
	&\leq
	\int_{M_0}
	\left|
		\left( \vphantom{\sum} E_\varphi(\varepsilon_u, \varepsilon_v) \right) \circ \varphi \ |\det D\varphi|
		-
		\left( \vphantom{\sum} E_\psi(\varepsilon_u, \varepsilon_v) \right) \circ \psi \ |\det D\psi| \,
	\right|
	dx
	\notag
	\\
	\begin{split}
	\label{eq:proof_elastic_operator_terms}
	&\leq
	\int_{M_0}
	\left| \vphantom{\sum} (E_\varphi \circ \varphi - E_\psi \circ \psi)(\varepsilon_u \circ \varphi, \, \varepsilon_v \circ \varphi) \right|
	|\det D\varphi| \, 
	dx
	\\
	&\hspace{13pt}
	\phantom{}
	+
	\int_{M_0}
	\left| \vphantom{\sum} (E_\psi \circ \psi)(\varepsilon_u \circ \varphi - \varepsilon_u \circ \psi, \, \varepsilon_v \circ \varphi) \right|
	|\det D\varphi| \,
	dx
	\\
	&\hspace{13pt}
	\phantom{}
	+
	\int_{M_0}
	\left| \vphantom{\sum} (E_\psi \circ \psi)(\varepsilon_u \circ \psi, \, \varepsilon_v \circ \varphi - \varepsilon_v \circ \psi) \right|
	|\det D\varphi| \,
	dx
	\\
	&\hspace{13pt}
	\phantom{}
	+
	\int_{M_0}
	\left| \vphantom{\sum} (E_\psi \circ \psi)(\varepsilon_u \circ \psi, \, \varepsilon_v \circ \psi) \right|
	\left| \vphantom{\sum} |\det D\varphi| - |\det D\psi| \right|
	dx .
	\end{split}
\end{align}

For the first term in \cref{eq:proof_elastic_operator_terms}, the assumption on $E$ yields
\begin{align}
	\label{eq:proof_elastic_operator_term1}
	\begin{split}
	&\hspace{12pt}
	\int_{M_0}
	\left| \vphantom{\sum} (E_\varphi \circ \varphi - E_\psi \circ \psi)(\varepsilon_u \circ \varphi, \, \varepsilon_v \circ \varphi) \right|
	|\det D\varphi| \, 
	dx
	\\
	&\leq 
	\|Du\|_\infty \, \|Dv\|_\infty \, \|\det D\varphi\|_\infty
	\int_{M_0}
	\left| \vphantom{\sum} (E_\varphi \circ \varphi - E_\psi \circ \psi) \right|
	dx
	\\
	&\leq
	C_\gamma \, \|\varphi - \psi\|_{1, \infty}^{M_0} \ \|u\|_V \, \|v\|_V ,
	\end{split}
\end{align}
where we have used $\|\det D\varphi\|_\infty \leq C_\gamma$ for all $\varphi \in \mathfrak{S}_\gamma$.

We estimate the second and third terms together by symmetry. Again note that $\mathit{id} \in \mathfrak{S}_\gamma$ for all $\gamma > 0$, so
\[
	\int_{M_0} |E_\psi \circ \psi| \, dx
	\leq
	\int_{M_0} |E_{\mathit{id}} \circ \mathit{id}| \, dx + \alpha_\gamma \, \|\psi - \mathit{id}\|_{1, \infty}^{M_0}
	\leq
	\int_{M_0} |E_{\mathit{id}}| \, dx + \alpha_\gamma \, \gamma
	=
	C_\gamma .
\]
It follows that
\begin{align}
	\label{eq:proof_elastic_operator_term23}
	\begin{split}
	&\hspace{13pt}
	\int_{M_0}
	\left| \vphantom{\sum} (E_\psi \circ \psi)(\varepsilon_u \circ \varphi - \varepsilon_u \circ \psi, \, \varepsilon_v \circ \varphi) \right|
	|\det D\varphi| \,
	dx
	\\
	&\hspace{11pt}
	\phantom{}
	+
	\int_{M_0}
	\left| \vphantom{\sum} (E_\psi \circ \psi)(\varepsilon_u \circ \psi, \, \varepsilon_v \circ \varphi - \varepsilon_v \circ \psi) \right|
	|\det D\varphi| \,
	dx
	\\
	&\leq
	\|D^2 u\|_\infty \, \|\varphi - \psi\|_\infty^{M_0} \, \|Dv\|_\infty \, \|\det D\varphi\|_\infty \, \int_{M_0} |E_\psi \circ \psi| \, dx
	\\
	&\hspace{11pt}
	\phantom{}
	+
	\|Du\|_\infty \, \|D^2 v\|_\infty \, \|\varphi - \psi\|_\infty^{M_0} \, \|\det D\varphi\|_\infty \, \int_{M_0} |E_\psi \circ \psi| \, dx
	\\
	&\leq
	C_\gamma \, \|\varphi - \psi\|_{1, \infty}^{M_0} \ \|u\|_V \, \|v\|_V .
	\end{split}
\end{align}

For the fourth term, we use inequality \cref{eq:ex4.det} and write
\begin{align}
	\label{eq:proof_elastic_operator_term4}
	\begin{split}
	&\hspace{13pt}
	\int_{M_0}
	\left| \vphantom{\sum} (E_\psi \circ \psi)(\varepsilon_u \circ \psi, \, \varepsilon_v \circ \psi) \right|
	\left| \vphantom{\sum} |\det D\varphi| - |\det D\psi| \right|
	dx
	\\
	&\leq
	C \, (\|D\varphi\|_\infty + \|D\psi\|_\infty)^2 \, \|D\varphi - D\psi\|_\infty^{M_0} \ 
	\|Du\|_\infty \, \|Dv\|_\infty \, \int_{M_0} |E_\psi \circ \psi| \, dx
	\\
	&\leq
	C_\gamma \, \|\varphi - \psi\|_{1, \infty}^{M_0} \ \|u\|_V \, \|v\|_V .
	\end{split}
\end{align}

Combining inequality~\cref{eq:proof_elastic_operator_terms} with estimates~\cref{eq:proof_elastic_operator_term1,eq:proof_elastic_operator_term23,eq:proof_elastic_operator_term4}, we conclude that
\[
	|(A_\varphi \hspace{1pt} u \mid v) - (A_\psi \hspace{1pt} u \mid v)|
	\leq
	C_\gamma \, \|\varphi - \psi\|_{1, \infty}^{M_0} \ \|u\|_V \, \|v\|_V .
\]

To justify that the operator in equation \cref{eq:bottom.penalty} also satisfies the hypotheses, we note that the penalty term can be rewritten as
\[
\beta\int_{\mathcal{M}_{\mathrm{bottom}}}
	((u \circ \varphi)^\top D\varphi^{-\top}n_0)((w \circ \varphi)^\top D\varphi^{-\top}n_0)
	\, \frac{|\det D\varphi|}{|D\varphi^{-\top} n_0|} \, d\sigma
\]
(where $n_0$ is a unit normal to $\mathcal{M}_{\mathrm{bottom}}$). One can then work on the  terms in the integral using similar arguments to those made in the previous proof.


\section{Conclusion}
\label{sec:conclusion}

In this paper, we first examined the existence and uniqueness of solutions to general systems $\partial_t \hspace{1pt} \varphi(t, x) =  v(t, \varphi(t, x)), \ \varphi(0, x) = x$, where the vector field is a function of a yank $j$ of the form $v(t) = L_{\varphi(t)}^{-1} \, j(t)$ or $v(t) = L_{\varphi(t)}^{-1} \, j(\varphi(t), \theta)$. We then extended the analysis to prove the existence of solutions to the corresponding inverse problems in which one attempts to recover the yank or the parameters from the observed initial and final volumes.

Although we have focused on the specific operator $L_\varphi^{-1} = (\reg K_V^{-1} + A_\varphi)^{-1} \in \mathscr{L}(V^*, V)$, our theorems can be generalized to an operator in $\mathscr{L}(V^*, V)$ satisfying similar conditions of boundedness and regularity. 

We have presented results of simulated inverse problems assuming shapes are hyperelastic materials. 
Our results indicate that the elasticity assumption together with the data from the boundary of target are not enough to determine the internal yank. Additional information such as the internal structure of the target or a parametric model for $j$ is necessary to tackle these inverse problems. As a proof of concept, we have considered a simple form of yank whose density is the gradient of a parametrized potential advected by deformation and demonstrated the retrievability of the potential function parameters under this setting. A more sophisticated model should likely involve propagation of the potential in addition to advection as a way to account for, e.g., the progression of pathology along with  morphological changes. A possible approach could be to combine the shape evolution equations discussed in this paper with, for example, a reaction-diffusion PDE on the potential function to model its dynamics. We are currently investigating a model of this kind, which comes with the extra technicality of dealing with such PDEs on varying domains, and hope to publish relevant results in the near future.



\appendix
\section{Implementation details}
\label{sec:implementation}
\Cref{subsec:tetrahedralization} covers discretization specific to templates with layered structures. (One may use any discretization procedure if layered structures are not of concern.) In \cref{subsec:gradient}, we include the computation of the gradient of the parametric yank problem with the elastic operators and yank presented in \cref{sec:elasticity}. The computation of the gradient of the free yank problem can be adapted from \cref{subsec:gradient}.

\subsection{Tetrahedralization of layered templates}
\label{subsec:tetrahedralization}

We use the notation of \cref{ex:layers}.
The template shape $M_0$ is discretized into a set of points $\bigcup_{\ell = 1}^L\{q_i^\ell\}_{i = 1}^N$ according to its layered structure $\varPhi$. Points $\{q_i^\ell\}_{i = 1}^N$ are on the same layer $\nu_\ell$, and the vectors $q_i^{\ell + 1} - q_i^\ell$ are parallel to the transversal vector $\partial_\nu \varPhi(q_i^1, \nu_\ell)$ at $q_i^\ell$ for all $i$ and $\ell$ (\cref{fig:tetrahedralization_layers,fig:tetrahedralization_points}). Note that points $\{q_i^1\}_{i = 1}^N$ are on the bottom layer, points $\{q_i^L\}_{i = 1}^N$ are on the top layer, and each discretized layer has the same number of discretized points. Since $\varPhi$ is a diffeomorphism, the same triangulation structure can be applied to each layer (\cref{fig:tetrahedralization_triangles}). It follows that $\{q_{i_1}^\ell, q_{i_2}^\ell, q_{i_3}^\ell, q_{i_1}^{\ell + 1}, q_{i_2}^{\ell + 1}, q_{i_3}^{\ell + 1}\}$ forms a triangular prism for any triangular face $(i_1, i_2, i_3)$ of one layer. Those prisms between the first and second layers are further split into tetrahedra without adding vertices using the procedure introduced in \cite{Dompierre1999}, which guarantees consistent triangular faces across adjacent prisms. To ensure the same tetrahedralization structure between consecutive layers, the tetrahedralization between the first and second layers is then replicated to prisms between consecutive upper layers (\cref{fig:tetrahedralization_mesh}).

\begin{figure}[hbt!]
	\centering
	\begin{subfigure}[t]{0.48\textwidth}
		\centering
		\includegraphics[width = 0.9\textwidth]{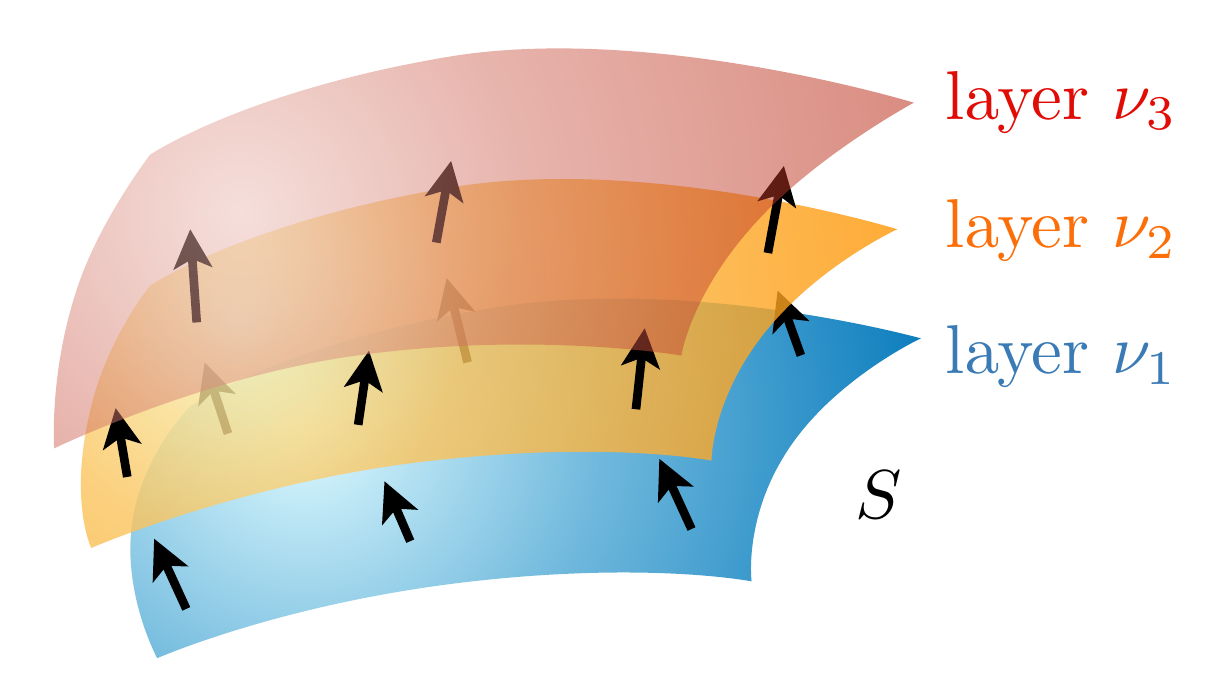}
		\caption{\footnotesize Layers and transversal vector field given by the layered structure.}
		\label{fig:tetrahedralization_layers}
	\end{subfigure}
	~~~
	\begin{subfigure}[t]{0.48\textwidth}
		\centering
		\includegraphics[width = 0.9\textwidth]{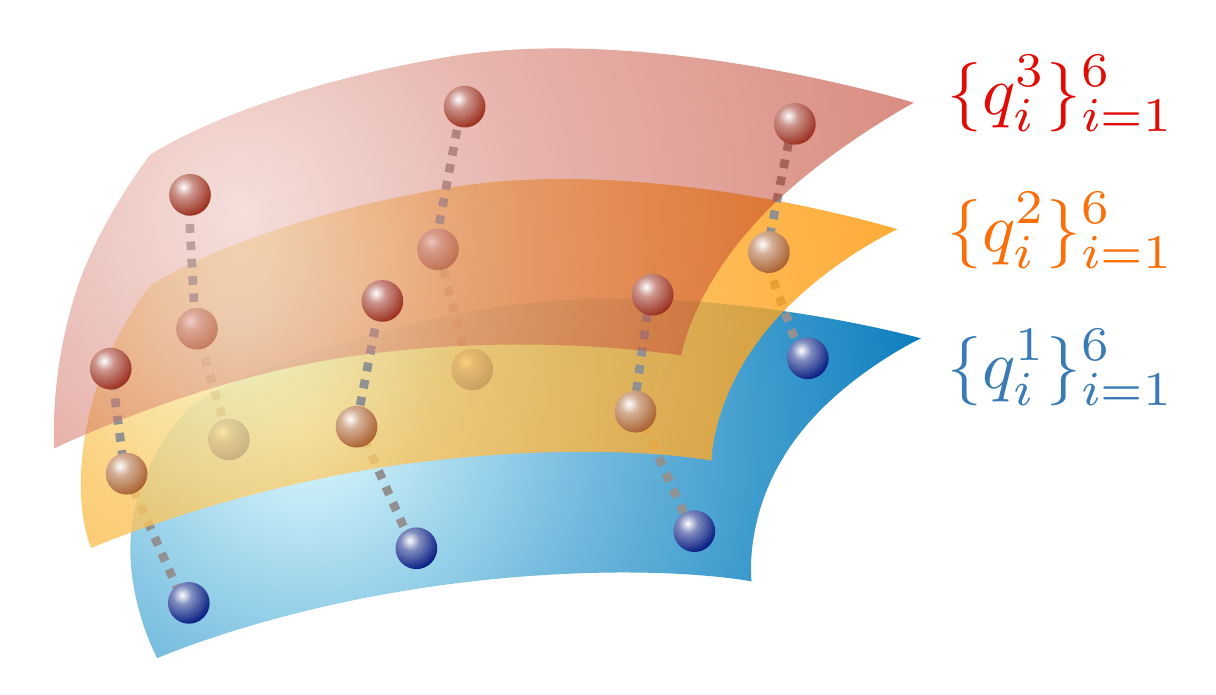}
		\caption{\footnotesize Discretized points according to the layers and the transversal vector field.}
		\label{fig:tetrahedralization_points}
	\end{subfigure}
	\\[10pt]
	\begin{subfigure}[t]{0.48\textwidth}
		\centering
		\includegraphics[width = 0.9\textwidth]{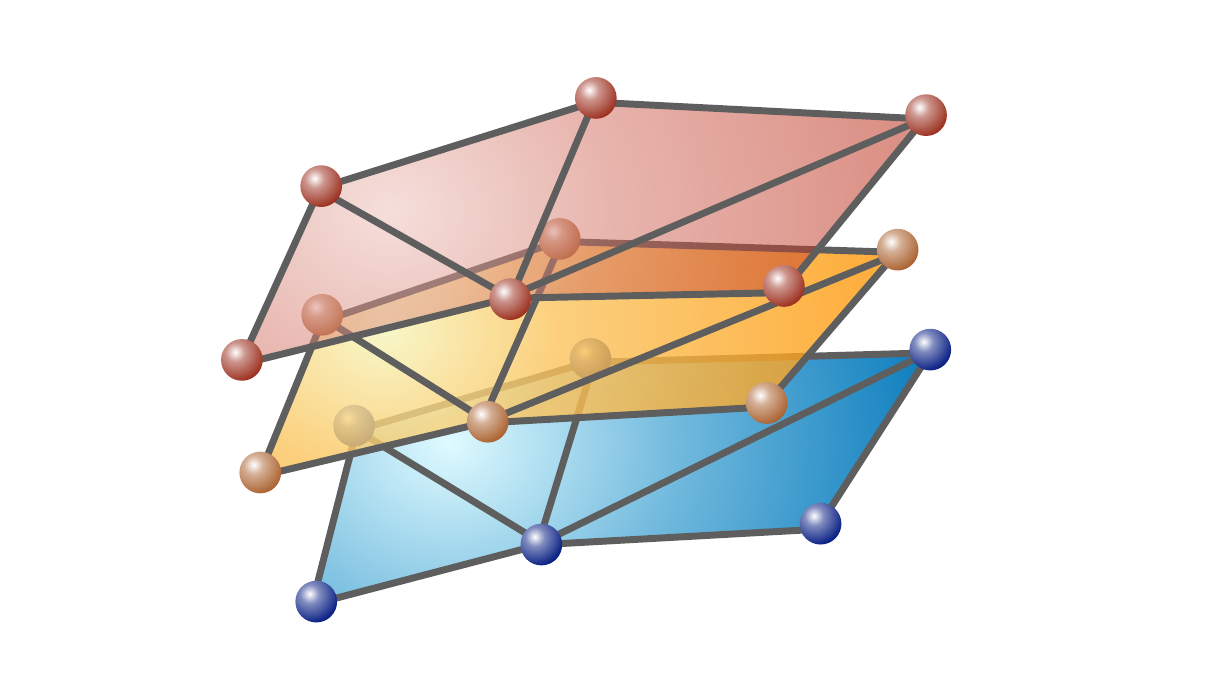}
		\caption{\footnotesize The same triangulation structure applied to each layer.}
		\label{fig:tetrahedralization_triangles}
	\end{subfigure}
	~~~
	\begin{subfigure}[t]{0.48\textwidth}
		\centering
		\includegraphics[width = 0.9\textwidth]{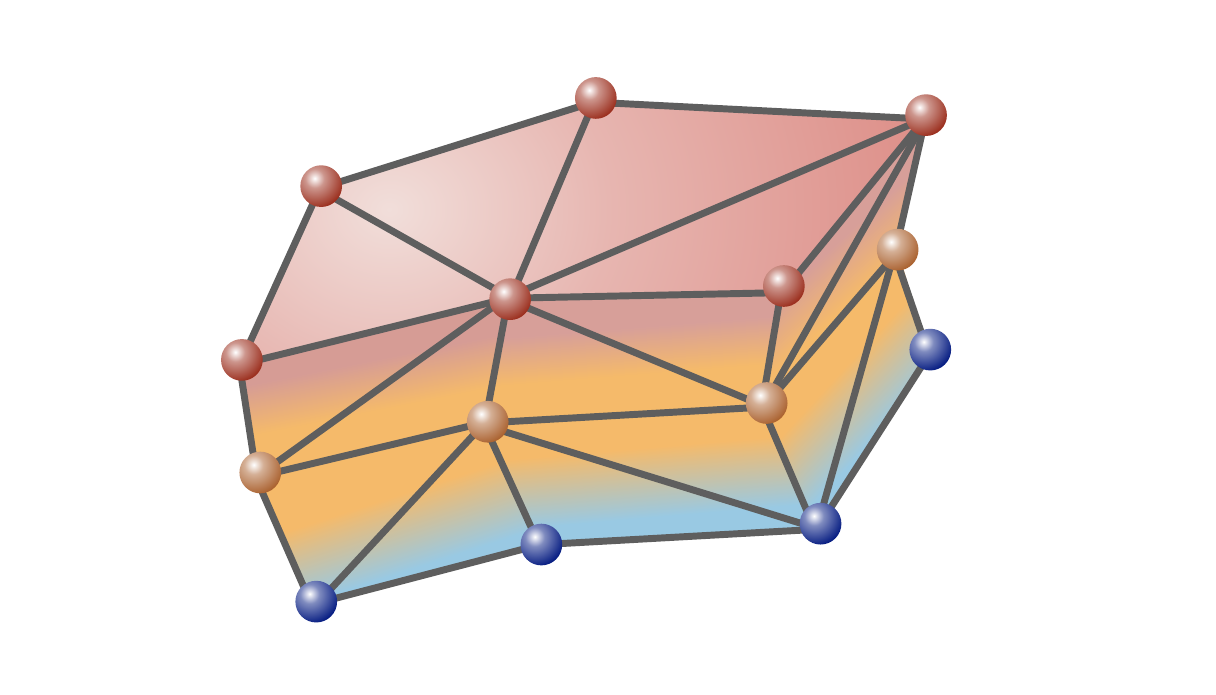}
		\caption{\footnotesize The same tetrahedralization structure applied to volumes between consecutive layers.}
		\label{fig:tetrahedralization_mesh}
	\end{subfigure}
	\caption{Tetrahedralization of layered templates.}
\end{figure}


\subsection{Gradient computation}
\label{subsec:gradient}

We write down the gradient of our optimization problem assuming a continuous time variable, which can be easily discretized in time once an integration scheme for ODEs is selected. Denote the discretized $M_0$ by $q_0 \in \mathbb{R}^{3n}$ and the discretized $M_\targ$ by $q_\targ \in \mathbb{R}^{3n'}$ ($n = NL$ for layered templates). Moreover, denote the kernel matrix of the RKHS $V$ by $\bm{K}_q \in \mathbb{R}^{3n \times 3n}$, namely for $q=(q_i)_{i=1,\ldots,n}$, $\bm{K}_q = (K(q_i,q_j))_{i,j=1,\ldots,n}$ where $K:\mathbb{R}^3 \times \mathbb{R}^3 \rightarrow \mathbb{R}^{3\times 3}$ is the kernel function associated to the vector RKHS $V$. Finally, we write the discretized operator $(A_\varphi \hspace{1pt} u \mid w)$ as $u^\top \bm{A}_q \, w$, and the discretized work $(j(\varphi, \theta) \mid w)$ as $j_{q, \, \theta}^\top \, w$. It follows that the optimal velocity expressed in \cref{lemma:minimizer} becomes $v_{q, \hspace{1pt} \theta} = (\reg \bm{K}_q^{-1} + \bm{A}_q)^{-1} j_{q, \hspace{1pt} \theta} \equalscolon \bm{L}_q^{-1} j_{q, \hspace{1pt} \theta}$ and is obtained numerically by solving a $3n$-by-$3n$ symmetric positive-definite linear system. The discretized optimization problem now becomes
\[
	\min_{\theta \,\in\, \Theta} \ \rho(q(T), q_\targ)
\]
subject to $\dot q(t) = v_{q(t), \, \theta}$ and $q(0) = q_0$. Introduce the costate $p(\cdot)$, where $p(t) \in \mathbb{R}^{3n}$. We then form the Lagrangian
\[
	\mathcal{L}(q, p, \theta) = \rho(q(T), q_\targ) + \int_0^T p(t)^\top (\dot q(t) - v_{q(t), \, \theta}) \, dt.
\]
For each $\theta$, we look for $q_\theta$ and $p_\theta$ such that
\[
	\left\{
		\begin{array}{l}
			\dot q_\theta(t) = v_{q_\theta(t), \, \theta}, \ q_\theta(0) = q_0,
			\\[5pt]
			D_q \mathcal{L}(q_\theta, p_\theta, \theta) = 0,
			\\[5pt]
			D_p \mathcal{L}(q_\theta, p_\theta, \theta) = 0.
		\end{array}
	\right. 
\]
With such chosen $q_\theta$ and $p_\theta$, we deduce that $\left. \partial_{\theta'} \mathcal{L}(q_\theta, p_\theta, \theta') \right|_{\theta' = \theta}$ is the gradient of our discretized optimization problem. We now show how to obtain $q_\theta$ and $p_\theta$. Derivatives of $\mathcal{L}$ with respect to $q$ and $p$ are given by
\begin{align*}
	\left\{
		\begin{array}{l}
			\displaystyle (D_q \mathcal{L}(q, p, \theta) \mid \delta q)
			=
			\left( \partial_{q(T)} \rho(q(T), q_\targ) \right)^\top \delta q(T)
			+
			p(T)^\top \delta q(T)
			\\[5pt]
			\hspace{90pt}
			\displaystyle
			\phantom{}
			-
			\int_0^T \left( \dot p(t) + \partial_{q(t)} (p(t)^\top v_{q(t), \, \theta}) \right)^\top \delta q(t) \, dt,
			\\[10pt]
			\displaystyle (D_p \mathcal{L}(q, p, \theta) \mid \delta p) = \int_0^T \delta p(t)^\top (\dot q(t) - v_{q(t), \, \theta}) \, dt\,.
		\end{array}
	\right. 
\end{align*}
Note that $D_q \mathcal{L}(q, p, \theta) = 0$ is equivalent to $\dot p(t) = -\partial_{q(t)} (p(t)^\top v_{q(t), \, \theta})$ and $p(T) = -\partial_{q(T)} \rho(q(T), q_\targ)$. In addition, $D_p \mathcal{L}(q, p, \theta) = 0$ is equivalent to $\dot q(t) = v_{q(t), \, \theta}$. Hence we can compute the gradient as follows. First we compute $q_\theta$ as a solution of
\[
	\dot q(t) = v_{q(t), \, \theta}, \ q(0) = q_0 .
\]
Plugging $q_\theta$ into the ODE of $p$, next we solve
\[
	\dot p(t) = -\partial_{q(t)} (p(t)^\top v_{q_\theta(t), \, \theta}), \ p(T) = -\partial_{q(T)} \rho(q_\theta(T), q_\targ)
\]
for $p_\theta$. Since $q_\theta$ and $p_\theta$ satisfy the requirements, we can compute the gradient of our discretized optimization problem as
\[
	\left. \partial_{\theta'} \mathcal{L}(q_\theta, p_\theta, \theta') \right|_{\theta' = \theta}
	=
	-\int_0^T \left. \partial_{\theta'} (p_\theta(t)^\top v_{q_\theta(t), \, \theta'}) \right|_{\theta' = \theta} \, dt .
\]

Now the computation of gradient is broken down into three terms: $\partial_q (p^\top v_{q, \hspace{1pt} \theta})$, $\partial_{q(T)} \rho(q(T), q_\targ)$, and $\partial_\theta (p^\top v_{q, \, \theta})$. We will use the notation $u_q^\top (\partial_q \bm{A}_q) \, w_q$ in the following to mean the differentiation of $u_q^\top \bm{A}_q \, w_q$ with respect to $q$ while keeping $u_q$ and $w_q$ fixed. Similarly, the notation $(\partial_q \, j_{q, \hspace{1pt} \theta})^\top w_q$ means the differentiation of $j_{q, \hspace{1pt} \theta}^\top \, w_q$ with respect to $q$ when we fix $w_q$. With those conventions set, we can now formally compute
\begin{align*}
	\partial_q (p^\top v_{q, \hspace{1pt} \theta})
	&=
	\partial_q \left( p^\top \bm{L}_q^{-1} j_{q, \hspace{1pt} \theta} \right)
	\\
	&=
	-(\bm{L}_q^{-1} p)^\top (\partial_q \bm{L}_q ) \, (\bm{L}_q^{-1} j_{q, \hspace{1pt} \theta})
	+
	(\bm{L}_q^{-1} p)^\top (\partial_q \hspace{1pt} j_{q, \hspace{1pt} \theta})
	\\
	&=
	-(\bm{L}_q^{-1} p)^\top (\reg \, \partial_q \bm{K}_q^{-1} + \partial_q \bm{A}_q) \, (\bm{L}_q^{-1} j_{q, \hspace{1pt} \theta})
	+
	(\bm{L}_q^{-1} p)^\top (\partial_q \hspace{1pt} j_{q, \hspace{1pt} \theta})
	\\
	&=
	\reg \, (\bm{K}_q^{-1} \beta_{q, \hspace{1pt} p})^\top (\partial_q \bm{K}_q) \, (\bm{K}_{q}^{-1} v_{q, \hspace{1pt} \theta})
	-
	\beta_{q, \hspace{1pt} p}^\top \, (\partial_q \bm{A}_q) \, v_{q, \hspace{1pt} \theta}
	+
	\beta_{q, \hspace{1pt} p}^\top \, (\partial_q \hspace{1pt} j_{q, \hspace{1pt} \theta}) ,
\end{align*}
where $\beta_{q, \hspace{1pt} p} = \bm{L}_q^{-1} p$. 

We present computations of $u^\top \bm{A}_q \, w$ in \cref{subsubsec:computation_elastic_energy}, $\bm{A}_q \, w$ in \cref{subsubsec:computation_elastic_force}, $u^\top (\partial_q \bm{A}_q) \, w$ in \cref{subsubsec:computation_dq_elastic_energy}, $j_{q, \, \theta}^\top \, w$ in \cref{subsubsec:computation_work}, $j_{q, \hspace{1pt} \theta}$ in \cref{subsubsec:computation_external_force}, and $(\partial_q \hspace{1pt} j_{q, \, \theta})^\top w$  in \cref{subsubsec:computation_dq_work}, which are essential in the above computation of $\partial_q (p^\top v_{q, \hspace{1pt} \theta})$. The computation of $\partial_{q(T)} \rho(q(T), q_\targ)$ depends on the discrepancy function $\rho$. We refer to \cite{Charon2013} when $\rho$ is the varifold discrepancy between triangulated surfaces. Since
\[
	\partial_\theta (p^\top v_{q, \, \theta})
	=
	\partial_\theta (p^\top \bm{L}_q^{-1} j_{q, \theta})
	=
	\partial_\theta (\beta_{q, \hspace{1pt} p}^\top \, j_{q, \theta}) ,
\]
the computation of $\partial_\theta (p^\top v_{q, \, \theta})$ can be derived from $j_{q, \, \theta}^\top \, w$ (\cref{subsubsec:computation_work}). 



To make the presentation more concrete, we focus on the layered elastic operator and yank described in \cref{sec:elasticity}.

\subsubsection{Computation of {\boldmath $u^\top A_q \, w$}}
\label{subsubsec:computation_elastic_energy}

A little computation shows that the layered elastic operator \cref{eq:layered_elastic_operator} can be rewritten as
\begin{align}
	\label{eq:computation_elastic_energy}
	\begin{split}
		(A_\varphi \hspace{1pt} u \mid w)
		&=
		\int_{\varphi(M_0)}
		\left(
			\vphantom{\int}
			\lambda_{\mathrm{tan}}
			\left( \vphantom{\sum} \mathrm{tr}(\varepsilon_u) - N_\varphi^\top \varepsilon_u N_\varphi \right)
			\left( \vphantom{\sum} \mathrm{tr}(\varepsilon_w) - N_\varphi^\top \varepsilon_w N_\varphi \right)
		\right.
		\\
		&\hspace{35pt}
		\phantom{}	
		+
		\mu_{\mathrm{tan}}
		\left(
			\vphantom{\sum}
			\mathrm{tr}(\varepsilon_u \varepsilon_w)
			-
			2 \, N_\varphi^\top \varepsilon_u \varepsilon_w N_\varphi
			+
			(N_\varphi^\top \varepsilon_u N_\varphi)(N_\varphi^\top \varepsilon_w N_\varphi)
		\right)
		\\
		&\hspace{35pt}
		\phantom{}
		+
		\mu_{\mathrm{tsv}} \, (S_\varphi^\top \varepsilon_u S_\varphi)(S_\varphi^\top \varepsilon_w S_\varphi)
		\\
		&\hspace{32pt}
		\left.
			\vphantom{\int}
			\phantom{}
			+
			2 \, \mu_{\mathrm{ang}} \left( \vphantom{\sum} S_\varphi^\top \varepsilon_u \varepsilon_w S_\varphi - (N_\varphi^\top \varepsilon_u S_\varphi)(N_\varphi^\top \varepsilon_w S_\varphi) \right)
		\right) dx ,
	\end{split}
\end{align}
where $\varepsilon_u = \frac{1}{2} \left( Du + Du^\top \right)$ and $\varepsilon_w = \frac{1}{2} \left( Dw + Dw^\top \right)$ are linear strain tensors, $N_\varphi$ is a unit vector field normal to deformed layers $\{\varphi(\mathcal{M}_\nu): \nu \in [0, 1]\}$, and $S_\varphi = \frac{(D\varphi \hspace{1pt} S) \circ \varphi^{-1}}{|(D\varphi \hspace{1pt} S) \circ \varphi^{-1}|}$ is the unit transversal vector field according to the deformed layered structure. After discretizing $\varphi(M_0)$ into a union of tetrahedra, we compute the integral \cref{eq:computation_elastic_energy} by summing over these tetrahedra. Thus we can focus the computation on one single tetrahedron. Note that we need $N_\varphi$, $S_\varphi$, $Du$, and $Dw$ to evaluate~\cref{eq:computation_elastic_energy}. Recall that the tetrahedralization procedure (\cref{subsec:tetrahedralization}) splits one triangular prism into three tetrahedra. Given a tetrahedron, we compute $N_\varphi$ as the average of normals of the two bases of the corresponding prism, and $S_\varphi$ is computed as the average of three sides of the corresponding prism. To be more precise, let the ``upward-pointing" unit normals of two bases be $N_1$ and $N_2$, and let the unit transversals from three sides be $S_1$, $S_2$, and $S_3$. The vectors $N_\varphi$ and $S_\varphi$ of the three tetrahedra split from this prism are computed by
\[
	N_\varphi = \frac{N_1 + N_2}{|N_1 + N_2|}
	\ \ \mbox{ and } \ \ 
	S_\varphi = \frac{S_1 + S_2 + S_3}{|S_1 + S_2 + S_3|} .
\]
For the computation of $Du$, denote the positions at the four vertices of the tetrahedron by $q_0$, $q_1$, $q_2$, $q_3$, and denote $u$ at the four vertices by $u_0$, $u_1$, $u_2$, $u_3$. We approximate $Du(q_0)$ by
\begin{align*}
	Du(q_0)
	&=
	Du(q_0)
	\left[ \begin{array}{ccc} \displaystyle \vphantom{\sum} q_1 - q_0, & q_2 - q_0, & q_3 - q_0 \end{array} \right]
	\left[ \begin{array}{ccc} \displaystyle \vphantom{\sum} q_1 - q_0, & q_2 - q_0, & q_3 - q_0 \end{array} \right]^{-1}
	\\
	&\approx
	\left[ \begin{array}{ccc} \displaystyle \vphantom{\sum} u_1 - u_0, & u_2 - u_0, & u_3 - u_0 \end{array} \right]
	\left[ \begin{array}{ccc} \displaystyle \vphantom{\sum} q_1 - q_0, & q_2 - q_0, & q_3 - q_0 \end{array} \right]^{-1} .
\end{align*}
The approximated $Du(q_0)$ within a tetrahedron $\mathcal{T}$, denoted by $(Du)_\mathcal{T}$, is characterized by
\[
	\left\{
		\begin{array}{l}
			(Du)_\mathcal{T} \, (q_1 - q_0) = u_1 - u_0 \\
			(Du)_\mathcal{T} \, (q_2 - q_0) = u_2 - u_0 \\
			(Du)_\mathcal{T} \, (q_3 - q_0) = u_3 - u_0 \\
		\end{array}
	\right. ,
\]
which is equivalent to
\[
	\left\{
		\begin{array}{l}
			(Du)_\mathcal{T} \, (q_0 - q_1) = u_0 - u_1 \\
			(Du)_\mathcal{T} \, (q_2 - q_1) = u_2 - u_1 \\
			(Du)_\mathcal{T} \, (q_3 - q_1) = u_3 - u_1 \\
		\end{array}
	\right. .
\]
The same pattern holds if we change the anchor position to $q_2$ and $q_3$. In other words, the approximated $(Du)_\mathcal{T}$ only depends on tetrahedron, not on the anchor position, the ordering of vertices, or the choice of three edges from the tetrahedron. $Dw$ is computed in exactly the same way.



\subsubsection{Computation of {\boldmath $A_q \, w = \partial_u (u^\top A_q \, w)$}}
\label{subsubsec:computation_elastic_force}

We use the same notation as in \cref{subsubsec:computation_elastic_energy} and keep focusing on one single tetrahedron. Note that we still denote the discretized $\varepsilon_u$ by $\varepsilon_u$. Define
\[
	U = \left[ \begin{array}{ccc} \displaystyle \vphantom{\sum} u_1 - u_0, & u_2 - u_0, & u_3 - u_0 \end{array} \right]
	\ \ \mbox{ and } \ \ 
	Q = \left[ \begin{array}{ccc} \displaystyle \vphantom{\sum} q_1 - q_0, & q_2 - q_0, & q_3 - q_0 \end{array} \right] ,
\]
so $(Du)_\mathcal{T} = U Q^{-1}$. Since $\mathrm{tr}(\varepsilon_u) = \sum_{i = 1}^3 e_i^\top \varepsilon_u \, e_i$, where $e_i$ is the canonical basis of $\mathbb{R}^3$, we only need to have an expression of $\partial_{u_i} (a^\top \varepsilon_u \, b)$ for arbitrary $a, b \in \mathbb{R}^3$ in order to compute $\partial_u (u^\top \bm{A}_q \hspace{1pt} w)$ (see equation~\cref{eq:computation_elastic_energy}). Note that
\begin{equation}
	a^\top \varepsilon_u \, b
	=
	a^\top \left(\frac{1}{2} \, (U Q^{-1} + Q^{-\top} U^\top)\right) b
	=
	\frac{1}{2} \, \mathrm{tr}\!\left( Q^{-1} (b a^\top + a b^\top)\, U \right) ,
	\label{eq:computation_aeb}
\end{equation}
which gives
\[
	\partial_{u_0}\!\left( a^\top \varepsilon_u \, b \right) = \left( -\frac{1}{2} \, \mathbbm{1}_3^\top Q^{-1} (b a^\top + a b^\top) \right)^\top
\]
and
\[
	\partial_{u_i}\!\left( a^\top \varepsilon_u \, b \right) = \left( \frac{1}{2} \left( Q^{-1} (b a^\top + a b^\top) \right)_{i*} \right)^\top
	\ \ \mbox{ for } i = 1, 2, 3 ,
\]
where $\mathbbm{1}_3$ denotes the 3-by-1 all-one vector, and $(A)_{i*}$ denotes the $i$th row of a matrix $A$.

Let $k$ be the global index running through $n$ discretized points. Note that when we compute $\partial_{u_k} (u^\top \bm{A}_q \hspace{1pt} w)$ by summing over tetrahedra, we only need to take into account those tetrahedra having $q_k$ as a vertex. Other tetrahedra do not have $u_k$ involved in our computation of $u^\top \bm{A}_q \hspace{1pt} w$. This information can be precomputed when we generate the tetrahedralization.


\subsubsection{Computation of {\boldmath $u^\top (\partial_q A_q) \, w = \partial_q(u^\top \!A_q \, w)$}}
\label{subsubsec:computation_dq_elastic_energy}

Differentiating $N_\varphi$, $S_\varphi$, and volume with respect to $q$ is straightforward. Given a tetrahedron with $q_0$, $q_1$, $q_2$, $q_3$ as vertices, we look at $\partial_{q_i} (a^\top \varepsilon_u \, b)$ for arbitrary $a, b \in \mathbb{R}^3$. From~\cref{eq:computation_aeb}, we deduce that
\[
	\partial_{q_0}\!\left( a^\top \varepsilon_u \, b \right)
	=
	\left( \frac{1}{2} \, \mathbbm{1}_3^\top Q^{-1} (b a^\top + a b^\top) \, U Q^{-1} \right)^\top
\]
and
\[
	\partial_{q_i}\!\left( a^\top \varepsilon_u \, b \right)
	=
	\left( -\frac{1}{2} \left( Q^{-1} (b a^\top + a b^\top) \, U Q^{-1} \right)_{i*} \right)^\top
	\ \ \mbox{ for } i = 1, 2, 3 .
\]


\subsubsection{Computation of {\boldmath $j_{q, \, \theta}^\top \, w$}}
\label{subsubsec:computation_work}

Much of the work has been done in \cref{subsubsec:computation_elastic_energy,subsubsec:computation_elastic_force}. Recall that
\[
	(j(\varphi, \theta) \mid w)
	=
	-\int_{\varphi(M_0)} \chi \ g_\theta \circ \varphi^{-1} \, \mathrm{div}(w) \, dx
	=
	-\int_{\varphi(M_0)} \chi \ g_\theta \circ \varphi^{-1} \, \mathrm{tr}(Dw) \, dx .
\]
In a single transformed tetrahedron $\mathcal{T}$, we evaluate $\chi \ g_\theta \circ \varphi^{-1}$ at the transformed centroid to simplify the computation. Denote the evaluated value by $g_\mathcal{T}$. The derivative $Dw$ is approximated in the same way as in \cref{subsubsec:computation_elastic_energy}, that is, $(Dw)_{\mathcal{T}} = W Q^{-1}$, where
\[
	W = \left[ \begin{array}{ccc} \displaystyle \vphantom{\sum} w_1 - w_0, & w_2 - w_0, & w_3 - w_0 \end{array} \right]
	\ \ \mbox{ and } \ \ 
	Q = \left[ \begin{array}{ccc} \displaystyle \vphantom{\sum} q_1 - q_0, & q_2 - q_0, & q_3 - q_0 \end{array} \right] .
\]
The contribution of a single tetrahedron $\mathcal{T}$ in the full integral is then given by
\begin{equation}
	g_\mathcal{T} \, \mathrm{tr}\!\left((Dw)_{\vphantom{\sum}\mathcal{T}}\right) \mathrm{vol}(\mathcal{T})
	=
	g_\mathcal{T} \, \mathrm{tr}\!\left( W Q^{-1} \right) \frac{1}{6} \left|\det Q\right| .
	\label{eq:computation_partial_work}
\end{equation}


\subsubsection{Computation of {\boldmath $j_{q, \, \theta} = \partial_w (j_{q, \, \theta}^\top \, w)$}}
\label{subsubsec:computation_external_force}

From equation~\cref{eq:computation_partial_work}, we obtain the derivatives
\[
	\partial_{w_0}\!\left( g_\mathcal{T} \, \mathrm{tr}\!\left(W Q^{-1}\right) \mathrm{vol}(\mathcal{T}) \right)
	=
	-g_{\mathcal{T}} \, \mathrm{vol}(\mathcal{T}) \left( \mathbbm{1}_3^\top Q^{-1} \right)^\top
\]
and
\[
	\partial_{w_i}\!\left( g_\mathcal{T} \, \mathrm{tr}\!\left(W Q^{-1}\right) \mathrm{vol}(\mathcal{T}) \right)
	=
	g_{\mathcal{T}} \, \mathrm{vol}(\mathcal{T}) \left( (Q^{-1})_{i*} \right)^\top
	\ \ \mbox{ for } i = 1, 2, 3 .
\]


\subsubsection{Computation of {\boldmath $(\partial_q \hspace{1pt} j_{q, \, \theta})^\top w = \partial_q (j_{q, \, \theta}^\top \, w)$}}
\label{subsubsec:computation_dq_work}

Again from \cref{eq:computation_partial_work}, note that $g_\mathcal{T}$ is independent of $q$, so the derivatives from one tetrahedron are given by
\begin{align*}
	&\hspace{14pt}
	\partial_{q_i}\!\left( g_\mathcal{T} \, \mathrm{tr}\!\left(W Q^{-1}\right) \mathrm{vol}(\mathcal{T}) \right)
	\\
	&=
	g_{\mathcal{T}} \, \mathrm{vol}(\mathcal{T}) \, \partial_{q_i}\!\left( \mathrm{tr}\!\left(W Q^{-1}\right) \right)
	+
	g_\mathcal{T} \, \mathrm{tr}\!\left(W Q^{-1}\right) \, \partial_{q_i}\!\left( \mathrm{vol}(\mathcal{T}) \right)
	\ \ \mbox{ for } i = 0, \dots, 3 ,
\end{align*}
where
\[
	\partial_{q_0}\!\left( \mathrm{tr}\!\left(W Q^{-1}\right) \right) = \left( \mathbbm{1}_3^\top Q^{-1} W Q^{-1} \right)^\top
\]
and
\[
	\partial_{q_i}\!\left( \mathrm{tr}\!\left(W Q^{-1}\right) \right) = \left( -(Q^{-1} W Q^{-1})_{i*} \right)^\top
	\ \ \mbox{ for } i = 1, 2, 3 .
\]


\bibliographystyle{siamplain}
\bibliography{Elasticity}

\begin{thebibliography}{10}

\bibitem{adaszewski2013early}
{\sc S.~Adaszewski, J.~Dukart, F.~Kherif, R.~Frackowiak, B.~Draganski, A.~D.~N.
  Initiative, et~al.}, {\em How early can we predict alzheimer's disease using
  computational anatomy?}, Neurobiology of aging, 34 (2013), pp.~2815--2826.

\bibitem{amar2005growth}
{\sc M.~B. Amar and A.~Goriely}, {\em Growth and instability in elastic
  tissues}, Journal of the Mechanics and Physics of Solids, 53 (2005),
  pp.~2284--2319.

\bibitem{amunts2013bigbrain}
{\sc K.~Amunts, C.~Lepage, L.~Borgeat, H.~Mohlberg, T.~Dickscheid, M.-{\'E}.
  Rousseau, S.~Bludau, P.-L. Bazin, L.~B. Lewis, A.-M. Oros-Peusquens, et~al.},
  {\em Bigbrain: an ultrahigh-resolution 3d human brain model}, Science, 340
  (2013), pp.~1472--1475.

\bibitem{aronszajn1950theory}
{\sc N.~Aronszajn}, {\em Theory of reproducing kernels}, Transactions of the
  American mathematical society, 68 (1950), pp.~337--404.

\bibitem{aylward2004onset}
{\sc E.~H. Aylward, B.~Sparks, K.~Field, V.~Yallapragada, B.~Shpritz,
  A.~Rosenblatt, J.~Brandt, L.~Gourley, K.~Liang, H.~Zhou, et~al.}, {\em Onset
  and rate of striatal atrophy in preclinical huntington disease}, Neurology,
  63 (2004), pp.~66--72.

\bibitem{beg2005computing}
{\sc M.~F. Beg, M.~I. Miller, A.~Trouv{\'e}, and L.~Younes}, {\em Computing
  large deformation metric mappings via geodesic flows of diffeomorphisms},
  International journal of computer vision, 61 (2005), pp.~139--157.

\bibitem{braak1991neuropathological}
{\sc H.~Braak and E.~Braak}, {\em Neuropathological stageing of
  alzheimer-related changes}, Acta neuropathologica, 82 (1991), pp.~239--259.

\bibitem{braak1995staging}
{\sc H.~Braak and E.~Braak}, {\em Staging of alzheimer's disease-related
  neurofibrillary changes}, Neurobiology of aging, 16 (1995), pp.~271--278.

\bibitem{bressan2018model}
{\sc A.~Bressan and M.~Lewicka}, {\em A {Model} of {Controlled} {Growth}},
  Archive for Rational Mechanics and Analysis, 227 (2018), pp.~1223--1266.

\bibitem{Charlier2017}
{\sc B.~Charlier, N.~Charon, and A.~Trouv{\'{e}}}, {\em The fshape framework
  for the variability analysis of functional shapes}, J. Foundations of Comput.
  Math, 17 (2017), pp.~287--357.

\bibitem{Charon2013}
{\sc N.~Charon and A.~Trouv\'{e}}, {\em The varifold representation of
  non-oriented shapes for diffeomorphic registration}, SIAM journal of Imaging
  Science, 6 (2013), pp.~2547--2580.

\bibitem{damasio1995human}
{\sc H.~Damasio}, {\em Human brain anatomy in computerized images.}, Oxford
  university press, 1995.

\bibitem{dicarlo2002growth}
{\sc A.~DiCarlo and S.~Quiligotti}, {\em Growth and balance}, Mechanics
  Research Communications, 29 (2002), pp.~449--456.

\bibitem{Dompierre1999}
{\sc J.~Dompierre, P.~Labb{\'{e}}, M.~Vallet, and R.~Camarero}, {\em {How to
  Subdivide Pyramids, Prisms, and Hexahedra into Tetrahedra}}, in Proceedings
  of the 8th International Meshing Roundtable, 1999, pp.~195--204.

\bibitem{Durrleman2013}
{\sc S.~Durrleman, X.~Pennec, A.~Trouv{\'e}, J.~Braga, G.~Gerig, and
  N.~Ayache}, {\em Toward a comprehensive framework for the spatiotemporal
  statistical analysis of longitudinal shape data}, International Journal of
  Computer Vision, 103 (2013), pp.~22--59.

\bibitem{gerig2006computational}
{\sc G.~Gerig, B.~Davis, P.~Lorenzen, S.~Xu, M.~Jomier, J.~Piven, and
  S.~Joshi}, {\em Computational anatomy to assess longitudinal trajectory of
  brain growth}, in Third International Symposium on 3D Data Processing,
  Visualization, and Transmission (3DPVT'06), IEEE, 2006, pp.~1041--1047.

\bibitem{goriely2017mathematics}
{\sc A.~Goriely}, {\em The mathematics and mechanics of biological growth},
  vol.~45, Springer, 2017.

\bibitem{Hsieh2019}
{\sc D.-N. Hsieh, S.~Arguill{\`e}re, N.~Charon, M.~I. Miller, and L.~Younes},
  {\em A model for elastic evolution on foliated shapes}, in Information
  Processing in Medical Imaging, A.~C.~S. Chung, J.~C. Gee, P.~A. Yushkevich,
  and S.~Bao, eds., Springer International Publishing, 2019, pp.~644--655.

\bibitem{hua2011accurate}
{\sc X.~Hua, B.~Gutman, C.~P. Boyle, P.~Rajagopalan, A.~D. Leow, I.~Yanovsky,
  A.~R. Kumar, A.~W. Toga, C.~R. Jack~Jr, N.~Schuff, et~al.}, {\em Accurate
  measurement of brain changes in longitudinal mri scans using tensor-based
  morphometry}, Neuroimage, 57 (2011), pp.~5--14.

\bibitem{Linjeb180414}
{\sc D.~C. Lin, C.~P. McGowan, K.~P. Blum, and L.~H. Ting}, {\em Yank: the time
  derivative of force is an important biomechanical variable in sensorimotor
  systems}, Journal of Experimental Biology, 222 (2019).

\bibitem{lindberg2012hippocampal}
{\sc O.~Lindberg, M.~Walterfang, J.~C. Looi, N.~Malykhin, P.~{\"O}stberg,
  B.~Zandbelt, M.~Styner, B.~Paniagua, D.~Velakoulis, E.~{\"O}rndahl, et~al.},
  {\em Hippocampal shape analysis in alzheimer's disease and frontotemporal
  lobar degeneration subtypes}, Journal of Alzheimer's Disease, 30 (2012),
  pp.~355--365.

\bibitem{lubarda2002mechanics}
{\sc V.~A. Lubarda and A.~Hoger}, {\em On the mechanics of solids with a
  growing mass}, International journal of solids and structures, 39 (2002),
  pp.~4627--4664.

\bibitem{ma2010bayesian}
{\sc J.~Ma, M.~I. Miller, and L.~Younes}, {\em A bayesian generative model for
  surface template estimation}, International journal of biomedical imaging,
  2010 (2010).

\bibitem{marsden1994mathematical}
{\sc J.~E. Marsden and T.~J. Hughes}, {\em Mathematical foundations of
  elasticity}, Courier Corporation, 1994.

\bibitem{miller2015amygdalar}
{\sc M.~I. Miller, L.~Younes, J.~T. Ratnanather, T.~Brown, H.~Trinh, D.~S. Lee,
  D.~Tward, P.~B. Mahon, S.~Mori, M.~Albert, et~al.}, {\em Amygdalar atrophy in
  symptomatic alzheimer's disease based on diffeomorphometry: the biocard
  cohort}, Neurobiology of aging, 36 (2015), pp.~S3--S10.

\bibitem{QIU2009S51}
{\sc A.~Qiu, M.~Albert, L.~Younes, and M.~I. Miller}, {\em Time sequence
  diffeomorphic metric mapping and parallel transport track time-dependent
  shape changes}, NeuroImage, 45 (2009), pp.~S51 -- S60.

\bibitem{ratnanather20183d}
{\sc J.~T. Ratnanather, S.~Arguill{\`e}re, K.~S. Kutten, P.~Hubka, A.~Kral, and
  L.~Younes}, {\em 3d normal coordinate systems for cortical areas}, arXiv
  preprint arXiv:1806.11169,  (2018).

\bibitem{Simon1983}
{\sc L.~Simon}, {\em Lecture notes on geometric measure theory}, Australian
  national university, 1983.

\bibitem{singh2013hierarchical}
{\sc N.~Singh, J.~Hinkle, S.~Joshi, and P.~T. Fletcher}, {\em A hierarchical
  geodesic model for diffeomorphic longitudinal shape analysis}, in
  International Conference on Information Processing in Medical Imaging,
  Springer, 2013, pp.~560--571.

\bibitem{tallinen2016growth}
{\sc T.~Tallinen, J.~Y. Chung, F.~Rousseau, N.~Girard, J.~Lef{\`e}vre, and
  L.~Mahadevan}, {\em On the growth and form of cortical convolutions}, Nature
  Physics, 12 (2016), p.~588.

\bibitem{tang2019regional}
{\sc X.~Tang, C.~A. Ross, H.~Johnson, J.~S. Paulsen, L.~Younes, R.~L. Albin,
  J.~T. Ratnanather, and M.~I. Miller}, {\em Regional subcortical shape
  analysis in premanifest huntington's disease}, Human Brain Mapping, 40
  (2019), pp.~1419--1433.

\bibitem{ward1997mathematical}
{\sc J.~P. Ward and J.~King}, {\em Mathematical modelling of avascular-tumour
  growth}, Mathematical Medicine and Biology: A Journal of the IMA, 14 (1997),
  pp.~39--69.

\bibitem{younes2010shapes}
{\sc L.~Younes}, {\em Shapes and diffeomorphisms}, vol.~171, Springer Science
  \& Business Media, 2010.

\bibitem{younes2018hybrid}
{\sc L.~Younes}, {\em Hybrid riemannian metrics for diffeomorphic shape
  registration}, Annals of Mathematical Sciences and Applications, 3 (2018),
  pp.~189--210.

\bibitem{younes2019identifying}
{\sc L.~Younes, M.~Albert, A.~Moghekar, A.~Soldan, C.~Pettigrew, and M.~I.
  Miller}, {\em Identifying changepoints in biomarkers during the preclinical
  phase of alzheimer's disease}, Frontiers in Aging Neuroscience, 11 (2019),
  p.~74.

\bibitem{younes2019normal}
{\sc L.~Younes, K.~S. Kutten, and J.~T. Ratnanather}, {\em Normal and
  equivolumetric coordinate systems for cortical areas}, arXiv preprint
  arXiv:1911.07999,  (2019).

\bibitem{younes2014regionally}
{\sc L.~Younes, J.~T. Ratnanather, T.~Brown, E.~Aylward, P.~Nopoulos,
  H.~Johnson, V.~A. Magnotta, J.~S. Paulsen, R.~L. Margolis, R.~L. Albin,
  et~al.}, {\em Regionally selective atrophy of subcortical structures in
  prodromal hd as revealed by statistical shape analysis}, Human brain mapping,
  35 (2014), pp.~792--809.

\end{thebibliography}

\end{document}